\newtheorem{lemma}{Lemma}[section]
\newtheorem{theorem}[lemma]{Theorem}
\newtheorem{corollary}[lemma]{Corollary}
\newtheorem{proposition}[lemma]{Proposition}
\theoremstyle{definition}
\newtheorem{definition}[lemma]{Definition}
\newtheorem{question}[lemma]{Question}
\newtheorem{example*}[lemma]{Example}
\newtheorem{remark}[lemma]{Remark}
\theoremstyle{remark}
\newtheorem*{rep@theorem}{\rep@title}
\newcommand{\newreptheorem}[2]{%
	\newenvironment{rep#1}[1]{%
		\def\rep@title{{\bf #2 \ref{##1}}}%
		\begin{rep@theorem}}%
		{\end{rep@theorem}}}
\DeclareRobustCommand{\qedify}[1]{%
	\ifmmode \quad\hbox{#1}
	\else
	\leavevmode\unskip\penalty9999 \hbox{}\nobreak\hfill
	\quad\hbox{#1}%
	\fi
}
\newenvironment{example}{\begin{example*}\pushQED{\qedify{$\diamondsuit$}}}{\popQED\end{example*}}
\numberwithin{equation}{section}
\newcommand{\superimpose}[2]{{\ooalign{$#1\@firstoftwo#2$\cr\hfil$#1\@secondoftwo#2$\hfil\cr}}}
\newcommand{\ttimes}{\hspace{0.3mm}{\mathpalette\superimpose{{\circ}{\cdot}}}\hspace{0.3mm}}
\newcommand{\tplus}{\mathrel{\oplus}}
\newcommand{\meet}{\wedge}
\newcommand{\join}{\vee}
\newcommand{\note}[1]{{\color{blue}#1}}
\DeclareMathOperator{\trop}{trop}
\DeclareMathOperator{\val}{val}
\DeclareMathOperator{\Cox}{Cox}
\DeclareMathOperator{\spann}{span}
\DeclareMathOperator{\rk}{rk}
\DeclareMathOperator{\Hom}{Hom}
\DeclareMathOperator{\Spec}{Spec}
\DeclareMathOperator{\supp}{supp}
\DeclareMathOperator{\divv}{div}
\DeclareMathOperator{\cone}{cone}
\DeclareMathOperator{\im}{im}
\DeclareMathOperator{\GL}{GL}
\newcommand{\Rbar}{\overline{\mathbb R}}
\begin{document}
	
	\title{Tropical vector bundles}

\author{Bivas Khan}
	\address{Department of Mathematics, Chennai Mathematical Institute, Chennai, India}
	\email{bivaskhan10@gmail.com}
        
        \author{Diane Maclagan}
        \address{Mathematics Institute, University of Warwick, Coventry, CV4 7AL, United Kingdom}
        \email{D.Maclagan@warwick.ac.uk}
        
        \begin{abstract}
The goal of this paper is to introduce a construction of a vector
bundle on a tropical variety.  When the base is a tropical toric
variety these tropicalize toric vector bundles, and are described by
the data of a valuated matroid and some flats in the lattice of flats
of the underlying matroid.  The fibers are tropical linear spaces.  We
define global sections for tropical toric vector bundles, stability,
and Jordan-H\"older and Harder-Narasimhan filtrations.  Many of these
require additional modularity assumptions on the defining matroid.
\end{abstract}

	\maketitle

\section{Introduction}

The goal of this paper is to  introduce a notion of vector bundles on tropical varieties.

The theory of divisors, or equivalently line bundles, on tropical
curves/metric graphs has been intensively studied since the beginning
of tropical geometry.  This has been particularly successful in Brill-Noether theory \cites{CDPR,JensenRanganathan} and other aspects of the geometry of the moduli space $M_g$ \cite{FarkasJensenPayne}.

The theory of higher rank vector bundles, by contrast, is much less
well developed.  After early work by Allermann \cite{Allermann}, the
story was restarted recently in the context of metric graphs by Gross,
Ulirsch, and Zakharov \cite{GUZ}.  In \cite{JMT20} Jun, Mincheva, and
Tolliver defined and studied vector bundles on tropical toric
varieties (in a slightly more general framework).  However their
construction has the drawback that all vector bundles are direct sums
of line bundles.  Similarly, all vector bundles on a simply connected
tropical variety are trivial in Allermann's theory.

In this paper we give a definition of a vector bundle on a tropical
toric variety that does not have these limitations, and define the
tropicalization of an equivariant vector bundle on a toric variety.
This definition then extends to vector bundles on subschemes of
tropical toric varieties, in the sense of \cite{TropicalIdeals}, and
to the tropicalization of some vector bundles on  subschemes of toric
varieties.

Our approach is to first tropicalize presentations of toric vector bundles
on a toric variety $X_{\Sigma}$.  By \cites{CoxHomogeneous, PerT, Perling}  an
equivariant sheaf on a toric variety can be described by an $\mathbb Z^{|\Sigma(1)|}$-graded module over the Cox ring of $X_{\Sigma}$, which is a polynomial ring with variables indexed by the rays of $\Sigma$. 
An equivariant surjection
\begin{equation} \label{eqtn:introsurjection}
  \oplus_{\mathbf{w} \in \mathcal G} \mathcal O(D_{\mathbf{w}})
  \rightarrow \mathcal E
\end{equation}
from a direct sum of line bundles onto a toric vector bundle $\mathcal E$ gives rise to a
presentation of this module (see Section~\ref{s:CoxModule}).  This
presentation can then be tropicalized in the sense of
\cites{TropicalIdeals, Balancing}.  The surjection of line bundles
induces the structure of a matroid on the index set $\mathcal G$, in
the sense of DiRocco, Jabbusch, and Smith \cite{DJS}.  It also gives a
linear embedding of the fibers into $\mathbb A^{|\mathcal G|}$, which
tropicalizes to a tropical linear space in $\Rbar^{|\mathcal G|}$.
See \S\ref{s:matroid} and \S\ref{ss:totalspace} respectively for
more details.

Our definition of a tropical toric vector bundle on a tropical toric variety takes these
properties as part of the definition, building on the Klyachko
classification of toric vector bundles \cite{Kly}.  We write
$\trop(X_{\Sigma})$ for the tropical toric variety \cites{Kajiwara,
  Payne} with fan $\Sigma$.

\begin{definition} \label{d:introdefn}
Let $\trop(X_{\Sigma})$ be a tropical toric variety.  A {\em tropical toric
reflexive sheaf} $\mathcal E = (\mathcal M, \mathcal G, \{E^i(j)\})$ on
$\trop(X_{\Sigma})$ is given by a simple valuated matroid $\mathcal M$
of rank $r$ on a ground set $\mathcal G$, and for each ray $\rho_i$ of
$\Sigma$ a collection $\{E^i(j) : j \in \mathbb Z \}$ of flats of the
underlying matroid $\underline{\mathcal M}$ with the property that $E^i(j) \leq
E^i(j')$ if $j>j'$, $E^i(j) = \emptyset$ for $j \gg 0$, and $E^i(j) =
\mathcal G$ for $j \ll 0$.

The reflexive sheaf $\mathcal E$ is a {\em tropical toric vector
  bundle} if in addition for every maximal cone $\sigma$ of $\Sigma$
there is a multiset $\mathbf{u}(\sigma) \subseteq M$, and a basis
$B_{\sigma} = \{ \mathbf{w}_{\mathbf{u}} : \mathbf{u} \in
\mathbf{u}(\sigma)\}$ of $\underline{\mathcal M}$ such that for every
ray $\rho_i \in \sigma$ and $j \in \mathbb Z$ we have
\begin{equation} \label{eqtn:tropicalcompatibilityintro}
	E^i(j) = \bigvee\limits_{\mathbf{u} \cdot \mathbf{v}_i \geq j} \mathbf{w}_{\mathbf{u}},
\end{equation}
is the smallest flat containing all the $\mathbf{w}_{\mathbf{u}}$ with $\mathbf{u} \cdot \mathbf{v}_i \geq j$.

\end{definition}

Klyachko \cite{Kly} showed that the category of toric vector
bundles with equivariant morphisms is equivalent to a category of
compatible filtrations of a vector space.  Our definition is an abstraction of  Klyachko's classification.
    The poset of all
intersections of filtered pieces, ordered by inclusions, embeds into
the lattice of flats of a matroid in the sense of \cite{DJS}.  See \S\ref{s:matroid} for more details.  In our
setting the \cite{DJS} matroid  is the underlying matroid of $\mathcal M$.
The compatibility
relation \eqref{eqtn:tropicalcompatibilityintro} is the abstraction of
the usual Klyachko compatibility relation.

A surjection of the form \eqref{eqtn:introsurjection} induces a
realizable valuated matroid $\mathcal M$, and thus a tropical toric
vector bundle (see Definition~\ref{pd:tropE}).  Tropical toric vector bundles that arise in this fashion are called {\em realizable}.  This tropicalization
depends on the choice of surjection.  This is 
analogous to the way in which the tropicalization of a projective
variety depends on its projective embedding, including the choice of
coordinates on $\mathbb P^n$.  However some tropical invariants are
constant for any, or for many, choices of surjection.

While the tropicalization of toric vector bundles is the main
motivation for our construction, not all tropical toric vector bundles
are realizable.  A tropical toric vector bundle can fail to be
realizable in two different ways: if the valuated matroid $\mathcal M$
is not realizable, or if the choice of flats $E^i(j)$ cannot come from
a filtration of subspaces.  See Examples~\ref{e:Fanoeg},
\ref{e:vamostotalspace}, and \ref{vbonP1split}\eqref{item:P1notdecomposable} for more on this phenomenon.

A coherent sheaf on a toric variety can be described by a module over
the Cox ring of the toric variety.  We describe this explicitly for toric vector bundles.

\begin{proposition}
  Let $S$ be the Cox ring of a smooth toric variety $X_{\Sigma}$, and let
  $\mathcal E$ be a rank-$r$ toric vector bundle on $X_{\Sigma}$ with
  an equivariant surjection of the form \eqref{eqtn:introsurjection}.
  Let $G$ be an $r \times m$ matrix whose columns are a realization of the associated valuated matroid $\mathcal M$ with ground set $\mathcal G$.
  Then $\mathcal E$ is the sheafification
  $\widetilde{P}$ of the $\mathbb Z^{|\Sigma(1)|}$-graded module $P$
  given by
  $$P = \bigoplus_{\mathbf{w} \in \mathcal G}
  S(\mathbf{d}_{\mathbf{w}})/ R,$$ where the $\mathbf{d}_{\mathbf{w}} \in
  \mathbb Z^{|\Sigma(1)|}$ encode the subspaces $E^i(j)$, and $R$ is a
  submodule of $\oplus_{\mathbf{w} \in \mathcal G}
  S(\mathbf{d}_{\mathbf{w}})$ obtained by homogenizing $\ker(G)$.
  \end{proposition}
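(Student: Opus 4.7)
The plan is to invoke Cox's correspondence between equivariant coherent sheaves on the smooth toric variety $X_{\Sigma}$ and finitely generated $\mathbb Z^{|\Sigma(1)|}$-graded $S$-modules (taken up to saturation at the irrelevant ideal $B$). Under this equivalence an equivariant line bundle $\mathcal O(D_{\mathbf w})$ corresponds to the twisted free module $S(\mathbf d_{\mathbf w})$, where $\mathbf d_{\mathbf w}\in\mathbb Z^{|\Sigma(1)|}$ records the coefficients of $D_{\mathbf w}$ in the basis of torus-invariant divisors, which in turn is read off from the filtered pieces $E^i(j)$ by the Klyachko data. Hence the equivariant surjection \eqref{eqtn:introsurjection} lifts to a morphism of graded $S$-modules $\widetilde\phi\colon\bigoplus_{\mathbf w\in\mathcal G}S(\mathbf d_{\mathbf w})\to N$ whose sheafification recovers the given surjection and satisfies $\widetilde N=\mathcal E$. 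Taking $N$ to be the image of $\widetilde\phi$, it suffices to identify $\ker(\widetilde\phi)$ with $R$ after $B$-saturation.

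To compute $\ker(\widetilde\phi)$ I first restrict to the dense torus $T\subset X_{\Sigma}$. On $T$ every equivariant line bundle $\mathcal O(D_{\mathbf w})$ is canonically trivial, and $\mathcal E|_T$ is a rank-$r$ trivial bundle whose fiber is the vector space in which the valuated matroid $\mathcal M$ is realized by the columns of $G$. The induced map on fibers is precisely $G\colon k^{|\mathcal G|}\to k^r$. Localizing $\widetilde\phi$ at all the variables $x_\rho$ and extracting the degree-zero piece therefore yields a map of free $S_0$-modules whose kernel equals $\ker(G)\otimes_k S_0$.

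By construction, $R$ is the $S$-span of homogenizations of elements of $\ker(G)$: a relation $\sum c_{\mathbf w}e_{\mathbf w}=0$ in $k^{|\mathcal G|}$ is converted into a homogeneous relation in $\bigoplus_{\mathbf w}S(\mathbf d_{\mathbf w})$ by multiplying each summand by the monomial that brings it into a common multidegree. Equivariance of $\widetilde\phi$ and its agreement with $G$ on the torus force $R\subseteq\ker(\widetilde\phi)$. Conversely, any homogeneous $r\in\ker(\widetilde\phi)$ localizes on $T$ to an element of $\ker(G)\otimes_k S_0$; clearing denominators produces a monomial in the $x_\rho$ that multiplies $r$ into $R$, so $r\in R\colon\! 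B^\infty$. Since sheafification is insensitive to $B$-saturation, $\widetilde P=\widetilde N=\mathcal E$, as required.

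The main technical step is the careful identification of the shifts $\mathbf d_{\mathbf w}$ from the subspaces $E^i(j)$: one must check on each torus-invariant open $U_\sigma$ that the Klyachko compatibility relation \eqref{eqtn:tropicalcompatibilityintro} together with the definition of $D_{\mathbf w}$ causes the equivariant section indexed by $\mathbf w$ to land in the prescribed filtered piece of $\mathcal E$ on $U_\sigma$, so that the grading twists match those of $\mathcal O(D_{\mathbf w})$ under the Cox-Perling equivalence. Once this bookkeeping is pinned down, the localization comparison of the previous paragraph finishes the argument.
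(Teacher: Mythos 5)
Your high-level strategy (lift the surjection to a map of $\mathbb Z^{|\Sigma(1)|}$-graded $S$-modules $\widetilde\phi\colon\bigoplus_{\mathbf w}S(\mathbf d_{\mathbf w})\to Q$, then identify $\ker\widetilde\phi$ with $R$ up to the irrelevant ideal) is the same as the paper's, but the mechanism you propose for the reverse inclusion $\ker(\widetilde\phi)\subseteq R:B^\infty$ has a real gap, and it is exactly the gap that the paper's chart-by-chart argument is designed to close.

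The problem is the step ``clearing denominators produces a monomial in the $x_\rho$ that multiplies $r$ into $R$, so $r\in R\colon B^\infty$.'' When you restrict to the dense torus you localize at the single element $\prod_\rho x_\rho$; clearing denominators therefore gives you some monomial $\prod_\rho x_\rho^{a_\rho}$ with $\bigl(\prod_\rho x_\rho^{a_\rho}\bigr)\,r\in R$. This does \emph{not} imply $r\in R:B_\Sigma^\infty$. The irrelevant ideal is generated by the ``complementary'' monomials $\mathbf x^{\hat\sigma}=\prod_{\rho\notin\sigma}x_\rho$, and to have $r\in R:B_\Sigma^\infty$ you need $B_\Sigma^N r\subseteq R$ for some $N$, i.e.\ $\bigl(\mathbf x^{\hat\sigma_1}\cdots\mathbf x^{\hat\sigma_N}\bigr)r\in R$ for \emph{every} choice of maximal cones $\sigma_1,\dots,\sigma_N$. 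Knowing that one particular full-support monomial multiplies $r$ into $R$ gives no control over these products (for $\mathbb P^2$: knowing $xyz\cdot r\in R$ says nothing about $x^N r\in R$). More conceptually: two graded submodules of a free module can agree after inverting all variables and yet have different $B$-saturations, since they can differ along torus-invariant boundary strata that are invisible on the open torus; matching on $T$ does not pin down the sheaf.

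This is also why your argument never invokes the Klyachko compatibility condition (the existence, for each maximal cone $\sigma$, of a basis $B_\sigma$ of $\underline{\mathcal M}$ adapted to all the $E^i(j)$ with $\rho_i\in\sigma$). That condition is precisely what the paper uses to control the \emph{boundary} charts. Concretely, the paper picks the fundamental circuits $\mathbf c^k$ of $\mathcal M$ over $B_\sigma$ and proves, using the compatibility basis, that the exponent vector $\mathbf u_k$ in the homogenized generator of $R$ satisfies $(\mathbf u_k)_i=0$ for all $\rho_i\in\sigma$. This guarantees the generator already lies in $R_{\mathbf x^{\hat\sigma}}$ (only the ``complementary'' variables appear, hence become units on $U_\sigma$), and a direct computation then shows $(R_{\mathbf x^{\hat\sigma}})_{\mathbf 0}=\ker(\eta(U_\sigma))$ on every chart, not just the torus. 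You relegate ``one must check on each torus-invariant open $U_\sigma$'' to a final bookkeeping remark about degree twists, but that check is not bookkeeping — it is the proof. If you run the localization-and-kernel comparison on each $U_\sigma$ (rather than only on $T$), you recover the paper's argument and the appeal to $B$-saturation becomes unnecessary.
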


A precise statement is given in Proposition~\ref{p:CoxModuleTVB}.
This motivates the following definition in the tropical setting.

\begin{definition}
Let $\mathcal E = (\mathcal M, \mathcal G, \{E^i(j) \})$ be a tropical
toric vector bundle on a tropical toric variety $\trop(X_{\Sigma})$,
and let $S =\Rbar[x_1,\dots,x_s]$ be the Cox semiring of
$\trop(X_{\Sigma})$.  Then $\mathcal E$ can be described by the
semimodule
\begin{equation}   P = \bigoplus_{\mathbf{w} \in \mathcal G}
                     S(\mathbf{d}_{\mathbf{w}}) / \mathcal B(R),
             \end{equation}
             where $\mathcal B()$ is the Giansiracusa bend congruence, and $R$ is a homogenization of the equations for the tropical linear space of $\mathcal M$.  
\end{definition}

A consequence of this is a description of the total space of a
tropical toric vector bundle as a tropical subvariety of
$\trop(X_{\Sigma}) \times \Rbar^m$.  The fiber over a point in
$\trop(X_{\Sigma})$ is a tropical linear space in $\Rbar^m$.  When the
point is the tropicalization of the identity of the torus of
$X_{\Sigma}$ the corresponding tropical linear space is the one
defined by the valuated matroid $\mathcal M$.  See \S~\ref{ss:totalspace} for more details.

Tropical toric vector bundles have the following properties that are familiar from  toric vector bundles.

\begin{theorem} \label{t:introtheorem}
  \begin{enumerate}
  \item \label{item:thmh0}    There is a definition of $h^0(\mathcal E)$ for a tropical
    toric vector bundle.  When $\mathcal E = \trop(\mathcal F)$ and
    the matroid for $\mathcal F$ is chosen carefully, we have
    $h^0(\mathcal E) = h^0(\mathcal F)$ (Definition~\ref{d:parliament} and Lemma~\ref{l:tvbh0}).
\item \label{item:thmsplits}  With extra modularity and minimality assumptions on $\mathcal
  M$, tropical toric vector bundles on $\trop(\mathbb P^1)$ split as a direct sum of line bundles (Proposition~\ref{p:P1splits}).
  \item \label{item:thmstability} There is a notion of stability for
    tropical toric vector bundles (see Section \ref{s:stability}).
    When $\mathcal E = \trop(\mathcal F)$ stability for $\mathcal F$
    implies stability for $\mathcal E$.  If the matroid for $\mathcal
    F$ is chosen carefully, $\mathcal E$ is stable if and only if
    $\mathcal F$ is.
\item \label{item:thmJHHN} With extra modularity assumptions on $\mathcal M$,
  Jordan-H\"older and Harder-Narasimhan filtrations exist
  (Theorems~\ref{t:JH} and \ref{t:HNfilt}).
\end{enumerate}    

\end{theorem}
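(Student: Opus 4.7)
The plan is to address each of the four parts of Theorem~\ref{t:introtheorem} separately, since each corresponds to results that will be proved in its own later section. The unifying strategy is to extract the essential combinatorial content of the classical statement for toric vector bundles and replay it inside the lattice of flats of $\underline{\mathcal M}$, using the valuation on $\mathcal M$ only where numerical invariants such as degrees are needed.

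For part~\eqref{item:thmh0}, I would imitate the parliament-of-polytopes description of $h^0(\mathcal F)$ from \cite{DJS}: for each basis element $\mathbf{w}$ of $\underline{\mathcal M}$ let $P_{\mathbf{w}} = \{\mathbf{u} \in M_{\mathbb R} : \mathbf{u}\cdot\mathbf{v}_i \geq j_{i,\mathbf{w}} \text{ for all } i\}$, where $j_{i,\mathbf{w}}$ is the maximal $j$ with $\mathbf{w} \in E^i(j)$, and put $h^0(\mathcal E) = \#\bigl(\bigcup_{\mathbf{w}} P_{\mathbf{w}} \cap M\bigr)$. When $\mathcal E = \trop(\mathcal F)$ and the surjection \eqref{eqtn:introsurjection} is chosen so that each Klyachko filtered subspace is spanned by a distinguished subset of basis elements of the fibre, the tropical compatibility \eqref{eqtn:tropicalcompatibilityintro} recovers the classical one exactly and the two counts agree on the nose. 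Part~\eqref{item:thmsplits} is a tropical version of Grothendieck's splitting theorem: the fan of $\trop(\mathbb P^1)$ has two maximal cones, so $\mathcal E$ is determined by two descending chains of flats in $\underline{\mathcal M}$ with a shared basis on each; modularity together with minimality will allow one to find a common basis refinement of the two chains, which I would then use to split off a rank-one summand and induct on rank.

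For part~\eqref{item:thmstability} the plan is to define the tropical degree of $\mathcal E$ from the $\{E^i(j)\}$ so that it recovers the classical toric degree on a tropicalization, to define a tropical sub-bundle as a submatroid $\underline{\mathcal M}' \subseteq \underline{\mathcal M}$ equipped with the induced flats $E^i(j) \meet \underline{\mathcal M}'$, and then to verify that $\mu(\trop(\mathcal F')) = \mu(\mathcal F')$ for equivariant sub-bundles. The implication that stability of $\mathcal F$ forces stability of $\trop(\mathcal F)$ will follow by lifting any destabilizing tropical sub-bundle along \eqref{eqtn:introsurjection}; with a careful choice of matroid every tropical sub-bundle lifts to a genuine equivariant sub-bundle, which gives the converse. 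Part~\eqref{item:thmJHHN} is then essentially formal: modularity guarantees that sums and intersections of sub-bundles remain sub-bundles, so iterating on the noetherian lattice of flats produces the Jordan-H\"older and Harder-Narasimhan filtrations. The main obstacle across all four parts is controlling how sub-bundles and quotients interact with the compatibility relation \eqref{eqtn:tropicalcompatibilityintro}: the lattice of flats of a general matroid is not modular, so the meet of a flat with a submatroid can fail to yield flats satisfying \eqref{eqtn:tropicalcompatibilityintro}, and arbitrary sums of tropical sub-bundles can fail to be sub-bundles. This is why modularity enters explicitly in parts~\eqref{item:thmsplits}, \eqref{item:thmstability}, and \eqref{item:thmJHHN}, and verifying that modularity is in fact sufficient is what occupies the bulk of the relevant later sections.
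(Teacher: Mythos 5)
Your proposal gets the overall architecture right---parts~\eqref{item:thmsplits} and~\eqref{item:thmJHHN} correctly identify partial modularity as the key technical tool, and your plan for part~\eqref{item:thmsplits} (find a common basis for the two chains of flats, then split) is in essence the paper's proof of Proposition~\ref{p:P1splits}, which shows the matroid must be $U_{r,r}$ under partial modularity plus tropical minimality. But two of the four parts contain genuine gaps.

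In part~\eqref{item:thmh0} your formula for $h^0(\mathcal E)$ is wrong on two counts. The inequality defining $P_{\mathbf{w}}$ should be $\mathbf{u}\cdot\mathbf{v}_i \leq (\mathbf{d}_{\mathbf{w}})_i$, not $\geq$: the character $\chi^{\mathbf{u}}$ contributes the intersection $\bigcap_i E^i(\mathbf{u}\cdot\mathbf{v}_i)$ to $H^0$, and $\mathbf{w}$ lies in that intersection exactly when $\mathbf{u}\cdot\mathbf{v}_i \leq (\mathbf{d}_{\mathbf{w}})_i$ for all $i$. More seriously, counting lattice points in $\bigcup_{\mathbf{w}}P_{\mathbf{w}}$ undercounts: the correct definition (Definition~\ref{d:parliament}) sets
\[
h^0(\mathcal E)_{\mathbf{u}} = \rk\Bigl(\bigvee_{\mathbf{u}\in P_{\mathbf{w}}}\mathbf{w}\Bigr), \qquad h^0(\mathcal E) = \sum_{\mathbf{u}\in M} h^0(\mathcal E)_{\mathbf{u}},
\]
so a lattice point lying in the overlap of several polytopes contributes the rank of the flat generated by the corresponding ground-set elements, not just $1$. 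In Example~\ref{e:parliaments}\eqref{item:firstparliament} the union of the polytopes contains only $7$ lattice points, yet $h^0 = 11$; the origin alone contributes $3$. Also, $\mathbf{w}$ should range over the whole ground set $\mathcal G$, not only over a basis, since every ground-set element gives a polytope.

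In part~\eqref{item:thmstability} the assertion that $\mu(\trop(\mathcal F')) = \mu(\mathcal F')$ for equivariant sub-bundles is not what is needed and is misleading. For a flat $\tilde F$ of $\underline{\mathcal M}$ corresponding to a subspace $F\subseteq E$, the tropical subsheaf $\trop(\mathcal F)_{\tilde F}$ need not coincide with $\trop(\mathcal F_F)$: in the lattice of flats one can have $\rk(\tilde E^i(j)\meet\tilde F) < \dim(E^i(j)\cap F)$ when the pair is not modular, so $\mu(\trop(\mathcal F)_{\tilde F}) \leq \mu(\mathcal F_F)$ with possibly strict inequality. It is precisely this inequality, not an equality, that yields the implication ``$\mathcal F$ stable $\Rightarrow$ $\trop(\mathcal F)$ stable'' (Lemma~\ref{l:stabilitypreserved}), and the failure of equality is exactly what makes the converse delicate: one must augment the ground set of the matroid so that every destabilizing subspace together with its intersections with the $E^i(j)$ becomes a chain of flats (Remark~\ref{r:semistabletropicalize}). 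Example~\ref{stabUtrop}\eqref{item:stabUtrop} exhibits an unstable toric bundle whose tropicalization (with the wrong matroid) is stable, so ``lifting a destabilizing tropical sub-bundle'' does not by itself suffice without this matroid-enlargement step.
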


Part~\ref{item:thmh0} of Theorem~\ref{t:introtheorem} is inspired by
and generalizes \cite{DJS}.  Part~\ref{item:thmstability} generalizes
\cite{Lucie} and \cites{DDK-Stab,ErSEVB}.  The classical proof that every
vector bundle on $\mathbb P^1$ splits as a direct sum of line bundles
already relies in a crucial way in some foundational results from
lattice theory, and Part~\ref{item:thmsplits} of the theorem can be regarded as
abstracting back to those results.

Modularity is a property of lattices that abstracts the linear algebra
fact that $\dim(V + W) + \dim(V \cap W) = \dim(V) + \dim(W)$ for two
subspaces $V,W$ of a finite-dimensional vector space.  Lattices of
flats of matroids are always {\em semimodular} (the analogous relation
holds with $\leq$ instead of $=$), but are much less likely to be
modular, even when realizable.  While we do not require complete
modularity in Theorem~\ref{t:introtheorem}, some form is necessary, as
Part~\ref{item:thmsplits} is false without the modularity assumption
(see Example~\ref{vbonP1split}).  More work is required to
completely understand this condition for realizable tropical toric
vector bundles.

The work of Jun, Mincheva, and Tolliver \cite{JMT20} also tropicalizes
toric vector bundles.  However in their setting all tropical vector
bundles on $\trop(X_{\Sigma})$ are a direct sum of line bundles, which
is far from the case in non-tropical algebraic geometry.  The issue
here is one would expect transition functions for a rank-$r$ tropical vector
bundle to live in a tropicalization of $\GL(r)$ over a valued field $K$.  However the naive
form of tropical $\GL(r)$  consists of tropical $r \times r$
matrices with only one entry not equal to $\infty$ in each row and
column, so is isomorphic to $S_r \ltimes \mathbb R^r$.  This is the
tropicalization of those elements of $\GL(r)$ that preserve the torus $(K^*)^r$ when acting on $\mathbb A^r$.  A toric
vector bundle is determined by a choice of basis for the fiber $E$ of
$\mathcal E$ over the identity of the torus for each cone $\sigma \in
\Sigma$, and such a matrix only permutes the elements of this basis.
Thus we would have the same basis for every cone of $\Sigma$, and so
the bundle $\mathcal E$ would decompose as a direct sum of line
bundles.

By contrast, in the present work we use the tropicalization of
$\GL(m)$ for transition maps, where $m=|\mathcal G|$ is the size of the ground set of the
valuated matroid $\mathcal M$.  This allows the
construction of indecomposable tropical toric vector bundles.  One way
to think of the matroid $\mathcal M$ in the realizable case is that we
only require finitely many matrices in $\GL(r)$ to construct the
transition functions of a vector bundle, and a realization of $\mathcal M$
can be thought of as a map from $K^m$ to $K^r$ that has all of the
columns of these matrices among the images of the standard basis
vectors.

We also extend the definition of tropical vector bundles to more
general bases.  Embedded tropicalization of a variety $X$ in the sense
of \cite{TropicalBook} proceeds by first embedding $X$ into a toric
variety. In an analogous fashion we {\em define} a tropical vector
bundle on a tropical subscheme of a tropical toric variety to be the
restriction of a tropical toric variety on the ambient toric variety.
Vector bundles on $X$ that can be tropicalized in this fashion are
those that are the pullback of a toric vector bundle $\mathcal E$ on a
toric variety $X_{\Sigma}$ under {\em some} embedding of $X$ into
$X_{\Sigma}$.  Note that every line bundle on $X$ has this form; for
example, when $\mathcal L$ is a very ample line bundle corresponding
to an embedding $\phi \colon X \rightarrow \mathbb P^m$, we have
$\mathcal L = \phi^*(\mathcal O_{\mathbb P^m}(1))$.  As with the
tropicalization of $X$, the freedom in the choice of embedding is
crucial to get interesting answers.  It would be interesting to
understand which vector bundles can be constructed in this fashion.
This is covered in Section~\ref{s:nontoric}.

The tropicalization of vector bundles on curves was considered already
by Gross, Ulirsch, and Zakharov \cite{GUZ}, in the context of
tropicalization taking curves to metric graphs.  Their vector bundles
are constructed as pushforwards of tropical line bundles on a cover of
the metric graph, and the vector bundles on a classical curve that can
be tropicalized are those that can be realized as a pushforward in
this sense.  It will also be important to understand the interaction
between this approach and the one of this paper, and in particular
which vector bundles on curves can be tropicalized in each sense.

As this paper was nearing completion we became aware of the similar
project of Kiumars Kaveh and Chris Manon on {\em Toric matroid
  bundles} \cite{KavehManon24}.  Their toric matroid bundles are the same as our tropical
toric vector bundles in the case that the valuated matroid structure
is trivial.  Both works have the same definition of first Chern class
and global sections, and highlight the importance of modularity. 

The structure of this paper is as follows.  In
Section~\ref{s:background} we recall the necessary background on
tropical geometry and lattice theory needed in the paper.  In
Section~\ref{s:matroid} we explain the connection between toric vector
bundles and matroids.  Section~\ref{s:CoxModule} describes the
Cox module of a toric vector bundle.  The tropical material begins in
Section~\ref{s:tropicaltoricdefns}, with the definition of a tropical
toric reflexive sheaf and a tropical toric vector bundle, and a
description of the total space of a tropical toric vector bundle.
Further properties of tropical toric vector bundles are explained in
Section~\ref{s:properties}: decomposition into direct sums, tensor
product with line bundles, and parliaments of polytopes.  Stability of
tropical toric vector bundles, and the Jordan-H\"older and
Harder-Narasimhan filtrations, are covered in
Section~\ref{s:stability}.  Finally, Section~\ref{s:nontoric} extends
these definition to tropical vector bundles on more general tropical
schemes.

\noindent {\bf Acknowledgements: } We thank Michel Brion for pointing
out Remark~\ref{r:allpullbacks}\eqref{item:brion}, and Lucie Devey
for conversations about \cite{Lucie}.  Thanks are also due to Chris
Manon for explaining his earlier work with Kiumars Kaveh, and to both
Kaveh and Manon for explaining their forthcoming work.  Maclagan also
thanks Milena Hering and Greg Smith for many useful conversations over
the years about toric vector bundles, and Georg Loho for useful
conversations about polymatroids.  Both authors were partially
supported by EPSRC grant EP/R02300X/1.  Khan was also supported by
postdoctoral fellowships from the National Board for Higher
Mathematics and a grant from the Infosys Foundation, and Maclagan was
also supported by ESPRC Fellowship EP/X02752X/1.

\noindent {\bf Notation: } We follow standard notation for toric
	geometry as in \cite{Fulton} or \cite{CLS}, so $N \cong \mathbb Z^n$
	always denotes a rank-$n$ lattice, and $M = \Hom(N,\mathbb Z)$ is the
	dual lattice.  We write $s = |\Sigma(1)|$ for the number of rays of the fan $\Sigma$.  
	
	We use $\mathcal M$ to denote a valuated matroid, and
        $\underline{\mathcal M}$ to denote the corresponding
        underlying matroid of $\mathcal M$.  The ground set $\mathcal
        G$ of $\mathcal M$ has size $m$.  Note the typographical
        distinction between the lattice $M \cong \mathbb Z^n$ and the valuated matroid
        $\mathcal M$.

        The symbol $\sqcup$ denotes disjoint union of sets.

	\section{Background on tropical and lattice techniques} \label{s:background}
	We recall here three key concepts from tropical geometry: tropical toric
	varieties, valuated matroids, and the Giansiracusa bend congruence.  We also recall the basics of lattice theory that we will need.

We follow standard notation for toric varieties from
\cite{Fulton} or \cite{CLS}.  
We write $n$ for the dimension
        of the toric variety $X_{\Sigma}$, $s=|\Sigma(1)|$ for the number of rays,  $\mathbf{v}_1,\dots,\mathbf{v}_s$ for the first lattice points 
        on the rays $\rho_1,\dots,\rho_s$ of the fan $\Sigma$, and $D_i$ for the torus-invariant divisor corresponding to the $i$th ray of $\Sigma$.
	The lattice of $X_{\Sigma}$ is  $N \cong \mathbb Z^n$, and
        $M = \Hom(N,\mathbb Z)$ is the dual lattice, where the homomorphism is as abelian
	groups.  We denote by  $N_{\mathbb R}$ the vector space $N \otimes \mathbb R$.
 	We assume unless otherwise noted that the fan $\Sigma$ is complete.
	
	We denote the tropical semiring by $\Rbar = (\mathbb R \cup
        \{\infty\}, \tplus, \ttimes)$, where $\tplus \, = \min$, and
        $\ttimes = +$.
	Let $K$ be a field equipped with a valuation $\text{val} :K
        \rightarrow \Rbar$.  For a vector $\mathbf{v} \in \Rbar^n$ we
        denote by $\supp(\mathbf{v})$ the set $\{ i : v_i \neq \infty
        \}$.

	\subsection{Tropical toric varieties} \label{preliTTVar}

        Recall that for a pointed rational cone $\sigma \subseteq N_{\mathbb R}$ the
	affine toric variety $U_{\sigma}$ is $\Spec(K[\sigma^{\vee} \cap M])$,
	with $K$-rational points $\Hom(K[\sigma^{\vee} \cap M],K) =
	\Hom(\sigma^{\vee} \cap M ,K)$, where the first homomorphism is as
	$K$-algebras, and the second is as semigroup homomorphisms from
	$(\sigma^{\vee} \cap M ,+)$ to $(K, \times)$.  The {\em tropical affine toric
		variety} $\trop(U_{\sigma})$ is
	$$\Hom(\sigma^{\vee} \cap M,\Rbar),$$ where the homomorphism is as
	semigroups from $(\sigma^{\vee} \cap M,+)$ to $(\Rbar, \ttimes)$.  The topology on $\trop(U_{\sigma})$ comes from viewing it as a subspace of $\Rbar^{\sigma^{\vee} \cap M}$.
	Given a rational polyhedral fan $\Sigma$, the
	{\em tropical toric variety} $\trop(X_{\Sigma})$ is made by
	gluing together the sets $\trop(U_{\sigma})$ along faces as in the standard case: if $\tau$ is a face of $\sigma$, then $\sigma^{\vee}$ is a sub-semigroup of $\tau^{\vee}$, so
	$\Hom(\tau^{\vee} \cap M, \Rbar)$ is a subset of $\Hom(\sigma^{\vee} \cap M,\Rbar)$.  The coordinate ring of $\trop(U_{\sigma})$ is the semiring $\Rbar[\sigma^{\vee} \cap M]$.
	
  The Cox semiring of a tropical toric variety is
  $$\Cox(\trop(X_{\Sigma})) = \Rbar[x_1,\dots,x_s].$$
  This is graded by the class group of
	$X_{\Sigma}$:
	\begin{equation} \label{eqtn:A1} A^1(\Sigma) = \text{coker}(M \stackrel{\phi}\rightarrow \mathbb Z^{s}). \end{equation}
	Here $(\phi(\mathbf{u}))_i = \mathbf{u} \cdot \mathbf{v}_i$,
        and the grading is $\deg(x_i) = \mathbf{e}_i + \im \phi$.

For $\sigma \in \Sigma$ write $\mathbf{x}^{\hat{\sigma}} = \prod_{\rho_i \not \in \sigma} x_i$.          The irrelevant ideal $B_{\Sigma}$ is $\langle
\mathbf{x}^{\hat{\sigma}} : \sigma \in \Sigma \rangle \subseteq
        \Rbar[x_1,\dots,x_s]$.
  The tropical Cox construction of
        $\trop(X_{\Sigma})$ is
        $$(\Rbar^s \setminus V(B_{\Sigma}))/H,$$ where $H = \im(\phi
        \otimes \mathbb R) \subseteq \mathbb R^s$ is the tropical dual of the class group.  For more details
        on tropical toric varieties, see \cite{Kajiwara},
        \cite{PayneTropical}, or \cite{TropicalBook}*{\S 6.2}.

	\subsection{Valuated matroids} \label{ss:valuatedmatroids}
	
	Valuated matroids are a generalization of matroids introduced by Dress
	and Wenzel \cite{DressWenzel} that have proved particularly useful in
	tropical geometry.  We assume familiarity with the basics of matroid
	theory as in \cite{Oxley}.
	
	A valuated matroid $\mathcal M$ of rank $r$ on a finite set $\mathcal
	G$ is given by a basis valuation function $$\nu \colon \binom{\mathcal G}{r} \rightarrow
	\Rbar$$ satisfying that $\nu(B) \neq \infty$ for some $B \in
	\binom{\mathcal G}{r}$ and for all $B_1, B_2 \in \binom{\mathcal
		G}{r}$ and $i \in B_1 \setminus B_2$ there is $j \in B_2 \setminus B_1$ with
	$$\nu(B_1) \ttimes \nu(B_2) \geq \nu(B_1 \setminus \{i \} \cup
        \{j \}) \ttimes \nu(B_2 \setminus \{j\} \cup \{i\}).$$ Any
        matroid can be given a valuated matroid structure by setting
        $\nu(B)=0$ if $B$ is a basis of the matroid, and
        $\nu(B)=\infty$ otherwise.  We call this the trivial valuated
        matroid structure on the matroid.  Conversely, every valuated
        matroid $\mathcal M$ has an underlying matroid
        $\underline{\mathcal M}$ with bases $\{ B \in \binom{\mathcal
          G}{r} \colon \nu(B) \neq \infty \}$.  Two different
        functions $\nu_1, \nu_2$ define the same valuated matroid if there is a
        constant $\lambda \neq \infty$ such that $\nu_1(B) = \lambda
        \ttimes \nu_2(B)$.  The matroids of $\nu_1$ and $\nu_2$ are
                {\em projectively equivalent} if there is $\mathbf{b}
                \in \mathbb R^{|\mathcal G|}$ with $\nu_1(B) =
                \nu_2(B) + \sum_{i \in B} b_i$ for all subsets $B$.

	The {\em circuits} of a valuated matroid are the elements of $\Rbar^{|\mathcal G|}$ of the form
	\begin{equation} \label{eqtn:circuits}  (C_{B,i})_j = \begin{cases} \lambda \ttimes \nu(B \cup \{i \}  \setminus \{j\} ) &  j \in B \cup \{i \} \\
		\infty & \text{otherwise} \\
	\end{cases}
	\end{equation}
	for $\lambda \in \mathbb R$, $B$ a basis of $\underline{\mathcal M}$, and $i \in \mathcal G \setminus B$.  The circuit $C_{B,i}$ with $\lambda = - \nu(B)$ is the {\em fundamental circuit} of $i$ over $B$.  Note that we may have $C_{B,i} = C_{B',i'}$ if $B \neq B'$.   A {\em vector} of $\mathcal M$ is a tropical sum of circuits.

The set of circuits can be characterised by the circuit axioms
\cite{MurotaTamura}.  These say that a collection $\mathcal C$ of
elements of $\Rbar^{|\mathcal G|}$ is the set of circuits of a
valuated matroid if:
\begin{enumerate}
\item $(\infty,\dots,\infty) \not \in \mathcal C$,
  \item if
$\mathbf{c} \in \mathcal C$, then $\lambda \ttimes \mathbf{c} \in
    \mathcal C$ for all $\lambda \in \mathbb R$,
   \item we do not have
$\supp(\mathbf{c}) \subsetneq \supp(\mathbf{c}')$ for $\mathbf{c},
     \mathbf{c}' \in \mathcal C$, and
\item      
     whenever $\mathbf{c}, \mathbf{c}' \in
\mathcal C$ with $c_i = c'_i <\infty$ and $c_j < c'_j$ there is
$\mathbf{c}'' \in \mathcal C$ with $\mathbf{c}'' \geq \mathbf{c}
\tplus \mathbf{c}'$, $c''_i = \infty$, and $c''_j=c_j$.
\end{enumerate}
	
        An $r$-dimensional subspace of $K^n$ gives rise to a 
        valuated matroid on the ground set
        $\mathcal G = \{1,\dots,n\}$ by fixing an $r \times n$ matrix
        $A$ whose rows form a basis for the subspace, and setting
	$$\nu(B) = \val(\det(A_B)),$$ where $A_B$ denotes the
        submatrix of $A$ with columns indexed by $B$.  Different
        choices of the matrix $A$ may scale $\nu$ by a constant.  A
        collection of $n$ vectors $\mathcal G$ in $K^r$ also induces a
        rank-$r$ valuated matroid on $\mathcal G$ in the same way by
        forming the matrix $A$ with columns the vectors in $\mathcal
        G$.  Valuated matroids coming from a matrix $A$ in these ways are called {\em realizable}.

Deletion and contraction are fundamental operations for matroids, and
these have versions for valuated matroids which we will need.  Let $\mathcal G' \subseteq
\mathcal G$ be a subset of the ground set $\mathcal G$ of a valuated
matroid $\mathcal M$ of rank $r$.  The {\em restriction} $\mathcal M|_{\mathcal
  G'}$, or deletion of $\mathcal G \setminus \mathcal G'$, is a
valuated matroid on the ground set $\mathcal G'$ with basis valuation
function defined as follows.  Let $r' = \rk_{\underline{\mathcal
    M}}(\mathcal G')$.  Fix $E \subseteq \mathcal G \setminus \mathcal
G'$ with $\rk(\mathcal G' \cup E) = \rk(\mathcal M)$.  We define
a basis valuation function $\nu' \colon \binom{\mathcal G'}{r'} \rightarrow \Rbar$ by
$$\nu'(B) = \nu(B \cup E).$$ See \cite{DressWenzel}*{Proposition 1.2}
for a proof that this defines a valuated matroid on $\mathcal G'$.
While it depends on the choice of set $E$, all such choices define the same  valuated matroid.  The underlying matroid of $\mathcal M|_{\mathcal G'}$ is the restriction $\underline{\mathcal M}|_{\mathcal G'}$ of the underlying matroid of $\mathcal M$.

The {\em contraction} $\mathcal M/\mathcal G'$ is the valuated matroid
on $\mathcal G \setminus \mathcal G'$ with basis valuation function
defined as follows.  Let $r''$ be the rank of the contraction
$\underline{\mathcal M}/\mathcal G'$ of the underlying matroid
$\underline{\mathcal M}$.  Fix $E \subseteq \mathcal G'$ with $|E| =
\rk(E)= r-r''$.  We define a basis valuation function $\nu''
\colon \binom{\mathcal G \setminus \mathcal G'}{r''} \rightarrow
\Rbar$ by
$$\nu''(B) = \nu(B \cup E).$$ See \cite{DressWenzel}*{Proposition 1.3}
for a proof that this defines a valuated matroid on $\mathcal G
\setminus \mathcal G'$.  While it depends on the choice of set $E$,
all such choices define the same  valuated matroid.  The underlying
matroid of $\mathcal M/{\mathcal G'}$ is the contraction
$\underline{\mathcal M}/{\mathcal G'}$ of the underlying matroid of
$\mathcal M$.
	
	\subsection{The bend congruence} \label{ss:bendcongruence}
	
	When working over semirings instead of rings, the notion of
        taking the quotient of a semimodule by a submodule is replaced
        by taking the quotient by a {\em congruence}.  This is an
        equivalence relation $\sim$ on the semimodule that is
        compatible with addition and multiplication by the semiring:
        $a \sim b, c \sim d$ implies $a+c \sim b+d$ and $\lambda a
        \sim \lambda b$.
	
	In \cite{Giansiracusa2} Jeff and Noah  Giansiracusa introduce a particular
	congruence on submodules of $R^n$ when $R$ is an idempotent semiring
	(such as $\Rbar$ or $\Rbar[x_1,\dots,x_s]$) that is particularly useful in tropical geometry.  We now recall this in the context of the tropical semiring.
	
	\begin{definition}
	Let $P$ be a an $\Rbar$-subsemimodule of $\Rbar^m$.  We write
        $p \in P$ as $p=\sum_{i=1}^m p_i \mathbf{e}_i$ for $p_i \in
        \Rbar$, and set $p^{\hat{i}} = \sum_{j \neq i } p_j
        \mathbf{e}_j$.  The {\em bend congruence} of $P$ is the
        congruence $\mathcal B(P)$ on $\Rbar^m$ generated by
		$$ \{  p \sim p^{\hat{i}} \colon p \in P, 1 \leq i \leq m \}.$$
	\end{definition}  
	
	This concept generalizes to ideals in polynomial semirings, or
        more generally monoid semirings.  For a (tropical) polynomial
        $f = \sum c_{\mathbf{u}} \mathbf{x}^{\mathbf{u}} \in
        \Rbar[x_1, \dots, x_s]$ we denote by $f_{\hat{\mathbf{v}}}$
        the polynomial
	$$f_{\hat{\mathbf{v}}} = \sum_{\mathbf{u} \neq \mathbf{v}} c_{\mathbf{u}}
	\mathbf{x}^{\mathbf{u}}.$$
	
	\begin{definition}
		Fix a pointed rational polyhedral cone, and let $I$ be an ideal in $\Rbar[\sigma^{\vee} \cap M]$.  The {\em bend congruence} $\mathcal B(I)$ of $I$ is
		the congruence on $\Rbar[\sigma^{\vee} \cap M]$ generated by
		$$ \{ f \sim f_{\hat{\mathbf{v}}} : f \in I \}.$$
	\end{definition}

        \subsection{Lattices and modularity} \label{ss:latticesmodularity}

We recall the notion and some basic properties of modular flats in
geometric lattice from \cite{Ziegler91, Stanley16}*{Chapter
  3}. Modular flats play an important role in our theory. Recall that
a lattice is a poset where every pair of elements has a unique least
upper bound (join $\join$), and a unique greatest lower bound (meet
$\meet$).  Geometric lattices are atomic lattices (every
element is the join of rank one elements) that are semimodular ($\rk(x
\meet y) + \rk(x \join y) \leq \rk(x)+\rk(y))$ for all $x,y$ in the
lattice).  These are precisely the lattices of flats of matroids
\cite{StanleyEC1}*{\S 3.3}. Hence, we also call elements of \(L\) 
flats.

A lattice is modular if
\begin{equation} \label{eqtn:modular} \rk(x \join y) + \rk(x \meet y) = \rk(x) +
	\rk(y) \end{equation} for all $x, y$ in the lattice.  Lattices
of flats of matroids are always semimodular,  but not always modular.  

\begin{definition}
We say that a pair
$(x,y)$ in a semimodular lattice is modular if the equality 
\eqref{eqtn:modular} holds \cite{Birkhoff}*{\S IV.3}.  An element $x$ is modular
if $(x,y)$ is a modular pair for {\em all} elements $y \in L$.
\end{definition}

	\begin{proposition}\cite[Definition 2.1]{Ziegler91} \label{defmodflat}
		Let \(L\) be a geometric lattice. The following conditions are equivalent:
		\begin{enumerate}
			\item A flat \(F \in L\) is modular.
			\item  For \(G \leq H\) in \(L\), \(G \vee (F \wedge H)=(G \vee F) \wedge H.\)
			\item For \(G \in L\) and \(H \leq F\), \(H \vee (G \wedge F)= (H \vee G) \wedge F\). 
		\end{enumerate}
        \end{proposition}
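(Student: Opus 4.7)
The plan is to establish $(1) \Leftrightarrow (2)$ by pairing a direct rank computation with a chain argument, and to obtain $(1) \Leftrightarrow (3)$ by an analogous symmetric route. Throughout I use that in a geometric lattice the rank function is strictly monotone along covers, so two comparable elements of equal rank are equal.

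For $(1) \Rightarrow (2)$, fix $G \leq H$ and set $A = G \vee (F \wedge H)$ and $B = (G \vee F) \wedge H$; since $A \leq B$ holds automatically, it suffices to show $\rk(A) \geq \rk(B)$. A short computation (using $G \leq H$) gives $F \vee A = F \vee G = F \vee B$, $F \wedge B = F \wedge H$, and $F \wedge A \geq F \wedge H$. Substituting these into the modularity rank equation of $F$ applied at both $(F,A)$ and $(F,B)$ yields
\[ \rk(A) - \rk(B) = \rk(F \wedge A) - \rk(F \wedge H) \geq 0, \]
so $\rk(A) = \rk(B)$ and hence $A = B$.

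For $(2) \Rightarrow (1)$, fix $y \in L$ and consider the order-preserving map $\phi \colon [F \wedge y, y] \to [F, F \vee y]$ defined by $\phi(z) = z \vee F$. Condition $(2)$ applied with $G = z$ and $H = y$ gives $(z \vee F) \wedge y = z \vee (F \wedge y) = z$ for every $z \in [F \wedge y, y]$, so $\phi$ is injective. A maximal chain in $[F \wedge y, y]$ has length $\rk(y) - \rk(F \wedge y)$; $\phi$ sends it to a strict chain of the same length in $[F, F \vee y]$, forcing $\rk(y) - \rk(F \wedge y) \leq \rk(F \vee y) - \rk(F)$. The opposite inequality is precisely semimodularity of $L$, so $(1)$ follows.

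For $(1) \Leftrightarrow (3)$ I would argue symmetrically. The naive rank argument from $(1) \Rightarrow (2)$ does not transcribe directly: setting $A' = H \vee (G \wedge F)$ and $B' = (H \vee G) \wedge F$, one has $A', B' \leq F$, so modularity of $F$ at $(F, A')$ and $(F, B')$ is tautological and does not pin down $\rk(A')$. The cleanest route is to invoke the symmetry of the modular pair relation in a semimodular lattice of finite length, since $(2)$ and $(3)$ respectively assert that $(F, y)$ and $(y, F)$ are modular pairs for every $y \in L$, and these two coincide by M-symmetry (see, e.g., \cite{Birkhoff}*{Chapter IV}). Alternatively one mimics the argument of $(2) \Rightarrow (1)$ with the companion map $\tilde\phi \colon [F \wedge G, F] \to [G, F \vee G]$, $\tilde\phi(z) = z \vee G$, to obtain $(3) \Leftrightarrow (1)$ via the same chain-plus-rank technique. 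I expect this $(1) \Leftrightarrow (3)$ equivalence to be the principal obstacle, precisely because modularity of $F$ is asymmetric with respect to the roles played by the arguments in $(3)$.
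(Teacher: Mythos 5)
The paper itself does not prove this proposition — it is cited verbatim from Ziegler (\cite[Definition 2.1]{Ziegler91}), so there is no in-paper argument to compare against. Your proof is therefore assessed on its own merits, and it is essentially correct. The rank computation for $(1)\Rightarrow(2)$ (establishing $A\leq B$ automatically, computing $F\vee A=F\vee B=F\vee G$, $F\wedge B=F\wedge H$, $F\wedge A\geq F\wedge H$, and subtracting the two modularity equations) is clean and right, and the chain argument for $(2)\Rightarrow(1)$ correctly combines injectivity of $z\mapsto z\vee F$ with semimodularity to squeeze the rank equality. You have also correctly identified that $(1)\Leftrightarrow(3)$ is the delicate direction: modularity of $F$ applied at $A'=H\vee(G\wedge F)$ or $B'=(H\vee G)\wedge F$ is vacuous since both lie below $F$, so the argument from $(1)\Rightarrow(2)$ does not transcribe.

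One imprecision: you assert that the companion map $\tilde\phi\colon[F\wedge G,F]\to[G,F\vee G]$, $\tilde\phi(z)=z\vee G$, delivers $(3)\Leftrightarrow(1)$ ``via the same chain-plus-rank technique.'' In fact the chain argument only runs in one direction. Condition $(3)$ applied with $H=z\geq F\wedge G$ gives $(z\vee G)\wedge F=z$, hence $\tilde\phi$ is injective and the chain-length comparison with semimodularity yields $(3)\Rightarrow(1)$; but the reverse implication $(1)\Rightarrow(3)$ cannot be extracted from this map alone, for the same reason the rank argument failed. The cleanest closure is exactly the one you name first: invoke $M$-symmetry of semimodular lattices of finite length, which asserts that the modular-pair relation is symmetric because it is characterized by the symmetric rank equality $\rk(a\vee b)+\rk(a\wedge b)=\rk(a)+\rk(b)$. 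Concretely, $(2)$ says $(F,y)$ is a modular pair for all $y$ (in the lattice-theoretic sense, with $F$ in the first slot), $(3)$ says $(y,F)$ is for all $y$, and both are equivalent to the rank equality for $(F,y)$, which is $(1)$. If you prefer to keep the argument self-contained rather than citing $M$-symmetry, you should prove the pairwise equivalence ``rank equality for $(a,b)$ $\Rightarrow$ $aMb$'' directly: apply semimodularity once to $(c\vee a,b)$ and once to $(a,c\vee(a\wedge b))$, substitute the rank equality, and compare; this recovers $(1)\Rightarrow(3)$ without circularity and is the standard proof of $M$-symmetry anyway.
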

	
		\begin{proposition} \label{propmodflat}
			\begin{enumerate}
				\item \cite[Proposition
                                  2.2(2)]{Ziegler91} Let \(L\) be a
                                  geometric lattice and \(F \in L\) be
                                  a modular flat. If \(G\) and \(H\)
                                  are flats in \(L\) with \(G \leq
                                  H\), then \[G \vee (F \wedge H)=(G
                                  \vee F) \wedge H\] is a modular flat
                                  of \([G, H]\).
				
				\item \label{item:propmodflat2}
                                  \cite[Proposition 2.3]{Ziegler91}
                                  Let \(\underline{\mathcal{M}}\) be a
                                  matroid on a ground set
                                  $\mathcal{G}$. Then
				\begin{itemize}
					\item $\emptyset$, $\mathcal{G}$ and all atoms are modular flats.
					\item  The meet of modular flats is always modular. 
				\end{itemize}
			\end{enumerate}
		\end{proposition}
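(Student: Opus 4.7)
The plan is to prove both parts by leveraging the three equivalent characterizations of modularity from Proposition~\ref{defmodflat}, using characterizations (2) and (3) to push joins and meets through modular flats.

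For part (1), the equation $G \vee (F \wedge H) = (G \vee F) \wedge H$ is immediate from characterization (2), since $G \leq H$ and $F$ is modular. Call this common value $F'$. To show $F'$ is modular in the interval $[G, H]$, I would fix any flat $K \in [G, H]$ and verify the rank identity for the pair $(F', K)$. Using $G \leq K \leq H$, one can rewrite $F' \vee K = K \vee (F \wedge H)$ and $F' \wedge K = (G \vee F) \wedge K$, and then combine instances of modularity of $F$ in $L$ (pairing $F$ with each of $G$, $H$, and $K$) to obtain the identity. The rank function of $[G, H]$ differs from that of $L$ by the additive constant $\rk(G)$, so a rank identity in $L$ transfers to the interval.

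For part (2), the extremes are immediate: for any flat $X$, $\emptyset \wedge X = \emptyset$ and $\emptyset \vee X = X$, so the rank identity collapses (and dually for $\mathcal{G}$). For an atom $a$, either $a \leq X$ (and the identity is trivial) or $a \not\leq X$, in which case $a \wedge X = \emptyset$, and the semimodularity of a geometric atomistic lattice forces $a \vee X$ to cover $X$, yielding $\rk(a \vee X) = \rk(X) + 1 = \rk(a) + \rk(X)$.

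The meet claim is the substantive case. Suppose $F_1$ and $F_2$ are modular, set $F = F_1 \wedge F_2$, and verify characterization (3): for any $H \leq F$ and any $G \in L$,
\[
H \vee (G \wedge F) = H \vee ((G \wedge F_2) \wedge F_1) = (H \vee (G \wedge F_2)) \wedge F_1,
\]
using modularity of $F_1$ (with $H \leq F \leq F_1$). Then modularity of $F_2$ (with $H \leq F \leq F_2$) rewrites $H \vee (G \wedge F_2) = (H \vee G) \wedge F_2$, and substituting gives $(H \vee G) \wedge F_2 \wedge F_1 = (H \vee G) \wedge F$. The main subtlety is that modularity is not symmetric in its two arguments, so at each step one must check that the element playing the role of $H$ in characterization (3) lies under the particular modular flat being used; this is ensured in both applications by the hypothesis $H \leq F$ together with $F \leq F_i$.
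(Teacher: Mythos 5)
The paper gives no proof, citing both parts to \cite{Ziegler91}, so your argument is filling in that reference. It is correct. Part~(2) is clean: the two applications of characterization~(3) of Proposition~\ref{defmodflat} (first for $F_1$, then for $F_2$) go through precisely because $H \leq F_1 \wedge F_2$ places $H$ below each modular flat, and the $\emptyset$, $\mathcal{G}$, and atom cases are direct rank checks as you describe. For part~(1), the sketched plan does succeed, and the ``combining'' can be made precise via the intermediate identity
\[
\rk\bigl((X \vee F) \wedge Y\bigr) = \rk(X) + \rk(F \wedge Y) - \rk(F \wedge X) \qquad (X \leq Y,\; F \text{ modular}),
\]
which follows from the modular pair identities of $F$ with $X$ and with $Y$ (equivalently, from the rank-preserving bijection $[F \wedge Y, Y] \to [F, F \vee Y]$ that modularity provides). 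Applying it with $(X,Y)$ equal to $(K,H)$, $(G,H)$, and $(G,K)$ gives $\rk(F' \vee K)$, $\rk(F')$, and $\rk(F' \wedge K)$ respectively (using the rewritings $F' \vee K = (K \vee F) \wedge H$ and $F' \wedge K = (G \vee F) \wedge K$ you note), and the three expressions combine to $\rk(F' \vee K) + \rk(F' \wedge K) = \rk(F') + \rk(K)$. Since rank in $[G,H]$ is rank in $L$ minus the constant $\rk(G)$, the shift cancels on both sides, as you observe, so $F'$ is modular in $[G,H]$.
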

		
		\begin{remark} \label{rmkmodflat}
			Let \(F\) be a modular flat in the lattice of flats of a matroid $\underline{\mathcal{M}}$ and \(F'\) be any other flat.
			\begin{enumerate}
				\item Then taking \(G=\emptyset\), and \(H=F'\) in Proposition \ref{propmodflat}(1), we see \(F' \meet F \) is a modular flat of $[\emptyset,F']$.
				
				\item Taking \(G=F'\), and
                                  \(H=\mathcal{G}\) in Proposition
                                  \ref{propmodflat}(1), we see
                                  \(E^i(j) \join F\) is a modular flat
                                  of $[F',\mathcal G]$.
			\end{enumerate}
		\end{remark}

\section{Matroids of toric vector bundles}

\label{s:matroid}

In this section we discuss matroids associated to toric vector bundles in the spirit of
\cite{DJS}.

\subsection{The polymatroid of a toric vector bundle}
Let $X_{\Sigma}$ be an $n$-dimensional toric variety.
A toric vector bundle $\mathcal E$ of rank \(r\) on a toric variety
$X_{\Sigma}$ is described by the following data, due to Klyachko
(\cite{Kly}): For each ray of the fan we associate a descending
filtration $E^i(j) \supseteq E^i(j+1)$ of a vector space $E \cong K^r$, with $E^i(j) = E$ for $j \ll 0$ and $E^i(j) = \{\mathbf{0}\}$ for $j \gg 0$.    These
filtrations need to be {\em compatible} in the sense that for every
cone $\sigma \in \Sigma$, we can write $E =
\bigoplus\limits_{\mathbf{u} \in M} E^{\sigma}_{\mathbf{u}}$
so that $E^i(j) = \sum_{\mathbf{u} \cdot \mathbf{v}_i \geq j }
E^{\sigma}_{\mathbf{u}}$ when $i \in \sigma$.  When $X_{\Sigma}$ is
smooth, so $\sigma$ is generated by part of a basis for $N$, this is
equivalent to requiring that there is a basis for $E$ such that each
$E^i(j)$ with $\rho_i \in \sigma$ is spanned by part of the
basis (cf. \cite[Remark 2.2.2]{Kly}, \cite[Page 76-77]{KD}).
	
If the filtrations do not satisfy these compatibility requirements we get a
toric reflexive sheaf, instead of a toric vector bundle; see \cite{PerT}*{\S
  4.5}.

The subspaces $E^i(j)$ induce a {\em polymatroid} as follows.  Let
$\mathcal H$ be the set of all intersections $\cap_i
 E^i(j_i)$ 
of the
$E^i(j)$
not equal to $\{\mathbf{0}\}$ or $E$. 
We define the
rank function $\rk \colon 2^{\mathcal H} \rightarrow \mathbb N$ by
\begin{equation} \label{eqtn:polymatroidrank} \rk(A) = \dim \left(\sum_{V \in A} V \right).
  \end{equation}
This satisfies the polymatroid axioms:
\begin{enumerate}
\item $\rk(\emptyset)=0$,
\item $\rk(A) \leq \rk(B)$ when $A \subseteq B$, and
  \item $\rk(A \cap B) + \rk(A \cup B) \leq \rk(A) + \rk(B)$ for all $A, B \subseteq \mathcal H$.
\end{enumerate}

\begin{definition} \label{d:polymatroid}
  The {\em polymatroid} of a toric vector bundle $\mathcal E$ on a
  toric variety $X_{\Sigma}$ is the polymatroid on the set $\mathcal
  H$ of all intersections of the $E^i(j)$ not equal to $\{\mathbf{0}\}$ or $E$ with rank function given by \eqref{eqtn:polymatroidrank}.
\end{definition}

A flat of this polymatroid is a set $A \subseteq \mathcal H$ that is
maximal among sets of its rank.  We can identify each intersection of
$E^i(j) \in \mathcal H$ with the smallest flat containing it.  The
poset of all flats under inclusion is a lattice, with meet given by
intersection.  This poset can be identified with the lattice
$L(\mathcal E)$ whose elements are all spans of intersections of
subsets of the $E^i(j)$, ordered by inclusion.

Note that for each $1 \leq i \leq s$ the $E^i(j)$ for all $j \in
\mathbb N$ form a chain in this lattice.

\begin{example} \label{e:trickypolymatroid}
Write $\rho_0=\cone(-1,-1)$, $\rho_1 = \cone(1,0)$, and
$\rho_2=\cone(0,1)$ for the rays of the fan of $\mathbb P^2$ as a toric variety.  Let
$\mathcal E$ be the toric vector bundle on $\mathbb P^2$ given by the
following filtrations on $E = K^3$ with basis
$\mathbf{e}_1,\mathbf{e}_2,\mathbf{e}_3$:{\small
$$E^0(j) = \begin{cases} E & j\leq 0
    \\ \spann(\mathbf{e}_1,\mathbf{e}_2 ) & j =1 \\ 0 & j \geq 2 \\
		\end{cases}, \, 
		E^1(j) = \begin{cases} E &  j\leq 0 \\
			\spann(\mathbf{e}_1+ \mathbf{e}_3 ) &  j =1 \\
			0  &  j \geq 2 \\
		\end{cases},  \, 
		E^2(j) = \begin{cases} E &  j\leq 0 \\
			\spann(\mathbf{e}_3 ) &  j =1 \\
			0  &  j \geq 2 \\
		\end{cases}.$$}
		
We have $\mathcal H = \{ a_1= \spann(\mathbf{e}_1, \mathbf{e}_2),
a_2=\spann(\mathbf{e}_1+\mathbf{e}_3), a_3
=\spann(\mathbf{e}_3) \}$.
The function $\rk \colon 2^{\mathcal H} \rightarrow \mathbb N$ is:
\begin{align*} & \rk(\{a_1,a_2,a_3\}) = \rk(\{a_1,a_2\}) = \rk(\{a_1,a_3\}) = 3 \\
  &   \rk(\{a_1 \}) = \rk(\{a_2,a_3\}) = 2\\
  & \rk( \{a_2\} ) = \rk( \{a_3 \})  = 1.\\
\end{align*}  
The sets $\{a_1,a_2,a_3\}, \{a_2, a_3\}, \{a_1\}, \{a_2\}, \{a_3\}$ are flats.  Figure~\ref{f:latticetricky} shows the poset of intersections of the $E^i(j)$, and the lattice of flats is shown in Figure~\ref{f:latticetricky2}.

\begin{figure}
\centering
\begin{minipage}{.4\textwidth} 
\centering
\begin{tikzpicture}
					[scale=.6,auto=left]
					\node (n0) at (3,0) {$\emptyset$};
					\node (n1) at (-0.6,3)  {\({\scriptsize\spann(e_1, e_2)}\)};
					\node (n3) at (3,2) {\({\scriptsize\spann(e_1+ e_3)}\)};
					\node (n4) at (6.5,2)  {\({\scriptsize\spann(e_3)}\)};
					\node  (n12) at (3, 6) {$\mathcal G$};
					\foreach \from/ \to in {n4/n0,n3/n0,n1/n0, n12/n1, n12/n3, n12/n4}
					\draw (\from) -- (\to);
				\end{tikzpicture}
				\captionof{figure}{ }
				\label{f:latticetricky}
			\end{minipage}%
			\begin{minipage}{.4\textwidth}
				\centering
					\begin{tikzpicture}
					[scale=.6,auto=left]
					\node (n0) at (3,0) {$\emptyset$};
					\node (n1) at (-.15,4)  {\({\scriptsize\spann(e_1, e_2)}\)};
					\node (n3) at (4.7,2) {\({\scriptsize\spann(e_1+ e_3)}\)};
					\node (n4) at (8.4,2)  {\({\scriptsize\spann(e_3)}\)};
					\node (n6) at (6.5,4) {\({\scriptsize\spann(e_1, e_3)}\)};
					\node  (n12) at (3, 6) {$\mathcal G$};
					\foreach \from/ \to in {n4/n0,n3/n0,n1/n0, n12/n1, n12/n6, n6/n3, n6/n4}
					\draw (\from) -- (\to);
				\end{tikzpicture}
				\captionof{figure}{ }
				\label{f:latticetricky2}
			\end{minipage}
		\end{figure}
\end{example}

\subsection{Matroids of a toric vector bundle}

In \cite{DJS} the authors associate a representable matroid
$\underline{\mathcal M}(\mathcal E)$ to a toric vector bundle
$\mathcal E$.  The key property of this matroid is that all
intersections of the subspaces $E^i(j)$ occur as flats of this
matroid.

Let $L'(\mathcal E)$ be the poset of all intersections $\cap_i
E^i(j_i)$, ordered by inclusion.  This is isomorphic to a subposet of
the lattice of flats of the polymatroid of $\mathcal E$ of
Definition~\ref{d:polymatroid}.  It is well known that every
polymatroid rank function is induced from that of a matroid.  This is
a special case of the fact that every lattice embeds into a geometric
lattice.  We now recall this, in a version that is relevant for this
paper.

  A function $\phi \colon L \rightarrow L'$
is an embedding of lattices if $\phi$ is an injection, $\phi(a \join
b) = \phi(a) \join \phi(b)$, and $\phi(a \meet b) = \phi(a) \meet
\phi(b)$.  Note that this implies that $a < b$ if and only if $\phi(a)
< \phi(b)$, as $a <b$ if and only if $a \join b = b$.
The original observation that lattices can be embedded into geometric
lattices goes back to Dilworth \cite{CrawleyDilworth}*{Chapter
  14}. See Remark~\ref{r:matroidliterature} for more references.

Recall that given two matroids $\underline{\mathcal M}_1,
\underline{\mathcal M}_2$ on a ground set $\mathcal G$ we say that
$\underline{\mathcal M}_1 < \underline{\mathcal M}_2$ in the weak
order if every basis of $\underline{\mathcal M}_1$ is a basis of
$\underline{\mathcal M}_2$.

\begin{theorem} \label{t:matroidoflattice}
Let $L$ be a lattice with $\hat{0}$ and $\hat{1}$ and a function $r \colon L
\rightarrow \mathbb N$ that satisfies
\begin{enumerate}
	\item (normalization) $r(\hat{0})=0$;
	\item (increasing) If $x < y$ then $r(x)<r(y)$;
	\item (semimodularity) For any $x,y \in L$, we have  $r(x \meet y) + r(x \join y) \leq r(x) + r(y)$.
\end{enumerate}
Then there is a matroid $\underline{ \mathcal M}$ for which $L$ embeds into the
lattice of flats $\mathcal L(\underline{\mathcal M})$ of $\underline{\mathcal M}$ as a
lattice, with $\rk(x)= r(x)$ for all $x \in L$.
		
There is a unique choice of such
matroid $\underline{\mathcal M}$ for which
\begin{enumerate}
	\item the ground set of $\underline{\mathcal M}$ has minimal size, and
	\item for any other matroid $\underline{\mathcal M}'$ whose lattice of flats
          contains $L$ as a meet-semilattice with the same ground set, we have
          $\underline{\mathcal M}'<\underline{\mathcal M}$ in the weak order.
\end{enumerate}

This matroid $\underline{\mathcal M}$ has no loops or parallel elements.
\end{theorem}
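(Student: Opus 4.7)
The plan is to prove existence by explicit construction, then deduce uniqueness and weak-order maximality from universal properties of the construction. The construction is inductive: process the elements of $L$ in a linear extension of the poset order, and for each $x \in L$ with $x > \hat{0}$ introduce a set $S_x$ of $r(x) - r(x^*)$ fresh symbols, where $x^* = \bigvee\{y \in L : y \lessdot x\}$ is the join of elements covered by $x$. The ground set is $\mathcal G = \bigsqcup_{x > \hat 0} S_x$, and the candidate embedding is $\phi \colon L \to 2^{\mathcal G}$ defined by $\phi(x) = \bigcup_{y \leq x} S_y$. A telescoping count along saturated chains in $L$, using semimodularity of $r$, shows $|\phi(x)| \geq r(x)$, with the correct value arising as the rank of $\phi(x)$ in the matroid defined below.

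Next I would endow $\mathcal G$ with a matroid structure by declaring each $\phi(x)$ to be a flat of rank $r(x)$ and extending to the minimal closure operator consistent with this. For $A \subseteq \mathcal G$, the matroid closure of $A$ is $\phi(x_A)$, where $x_A = \bigvee\{y \in L : S_y \cap A \neq \emptyset\}$ is the smallest element of $L$ with $A \subseteq \phi(x_A)$, and $\rk_{\underline{\mathcal M}}(A)$ is made as large as possible subject to $\rk_{\underline{\mathcal M}}(\phi(x)) = r(x)$ for all $x \in L$. Meet preservation $\phi(x \meet y) = \phi(x) \cap \phi(y) = \bigcup_{z \leq x \meet y} S_z$ is immediate from the construction. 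Join preservation reduces to $x_{\phi(x) \cup \phi(y)} = x \join y$, which holds by the lattice property. Semimodularity of $r$ enters crucially in showing that the induced rank function on $2^{\mathcal G}$ is submodular (and hence a matroid rank function) rather than only satisfying polymatroid axioms.

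For uniqueness and minimality, let $\underline{\mathcal M}'$ be any matroid with $L$ embedded as a meet-semilattice in $\mathcal L(\underline{\mathcal M}')$ with matching ranks, and let $\phi'$ denote the corresponding embedding. For each $x \in L$ with $x > \hat 0$, at least $r(x) - r(x^*)$ atoms of $\underline{\mathcal M}'$ lie in $\phi'(x) \setminus \phi'(x^*)$, and a short argument shows that atoms needed for different $x$ must be distinct: if an atom lies in both $\phi'(x_1) \setminus \phi'(x_1^*)$ and $\phi'(x_2) \setminus \phi'(x_2^*)$, then it lies in $\phi'(x_1 \meet x_2)$, and $x_1 \meet x_2 < x_1$ would force it into $\phi'(x_1^*)$, a contradiction. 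Summing therefore gives $|\mathcal G'| \geq \sum_{x > \hat 0}(r(x) - r(x^*)) = |\mathcal G|$. Weak-order maximality of our $\underline{\mathcal M}$ holds because we have declared every subset to be independent unless forced to be dependent by the flat structure; any other matroid on the same ground set respecting $\phi$ has a subset of our bases. The absence of loops and parallel elements follows directly from the construction: each fresh symbol is itself an atom, and any two distinct fresh symbols lie in different minimal flats among the $\phi(y)$.

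The main obstacle will be rigorously verifying that the closure operator above actually satisfies the matroid exchange axiom, with resulting rank matching $r$ on $\phi(L)$. This requires a careful case analysis leveraging both the semimodularity of $r$ and the fact that $L$ is a lattice (not just a meet-semilattice), so that joins of arbitrary pairs exist and interact submodularly with $r$ to control rank increases along the chains used to define $\rk_{\underline{\mathcal M}}$.
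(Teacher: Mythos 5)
Your proposal takes a genuinely different route from the paper. The paper proves existence by citing Sims' theorem (which gives a matroid realizing $L$ as its lattice of cyclic flats, with rank function $\rk(S)=\min_{x\in L}(r(x)+|S\setminus\phi(x)|)$) and then obtains the minimal matroid by a careful \emph{deletion} argument inside Sims' matroid, using relative complements to cut the ground set down to size $\sum_{j\in\mathcal J}(r(j)-r(s_j))$. You instead try to build the minimal matroid abstractly in one step, choosing fresh symbols directly. Your ground-set count $\sum_{x>\hat 0}(r(x)-r(x^*))$ agrees with the paper's $\sum_{j\in\mathcal J}(r(j)-r(s_j))$: for a join-irreducible $j$ with unique lower cover $y$, every join-irreducible $j'<j$ satisfies $j'\leq y$, so $s_j=y=j^*$; for non-join-irreducibles $x$ one has $x^*=x$, contributing $0$. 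Your atom-counting disjointness argument for the lower bound and your weak-order maximality argument are essentially the paper's, and are fine.

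That said, there are two substantive problems. First, a concrete error: you describe the closure operator as $A\mapsto\phi(x_A)$ \emph{and} describe the rank function as ``as large as possible subject to $\rk(\phi(x))=r(x)$.'' These are incompatible. The operator $A\mapsto\phi(x_A)$ has image exactly $\{\phi(x):x\in L\}$, so the resulting ``matroid'' would have only those flats, with rank equal to height in $L$ — for example on a two-element chain $\hat 0<a$ with $r(a)=3$ you would get $\mathcal G=S_a$ of size $3$, $\cl(\{s_1\})=\phi(a)=\mathcal G$, and rank $1$ rather than $3$. The correct object you want is the Dilworth-truncation rank function $\rho(A)=\min_{x\in L}(r(x)+|A\setminus\phi(x)|)$, whose closure is \emph{not} $\phi(x_A)$ in general (most independent sets below full rank are already flats in this freest matroid). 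You should drop the explicit closure formula and work only with the rank function.

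Second, the heart of the existence proof — that $\rho$ is a matroid rank function and that $\rho(\phi(x))=r(x)$ — is deferred. You explicitly flag this as ``the main obstacle,'' but it is not a peripheral verification; it is precisely what the paper delegates to Sims' theorem. The submodularity of $\rho$ does follow from a set-theoretic inequality plus semimodularity of $r$ and the facts $\phi(x\meet y)=\phi(x)\cap\phi(y)$, $\phi(x\join y)\supseteq\phi(x)\cup\phi(y)$; that part is routine. The harder part is $\rho(\phi(x))=r(x)$, which reduces to showing $|\phi(x)|-|\phi(y)|\geq r(x)-r(y)$ whenever $y\leq x$, i.e.\ $\sum_{j\in\mathcal J,\,j\leq x,\,j\not\leq y}(r(j)-r(j^*))\geq r(x)-r(y)$. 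Your ``telescoping along saturated chains'' idea reduces this to a per-cover inequality, but that inequality is not immediate: when a single join-irreducible $j$ enters at a cover $z\lessdot z'$ one needs $y\meet j=j^*$ (which follows because all $j'\in\mathcal J$ with $j'<j$ are then $\leq z$), and when several enter, a careful induction is required. This needs to be written out, as it is exactly where semimodularity does its work, and a wrong ground-set size would make the whole construction collapse.

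A minor point: your reason for ``no parallel elements'' (``any two distinct fresh symbols lie in different minimal flats among the $\phi(y)$'') is false for two symbols in the same $S_j$ when $|S_j|\geq 2$; both lie in the same minimal $\phi(j)$. The conclusion is still right, but it should be derived from the rank function: $\rho(\{s,s'\})=2$ because for $s,s'\in S_j$ one has $\{s,s'\}\subseteq\phi(x)$ iff $j\leq x$, whence $r(x)\geq r(j)\geq 2$, while for $x\not\geq j$ the cardinality term contributes at least $2$.
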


\begin{proof}
The main theorem of \cite{Sims} shows that there is a matroid
$\underline{\mathcal M}$ for which $L$ is isomorphic to the lattice of {\em
  cyclic} flats of $\underline{\mathcal M}$.  These are flats (also called ``fully
dependent'') that are unions of circuits.  In addition, $\rk_{\mathcal
  M}(x) = r(x)$ for all $x \in L$.  This shows the existence claim of the theorem.

We write $\mathcal G$ for the ground set of $\underline{\mathcal M}$ and $\phi(x)$
for the flat of $\underline{\mathcal M}$ corresponding to $x \in L$.  In the proof
of \cite{Sims}*{Theorem 1}, Step C shows that the matroid $\underline{\mathcal M}$
constructed has rank function given by $\rk(S) = \min_{x \in L} (r(x)
+ |S \setminus \phi(x)|)$ for all $S \subseteq \mathcal G$.  If $B \subseteq \mathcal G$ satisfies
\begin{equation} \label{eqtn:simsindependent} |B
\cap \phi(x)| \leq r(x) \text{ for all } x \in L,\end{equation} then $\rk(B) = r(y) + |B
\setminus \phi(y)|$ for some $y \in L$, and thus $\rk(B) = r(y) + |B|
- | B \cap \phi(y)| \geq r(y) + |B| - r(y) = |B|$, so all such $B$ are
independent in Sim's matroid $\underline{\mathcal M}$.

To show that there is a minimal choice for $\underline{\mathcal M}$, let $\underline{\mathcal
M}$ be any matroid satisfying the conditions of the theorem.  We now
construct a subset $\mathcal G'$ of the ground set $\mathcal G$ of
$\underline{\mathcal M}$ for which $L$ is still  isomorphic to a sublattice of
the lattice of flats of $\underline{\mathcal M}|_{\mathcal G'}$.  Let $\mathcal J$
be the poset of join irreducibles (elements that are not the join of other subsets of $L$) in $L$.  For $j \in \mathcal J$, set
$s_j = \join_{j' < j, j' \in \mathcal J} j'$.  The set $\mathcal G'$ we
construct will have size $\sum_{j \in \mathcal J} r(j)-r(s_j)$.
Since this will show that every choice of matroid $\underline{\mathcal M}$
satisfying the conditions of the theorem has a subset $\mathcal G'$ of
this size, we can conclude that this is the minimal possible size.

The construction is by induction on $\mathcal J$.  For each minimal
element $j$ of $\mathcal J$, choose a maximal independent subset of
$\phi(j)$, and set $\mathcal G'$ initially to be the union of all such
subsets.  For $j \in \mathcal J$ for which all smaller $j' \in
\mathcal J$ have been considered, choose a complement $W_j \in
\mathcal L(\underline{\mathcal M})$ to $\phi(j)$ with respect to $\phi(s_j)$.
This has the property that $W_j \join \phi(s_j) = \phi(j)$, while $W_j \meet
\phi(s_j) = \hat{0}$; it exists by \cite[Proposition 3.4.4]{White}.
Add a maximal independent subset $\mathcal G'_j$ of $W_j$ to $\mathcal G'$.  We
claim that when this procedure is finished, with all $j \in \mathcal
J$ considered, $L$ is a sublattice of the lattice of flats of
$\underline{\mathcal M}|_{\mathcal G'}$.  Indeed, for $x \in L$ set $\psi(x) =
\phi(x) \cap \mathcal G'$.  Then $\psi(x)$ is a flat of $\mathcal
M|_{\mathcal G'}$, since if there was a circuit in $\underline{\mathcal M}|_{\mathcal G'}$
with exactly one element not in $\psi(x)$, this would be a circuit
with exactly one element not in the flat $\phi(x)$ of $\underline{\mathcal M}$, which is a
contradiction.  In addition, if $y<x$ then $\psi(y) \subsetneq
\psi(x)$.  Note also that this construction removes any loops or parallel elements.

Finally, we note that $\rk(\psi(x)) = r(x)$ for all $x \in L$.  The
proof is by induction on $L$.  When $x \in L$ is minimal, it is a
minimal element of the poset $\mathcal J$ as well, so the rank is
correct by construction.  Suppose now that the claim is true for all
$y<x$.  If $x \in \mathcal J$, then by induction we have $\rk
\psi(s_x) = r(s_x)$.  During the construction of $\mathcal G'$ we add
a set $\mathcal G'_x$ of size $r(x)-r(s_x)$ to $\mathcal G'$.  This
has the property that the closure in $\underline{\mathcal M}$ of $\psi(s_x) \cup
\mathcal G'_x$, which is also the closure of $\phi(s_x) \cup \mathcal
G'_x$, is $\phi(s_x) \join W_x = \phi(x)$, which has rank $r(x)$, so
the union of a maximal independent subset of $\psi(s_j)$ and $\mathcal
G'_j$ is independent, and thus $\rk(\psi(x)) = r(x)$.  If $x \not
\in\mathcal J$, write $x = j_1\join \cdots \join j_{\ell}$ for $j_i
\in \mathcal J$ less than $x$.  By induction $\rk(\psi(j_i)) =
r(j_i)$, and thus the closure in $\underline{\mathcal M}$ of $\psi(j_i)$ is
$\phi(j_i)$ for $1 \leq i \leq \ell$.  Thus the closure in $\underline{\mathcal
M}$ of $\cup_{i=1}^{\ell} \psi(j_i)$ equals the closure of
$\cup_{i=1}^{\ell} \phi(j_i)$, which is $\phi(j_1) \join \cdots \join
\phi(j_\ell) =\phi(j_1 \join \cdots \join j_{\ell}) = \phi(x)$.  This
shows that $\rk(\cup_{i=1}^{\ell} \psi(j_i)) = \rk(\phi(x)) = r(x)$.
As $\rk(\cup_{i=1}^{\ell} \psi(j_i)) \leq \rk(\psi(x)) \leq \rk(\phi(x))$, we conclude that $\rk(\psi(x)) = r(x)$ as required.

Note that when this minimalization procedure is applied to Sim's
matroid $\underline{\mathcal M}$, a set $B \subseteq \mathcal G'$ is still
independent whenever the condition \eqref{eqtn:simsindependent}
holds.  For any matroid satisfying the conditions of the theorem with
ground set $\mathcal G$, and $x \in L$, if a set $B \subseteq \mathcal
G$ is independent then we must have $|B \cap \phi(x)| \leq r(x)$.  In
particular, if $\mathcal G = \mathcal G'$, then every basis of the
matroid is a basis of the minimalization of $\underline{\mathcal M}$, and thus
this minimalization is the largest element in the weak order of
matroids of minimal ground set satisfying the conditions of the theorem.
\end{proof}

\begin{remark} \label{r:matroidliterature}
The first proof that all lattices can be embedded into lattices of
flats of matroids is attributed to earlier unpublished work of
Dilworth in \cite{CrawleyDilworth}*{Chapter 14}.  Sims' proof in
\cite{Sims} follows the general outline given in
\cite{CrawleyDilworth}, which is also present in the minimalization
part of our proof of Theorem~\ref{t:matroidoflattice}.  A different
short proof is given in \cite{BdeM08}.  The fact that there is a
unique choice of matroid with minimal ground set that is largest in the
weak order is essentially shown by Nguyen in \cite{White}*{Chapter
  10}, in the setting that the lattice $L$ is atomic (pointed), and
our proof mimics the one given there.  The case that the lattice is
the lattice of flats of a polymatroid was observed by Helgason
\cite{Helgason} (who calls polymatroids ``hypermatroids'').
Expositions are given in \cite{Lovasz} and \cite[\S12.1]{Oxley}.  See
also \cite{BergmanFanPolymatroid} for a tropical connection.  Note
that the matroid constructed in these latter cases is rarely the
minimal one of Theorem~\ref{t:matroidoflattice}.  See
Remark~\ref{r:matroidrealizable} for the realizable case.
\end{remark}
  
Theorem~\ref{t:matroidoflattice} allows us to associate a
        matroid to a toric vector bundle.
	
\begin{definition} \label{d:matroidoftvb}
Let $\mathcal E$ be a toric vector bundle on a toric variety
$X_{\Sigma}$, with associated filtrations $E^i(j)$.  Let $L'(\mathcal
E)$ be the set of all intersections $\cap_i E^i(j_i)$, ordered by
inclusion, and $L(\mathcal E)$ be the poset consisting of all spans of
subsets of $L'(\mathcal E)$.  Note that $E$ is the maximal element
$\hat{1}$ of this poset, and the empty span $\{\mathbf{0}\}$ is the
minimal element $\hat{0}$.  The poset $L(\mathcal E)$ is a
join-semilattice by construction (as the poset of all subspaces of $E$
is lattice with join equal to span), and thus is a lattice by
\cite{StanleyEC1}*{Proposition 3.3.1}.  Define $r \colon L(\mathcal E)
\rightarrow \mathbb N$ by $r(x) = \dim(x)$.  As join in $L(\mathcal
E)$ is span (join) of the two subspaces, and the meet is contained in
the intersection (which may not be in $L(\mathcal E)$ if the subspaces
are not in $L'(\mathcal E)$) the function $r$ is semimodular.  We say
that $\underline{\mathcal M}$ is a matroid for $\mathcal E$ if $L(\mathcal E)$ is
isomorphic to a join-sublattice of the lattice of flats of $\mathcal E$.
Such a matroid always exists by applying
Theorem~\ref{t:matroidoflattice} to $L(\mathcal E)$.  We say that
$\underline{\mathcal M}$ is a minimal matroid of $\mathcal E$ if the ground set of
$\underline{\mathcal M}$ has the minimal possible size for such a matroid.
\end{definition}

\begin{example} \label{e:trickymatroid}
  Let $\mathcal E$ be the toric vector bundle on $\mathbb P^2$ of
  Example~\ref{e:trickypolymatroid}.  The lattice $L'(\mathcal E)$ of
  Definition~\ref{d:matroidoftvb} is shown in
  Figure~\ref{f:latticetricky}.  The lattice $L(\mathcal E)$ is shown
  in Figure~\ref{f:latticetricky2}.

We give three examples of matroids for $\mathcal E$.

\begin{enumerate}
\item \label{item:e:trickymatroid0} The unique matroid
  $\underline{\mathcal M}_1$ constructed by Theorem~\ref{t:matroidoflattice} for
  $L(\mathcal E)$ with function $r$ given by dimension has ground set
  $\mathcal G = \{ e_0^1,e_0^2,e_1^1,e_2^2 \}$.  The bases of
  $\underline{\mathcal M}_1$ are all subsets of $\mathcal G$ of size $3$, so
  $\underline{\mathcal M}_1$ is the uniform matroid $U(3,4)$.  When constructing a realization of
  $\underline{\mathcal M}_1$, we choose $e_1^1$ to be a multiple of
  $\mathbf{e}_1+\mathbf{e}_3$, and $e_2^1$ to be a multiple of
  $\mathbf{e}_3$.  Since $\{e_0^i, e_1^1,e_2^1\}$ is independent for $i
  =1,2$, we must then choose $e_0^1,e_0^2$ outside
  $\spann(e_1^1,e_2^1)$, so not a multiple of $\mathbf{e}_1$.  One
  such choice is $e_0^1 = \mathbf{e}_1+\mathbf{e}_2$, and $e_0^2=
  \mathbf{e}_2$, so
$$\mathcal G = \{\mathbf{e}_1+\mathbf{e}_2, \mathbf{e}_2,
  \mathbf{e}_1+\mathbf{e}_3, \mathbf{e}_3 \}.$$
\item \label{item:e:trickymatroid} Another choice is the matroid
  $\underline{\mathcal M}_2$ with ground set $\mathcal G =\{ \mathbf{e}_1,
  \mathbf{e}_2, \mathbf{e}_1+\mathbf{e}_3, \mathbf{e}_3 \}$.  We have
  $\underline{\mathcal M}_2 \neq \underline{\mathcal M}_1$, as there is a
  circuit $\{ \mathbf{e}_1, \mathbf{e}_1+\mathbf{e}_3, \mathbf{e}_3
  \}$ of size three in $\underline{\mathcal M}_2$, so it is not the uniform
  matroid, which is the largest rank-$3$ matroid on $4$ elements in the weak order.
			
\item \label{item:e:trickymatroid1}  Let $\underline{\mathcal M}_3$ be the matroid
  with ground set $\mathcal G = \{ \mathbf{e}_1+\mathbf{e}_2,
  \mathbf{e}_1-\mathbf{e}_2, \mathbf{e}_2, \mathbf{e}_1+\mathbf{e}_3,
  \mathbf{e}_3, \mathbf{e}_1+\mathbf{e}_2+\mathbf{e}_3 \}$.  Then
  $L(\mathcal E)$ is identified with the flats
  $\{\spann(\mathbf{e}_1+\mathbf{e}_2,\mathbf{e}_1-\mathbf{e}_2,
  \mathbf{e}_2), \spann(\mathbf{e}_1+\mathbf{e}_3),
  \spann(\mathbf{e}_3), \{0\}, E \}$.
  
\end{enumerate}    
\end{example}

\begin{remark} \label{r:matroidrealizable}
When the lattice $L$ is a join-sublattice of the lattice of all
subspaces of $E$, meaning that the join in $L$ is the join in the
subspace lattice, but the meet in the subspace lattice might not lie
in $L$, then the matroid of Theorem~\ref{t:matroidoflattice} can be
chosen to be realizable.  This is shown in \cite{Ziegler}*{Theorem
  4.9}, and also by \cite{DJS}*{Algorithm 3.2}.
\end{remark}

Note that for each ray $\rho_i$ of $\Sigma$, each filtered piece
$E^i(j)$ is an element of the lattice of flats $\mathcal L(\underline{\mathcal
M})$ of any matroid for $\mathcal E$, and the set of all $E^i(j)$ as $j$
decreases is a chain in $\mathcal L(\underline{\mathcal M})$.

	\begin{remark}
		The matroid associated to a toric vector bundle was
                first introduced in \cite{DJS}.
                Theorem~\ref{t:matroidoflattice} differs from that
                presentation as follows.  Firstly, \cite{DJS} only
                requires an embedding of the meet-semilattice
                $L'(\mathcal E)$; however their embedding actually
                embeds $L(\mathcal E)$, and respects joins.  Adding
                extra elements to the ground set of the matroid in our
                sense does not change the join, but might change the
                meet.  Secondly, note that for the toric vector bundle
                of Example~\ref{e:trickymatroid}, the minimal possible
                size of the ground set is $4$ for an embedding of
                either $L(\mathcal E)$ as a join-semilattice, or
                $L'(\mathcal E)$ as a meet-semilattice.  However both
                $\underline{\mathcal M_1}$ and $\underline{\mathcal
                  M}_2$ have one circuit: $\{
                \mathbf{e}_1+\mathbf{e}_2, \mathbf{e}_2,
                \mathbf{e}_1+\mathbf{e}_3, \mathbf{e}_3 \}$ for
                $\underline{\mathcal M}_1$, and $\{ \mathbf{e}_1,
                \mathbf{e}_1+\mathbf{e}_3, \mathbf{e}_3 \}$ for
                $\underline{\mathcal M}_2$, so the claim in
                \cite{DJS}*{Proposition 3.1} that there is a unique
                matroid with the number of elements in the ground set
                being minimal, and the number of circuits minimal does
                not hold.
		
		The issue here is \cite{DJS}*{Algorithm 3.2} does not construct a
		unique matroid.  Given $L(\mathcal E)$, we initialize the $G$ from that
		algorithm to contain multiples of $\mathbf{e}_1+\mathbf{e}_3$ and
		$\mathbf{e}_3$.  The algorithm then considers
		$V=\spann(\mathbf{e}_1,\mathbf{e}_2)$.  The set $G'$ of elements of
		$G$ lying in $V$ is empty, so $\spann(G') =\{ 0 \}$.  The algorithm
		then tells us to append to $G$ a basis for a complementary subspace to
		$\{0\} = \spann(G')$ in $V$, which is a basis for $V$.  However the
		choice of basis at this stage affects the matroid.  The matroids
		$\underline{\mathcal M}_1$ and $\underline{\mathcal M}_2$ are the results of two different
		choices for a basis for $V$.
		
		Other than the fact that references to ``the matroid'' should be
		replaced by ``a matroid'', we are not aware that this issue
		substantively affects any results in \cite{DJS}.  In particular, as we
		note in Remark~\ref{r:DJSok}, the results about parliaments of
		polytopes (which are no longer unique) encoding global sections still
		hold.

                 As in Theorem~\ref{t:matroidoflattice} we can
                 reimpose uniqueness on the matroid by asking that it
                 be maximal in the weak order on matroids.  However
                 this may not always be geometrically desirable.  For
                 example, the toric vector bundle of
                 Example~\ref{e:trickymatroid} is a direct sum of a
                 rank-$2$ bundle and a line bundle.  This can seen by
                 noting that every subspace in the filtration is a
                 direct sum of a subspace in
                 $\spann(\mathbf{e}_1,\mathbf{e}_3)$ and one in
                 $\spann(\mathbf{e}_2)$.  However the matroid
                 $\underline{\mathcal M}_1$, which is the largest in
                 the weak order, is not a direct sum of matroids of
                 lower ranks, while the matroid $\underline{\mathcal
                   M}_2$, which is smaller in the weak order, is a
                 direct sum.  See Section~\ref{ss:directsum} for more
                 on direct sums in the context of this paper.  A theme
                 of the approach taken here is that the correct choice
                 of matroid depends on the property desired.  See
                 Remark~\ref{r:semistabletropicalize} for more on this in the context of
                 stability of tropical toric vector bundles. 
	\end{remark}

\section{Toric vector bundles as modules over the Cox ring}
\label{s:CoxModule}

In this section we give a description of a toric vector bundle as a module over the Cox ring of the base toric variety.  
Let $\mathcal E$ be a rank-$r$ toric vector bundle on toric variety
$X_{\Sigma}$, and let $\underline{\mathcal M}(\mathcal E)$ be a matroid for $\mathcal E$
in
the sense of Definition~\ref{d:matroidoftvb}, which has a realization
$\mathcal G \subset K^r$.  For each $\mathbf{w} \in \mathcal
G$, set $\mathbf{d}_{\mathbf{w}} \in \mathbb Z^{|\Sigma(1)|}$ to be the
vector with 
\begin{equation} \label{eqtn:dw}
(\mathbf{d}_{\mathbf{w}})_i = \max(j : \mathbf{w} \in E^i(j)).
\end{equation}

Note that the vectors $\mathbf{d}_{\mathbf{w}}$, together with the realization  $\mathcal G$, actually determine the filtrations $E^i(j)$.

\begin{lemma} \label{l:realizableEfromd}
Let $\mathcal E$ be a toric vector bundle on $X_{\Sigma}$ given by
filtrations $\left( E, \{ E^i(j) \}\right) $.  Then
$$E^i(j) = \spann(\mathbf{w} \in \mathcal G : (\mathbf{d}_{\mathbf{w}})_i \geq j).$$
\end{lemma}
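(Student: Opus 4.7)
The plan is to unpack what it means for $\mathcal G \subset K^r$ to be a realization of a matroid $\underline{\mathcal M}(\mathcal E)$ for $\mathcal E$ in the sense of Definition~\ref{d:matroidoftvb}, and then to observe that both sides of the claimed equality describe the same subspace of $K^r$ via a rank comparison.

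First I would note that $E^i(j) \in L'(\mathcal E) \subseteq L(\mathcal E)$, and under the embedding $L(\mathcal E) \hookrightarrow \mathcal L(\underline{\mathcal M}(\mathcal E))$ furnished by Theorem~\ref{t:matroidoflattice}, the subspace $E^i(j)$ corresponds to a flat $F$ of $\underline{\mathcal M}(\mathcal E)$ whose matroid rank equals $\dim E^i(j)$, since the embedding preserves the rank function $r(x) = \dim x$ used in Definition~\ref{d:matroidoftvb}. In the realization $\mathcal G \subset K^r = E$, which by Remark~\ref{r:matroidrealizable} can be (and is) chosen compatibly with the existing subspace structure on $E$, a ground set element $\mathbf{w} \in \mathcal G$ belongs to the flat $F$ precisely when $\mathbf{w} \in E^i(j)$ viewed as a vector in $K^r$.

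Next I would set $V = \spann(\mathbf{w} \in \mathcal G : (\mathbf{d}_{\mathbf{w}})_i \geq j)$. The condition $(\mathbf{d}_{\mathbf{w}})_i \geq j$ is, by the definition \eqref{eqtn:dw}, equivalent to $\mathbf{w} \in E^i(j)$, so each generating vector lies in $E^i(j)$ and hence $V \subseteq E^i(j)$. For the reverse inclusion, $V$ is by construction the span of those $\mathbf{w} \in \mathcal G$ that realize the ground set elements of the flat $F$, so the definition of a vector realization of a matroid gives $\dim V = \rk_{\underline{\mathcal M}(\mathcal E)}(F)$. Combined with $\rk_{\underline{\mathcal M}(\mathcal E)}(F) = \dim E^i(j)$ from the previous paragraph, this yields $\dim V = \dim E^i(j)$, and since $V$ is a subspace of $E^i(j)$ of the same dimension we conclude $V = E^i(j)$.

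The main obstacle here is not the final dimension count but the bookkeeping needed to keep track of the three different incarnations of $E^i(j)$ -- as a subspace of $K^r$, as an element of the abstract lattice $L(\mathcal E)$, and as a flat of the matroid $\underline{\mathcal M}(\mathcal E)$ -- and the verification that the realization $\mathcal G \subset K^r$ appearing in the hypothesis is compatible with the Klyachko filtration data. Once that compatibility, which is built into the statement by realizing $\underline{\mathcal M}(\mathcal E)$ inside the same $K^r = E$ that carries the filtrations, is made explicit, the argument reduces to a single rank comparison.
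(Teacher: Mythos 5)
Your proof is correct. The paper disposes of the reverse inclusion more tersely: having observed that every $\mathbf{w}\in\mathcal G$ lying in $E^i(j)$ satisfies $(\mathbf{d}_{\mathbf{w}})_i\geq j$, it simply cites the atomicity of the lattice of flats to assert that $E^i(j)$, being a flat of $\underline{\mathcal M}(\mathcal E)$, is the span of the ground set elements it contains, and is therefore contained in the right-hand side. Your dimension count is a more explicit packaging of the same underlying fact: identifying $E^i(j)$ with a flat $F$, using the rank-preserving property of the embedding of Theorem~\ref{t:matroidoflattice} to get $\rk(F)=\dim E^i(j)$, observing $\dim V=\rk(F)$ from the definition of rank in a realized matroid, and combining with $V\subseteq E^i(j)$. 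Both hinge on the same compatibility between the realization $\mathcal G\subset K^r=E$ and the Klyachko filtration data (that the ground set elements lying in $E^i(j)$ are exactly the atoms of the flat $F$, as your middle paragraph spells out), so the two proofs are essentially equivalent; yours makes the rank bookkeeping transparent where the paper invokes atomicity as a one-liner.
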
  

\begin{proof}
By definition, if $(\mathbf{d}_{\mathbf{w}})_i \geq j$, we have
  $\mathbf{w} \in E^i(j)$, so the right-hand side is contained in
  $E^i(j)$.  For the converse, suppose $\mathbf{w} \in \mathcal G$
  lies in $E^i(j)$.  This implies that $(\mathbf{d}_{\mathbf{w}})_i
  \geq j$, so $\mathbf{w}$ is in the right-hand side.  As $E^i(j)$ is a flat in the atomic lattice of flats of the matroid $\underline{\mathcal M}(\mathcal E)$, it is 
  the span of those $\mathbf{w} \in \mathcal G$ it contains, so it
  follows that $E^i(j)$ is contained in the right-hand side.
\end{proof}

\begin{remark} \label{r:KMconnection}
In \cite{KavehManonOlder}, Kaveh and Manon give an equivalent
description of toric vector bundles in terms of piecewise linear
functions, and introduce the {\em diagram} of a toric vector bundle.  This the $|\Sigma(1)| \times |\mathcal G|$ matrix with $(i,\mathbf{w})$th entry $(\mathbf{d}_{\mathbf{w}})_i$.
\end{remark}

Write $G$ for the $r \times m$ matrix with columns the
vectors in $\mathcal G$, and $\mathbf{w}_i$ for the $i$th column of
$G$.
For a collection of vectors $A=\{\mathbf{a}_1,\dots,\mathbf{a}_l\}
\subset \mathbb Z^{|\Sigma(1)|}$, we write $\min A$ for the vector
with $(\min A)_k = \min_j((\mathbf{a}_j)_k)$.
We write $V$ for the $|\Sigma(1)| \times n$ matrix with rows the first
lattice points on the rays of $\Sigma$.

We assume for now that $X_{\Sigma}$ is smooth, so all Weil divisors
are Cartier.  Recall that Cox \cite{CoxHomogeneous} showed that in
this setting there is a correspondence between coherent sheaves on
$X_{\Sigma}$ and class-group-graded modules over a polynomial ring,
now called the Cox ring, with one variable for each ray of the fan
$\Sigma$.  This generalizes the analogous correspondence for
projective space.  When the sheaf is torus equivariant, the module has
a finer $\mathbb Z^{|\Sigma(1)|}$-grading coming from the surjection
$\mathbb Z^{|\Sigma(1)|} \rightarrow A^1(\Sigma)$ of \eqref{eqtn:A1}
(see \cite[Section 4.8]{PerT}).

We write $x_1,\dots,x_s$ for the variables of the Cox ring of $X_{\Sigma}$, and $t_1,\dots,t_n$ for the coordinates on the torus of $X_{\Sigma}$.

\begin{proposition} \label{p:CoxModuleTVB}
  Let $S=K[x_1,\dots,x_s]$ be the Cox ring of a smooth toric variety $X_{\Sigma}$, let
  $\mathcal E$ be a toric vector bundle on $X_{\Sigma}$, and let
  $\mathcal G \subseteq K^r$ be the  ground set of a matroid for
  $\mathcal E$ in the sense of Definition~\ref{d:matroidoftvb}.

For $\mathbf{c} \in \ker(G)$, and $1 \leq i \leq m$, set $\mathbf{u}_i =
  \mathbf{d}_{\mathbf{w}_i} - \min(\mathbf{d}_{\mathbf{w}_l} : c_l
  \neq 0)$.
  Then $\mathcal E$ is the sheafification
  $\widetilde{P}$ of the $\mathbb Z^{|\Sigma(1)|}$-graded module $P$
  given by
  $$P = \bigoplus_{\mathbf{w} \in \mathcal G} S(\mathbf{d}_{\mathbf{w}})/ R,$$ where $R$ is
  the submodule of $\oplus_{\mathbf{w} \in \mathcal G} S(\mathbf{d}_{\mathbf{w}})$ generated by
  $$\left\{\sum_{i=1}^m c_i \mathbf{x}^{\mathbf{u}_i}
  \mathbf{e}_i : (c_1,\dots,c_m) \in \ker(G) \right\}.$$
  \end{proposition}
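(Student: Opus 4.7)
The plan is to invoke the Cox correspondence between equivariant coherent sheaves on the smooth toric variety $X_{\Sigma}$ and $\mathbb{Z}^{|\Sigma(1)|}$-graded $S$-modules (\cites{CoxHomogeneous, PerT}). Since the free module $F = \bigoplus_{\mathbf{w} \in \mathcal{G}} S(\mathbf{d}_{\mathbf{w}})$ sheafifies to $\bigoplus_{\mathbf{w}} \mathcal{O}(D_{\mathbf{w}})$ with $D_{\mathbf{w}} = \sum_i (\mathbf{d}_{\mathbf{w}})_i D_i$, it suffices to produce an equivariant surjection $F \twoheadrightarrow \mathcal{E}$ and identify its kernel Cox module as the submodule $R$ in the statement.

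First I would construct the surjection. Klyachko's classification says that an equivariant morphism $\mathcal{O}(D_{\mathbf{w}}) \to \mathcal{E}$ corresponds to a vector of $E$ lying in $E^i((\mathbf{d}_{\mathbf{w}})_i)$ for every ray $\rho_i$; the definition $(\mathbf{d}_{\mathbf{w}})_i = \max\{j : \mathbf{w} \in E^i(j)\}$ makes $\mathbf{w}$ itself admissible. Summing these morphisms over $\mathbf{w} \in \mathcal{G}$ gives an equivariant $F \to \mathcal{E}$ which is surjective because $\mathcal{G}$ spans $E$, and fiberwise surjectivity over the torus upgrades to global surjectivity by equivariance.

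Next I would verify that each proposed generator $g_{\mathbf{c}} = \sum_i c_i \mathbf{x}^{\mathbf{u}_i} \mathbf{e}_i$ lies in the kernel. The shift $\mathbf{u}_i = \mathbf{d}_{\mathbf{w}_i} - \min\{\mathbf{d}_{\mathbf{w}_l} : c_l \neq 0\}$ is chosen precisely so that each $\mathbf{u}_i$ has nonnegative entries (hence $\mathbf{x}^{\mathbf{u}_i}$ is a genuine monomial of $S$) and so that all nonzero summands of $g_{\mathbf{c}}$ share a common multidegree, making $g_{\mathbf{c}}$ homogeneous. Under the sheafified surjection, on the torus the nonzero summands are multiplied by this single common character, so the image of $g_{\mathbf{c}}$ is that character times $\sum_i c_i \mathbf{w}_i = 0$, using that $\mathbf{c} \in \ker G$. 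This yields an induced surjection $\widetilde{P} \twoheadrightarrow \mathcal{E}$.

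The main obstacle will be showing that $R$ already exhausts the kernel, which I would check affine-locally. For a maximal cone $\sigma$, smoothness provides $\mathbf{u}^{\sigma}_{\mathbf{w}} \in M$ with $\mathbf{u}^{\sigma}_{\mathbf{w}} \cdot \mathbf{v}_i = (\mathbf{d}_{\mathbf{w}})_i$ for all $\rho_i \in \sigma$; trivializing each $\mathcal{O}(D_{\mathbf{w}})|_{U_{\sigma}}$ via $\chi^{\mathbf{u}^{\sigma}_{\mathbf{w}}}$ converts the restricted surjection into an $M$-graded map of free $K[\sigma^{\vee} \cap M]$-modules. Over the torus the kernel is $\ker(G) \otimes K[M]$, and each $\mathbf{c} \in \ker(G)$ admits a unique (up to $K^{\times}$) homogeneous lift, obtained by multiplying each $c_i \mathbf{e}_i$ by the monomial that equalizes the shifted degrees; a direct computation identifies this lift with the localization of $g_{\mathbf{c}}$ at $\mathbf{x}^{\hat{\sigma}}$ up to an invertible monomial in $K[\sigma^{\vee} \cap M]$. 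The quotient of the free module by these lifts is then a rank-$r$ equivariant vector bundle whose Klyachko filtration on $U_{\sigma}$, computed using Lemma~\ref{l:realizableEfromd} together with the defining property of $\mathbf{u}^{\sigma}_{\mathbf{w}}$, matches that of $\mathcal{E}|_{U_{\sigma}}$; so it must equal $\mathcal{E}|_{U_{\sigma}}$. Gluing across the cover $\{U_{\sigma}\}$ then yields the required isomorphism $\widetilde{P} \cong \mathcal{E}$.
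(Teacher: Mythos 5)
Your overall strategy — produce the surjection $\eta\colon \bigoplus_{\mathbf{w}}\mathcal{O}(D_{\mathbf{w}})\twoheadrightarrow\mathcal{E}$, check the generators of $R$ land in $\ker\eta$, then work chart-by-chart to show $R$ exhausts the kernel — is the same as the paper's, and the homogeneity/nonnegativity calculation for the generators $g_{\mathbf{c}}$ is correct. However, two steps as written do not hold up.

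First, the surjectivity claim. You write that $\eta$ is surjective ``because $\mathcal{G}$ spans $E$, and fiberwise surjectivity over the torus upgrades to global surjectivity by equivariance.'' That principle is false: an equivariant morphism of toric vector bundles can be an isomorphism on the fiber over the identity of the torus and still fail to be surjective on a boundary orbit (already $\mathcal{O}_{\mathbb{A}^1}\to\mathcal{O}_{\mathbb{A}^1}(\lbrack 0\rbrack)$, both equivariant, shows this). What is actually needed is that the induced linear map $K^{|\mathcal{G}|}\to E$ surjects on each filtered piece, i.e.\ sends $F^i(j)$ onto $E^i(j)$. That holds here because each $E^i(j)$ is a \emph{flat} of $\underline{\mathcal{M}}(\mathcal{E})$, not merely contained in the span of $\mathcal{G}$ — this is precisely the content of Lemma~\ref{l:realizableEfromd} and the observation in Remark~\ref{r:tropicalizepresentation}, and it must be invoked.

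Second, and more seriously, the exhaustion step is asserted rather than proved. You say ``The quotient of the free module by these lifts is then a rank-$r$ equivariant vector bundle whose Klyachko filtration $\dots$ matches that of $\mathcal{E}|_{U_\sigma}$.'' But whether the quotient is locally free of rank $r$ on $U_\sigma$ \emph{is} the question: if the localized lifts $(g_{\mathbf{c}})_{\mathbf{x}^{\hat\sigma}}$ do not generate $\ker(\eta(U_\sigma))$ (only its restriction to the dense torus), the quotient can pick up torsion supported on the boundary, and it then has no Klyachko filtration to compare. Knowing that over the torus the kernel is $\ker(G)\otimes K[M]$ says nothing a priori about a neighborhood of a torus-fixed point. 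The paper closes this gap using the compatibility condition for the toric vector bundle $\mathcal{E}$: on each cone $\sigma$ it extracts a basis $B_\sigma$ of $\underline{\mathcal{M}}$ adapted to all $E^i(j)$ with $\rho_i\in\sigma$, shows that for the fundamental circuits $\mathbf{c}^k$ of $k\notin B_\sigma$ over $B_\sigma$ one has $(\mathbf{u}_k)_i=0$ for all $\rho_i\in\sigma$ (so $\mathbf{e}_k$ appears with unit coefficient in the localized generator), and then runs a triangularity argument against the linear independence of $B_\sigma$ to conclude these $|\mathcal{G}|-r$ generators exhaust $\ker(\eta(U_\sigma))$. Your proposal never invokes the compatibility condition, so this essential input is missing; without it the argument stalls exactly where the paper has to work hardest.
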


\begin{proof}
Let $D_{\mathbf{w}}$ be the torus-invariant divisor $\sum
(\mathbf{d}_{\mathbf{w}})_i D_i$ on $X_{\Sigma}$.

  By \cite{DJS}*{Remark 3.6} there is a surjection
  $$\eta \colon \mathcal F := \bigoplus_{\mathbf{w} \in \mathcal G} \mathcal
  O(D_{\mathbf{w}}) \twoheadrightarrow \mathcal E.$$ This follows, via
  Klyachko's equivalence of categories, from the surjection $\eta_e
  \colon K^{|\mathcal G|} \rightarrow E$ given by
  $\mathbf{e}_{\mathbf{w}} \mapsto \mathbf{w}$.

  Let $Q = \oplus_{\alpha \in A^1(X_{\Sigma})} H^0(X_{\Sigma},\mathcal
  E \otimes_{\mathcal O_{X_{\Sigma}}} \mathcal
  O_{X_{\Sigma}}(\alpha))$.  The proof of
  \cite{CoxHomogeneous}*{Theorem 3.2} shows that the sheafification
  $\widetilde{Q}$ of $Q$ equals $\mathcal E$.  The map $\eta$ induces
  a linear transformation
$$\oplus_{\mathbf{w} \in \mathcal G} H^0(X_{\Sigma}, \mathcal
  O(D_{\mathbf{w}})) \rightarrow H^0(X_{\Sigma}, \mathcal E),$$ and
  thus, after taking the sum over all twists by $\mathcal O(\alpha)$,
  we get a homomorphism of
  $S$-modules
    $$\eta' \colon P':=\oplus_{\mathbf{w} \in \mathcal G}
  S(\mathbf{d}_{\mathbf{w}}) \rightarrow Q.$$

 We begin
  by showing that for fixed $\sigma \in \Sigma$ we have $(R_{\mathbf{x}^{\hat{\sigma}}})_{\mathbf{0}} \subseteq
  (P'_{\mathbf{x}^{\hat{\sigma}}})_{\mathbf{0}} \cong \mathcal
  F(U_{\sigma})$  equal to $\ker(\eta(U_{\sigma}))$.

Recall that we have $(S_{\mathbf{x}^{\hat{\sigma}}})_{\mathbf{0}}
\cong K[U_{\sigma}]$, via the map that takes $\mathbf{x}^{\mathbf{v}}$ to $\mathbf{t}^{\mathbf{u}} \in K[U_{\sigma}]$, where $\mathbf{u} \in M$ is the unique element with  $V\mathbf{u} =
\mathbf{v}$.  For each $\mathbf{w} \in \mathcal G$, choose
$\mathbf{x}^{\mathbf{v}_{\mathbf{w}}} \in
S_{\mathbf{x}^{\hat{\sigma}}}$ with
$\deg(\mathbf{x}^{\mathbf{v}_{\mathbf{w}}}) = [D_{\mathbf{w}}] \in
A^1(X_{\Sigma})$ and $(\mathbf{v}_{\mathbf{w}})_i =0$ whenever
$\mathbf{v}_i \in \sigma$.  This is possible since $X_{\Sigma}$ is
smooth.  Then
$(S(\mathbf{d}_{\mathbf{w}})_{\mathbf{x}^{\hat{\sigma}}})_{\mathbf{0}}
\cong \mathbf{x}^{\mathbf{v}_{\mathbf{w}}}
(S_{\mathbf{x}^{\hat{\sigma}}})_{\mathbf{0}}$.  Since
$\deg(\mathbf{x}^{\mathbf{v}_{\mathbf{w}}-\mathbf{d}_{\mathbf{w}}}) =
\mathbf{0}$,  there is $\mathbf{u}_{\mathbf{w}} \in M$ with
$V\mathbf{u}_{\mathbf{w}} =
\mathbf{v}_{\mathbf{w}}-\mathbf{d}_{\mathbf{w}}$.  Thus as an
$M$-graded $K[U_{\sigma}]$ module,
$(S(\mathbf{d}_{\mathbf{w}})_{\mathbf{x}^{\hat{\sigma}}})_{\mathbf{0}}$
is isomorphic to $\mathbf{t}^{\mathbf{u}_{\mathbf{w}}} K[U_{\sigma}]$
\cite{Fulton}*{p62}.

Thus
\begin{equation}
\label{eqtn:localisom}
\mathcal F(U_{\sigma}) = (P'_{\mathbf{x}^{\hat{\sigma}}})_{\mathbf{0}} \cong
\oplus_{\mathbf{w} \in \mathcal G} \mathbf{t}^{\mathbf{u}_{\mathbf{w}}}  K[U_{\sigma}].
\end{equation}
We also have that 
 $\mathcal E$ is a direct sum of line bundles on
 $U_{\sigma}$, so $\mathcal E(U_{\sigma}) \cong \oplus_i \mathbf{t}^{\mathbf{u}'_i}
 K[U_{\sigma}]$ as an $M$-graded module.  We write $\sum_{\mathbf{w} \in
   \mathcal G} \mathbf{t}^{\mathbf{u}_{\mathbf{w}}} g_{\mathbf{w}}
 \mathbf{f}_{\mathbf{w}}$, with $g_{\mathbf{w}} \in K[U_{\sigma}]$,
 for an element of $\mathcal F(U_{\sigma})$, and $\sum_i
 \mathbf{t}^{\mathbf{u}'_i} g_i \mathbf{e}'_i$ for an element of
 $\mathcal E(U_{\sigma})$, where the $\mathbf{f}_{\mathbf{w}}$ and $\mathbf{e}'_i$ are basis vectors.   The morphism $\eta(U_{\sigma})$ is given
 by an $r \times |\mathcal G|$ matrix $A$ with entries in
 $K[U_{\sigma}]$, with $\eta(U_{\sigma})(\mathbf{t}^{\mathbf{u}_{\mathbf{w}}}
  \mathbf{f}_{\mathbf{w}}) = \sum_i A_{i\mathbf{w}}
  \mathbf{t}^{\mathbf{u}'_i} \mathbf{e}'_i$.  Since
 $\eta$ is a morphism of toric vector bundles,
 $\eta(U_{\sigma})$ is equivariant, so each entry of $A$ has the form
 $a\mathbf{t}^{\mathbf{u}}$ for some $a \in K$, $\mathbf{u}
 \in \sigma^{\vee} \cap M$.  The morphism $\eta_e$ is $\eta \otimes
 (K[T]/\langle t_i -1 : 1 \leq i \leq n\rangle)_{\langle t_i-1: 1 \leq i \leq n \rangle}$, so
 $A_{i\mathbf{w}} = w_i \mathbf{t}^{\mathbf{u}}$ for some
 $\mathbf{u}$.  Since
 $\eta(U_{\sigma})$ is a degree-zero
 homomorphism, we have  $\mathbf{u} =
 \mathbf{u}_{\mathbf{w}}-\mathbf{u}'_i$.   The submodule
 $\ker(\eta(U_{\sigma}))$ of $\mathcal F(U_{\sigma})$ is thus
 generated by $$\left\{ \sum_{\mathbf{w} \in \mathcal G} a_{\mathbf{w}} \mathbf{t}^{\mathbf{u}_{\mathbf{w}}} \mathbf{f}_{\mathbf{w}} : (a_{\mathbf{w}}) \in \ker(G)
 \right\}.$$

The compatibility
condition for $\mathcal E$ implies that there is a basis
$B_{\sigma}$ for $\underline{\mathcal M}(\mathcal E)$ with each $E^i(j)$ with $\rho_i \in
\sigma$ spanned by a collection of elements of $B_{\sigma}$.
Fix $k \not \in B_{\sigma}$. Let $\mathbf{c}^k \in \ker(G)$
be the vector whose tropicalization is the fundamental circuit of $k$  over
$B_{\sigma}$, so $c^k_{\mathbf{w}_k}=1$, $c^k_{\mathbf{w}} =0$ for
$\mathbf{w} \not \in B_{\sigma} \cup \{k\}$, and
$$\mathbf{w}_k = - \sum_{\mathbf{w} \in B_{\sigma}} c^k_{\mathbf{w}}
\mathbf{w}.$$ The elements
\begin{equation} \label{eqtn:gensker}
  \left\{ \sum_{\mathbf{w}} c^k_{\mathbf{w}}
\mathbf{t}^{\mathbf{u}_{\mathbf{w}}} \mathbf{f}_\mathbf{w} : \mathbf{c}^k
\text{ is the fundamental circuit of } k \text{ over } \mathcal
B_{\sigma} \text{ for all } k \not \in B_{\sigma}  \right\}
\end{equation}
generate $\ker(\eta(U_{\sigma}))$.

We now check that these elements also generate
$(R_{\mathbf{x}^{\hat{\sigma}}})_{\mathbf{0}}$.  For fixed $i \in
\sigma$, $j \in \mathbb Z$, and $k \not \in B_{\sigma}$, if
$\mathbf{w} \not \in E^i(j)$ for some $\mathbf{w} \in \mathcal
B_{\sigma}$ with $c^k_{\mathbf{w}} \neq 0$, then since $\{ \mathbf{w}
\in B_{\sigma} \} \subseteq K^r$ is linearly independent and
contains a basis for $E^i(j)$, we have $\mathbf{w}_k \not \in E^i(j)$.
This implies that $(\mathbf{d}_{\mathbf{w}_k})_i \leq
(\mathbf{d}_{\mathbf{w}})_i$.  However if all $\mathbf{w} \in \mathcal
B_{\sigma}$ with $c^k_{\mathbf{w}} \neq 0$ lie in $E^i(j)$, then we
also have $\mathbf{w}_k \in E^i(j)$.  Thus
$(\mathbf{d}_{\mathbf{w}_k})_i = \min( (\mathbf{d}_{\mathbf{w}})_i :
\mathbf{w} \in B_{\sigma}, c^k_{\mathbf{w}} \neq 0 )$, so
$(\mathbf{u}_k)_i = 0$.

This means
that $(\mathbf{u}_{k})_i=0$ if $\rho_i \in \sigma$.  Thus
$\mathbf{x}^{-\mathbf{u}_k} \in S_{\mathbf{x}^{\hat{\sigma}}}$, and so $\mathbf{e}_k + \sum_{j \in B_{\sigma}} c_j
\mathbf{x}^{\mathbf{u}_j-\mathbf{u}_k} \mathbf{e}_j \in
P'_{\mathbf{x}^{\hat{\sigma}}}$.  We claim that these elements, as $k$
varies over the complement of $B_{\sigma}$, generate $R_{\mathbf{x}^{\hat{\sigma}}}$.  If not, after subtracting
multiples of these elements from a generator, we would have an element
$\sum_{j \in B_{\sigma}} c'_j \mathbf{x}^{\mathbf{v}_j}
\mathbf{e}_j \in R_{\mathbf{x}^{\hat{\sigma}}}$.  Since the
coefficients of elements of $R$ lie in $\ker(G)$, this would
contradict $B_{\sigma}$ being a basis.

Recall that $\mathbf{v}_{\mathbf{w}_k}$ has $\deg(\mathbf{x}^{
  \mathbf{v}_{\mathbf{w}_k}}) = [D_{\mathbf{w}_k}]$ and $(
\mathbf{v}_{\mathbf{w}_k})_i =0$ when $\rho_i \in \sigma$, so $\mathbf{x}^{\mathbf{v}_{\mathbf{w}_k}}$ is a unit in $S_{\mathbf{x}^{\hat{\sigma}}}$.  Thus the
elements $\mathbf{x}^{\mathbf{v}_{\mathbf{w}_k}}
(\mathbf{e}_{\mathbf{w}_k} + \sum_{\mathbf{w} \in B_{\sigma}}
c_{\mathbf{w}_k} \mathbf{x}^{\mathbf{u}_{\mathbf{w}}-\mathbf{u}_k}
\mathbf{e}_{\mathbf{w}})$ for $k \in B_{\sigma}$ have degree
$\mathbf{0} \in A^1(X_{\Sigma})$, and also generate $R_{\mathbf{x}^{\hat{\sigma}}}$, so generate $(R_{\mathbf{x}^{\hat{\sigma}}})_{\mathbf{0}}$.  Under the isomorphism
\eqref{eqtn:localisom}, these are taken to the elements
\eqref{eqtn:gensker}.  This shows that $(R_{\mathbf{x}^{\hat{\sigma}}})_{\mathbf{0}} \subseteq
(P'_{\mathbf{x}^{\hat{\sigma}}})_{\mathbf{0}} \cong \mathcal
F(U_{\sigma})$ is equal to $\ker(\eta(U_{\sigma}))$ as claimed.

We thus have $\widetilde{R} = \ker(\eta)$, so
$$0 \rightarrow \widetilde{R} \rightarrow \tilde{P'} \cong \mathcal F  \stackrel{\eta}{\rightarrow} \mathcal E \rightarrow 0$$
is exact.  Since the sheafification map is exact \cite{CoxHomogeneous}*{Proposition 3.1}, it follows that
$$\widetilde{P} = \widetilde{P'/R} \cong  \mathcal E$$ as required.
\end{proof}

\begin{example} \label{e:realizableTangentBundle}
	Consider the tangent bundle $\mathscr T$ on $\mathbb
	P^2$.   We use the notation of \cite{DJS}*{Example 3.8}.  The matrix $G$ equals
	$$G = \begin{pmatrix} 1 & 0 & -1 \\ 0 & 1 & -1 \\ \end{pmatrix}.$$ The
	degrees $\mathbf{d}_{\mathbf{e}_i}$ are $(1,0,0)$, $(0,1,0)$,
	and $(0,0,1)$.  The Cox ring of $\mathbb P^2$ is $S = 
	K[x,y,z]$.  We have $\mathscr T$ equal to $\widetilde{P}$,
	where $P$ is the $S$-module $$P = (S(1,0,0) \oplus S(0,1,0) \oplus S(0,0,1)) / \langle (x\mathbf{e}_1+y\mathbf{e}_2+z\mathbf{e}_3) \rangle.$$
	Note that on the chart $x \neq 0$ this becomes
	\begin{multline}
		(P_x)_{\mathbf{0}} =  x K[y/x,z/x] \oplus  yK[y/x,z/x] \oplus z 
		K[y/x,z/x] / \langle (\mathbf{e}_1+y/x\mathbf{e}_2+z/x\mathbf{e}_3)
		\rangle \\ = x K[y/x,z/x] \oplus
		y K[y/x,z/x].\\
	\end{multline}
This is free of rank $2$ as expected.
\end{example}

\begin{remark}
A similar construction occurs in \cites{GeorgeManon1,GeorgeManon2, GeorgeManon3},
where George and Manon construct the Cox rings of some families of
projectivized toric vector bundles.  For those families, the Cox
module described in Proposition~\ref{p:CoxModuleTVB} is a graded
submodule of the associated Cox ring.
\end{remark}

		\section{Tropical toric vector bundles} \label{s:tropicaltoricdefns}
		
		In this section we make the main definition of the paper: in
		Definition~\ref{d:tropicaltvb} we define a {\em tropical} toric vector
		bundle on a tropical toric variety.
		
		\subsection{Tropical toric reflexive sheaves}

		We first define a tropical toric reflexive sheaf on $\trop(X_{\Sigma})$.

\begin{definition} \label{d:tropicalreflexivesheaf}
A tropical toric reflexive sheaf $\mathcal{E}$ on $\trop(X_{\Sigma})$
is given by the data of a simple (no loops or parallel elements)
valuated matroid $\mathcal M$ of rank $r$ on a ground set $\mathcal
G$, together with for each ray $\rho_i$ of $\Sigma$ a collection $\{
E^i(j) : j \in \mathbb Z \}$ of flats of the underlying matroid
$\underline{\mathcal M}$ with the property that $E^i(j) \leq E^i(j')$
if $j >j'$, $E^i(j)= \emptyset$ for $j \gg 0$, and $E^i(j) = \mathcal G$
for $j \ll 0$.
			
Moreover, we call a tropical toric reflexive sheaf {\em tropically
  minimal} if the lattice of flats of $\underline{\mathcal M}$ does
not contain a proper subposet containing the $E^i(j)$ that is the
lattice of flats of a matroid.
			
We denote the tropical toric reflexive sheaf $\mathcal{E}$ by \(\left(
\mathcal M, \mathcal G, \{E^i(j)\}\right) \).
		\end{definition}

For $\mathbf{w} \in \mathcal G$, set $\mathbf{d}_{\mathbf{w}} \in
\mathbb Z^{|\Sigma(1)|}$ to be the vector with
$(\mathbf{d}_{\mathbf{w}})_i = \max(j : \mathbf{w} \in E^i(j) )$.
This is directly analogous to \eqref{eqtn:dw}, however here each $E^i(j)$ is
  a subset of the set $\mathcal G$, and $\mathbf{w} \in \mathcal G$.
			
Let $S=\Cox(\trop(X_{\Sigma})) = \Rbar[x_1,\dots,x_s]$.  Fix
$m=|\mathcal G|$, and let $R$ be the $S$-subsemimodule of
$\oplus_{\mathbf{w} \in \mathcal G} S(\mathbf{d}_{\mathbf{w}})$ generated by
			$$\left\{ \sum_{k=1}^m c_i \mathbf{x}^{\mathbf{u}_k} \mathbf{e}_k : (c_1,\dots,c_m) \text{ is a vector of the valuated matroid } \mathcal
			M \text{ and }
\mathbf{u}_k = \mathbf{d}_{\mathbf{w}_k} - \min(\mathbf{d}_{\mathbf{w}_l} : c_l \neq \infty)\right\}.$$

The tropical toric reflexive sheaf on $\trop(X_{\Sigma})$ given by
this data is described by the $\mathbb Z^{|\Sigma(1)|}$-graded
$S$-semimodule
\begin{equation} \label{eqtn:CoxModuleTropical}  P = \bigoplus_{\mathbf{w} \in \mathcal G}
                     S(\mathbf{d}_{\mathbf{w}}) / \mathcal B(R),
             \end{equation}
             where $\mathcal B()$ is the Giansiracusa bend congruence;
             see Section~\ref{ss:bendcongruence}.  The semimodule $P$
             is the tropicalization of the module $P$ of
             Proposition~\ref{p:CoxModuleTVB}.

\begin{remark}
	Note that we can recover the filtrations $E^i(j)$ from $\{
	\mathbf{d}_{\mathbf{w}} : \mathbf{w} \in \mathcal G \}$ as in
	Lemma~\ref{l:realizableEfromd}.  Indeed, for $\rho_i \in \Sigma(1)$ and $j \in \mathbb Z$ set
	$$E^i_j= \bigvee\limits_{(\mathbf{d}_{\mathbf{w}})_i \geq j}
        \mathbf{w},$$ where $\join$ is join in the lattice $\mathcal
        L(\underline{\mathcal M})$ of flats of $\underline{\mathcal
          M}$.  We will show that $E^i_j = E^i(j)$.  If
        $(\mathbf{d}_{\mathbf{w}})_i \geq j$ for some $\mathbf{w} \in
        \mathcal G$, then $\mathbf{w} \in E^i(j)$ by the definition of
        $\mathbf{d}_{\mathbf{w}}$, so $E^i_j \leq E^i(j)$.
        Conversely, if $\mathbf{w} \in \mathcal G$ is contained in
        $E^i(j)$ for some $i,j$, then $(\mathbf{d}_{\mathbf{w}})_i
        \geq j$ by the definition of $\mathbf{d}_{\mathbf{w}}$, so
        $\mathbf{w} \in E^i_j$.  This shows that $E^i(j) \leq E^i_j$,
        and thus $E^i(j)= E^i_j$.
\end{remark}

\begin{example}\label{e:reflexive}

\begin{enumerate} 
	
	\item \label{item:trs1} A rank-$1$ tropical reflexive sheaf is
          given by the data \(\left( \mathcal M, \mathcal G,
          \{E^i(j)\}\right) \), where \(\rk(\mathcal
          M)=1\). The only information in the filtrations
          \(\{E^i(j)\}\) is the smallest integer $a_i$ for which
          $E^i(a_i) \neq \emptyset$, so we may identify such a sheaf
          with the vector $\mathbf{a} = (a_1,\dots,a_s) \in \mathbb
          Z^s$.  This is the tropical version of $\mathcal O(\sum_{i=1}^s a_i D_i)$. 
          In view of \cite[Theorem 8.2.3]{CLS}, we can define tropical canonical sheaf
          $\trop(\omega_{X_{\Sigma}})$ as the rank-$1$ sheaf where all $a_i$ equal $-1$.
	
	\item \label{item:trs2} (Tropical tangent sheaf). Let
          \(X_{\Sigma}\) be a complete toric variety. We define the
          tropical tangent reflexive sheaf \(\trop(\mathscr{T})\) on
          $\trop(X_{\Sigma})$ as follows.
          Let
          $\mathcal{G}=\{\mathbf{v}_1,\dots,\mathbf{v}_s\} \subseteq
          N \otimes_{\mathbb Z} K$. Consider the simple representable matroid
          on the ground set $\mathcal{G}$ (i.e., we identify
          \(\mathbf{v}_i\) with \(\mathbf{v}_j\) if they are scalar
          multiples of each other). Let $\mathcal{M}$ be the trivial
          valuated matroid structure on this matroid. Consider the following
          filtration of flats
	
	\[ \mathscr{T}^{i}(j) = \left\{ \begin{array}
		{r@{\quad \quad}l}
	\mathcal{G} & i \leq 0 \\ 
		\{\mathbf{v}_i\} & j=1 \\
		\emptyset & j > 1
	\end{array} \right. .\] 
	Then \(\trop(\mathscr{T})=(\mathcal{M}, \mathcal{G},
        \{\mathscr{T}^{i}(j)\})\) is a tropical reflexive sheaf on
        $\trop(X_{\Sigma})$.  See
        Example~\ref{e:tropicalizationegs} \eqref{item:tropicalizationtangentsheaf} for an interpretation of this as a tropicalization of the tangent sheaf.
	
	\item \label{item:trs3} (Uniform matroid as tangent bundle on
          \(\mathbb{P}^n\)).  Consider the uniform matroid \(U_{n,
            n+1}\) on the ground set \(\mathcal{G}=\{\mathbf{w}_0, \ldots,
          \mathbf{w}_n\}\), where any proper subset of $\mathcal{G}$ is
          independent, and let $\mathcal M$ be the trivial valuated
          matroid structure on this uniform matroid.  Let
          \(X_{\Sigma}=\mathbb{P}^n\) be projective space. Let \(\mathbf{e}_1,
          \ldots, \mathbf{e}_n\) denote the standard basis of
          $\mathbb{Z}^n$. The fan $\Sigma$ consists of rays
          \(\mathbf{v}_1=\mathbf{e}_1, \ldots,
          \mathbf{v}_n=\mathbf{e}_n\) and
          \(\mathbf{v}_0=-\mathbf{e}_1- \ldots -\mathbf{e}_n\) and the
          maximal cones generated by proper subsets of
          $\{\mathbf{v}_0, $\ldots$, \mathbf{v}_n\}$. Consider the
          family of filtrations
	\begin{center}
		\begin{tabular}{ccc}
			$E^{\mathbf{v}_i}(j) = \begin{cases} \mathcal{G}  & j \leq 0 \\ \{\mathbf{w}_i\} & j =1 \\  \emptyset & j > 1,  \\ \end{cases} \text{ for } i=0, \ldots, n$ 
		\end{tabular}.
	\end{center}

	This defines a tropical reflexive sheaf on $\trop(\mathbb{P}^n)$, which is the tangent sheaf by Example \ref{e:reflexive} \eqref{item:trs2}.
	
	\item \label{item:trs4} Consider the singular toric surface
          \(X_{\Sigma}\), where the rays of the fan are given by
          \(\mathbf{v}_1=(1,0), \mathbf{v}_2=(1,2)\) and
          \(\mathbf{v}_3=(-1,1)\). The maximal cones of $\Sigma$ are
          given by
	\begin{equation*}
		\sigma_1=\cone(\mathbf{v}_1, \mathbf{v}_2), \, \sigma_2=\cone(\mathbf{v}_2, \mathbf{v}_3) \text{ and } \sigma_3=\cone(\mathbf{v}_1, \mathbf{v}_3).
	\end{equation*}
	Let \(\mathbf{e}_1, \mathbf{e}_2\) denote the standard basis of
        \(E=K^2\). Consider the representable matroid
        \(\underline{\mathcal M}\) of rank \(2\) on the ground set $$\mathcal G=\{\mathbf{e}_1, \mathbf{e}_2, \mathbf{e}_1+\mathbf{e}_2\}.$$ Consider
        the following family of filtrations
	
		$$E^{1}(j) = \begin{cases} \mathcal G &  j\leq 1 \\
			\{\mathbf{e}_1\} &  j =2 \\
			\emptyset  &  j > 2 \\
		\end{cases}, \, 
		E^{2}(j) = \begin{cases} \mathcal G &  j\leq 1 \\
		\{\mathbf{e}_2\} &  j =2 \\
			\emptyset  &  j > 2 \\
		\end{cases},  \, 
		E^{3}(j) = \begin{cases} \mathcal G &  j\leq 1 \\
			\{\mathbf{e}_1+\mathbf{e}_2\} &  j =2 \\
			\emptyset  &  j > 2 \\
		\end{cases}.$$

	\item \label{item:trs6} Let $X_{\Sigma} = \mathbb P^1$.  Let
          $\mathcal M$ be the valuated matroid (with the trivial
          valuation) corresponding to the five vectors in $K^3$:
          $\mathcal G = \{\mathbf{w}_1=(1,0,0), \mathbf{w}_2=(1,1,0),
          \mathbf{w}_3=(0,1,0), \mathbf{w}_4=(0,0,1),
          \mathbf{w}_5=(0,1,1) \}$.  This has the lattice of flats
          shown in Figure~\ref{f:Matroideg}.
	
	\begin{figure}
		\centering
		\begin{minipage}{.5\textwidth}
			\centering
			\begin{tikzpicture}
				[scale=.5,auto=left]
				\node (n0) at (3,0) {$\emptyset$};                                
				\node (n1) at (-1,2)  {1};
				\node (n2) at (1,2)  {2};
				\node (n3) at (3,2) {3};
				\node (n4) at (5,2)  {4};
				\node (n5) at (7,2)  {5};
				\node (n6) at (-2,4) {123};
				\node (n7) at (0,4)  {14};
				\node (n8) at (2,4) {15};
				\node (n9) at (4,4)  {24};
				\node (n10) at (6,4)  {25};
				\node (n11) at (8,4)  {345};
				\node  (n12) at (3, 6) {$\mathcal G$};                                
				\foreach \from/ \to in {n5/n0,n4/n0,n3/n0,n2/n0,n1/n0,n6/n1,n6/n2,n6/n3,n7/n1,n7/n4,n8/n1,n8/n5,n9/n2,n9/n4, n10/n2, n10/n5,n11/n3, n11/n4, n11/n5, n12/n6, n12/n7, n12/n8, n12/n9, n12/n10, n12/n11}
				\draw (\from) -- (\to);
			\end{tikzpicture}
			\captionof{figure}{}
			\label{f:Matroideg}
		\end{minipage}%
		\begin{minipage}{.4\textwidth}
			\centering
			\begin{tikzpicture}
				[scale=.5,auto=left]
				\node (n0) at (3,0) {$\emptyset$};                                
				\node (n1) at (1,2)  {1};
				\node (n3) at (3,2) {3};
				\node (n4) at (5,2)  {4};
				\node (n6) at (1,4) {123};
				\node (n7) at (3,4)  {14};
				\node (n11) at (5,4)  {345};
				\node  (n12) at (3, 6) {$\mathcal G$};                                
				\foreach \from/ \to in {n4/n0,n3/n0,n1/n0,n6/n1,n6/n3,n7/n1,n7/n4,n11/n3, n11/n4, n12/n6, n12/n7, n12/n11}
				\draw (\from) -- (\to);
			\end{tikzpicture}
			\captionof{figure}{}
			\label{f:Matroideg1}
		\end{minipage}
	\end{figure}
	    
	The fan of $\mathbb P^1$ has two rays: $\rho_0$ and $\rho_1$.  Consider the filtrations
	
	\begin{tabular}{ccc}
	  $E^0(j) = \begin{cases}
            \mathcal G
            & j \leq 0 \\ \{\mathbf{w}_1 ,\mathbf{w}_2,
            \mathbf{w}_3\} & j=1 \\ \{\mathbf{w}_1\} & j=2
            \\ \emptyset & j > 2 \\ \end{cases}$ & \hspace{1cm} &
	  $E^1(j) = \begin{cases}
            \mathcal G
            & j \leq 0 \\  \{\mathbf{w}_3, \mathbf{w}_4, \mathbf{w}_5\} & j=1 \\ \{\mathbf{w}_3\} & j=2 \\ \emptyset & j > 2 .\\ \end{cases}$\\
	\end{tabular}
	
	Note that $\mathcal E$ is not tropically minimal, as the subposet of Figure~\ref{f:Matroideg1} is the lattice of flats of the
	uniform matroid $U(3,3)$.  Replacing $\mathcal M$ by the (trivially
	valuated) valuated matroid corresponding to the vectors
	$\{\mathbf{w}_1=(1,0,0),\mathbf{w}_3=(0,0,1), \mathbf{w}_4=(0,0,1)\}$ also gives a tropical reflexive sheaf.

\end{enumerate}

\end{example}

\subsection{Tropical toric vector bundles}

We now generalize Definition~\ref{d:tropicalreflexivesheaf} to define  tropical toric vector bundles.

\begin{definition} \label{d:tropicaltvb}

A rank-$r$ tropical toric vector bundle on $\trop(X_{\Sigma})$ is a
rank-$r$ tropical toric reflexive sheaf \(\left( \mathcal{M}, \mathcal{G}, \{E^i(j)\}\right) \) that satisfies the following compatibility
condition:

For any $\sigma \in \Sigma$ there is a multiset $\mathbf{u}(\sigma)
\subseteq M$, and a basis $B_{\sigma} = \{ \mathbf{w}_{\mathbf{u}} :
\mathbf{u} \in \mathbf{u}(\sigma)\}$ of $\underline{\mathcal M}$ such
that for every ray $\rho_i \in \sigma$ and $j \in \mathbb Z$ we have
\begin{equation} \label{eqtn:tropicalcompatibility}
	E^i(j) = \bigvee\limits_{  \mathbf{u} \in B_{\sigma}, \mathbf{u} \cdot \mathbf{v}_i \geq j } \mathbf{w}_{\mathbf{u}}.
\end{equation}
\end{definition}

When the toric variety $X_{\Sigma}$ is smooth, we can simplify the
compatibility condition \eqref{eqtn:tropicalcompatibility}, as in the
case of usual toric vector bundles.  This should be compared with \cite{Kly}*{Remark 2.2.2}.

\begin{lemma}  \label{l:compatibilitymeansbasis}
	If $X_{\Sigma}$ is a smooth toric variety then condition~\eqref{eqtn:tropicalcompatibility} is equivalent to
	the condition that
	\begin{equation}  \label{cond:basis} 
		\begin{split}
			&\text{for every } \sigma \in \Sigma \text{ there is a
		basis } B_{\sigma} \text{ of }\underline{\mathcal M} \text{ such
            that} \text{ each } E^i(j) \text{  for } i \in \sigma \\
        &  \text{  is the join of
                elements of } B_{\sigma} .
		\end{split}
             \end{equation}
\end{lemma}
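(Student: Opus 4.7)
The plan is to prove the two directions of the equivalence separately, with the forward direction being essentially by inspection and the reverse direction using the smoothness of $\sigma$ to produce the required multiset of characters.

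For the forward direction ($\eqref{eqtn:tropicalcompatibility} \Rightarrow \eqref{cond:basis}$), suppose $B_\sigma = \{\mathbf{w}_{\mathbf{u}} : \mathbf{u} \in \mathbf{u}(\sigma)\}$ is a basis of $\underline{\mathcal M}$ witnessing the compatibility condition for $\sigma$. Then for any $\rho_i \in \sigma$ and $j \in \mathbb Z$, equation~\eqref{eqtn:tropicalcompatibility} exhibits $E^i(j)$ as the join of a subset of $B_\sigma$, which is exactly \eqref{cond:basis}.

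For the reverse direction, fix $\sigma \in \Sigma$ with basis $B_\sigma = \{\mathbf{b}_1, \dots, \mathbf{b}_r\}$ satisfying \eqref{cond:basis}. Since $\underline{\mathcal M}$ is simple, each $\mathbf{b}_k$ is an atom of the lattice of flats, and since $B_\sigma$ is independent in $\underline{\mathcal M}$, the elements of $B_\sigma$ lying inside a flat $E^i(j)$ form an independent subset of that flat. I will first observe that if $E^i(j)$ is the join of some subset of $B_\sigma$, then in fact
\[
E^i(j) = \bigvee\bigl\{\mathbf{b}_k \in B_\sigma : \mathbf{b}_k \in E^i(j)\bigr\},
\]
because the hypothesized spanning subset of $B_\sigma$ must have size $\rk(E^i(j))$ and hence be a basis of $E^i(j)$, forcing no other element of $B_\sigma$ to lie in $E^i(j)$. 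Next, for each $\mathbf{b}_k$ and each $\rho_i \in \sigma$, set
\[
a_k^i = \max\{j \in \mathbb Z : \mathbf{b}_k \in E^i(j)\},
\]
which is a well-defined integer since $E^i(j) = \mathcal G$ for $j \ll 0$ and $E^i(j) = \emptyset$ for $j \gg 0$. Here is where smoothness enters: since $\sigma$ is a smooth cone, $\{\mathbf{v}_i : \rho_i \in \sigma\}$ extends to a $\mathbb Z$-basis of $N$, so I can choose $\mathbf{u}_k \in M$ with $\mathbf{u}_k \cdot \mathbf{v}_i = a_k^i$ for all $\rho_i \in \sigma$ (with values on the remaining basis vectors extended arbitrarily). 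I then relabel $\mathbf{b}_k$ as $\mathbf{w}_{\mathbf{u}_k}$, and let $\mathbf{u}(\sigma)$ be the multiset $\{\mathbf{u}_1, \dots, \mathbf{u}_r\}$.

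Finally, I verify \eqref{eqtn:tropicalcompatibility}: by the definition of $a_k^i$ and the fact that the filtration $E^i(\cdot)$ is descending, the condition $\mathbf{u}_k \cdot \mathbf{v}_i \geq j$ is equivalent to $\mathbf{b}_k \in E^i(j)$, so
\[
\bigvee_{\mathbf{u}_k \cdot \mathbf{v}_i \geq j} \mathbf{w}_{\mathbf{u}_k}
= \bigvee\bigl\{\mathbf{b}_k \in B_\sigma : \mathbf{b}_k \in E^i(j)\bigr\}
= E^i(j)
\]
by the first observation. The only potential obstacle is the preliminary observation that $E^i(j)$ equals the join of \emph{all} its elements in $B_\sigma$ (not just some spanning subset), but this follows cleanly from the fact that $B_\sigma$ is independent; the rest of the argument is a direct construction enabled by smoothness.
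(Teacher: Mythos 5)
Your proof is correct and follows essentially the same route as the paper's: both directions match, the key use of smoothness to produce $\mathbf{u}_k \in M$ with prescribed pairings against the $\mathbf{v}_i$ is identical, and the concluding observation that $E^i(j)$ equals the join of all elements of $B_\sigma$ it contains is exactly the step the paper invokes (tersely, via independence of $B_\sigma$). You simply spell out the rank-counting justification for that observation in more detail, which the paper compresses into one clause.
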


\begin{proof}
Suppose that $X_{\Sigma}$ is smooth, and the data $\mathcal E = \{ \mathcal M,
\mathcal G, E^i(j) \}$ defines a tropical toric reflexive sheaf.  We first
suppose that the tropical reflexive sheaf satisfies
\eqref{eqtn:tropicalcompatibility}.
Condition~\ref{cond:basis} follows immediately, as it is an a
priori weaker condition than \eqref{eqtn:tropicalcompatibility}, and
the same basis $B_{\sigma}$ works.

Now suppose that the tropical reflexive sheaf $\mathcal E$ satisfies
Condition~\ref{cond:basis}.  Fix $\sigma \in \Sigma$, and a basis
$B_{\sigma}$ of $\underline{\mathcal M}$ such that
Condition~\ref{cond:basis} holds.  For $\mathbf{w} \in B_\sigma$,
consider $(\mathbf{d}_{\mathbf{w}})_i = \max(j: \mathbf{w} \in
E^i(j))$.  We then have $\mathbf{w} \in B_{\sigma} \cap E^i(j)$ if and
only if $j \leq (\mathbf{d}_{\mathbf{w}})_i$.  Since $\sigma$ is a
unimodular cone, for each $\mathbf{w} \in B_{\sigma}$ there is
$\mathbf{u}_{\mathbf{w}} \in M$ with $\mathbf{u}_{\mathbf{w}} \cdot
\mathbf{v}_i = (\mathbf{d}_{\mathbf{w}})_i$ for all $i$ with $\rho_i
\in \sigma$.  By construction $\mathbf{w} \in B_{\sigma} \cap E^i(j)$
if and only if $\mathbf{u}_{\mathbf{w}} \cdot \mathbf{v}_i \geq j$.
Since $E^i(j)$ is the join of elements of $B_{\sigma}$, and
$B_{\sigma}$ is a basis and thus independent, we have that $E^i(j)$ is
the join of all elements of $B_{\sigma}$ that it contains.  Thus the
multiset $\mathbf{u}(\sigma):=\{ \mathbf{u}_{\mathbf{w}} : \mathbf{w}
\in B_{\sigma} \} \subseteq M$, and the basis $B_{\sigma}$ satisfies
\eqref{eqtn:tropicalcompatibility}.
\end{proof}  

As an application of the above lemma we determine which of the tropical reflexive sheaves in Example \ref{e:reflexive} are in fact  vector bundles.

\begin{example}
	\begin{enumerate}
		\item \label{item:tvb1} When $X_{\Sigma}$ is smooth,
                  then using Lemma \ref{l:compatibilitymeansbasis}, we
                  see that the tropical reflexive sheaves in Example
                  \ref{e:reflexive} \eqref{item:trs2} and
                  \eqref{item:trs3} are examples of tropical vector
                  bundles. In particular, the tropical tangent sheaf
                  on $\trop(\mathbb{P}^n)$ in Example
                  \ref{e:reflexive} \eqref{item:trs3} is a tropical
                  toric vector bundle. More generally, observe that
                  any choice of filtrations of flats the uniform
                  matroid \(U_{n, n+1}\) will give rise to a tropical
                  vector bundle on $\trop(\mathbb{P}^n)$ as the
                  compatibility condition will always hold using Lemma
                  \ref{l:compatibilitymeansbasis}.
		
		\item \label{item:tvb2} The filtrations
                  in Example \ref{e:reflexive} \eqref{item:trs4} do
                  not satisfy the compatibility condition for the cone
                  $\sigma_1$.  If they did, there would be weights $\mathbf{u}_{\mathbf{w}_1} = (a_1,b_1)$ and
                   \(\mathbf{u}_{\mathbf{w}_2} = (a_2,b_2)\) in $\mathbb{Z}^2$. From the
                  filtration \(E^{\mathbf{v}_1}(j)\), we have
		\begin{equation*}
			a_1=2 \text{ and } a_2=1.
		\end{equation*}
		Similarly, from the filtration \(E^{\mathbf{v}_2}(j)\), we have
		\begin{equation*}
			a_1+2 b_1=1 \text{ and } a_2 + 2 b_2=2.
		\end{equation*}
		Note that these equations do not have an integer solution. Hence, this is an example of tropical reflexive sheaf that is not a vector bundle.

	      \item \label{item:trs5}
          Consider the representable matroid
          \(\underline{\mathcal M}\) of rank-\(2\) on the ground set $$\mathcal G=\{\mathbf{w}_1=\mathbf{e}_1, \mathbf{w}_2=\mathbf{e}_2,
          \mathbf{w}_3=\mathbf{e}_1+\mathbf{e}_2\}.$$

          We define a tropical toric reflexive sheaf on the tropical
          toric variety $\trop(\mathbb A^3)=\Rbar^3$, whose fan has three rays
          that span a single cone.  Consider the filtrations on these rays:
		$$E^{1}(j) = \begin{cases} \mathcal G &  j\leq 1 \\
		\{\mathbf{w}_1\} &  j =2 \\
		\emptyset  &  j > 2 \\
	\end{cases}, \, 
	E^{2}(j) = \begin{cases} \mathcal G &  j\leq 1 \\
		\{\mathbf{w}_2\} &  j =2 \\
		\emptyset  &  j > 2 \\
	\end{cases},  \, 
	E^{3}(j) = \begin{cases} \mathcal G &  j\leq 1 \\
		\{\mathbf{w}_3\} &  j =2 \\
		\emptyset  &  j > 2 \\
	\end{cases}.$$
        These filtrations define a tropical toric reflexive sheaf, but
        not a tropical toric vector bundle.  If they defined a
        tropical toric vector bundle, there would be a basis for
        $\underline{\mathcal M}$ for which $E^1(2)=\{\mathbf{w}_1 \}$,
        $E^2(2) = \{\mathbf{w}_2 \}$, and $E^3(2) =\{\mathbf{w}_3 \}$
        are all joins of elements of this basis.  But this would mean
        that the three elements $\{ \mathbf{w}_1, \mathbf{w}_2,
        \mathbf{w}_3 \}$ are part of a basis for the rank-$2$ matroid
        $\underline{\mathcal M}$.
        
	\item  \label{item:tvb4} The tropical reflexive sheaf in Example \ref{e:reflexive} \eqref{item:trs6} is a tropical toric vector bundle as Lemma \ref{l:compatibilitymeansbasis} is satisfied with the basis $\{\mathbf{w}_1,\mathbf{w}_3,\mathbf{w}_4\}$ for both cones.
		\end{enumerate}
\end{example}

\begin{remark}\label{dist_latt_not_sat}
  For  toric vector bundles,
Klyachko \cite{Kly}*{Remark 2.2.2} shows that 
  Condition~\ref{cond:basis} is also
equivalent to the fact that the subspaces $E^i(j)$ generate a {\em
	distributative lattice}.  Recall that a lattice $L$ is {\em
	distributative} if $x \meet (y \join z) = (x \meet y) \join (x
\meet z)$ for all $x,y,z \in L$, or equivalently $x \join (y \meet
z) = (x \join y) \meet (x \join z)$ for all $x,y,z \in L$.  This is no longer equivalent in this setting.

To see this, let $\mathcal M$ be the Vamos matroid with the trivial valuated matroid structure.  This is the rank-$4$ matroid with ground set $\mathcal G = \{1,\dots,8\}$, and all subsets of size at most $4$ independent except
$$\{ \{1,2,3,4\}, \{1,4,5,6\}, \{ 1,4,7,8\}, \{2,3,5,6\},
\{2,3,7,8\} \}.$$ Let $\mathcal E$ be the tropical toric reflexive
sheaf on $\trop(\mathbb A^2)$ with
$$E^1(j) = \begin{cases} \mathcal G & j \leq 0 \\ \{1,2\} & j=1 \\ \emptyset & j >1 \\ \end{cases}, \, \, \, \, \,
E^2(j) = \begin{cases} \mathcal G & j \leq 0 \\ \{3,4\} & j=1 \\ \emptyset & j >1 \\ \end{cases}.$$

The sublattice of $\mathcal L(\underline{ \mathcal M})$ generated by $\mathcal L(\mathcal E)$ is shown in Figure~\ref{f:Vamoseg}.
	\begin{figure}
	\centering
	\begin{minipage}{.5\textwidth}
		\centering
		\begin{tikzpicture}
			[scale=.5,auto=left]
		\node (n0) at (3,0) {$\emptyset$};
		\node (n1) at (1,2)  {12};
		\node (n4) at (5,2)  {34};
		\node (n7) at (3,4)  {1234};
		\node  (n12) at (3, 6) {$\mathcal G$};
		\foreach \from/ \to in {n4/n0,n1/n0,n7/n1, n12/n7, n7/n4}
		\draw (\from) -- (\to);
		\end{tikzpicture}
		\captionof{figure}{}
		\label{f:Vamoseg}
	\end{minipage}%
	\begin{minipage}{.4\textwidth}
		\centering
		\begin{tikzpicture}
				[scale=.5,auto=left]
			\node (n0) at (3,0) {$\emptyset$};
			\node (n1) at (-1,2)  {1};
			\node (n2) at (1,2)  {2};
			\node (n3) at (3,2) {3};
			\node (n4) at (5,2)  {4};
			\node (n5) at (7,2)  {5};
			\node (n6) at (-2,4) {124};
			\node (n7) at (0,4)  {13};
			\node (n8) at (2,4) {15};
			\node (n9) at (4,4)  {235};
			\node (n10) at (6,4)  {34};
			\node (n11) at (8,4)  {45};
			\node  (n12) at (3, 6) {$\mathcal G$};
			\foreach \from/ \to in {n5/n0,n4/n0,n3/n0,n2/n0,n1/n0,n6/n1,n6/n2,n6/n4,n7/n1,n7/n3,n8/n1,n8/n5,n9/n2,n9/n3,n9/n5, n10/n3, n10/n4, n11/n4, n11/n5, n12/n6, n12/n7, n12/n8, n12/n9, n12/n10, n12/n11}
			\draw (\from) -- (\to);
		\end{tikzpicture}
		\captionof{figure}{}
		\label{eg1}
	\end{minipage}
\end{figure}

Note that this lattice is distributative.  However there is no basis
$B$ for $\underline{\mathcal M}$ with all $E^i(j)$ a join of elements
of elements of $B$, so $\mathcal E$ is not a tropical toric vector bundle.

This contrasts with the case of usual toric vector bundles, where
every toric reflexive sheaf on a surface is a toric vector bundle (see \cite[Example 2.3 (4)]{Kly}).  In
Corollary~\ref{c:surfacecompatible} we recover a tropical version of that result by placing an additional assumption on the reflexive sheaf.
\end{remark}

A toric vector bundle on $X_{\Sigma}$ gives rise to a tropical toric vector bundle on $\trop(X_{\Sigma})$  as follows.

\begin{definition}  \label{pd:tropE}
Let $X_{\Sigma}$ be a toric variety and let $\mathcal E$ be a toric
vector bundle of rank $r$ on $X_{\Sigma}$.  Let $\underline{\mathcal
	M}$ be a realizable matroid for $\mathcal E$ in the sense of
Definition~\ref{d:matroidoftvb}, with realization $\mathcal G
\subseteq K^r$.  The tropicalization $\trop(\mathcal E)$ of the pair
$(\mathcal E, \mathcal G)$ is the tropical toric vector bundle
corresponding to the valuated matroid $\mathcal M$ induced by
$\mathcal G$.  This has underlying matroid $\underline{\mathcal M}$.  The
subspaces $E^i(j)$ of the filtration defining $\mathcal E$ correspond
to flats of $\underline{\mathcal M}$ by construction, and the compatibility
condition \eqref{eqtn:tropicalcompatibility} is satisfied for
$\trop(\mathcal E)$ because it is satisfied for $\mathcal E$. Tropical toric vector bundles of the form $\trop(\mathcal E)$ are called realizable.

The module \eqref{eqtn:CoxModuleTropical} is the tropicalization of the Cox module for $\mathcal E$ as given in Proposition~\ref{p:CoxModuleTVB}.
\end{definition}

\begin{example} \label{e:tropicalizationegs}	
	\begin{enumerate}
		\item \label{item:tropicalizationtangentsheaf} The tropical tangent sheaf in Example \ref{e:reflexive} \eqref{item:trs2} is the tropicalization of the tangent sheaf on the toric variety \(X_{\Sigma}\). See \cite[Example 2.3 (5)]{Kly} for the filtration of the tangent sheaf.

	        \item \label{item:tropeg2}
 Consider the representable matroid \(\underline{\mathcal M}\) of rank
 \(3\) on the ground set $$\mathcal G=\{\mathbf{e}_1, \mathbf{e}_2,
 \mathbf{e}_3, \mathbf{e}_1-\mathbf{e}_2,
 \mathbf{e}_2-\mathbf{e}_3\} \subseteq K^3.$$ The lattice of flats of
 \(\underline{\mathcal M}\) is given in Figure \ref{eg1}.

	Consider the following filtrations of flats.
	
	{\scriptsize
		$$E^1(j) = \begin{cases} \mathcal G &  j\leq -1 \\
			\{\mathbf{e}_1, \mathbf{e}_2, \mathbf{e}_1-\mathbf{e}_2\} &   j = 0 \\
			\{\mathbf{e}_1\} & 0 < j \leq 4\\
			\emptyset  &  j > 4 \\
		\end{cases}, \, 
		E^2(j) = \begin{cases} \mathcal G &  j\leq -2 \\
			\{\mathbf{e}_2, \mathbf{e}_3, \mathbf{e}_2-\mathbf{e}_3\} &  -2 < j \leq 0 \\
			\{\mathbf{e}_3\} & 0 < j \leq 3\\
			\emptyset  &  j > 3 \\
		\end{cases},  \, 
		E^0(j) = \begin{cases} \mathcal G &  j\leq -1 \\
			\{ \mathbf{e}_1-\mathbf{e}_2, \mathbf{e}_2-\mathbf{e}_3\} &  -1 < j \leq 2 \\
			\{\mathbf{e}_2-\mathbf{e}_3\} & j = 3\\
			\emptyset  &  j > 3 \\
		\end{cases}.$$}

	Then \(\left( \mathcal{M}, \mathcal{G}, \{E^i(j)\}\right) \)
        defines a tropical toric vector bundle on
        $\trop(\mathbb{P}^2)$. This tropical sheaf is obtained by
        tropicalizing the toric vector bundle in \cite[Example
          4.2]{DJS}.
		\end{enumerate}
\end{example}

\begin{remark}
  Note that the tropicalization of a toric vector bundle depends not only on the choice of matroid $\underline{\mathcal M}$ for $\mathcal E$, but also on the choice of realization of that matroid, as that affects the valuated matroid.
\end{remark}

\begin{example} \label{e:trickymatroidrevisited}
    Example~\ref{e:trickymatroid} gives three different choices of
  matroid for a rank-$3$ toric vector bundle $\mathcal E$ on $\mathbb P^2$.  This
  gives rise to three different choices of tropicalization of $\mathcal E$.  
  We emphasize that  the choice of matroid
is part of the data of the tropical vector bundle.
\end{example}

\begin{remark} \label{r:tropicalizepresentation}
  Given a choice of a realization $\mathcal G$ for a matroid for a
  rank-$r$ toric vector bundle $\mathcal E$ on a toric variety
  $X_{\Sigma}$, following \cite{DJS}*{Remark 3.6}
  we can construct a surjection
  \begin{equation} \label{eqtn:tropicalpresentation} \oplus_{\mathbf{w} \in \mathcal G} \mathcal O(D_{\mathbf{w}})
  \rightarrow \mathcal E \end{equation} as follows.  Recall that
  $\oplus_{\mathbf{w} \in \mathcal G} \mathcal O(D_{\mathbf{w}})$ is a
  toric vector bundle, with corresponding filtration on $K^{|\mathcal
    G|}$ the direct sum of the filtrations corresponding to each
  $\mathcal O(D_{\mathbf{w}})$ as given in Example~\ref{e:reflexive}
  \eqref{item:trs1}; the vector $\mathbf{a}$ there records the
    coefficients of torus invariant divisors in $D_{\mathbf{w}}$.  Consider the linear map $K^{|\mathcal G|}
    \rightarrow K^r$ by $\mathbf{e}_i \mapsto \mathbf{w}_i$.  The
definition of $D_{\mathbf{w}}$ implies that this map respect the
filtrations on $K^{|\mathcal G|}$ and $K^r$, so by Klyachko's
equivalence of categories induces a morphism $\oplus_{\mathbf{w} \in
  \mathcal G} \mathcal O(D_{\mathbf{w}}) \rightarrow \mathcal E$ as
required.  Since each $E^i(j)$ is a flat of the matroid, the linear
map is surjective on the filtration, so the morphism is surjective.
The tropicalization of $\mathcal E$ can thus be thought of as
tropicalizing the presentation~\ref{eqtn:tropicalpresentation}.
\end{remark}

\begin{remark}
  The semimodule corresponding to $\trop(\mathcal E)$ is
\begin{equation} \label{eqtn:tropicalM}
	\bigoplus_{i =1}^m S(\mathbf{d}_{\mathbf{w}_i})/ \mathcal B(\trop(R)),\end{equation}
where $\mathcal B()$ denotes the bend locus of \cite{Giansiracusa2},
$R$ is as in Proposition~\ref{p:CoxModuleTVB}, and $\trop(R)$ is the
submodule obtained by taking $\sum_{k=1}^m c_k \mathbf{x}^{\mathbf{u}_k}$ to
$\tplus_{k=1}^m \val(c_k)\ttimes \mathbf{x}^{\mathbf{u}_k} \in S^m$.
\end{remark}

\begin{example}
   For the tropicalization of the tangent bundle to $\mathbb P^2$, as in Example~\ref{e:reflexive} \eqref{item:trs2}, 
the semimodule defining $\trop(\mathscr{T})$ over $\Cox(\trop(\mathbb
P^2)) = \Rbar[x,y,z]$ is
$$(\Rbar[x,y,z](1,0,0) \oplus \Rbar[x,y,z](0,1,0)
\oplus\Rbar[x,y,z](0,1,0))/\mathcal B (\langle
x\mathbf{e}_1+y\mathbf{e}_2+z\mathbf{e}_3 \rangle) $$ over
$\Cox(\trop(\mathbb P^2)) = \Rbar[x,y,z]$.  Note that on the affine
chart $x \neq 0$, this becomes
$$ x\Rbar[y/x,z/x] \oplus y\Rbar[y/x,z/x] \oplus z\Rbar[y/x,z/x]/\mathcal B \langle   \mathbf{e}_1 + y/x \mathbf{e}_2+z/x\mathbf{e}_3 \rangle. $$
Note that this tropicalization, and presentation of the Cox module, comes from tropicalizing the presentation for $\trop(\mathscr{T})$ coming from the Euler sequence as discussed in Remark~\ref{r:tropicalizepresentation}.
\end{example}

However, there are tropical toric vector bundles that are not the tropicalization of a toric vector bundle on some toric variety, as the following example shows.

\begin{example} \label{e:Fanoeg}
  Let $\mathcal M$ be the trivial
  valuated matroid structure on the Fano matroid on $\mathcal G = \{1,\dots,7\}$.  The Fano matroid is the rank-$3$ matroid whose dependent sets
of size three are the lines in Figure~\ref{f:Fano}.

\begin{figure}
	\centering
	\begin{minipage}{.4\textwidth}
		\centering
		\begin{tikzpicture}
		[scale=1.8,auto=left]
			\draw (0,0) circle (0.5);
			\draw (90:1) -- (-30:1)--(210:1)--cycle;
			
			\draw (90:1)--(0,0);
			\draw (210:1)--(0,0);
			\draw (-30:1)--(0,0);
			
			\draw (30:0.5)--(0,0);
			\draw (150:0.5)--(0,0);
			\draw (270:0.5)--(0,0);
			
			\fill (-0.866,-0.5) circle (1.5pt);
			\node at (-0.866,-0.65) { \( 1 \)};
			\fill (0.866,-0.5) circle (1.5pt);
			\node at (0.866,-0.65) { \( 3 \)};
			\fill (0,-0.5) circle (1.5pt);
			\node at (0,-0.65) { \( 2 \)};
			\fill (0,1) circle (1.5pt);
			\node at (0,1.2) { \( 7\)};
			\fill (0,0) circle (1.5pt);
			\node at (.1,-0.2) { \( 5 \)};
			\fill (0.433,0.25) circle (1.5pt);
			\node at (0.53,0.3) { \( 6 \)};
			
			\fill (-0.433,0.25) circle (1.5pt);
			\node at (-0.53,0.3) { \( 4 \)};
			
		\end{tikzpicture}
		\captionof{figure}{Fano Matroid}
		\label{f:Fano}
	\end{minipage}%
	\begin{minipage}{.4\textwidth}
		\centering
			\begin{tikzpicture}[scale=1.94]
			

			\coordinate (B1) at (0,0);
			\coordinate (B2) at (1.1,0);
			\coordinate (B3) at (0,1);
			\coordinate (B4) at (-1,1);
			\coordinate (B5) at (0,-1);
			\coordinate (B6) at (1,1);
			\coordinate (B7) at (-1,-1);
			\coordinate (B8) at (1,-1);
			\coordinate (B9) at (-1.1,0);

			\definecolor{c1}{RGB}{0,129,188}
			\definecolor{c2}{RGB}{252,177,49}
			\definecolor{c3}{RGB}{35,34,35}

			\draw[c3,->] (B1) -- (B2);
			\draw[c3,->] (B1) -- (B3);
			\draw[c3,->] (B1) -- (B4);
			\draw[c3,->] (B1) -- (B5);
			\draw[c3,->] (B1) -- (B9);
			\draw[c3,->] (B1) -- (B7);
			\draw[c3,->] (B1) -- (B6);
			
			\node at (-1.2,0) { \textbf{\(5\)}};
			
			\node at (1.1,1.1) { \( 2 \)};
			\node at (.5,0) {  \(\bullet \)};
			\node at (.5,-.1) {  \((1,0)\)};
			\node at (-.5,0) {  \(\bullet \)};
			\node at (-.5,-.13) {  \((-1,0)\)};
			\node at (0,-.5) {  \(\bullet \)};
			\node at (.3,-.5) {  \((0,-1)\)};
			\node at (0,.5) {  \(\bullet \)};
			\node at (.22,.5) {  \((0,1)\)};
			\node at (.5,.5) {  \(\bullet \)};
			\node at (.8,.5) {  \((1,1)\)};
			
			\node at (-.5,.5) {  \(\bullet \)};
			\node at (-.9,.5) {  \((-1,1)\)};
			\node at (-.5,-.5) {  \(\bullet \)};
			\node at (-.9,-.5) {  \((-1,-1)\)};
			
			\node at (-1.1,-1.1) { \(6\)};

			\node at (1.2,0) { \(1 \)};
			\node at (0,1.2) { \(3 \)};
			\node at (0,-1.2) { \(7 \)};
			\node at (-1,1.2) { \(4\)};
		\end{tikzpicture}	
		\captionof{figure}{Fan for \(X_{\Sigma}\)}
		\label{f:Fano1}
	\end{minipage}
\end{figure}

Let $X_{\Sigma}$ be the smooth toric
surface whose rays are the columns of the matrix
$$\left( \begin{array}{rrrrrrr}  1 & 1 & 0 & -1 & -1 & -1 & 0 \\
	0  & 1 & 1 & 1 & 0 & -1 & -1 \\\end{array} \right).$$
The corresponding fan is shown on the right of Figure~\ref{f:Fano1}.

We define a tropical toric vector bundle by setting

{\scriptsize
	$$E^1(j) = \begin{cases} \mathcal G &  j\leq 0 \\
		\{1, 2, 3\} &   j=1 \\
		\{1\} & j =2\\
		\emptyset  &  j > 2 \\
	\end{cases}, \, 
	E^2(j) = \begin{cases} \mathcal G &  j\leq 0 \\
		\{2, 4, 6\} &   j=1 \\
		\{2\} & j =2\\
		\emptyset  &  j > 2 \\
	\end{cases},  \, 
	E^3(j) = \begin{cases} \mathcal G &  j\leq 0 \\
		\{3, 4, 5\} &   j=1 \\
		\{3\} & j =2\\
		\emptyset  &  j > 2 \\
	\end{cases}, \,
	E^4(j) = \begin{cases} \mathcal G &  j\leq 0 \\
		\{1, 4, 7\} &   j=1 \\
		\{4\} & j =2\\
		\emptyset  &  j > 2 \\
	\end{cases}, \, $$}
{\scriptsize
	$$
	E^5(j) = \begin{cases} \mathcal G &  j\leq 0 \\
		\{2, 5, 7\} &   j=1 \\
		\{5\} & j =2\\
		\emptyset  &  j > 2 \\
	\end{cases},  \, 
	E^6(j) = \begin{cases} \mathcal G &  j\leq 0 \\
		\{1, 5, 6\} &   j=1 \\
		\{6\} & j =2\\
		\emptyset  &  j > 2 \\
	\end{cases}, \,
	E^7(j) = \begin{cases} \mathcal G &  j\leq 0 \\
		\{3, 6, 7\} &   j=1 \\
		\{7\} & j =2\\
		\emptyset  &  j > 2 \\
	\end{cases}.$$}

Numbering the rays of the fan $\Sigma$ from $1$ to $7$, using that $X_{\Sigma}$ is smooth, we can check the compatibility condition \eqref{cond:basis} as follows:
\begin{equation*}
	\begin{split}
		&	\text{For } E^1(j) \text{ and } E^2(j) \text{ the basis is given by } B=\{1,2,4\}\\
		&	\text{For } E^2(j) \text{ and } E^3(j) \text{ the basis is given by } B=\{2,3, 4\}\\
		&	\text{For } E^3(j) \text{ and } E^4(j) \text{ the basis is given by } B=\{1,4,5\}\\
		&	\text{For } E^4(j) \text{ and } E^5(j) \text{ the basis is given by } B=\{4,5,7\}\\
		&	\text{For } E^5(j) \text{ and } E^6(j) \text{ the basis is given by } B=\{2,5,6\}\\
		&	\text{For } E^6(j) \text{ and } E^7(j) \text{ the basis is given by } B=\{1,6,7\}\\
		&	\text{For } E^7(j) \text{ and } E^1(j) \text{ the basis is given by } B=\{1,3,7\}.
	\end{split}
\end{equation*}
We claim that $\mathcal E$ is not realizable over any field of
characteristic not equal to $2$.  This is immediate from the fact that
$\underline{\mathcal M}$ is not realizable over any field of
characteristic not equal to $2$, since the Fano matroid is the
only rank-$3$ matroid on $7$ elements containing the given flats of
rank one and two in our choice of $E^i(j)$, so this failure of
realizability cannot be fixed by choosing another matroid containing
$\mathcal L(\mathcal E)$.
\end{example}

\subsection{Partially modular tropical toric vector bundles}

We now introduce a subclass of tropical toric vector bundles that behave more like the tropicalizations of toric vector bundles.

\begin{definition} \label{d:partiallymodular}
A tropical toric reflexive sheaf or tropical toric vector bundle is {\em partially modular} if the
following condition holds for all $1 \leq i,j \leq s$, and  $k,l \in \mathbb Z$:
\begin{equation} \label{eqtn:partiallymodular}
	\begin{split}
		\rk((E^i(k-1) & \meet E^j(l)) \join (E^i(k) \meet E^j(l-1))) \\ & = \rk(E^i(k-1) \meet E^j(l)) + \rk(E^i(k) \meet E^j(l-1)) - \rk(E^i(k) \meet E^j(l)).
	\end{split}
	\end{equation}
 Condition \eqref{eqtn:partiallymodular} requires that $(E^i(k-1)
 \meet E^j(l), E^i(k) \meet E^j(l-1))$ is a modular pair for all $1
 \leq i,j \leq s$ and $k,l \in \mathbb Z$; see \S
 \ref{ss:latticesmodularity}.
\end{definition}

The following lemma is then a consequence of the fact that the lattice
of all subspaces of a vector space is modular.

\begin{lemma} \label{l:partiallymodular}
Realizable tropical toric vector bundles are partially modular.
\end{lemma}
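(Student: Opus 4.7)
The plan is to exploit realizability to pass from the matroid lattice computation into a computation in the modular lattice of all subspaces of $K^r$, where the dimension formula $\dim(V+W) + \dim(V \cap W) = \dim(V) + \dim(W)$ does all the work.

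First I would set up notation. Let $\mathcal E = \trop(\mathcal F)$ with realization $\mathcal G \subseteq K^r$ of the matroid $\underline{\mathcal M}$, and let $V_i^j \subseteq K^r$ denote the Klyachko filtered subspace corresponding to the flat $E^i(j)$, so that $V_i^j = \spann(E^i(j))$ and $E^i(j) = V_i^j \cap \mathcal G$ by Lemma \ref{l:realizableEfromd}. The key observation is that intersections of the $V_i^j$'s lie in $L'(\mathcal E)$, and hence (by Remark \ref{r:matroidrealizable} and the discussion in Definition \ref{d:matroidoftvb}) correspond to flats of $\underline{\mathcal M}$ whose rank equals the dimension of the subspace. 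Concretely, for any two flats $F_1, F_2$ among the $E^i(j)$'s, the meet $F_1 \meet F_2 = F_1 \cap F_2$ (as subsets of $\mathcal G$) equals $(V_{F_1} \cap V_{F_2}) \cap \mathcal G$, and because $V_{F_1} \cap V_{F_2} \in L'(\mathcal E)$ the rank of this matroid flat is exactly $\dim(V_{F_1} \cap V_{F_2})$. Similarly the join in the matroid lattice of flats corresponds to the subspace sum, with rank equal to $\dim(V_{F_1} + V_{F_2})$.

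Next I would translate the four terms of \eqref{eqtn:partiallymodular} into dimensions. Setting $A = V_i^{k-1} \cap V_j^l$ and $B = V_i^k \cap V_j^{l-1}$, the previous paragraph gives
\begin{align*}
\rk(E^i(k-1) \meet E^j(l)) &= \dim A, \quad \rk(E^i(k) \meet E^j(l-1)) = \dim B, \\
\rk(E^i(k) \meet E^j(l)) &= \dim(V_i^k \cap V_j^l), \\
\rk((E^i(k-1) \meet E^j(l)) \join (E^i(k) \meet E^j(l-1))) &= \dim(A+B).
\end{align*}

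The final step is the algebra. Because the filtrations are descending we have $V_i^k \subseteq V_i^{k-1}$ and $V_j^l \subseteq V_j^{l-1}$, so
\[
A \cap B = V_i^{k-1} \cap V_j^l \cap V_i^k \cap V_j^{l-1} = V_i^k \cap V_j^l.
\]
Substituting into the dimension formula $\dim(A+B) + \dim(A \cap B) = \dim A + \dim B$ yields exactly \eqref{eqtn:partiallymodular}. The only real subtlety — and the only place where one must be careful — is the identification of meet-in-the-matroid with intersection-of-subspaces, which requires that the relevant subspace intersections themselves lie in $L(\mathcal E)$; this is automatic here because $A$, $B$, and $A \cap B = V_i^k \cap V_j^l$ are all intersections of the $V_i^j$'s and hence belong to $L'(\mathcal E)$.
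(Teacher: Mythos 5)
Your proposal is correct and follows essentially the same route as the paper: translate the matroid-lattice computation into the modular lattice of subspaces of $K^r$, observe that the filtration nesting forces $A\cap B = V_i^k\cap V_j^l$, and apply the dimension formula. The paper's proof is terser — it simply asserts that for a matroid constructed from a toric vector bundle the meet of flats of the form $E^i(k)$ is the subspace intersection, the join is the sum, and rank is dimension, then writes out the same two-line dimension computation — whereas you carefully justify why the matroid rank of meets and joins matches the corresponding subspace dimensions (that these intersections lie in $L'(\mathcal E)$ and hence are genuine flats of $\underline{\mathcal M}$ with the right rank). That extra justification is the one genuinely non-trivial point the paper glosses over, and you correctly flagged it as the crux.
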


\begin{proof}
For any matroid constructed from a toric vector bundle the meet of two
flats of the form $E^i(k)$ is their intersection, and the join is
their sum.  The rank of a flat is the dimension of the corresponding subspace.  We then have
\begin{align*}& \dim  ((E^i(k-1)  \cap E^j(l)) + (E^i(k) \cap  E^j(l-1))) \\ &
	= \dim(E^i(k-1) \cap E^j(l)) + \dim(E^i(k) \cap E^j(l-1)) - 
	\dim((E^i(k-1) \cap E^j(l)) \cap (E^i(k) \cap  E^j(l-1))) \\
	& = \dim(E^i(k-1) \cap E^j(l)) + \dim(E^i(k) \cap E^j(l-)) - 
	\dim(E^i(k) \cap E^j(l)).
\end{align*}  
\end{proof}

\begin{lemma} \label{l:partiallymodularmeansbasis}
Let $\mathcal E$ be a partially modular tropical toric reflexive sheaf on a
tropical toric variety.  Fix $1 \leq i < j \leq s$.  Then there is a basis $B$ of
the matroid $\underline{\mathcal M}(\mathcal E)$ of $\mathcal E$
with each $E^i(k)$ or $E^j(l)$ for $k,l \in \mathbb Z$ the join 
of a subset of $B$.
\end{lemma}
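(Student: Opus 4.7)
Let $\emptyset = X_0 \subsetneq X_1 \subsetneq \cdots \subsetneq X_p = \mathcal{G}$ be the distinct flats appearing as $E^i(k)$ for varying $k$, and similarly $\emptyset = Y_0 \subsetneq \cdots \subsetneq Y_q = \mathcal{G}$ for $E^j(\cdot)$. Set $F_{a,b} = X_a \meet Y_b$, so $F_{a,b}$ is increasing in each index, $F_{0,0} = \emptyset$, and $F_{p,q} = \mathcal{G}$. It suffices to produce a basis of $\mathcal{G}$ whose intersection with each $F_{a,b}$ is a basis of $F_{a,b}$, since every $E^i(k)$ equals some $X_a = F_{a,q}$ and every $E^j(l)$ equals some $Y_b = F_{p,b}$. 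The plan is to build, by induction on $a+b$, nested independent sets $B_{a,b}$ that are bases of the $F_{a,b}$'s, and take $B := B_{p,q}$.

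Before starting the induction I would translate the hypothesis. Choosing $k,l \in \mathbb{Z}$ to be the smallest indices with $E^i(k) = X_{a-1}$ and $E^j(l) = Y_{b-1}$, we have $E^i(k-1) = X_a$ and $E^j(l-1) = Y_b$, so the partial modularity condition \eqref{eqtn:partiallymodular} applied at $(k,l)$ says exactly that $(F_{a,b-1}, F_{a-1,b})$ is a modular pair. Since modular pairs are symmetric in their arguments, and $F_{a-1,b} \meet F_{a,b-1} = F_{a-1,b-1}$, we obtain
\begin{equation} \label{eqtn:planmodrel}
  \rk(F_{a-1,b} \join F_{a,b-1}) = \rk F_{a-1,b} + \rk F_{a,b-1} - \rk F_{a-1,b-1}
\end{equation}
for all $a,b \geq 1$.

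I would then carry out the induction. Set $B_{0,0} = \emptyset$, and at a cell $(a,b)$ with $a+b>0$ proceed as follows. If $a=0$ or $b=0$, extend the unique predecessor to a basis of $F_{a,b}$ by adding elements. Otherwise, consider the already-constructed $B_{a-1,b}$ and $B_{a,b-1}$, both of which contain $B_{a-1,b-1}$ by the inductive nesting. The intersection $B_{a-1,b} \cap B_{a,b-1}$ is an independent subset of $F_{a-1,b} \cap F_{a,b-1} = F_{a-1,b-1}$ containing the basis $B_{a-1,b-1}$, hence equals $B_{a-1,b-1}$. Therefore
$$|B_{a-1,b} \cup B_{a,b-1}| = \rk F_{a-1,b} + \rk F_{a,b-1} - \rk F_{a-1,b-1},$$
which by \eqref{eqtn:planmodrel} equals $\rk(F_{a-1,b} \join F_{a,b-1})$. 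Since $B_{a-1,b} \cup B_{a,b-1}$ spans $F_{a-1,b} \join F_{a,b-1}$ and has cardinality equal to the rank of its span, it is independent; extend it to a basis $B_{a,b}$ of $F_{a,b}$.

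To finish, note that $B_{a,q} \subseteq B_{p,q}$ is an independent subset of $X_a$ of size $\rk X_a$, so $B_{p,q} \cap X_a = B_{a,q}$ and hence $X_a$ is the join of those atoms of $B_{p,q}$ it contains; the same argument works for each $Y_b$. The only nontrivial step is verifying independence of $B_{a-1,b} \cup B_{a,b-1}$, which is precisely what partial modularity is designed to deliver; everything else is a bookkeeping induction.
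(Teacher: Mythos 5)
Your proof is correct. The approach is essentially the same in spirit as the paper's, with a slightly different — and arguably cleaner — bookkeeping. The paper first constructs, for each cell $(a,b)$, a \emph{flat} $w_{a,b}$ that is a relative complement of $(F_{a-1,b}\join F_{a,b-1})$ in $F_{a,b}$ (invoking the relatively complemented property of geometric lattices), proves by induction on $a+b$ that $F_{a,b}$ is the join of the $w_{k,l}$ for $(k,l)\le(a,b)$ with ranks adding up, and only then passes to independent sets by choosing a basis in each $w_{a,b}$ and taking the union. You skip the flat-level decomposition entirely: you build nested independent sets $B_{a,b}$ directly, with the independence of $B_{a-1,b}\cup B_{a,b-1}$ at each step established by exactly the same rank computation (the cardinality equals $\rk F_{a-1,b}+\rk F_{a,b-1}-\rk F_{a-1,b-1}$, which partial modularity identifies as the rank of the closure $F_{a-1,b}\join F_{a,b-1}$). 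This avoids the need to cite the relative-complement lemma and to argue that the chosen independent subsets of the $w_{a,b}$ are pairwise disjoint, at the cost of having to track the nesting invariant $B_{a',b'}\subseteq B_{a,b}$ for $(a',b')\le(a,b)$ — which you do handle correctly, since $B_{a,b}$ is built to contain both $B_{a-1,b}$ and $B_{a,b-1}$, and those two already contain everything below them by induction. One small remark: the symmetry of modular pairs that you invoke is automatic here because the paper defines modular pairs by the rank identity $\rk(x\join y)+\rk(x\meet y)=\rk x+\rk y$, which is visibly symmetric; no appeal to semimodularity is needed for that step.
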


\begin{proof}
Write $a_0 \leq \dots \leq a_q$ for the distinct $E^i(k)$, where $a_0
= \emptyset$, and $a_q = \mathcal G$, and $b_0 \leq \dots \leq b_p$ for
the distinct $E^j(l)$, where again $b_0=\emptyset$ and $b_p = \mathcal G$.  For $1 \leq i \leq q$ and $1 \leq j \leq p$ we have
$$(a_{i-1} \meet b_j) \join (a_i \meet b_{j-1}) \leq a_i \meet b_j.$$
If the inequality is strict, then since lattice of flats of matroids
are relatively complemented (see \cite[Proposition 3.4.4]{White})
  there is $w_{ij} \in \mathcal L(\underline{\mathcal{M}})$ with
$$w_{ij} \meet ((a_{i-1} \meet b_j) \join (a_i \meet b_{j-1})) =
\emptyset, \text{ and } w_{ij} \join ((a_{i-1} \meet b_j) \join (a_i
\meet b_{j-1})) = a_i \meet b_j,$$
and $\rk(w_{ij}) = \rk(a_i \meet
b_j) - \rk((a_{i-1} \meet b_j) \join (a_i \meet b_{j-1}))$.  If the inequality is not strict, we set $w_{ij}=\emptyset$.

We now show that for any $1 \leq i \leq q$ and $1 \leq j \leq p$ we have
\begin{equation} \label{eqtn:spans} a_i \meet b_j = \join_{k \leq i, l \leq j} w_{kl}, \text{ and }
	\rk(a_i \meet b_j) = \sum_{k \leq i, l \leq j} \rk(w_{kl}).\end{equation}
The proof is by induction on $i+j$.  The base case is when $i=j=1$, when  $w_{11} = a_1 \meet b_1$, so the claim is true.   Suppose the claim is true for smaller $i+j$.  We then have
\begin{align*} a_i \meet b_j & = 
	w_{ij} \join ((a_{i-1} \meet b_j) \join (a_i \meet b_{j-1}))\\
	& = w_{ij} \join (\join_{k \leq i-1, l \leq j} w_{kl}) \join (\join_{k \leq i, l \leq j-1} w_{kl}) \\
	& = \join_{k \leq i, l \leq j} w_{kl}.\\
\end{align*}  
In addition,
\begin{align*} \rk(a_i \meet b_j) & = \rk((a_{i-1} \meet b_j) \join (a_i \meet b_{j-1})) + \rk(w_{ij})\\
	& = \rk(a_{i-1} \meet b_j) + \rk(a_i \meet b_{j-1}) - \rk(a_{i-1} \meet b_{j-1})+ \rk(w_{ij}) \\
	& = \sum_{k \leq i-1, l \leq j} \rk(w_{kl})  + \sum_{k \leq i, l \leq j-1} \rk(w_{kl}) - \sum_{k \leq i-1, l \leq j-1} \rk(w_{kl}) + \rk(w_{ij}) \\
	& = \sum_{k \leq i, l \leq j} \rk(w_{kl}).\\
\end{align*}  

The case $i=q, j=p$ of \eqref{eqtn:spans} states that $\mathcal G$ is
the join of all the $w_{kl}$, and the sum of their ranks is the rank
of $\underline{\mathcal M}(\mathcal E)$.  For each $1 \leq i \leq q$
and $1 \leq j \leq p$, choose a maximal independent subset of
$\mathcal G$ whose join is $w_{ij}$, and let $B$ be the union of all
such subsets.  Then $|B| = \sum \rk(w_{ij}) = \rk \underline{\mathcal
  M}(\mathcal E)$, and the join of the elements of $B$ is $\mathcal
G$, so $\rk(B) = \rk \underline{\mathcal M}(\mathcal E)$.  Thus $B$ is a
basis for $\underline{\mathcal M}(\mathcal E)$.  Since each $a_i$
equals $a_i \meet b_p$, and $b_j = a_q \meet b_j$, the lemma follows.
\end{proof}

One consequence of Lemma~\ref{l:partiallymodularmeansbasis} is the following tropical version of \cite{Kly}*{Example 2.3(4)}.

\begin{corollary} \label{c:surfacecompatible}
A partially modular tropical toric reflexive sheaf $\mathcal E$ on the tropicalization of a smooth toric surface $\trop(X_{\Sigma})$ is a tropical toric vector bundle.
\end{corollary}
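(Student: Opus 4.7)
The plan is to apply Lemma~\ref{l:compatibilitymeansbasis} to reduce the statement to finding a basis of $\underline{\mathcal{M}}(\mathcal{E})$ with a certain compatibility property for each cone, and then use Lemma~\ref{l:partiallymodularmeansbasis} to construct such a basis cone by cone. Concretely, since $X_{\Sigma}$ is a smooth surface, Lemma~\ref{l:compatibilitymeansbasis} says that $\mathcal{E}$ is a tropical toric vector bundle if and only if Condition~\ref{cond:basis} holds: for every $\sigma \in \Sigma$ there exists a basis $B_\sigma$ of $\underline{\mathcal{M}}(\mathcal{E})$ such that each $E^i(k)$ with $\rho_i \in \sigma$ is a join of a subset of $B_\sigma$.

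The main step is to verify this for the maximal cones. Since $X_{\Sigma}$ has dimension two, every maximal cone $\sigma \in \Sigma$ is generated by exactly two rays $\rho_i$ and $\rho_j$ with $i < j$. Applying Lemma~\ref{l:partiallymodularmeansbasis} to this pair of indices produces a basis $B_\sigma$ of $\underline{\mathcal{M}}(\mathcal{E})$ such that each $E^i(k)$ and each $E^j(l)$ is the join of a subset of $B_\sigma$. This is precisely Condition~\ref{cond:basis} for $\sigma$.

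For cones $\tau$ of lower dimension, the condition is inherited from a maximal cone $\sigma$ containing $\tau$: the rays of $\tau$ form a subset of the rays of $\sigma$, so the basis $B_\sigma$ also witnesses Condition~\ref{cond:basis} for $\tau$, and we may simply set $B_\tau = B_\sigma$. Having established Condition~\ref{cond:basis} for every $\sigma \in \Sigma$, Lemma~\ref{l:compatibilitymeansbasis} then implies that the reflexive sheaf $\mathcal{E}$ is a tropical toric vector bundle, completing the proof.

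No essentially new work is needed: the bulk of the argument is absorbed into Lemma~\ref{l:partiallymodularmeansbasis}, whose hypothesis (partial modularity, i.e.\ that $(E^i(k{-}1)\meet E^j(l),\,E^i(k)\meet E^j(l{-}1))$ is a modular pair for all $k,l$) is exactly what is needed to simultaneously diagonalize the two filtrations against a common basis. The only point to watch is that the argument requires pairs of rays, which is automatic on a surface; this is also why the analogous statement for higher-dimensional smooth toric varieties would require compatibility among more than two filtrations simultaneously and hence a stronger modularity hypothesis.
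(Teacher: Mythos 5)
Your proof is correct and follows essentially the same route as the paper's: invoke Lemma~\ref{l:partiallymodularmeansbasis} to obtain, for each (at most two-ray) cone, a basis making the two filtrations simultaneously joins of basis elements, then apply Lemma~\ref{l:compatibilitymeansbasis} to conclude. The only difference is that you spell out the handling of lower-dimensional cones explicitly, which the paper leaves implicit.
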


\begin{proof}
From Lemma~\ref{l:partiallymodularmeansbasis} we know that for all
cones $\sigma \in \Sigma$ there is a basis $B_{\sigma}$ of
$\underline{\mathcal M}(\mathcal E)$ with every $E^i(j)$ for all $\rho_i \in
\sigma$  and $j \in \mathbb Z$ the join
of a subset of $B_{\sigma}$.  Then, Lemma~\ref{l:compatibilitymeansbasis} implies that condition \eqref{eqtn:tropicalcompatibility} holds, so $\mathcal E$ is a tropical toric vector bundle.
\end{proof}

See Example \ref{vbonP1split} for examples and non-examples of
partially modular tropical reflexive sheaves.

\subsection{The total space of a tropical toric vector bundle}

\label{ss:totalspace}

The total space of a rank-$r$ tropical toric vector bundle on
$\trop(X_{\Sigma})$ is a subset of $\trop(X_{\Sigma}) \times \Rbar^{|\mathcal G|}$, which we now define.

Let $\mathbf{z} \in \Rbar^s$ be the Cox coordinates of a point in
$\trop(X_{\Sigma})$.  The fiber over $\mathbf{z}$ of a tropical toric
vector bundle $\mathcal E = (\mathcal M,\mathcal G, \{ E^i(j) \})$ on
$\trop(X_{\Sigma})$ is the tropical linear space in $\Rbar^{|\mathcal G|}$ constructed as follows.  Set $m=|\mathcal G|$.
Recall from Proposition~\ref{p:CoxModuleTVB} that
for a vector $\mathbf{c} = (c_1,\dots,c_m)$ and $1 \leq i \leq m$ we set $\mathbf{u}_i =
\mathbf{d}_{\mathbf{w}_i} - \min(\mathbf{d}_{\mathbf{w}_l} : c_l \neq
\infty)$.
We define $\mathbf{c}(\mathbf{z}) \in \Rbar^{m}$ to be
$$\mathbf{c}(\mathbf{z}) := \sum_{l=1}^m c_l \mathbf{z}^{\mathbf{u}_l}
\mathbf{e}_l,$$ where all operations are tropical, so $c_l
\mathbf{z}^{\mathbf{u}_l}$ is $c_l + \mathbf{z} \cdot \mathbf{u}_l$ in
usual arithmetic.  Note that $\mathbf{z}^{\mathbf{u}_l} = \infty$ if $z_k = \infty$ for some $k$ with $(\mathbf{u}_{l})_k>0$.

\begin{lemma} \label{l:fiberwelldefined}
For any point $\mathbf{z} \in \trop(X_{\Sigma})$ the set $\mathcal C_{\mathbf{z}}$ of elements of minimal nonempty support from $ \{
\mathbf{c}(\mathbf{z}) : \mathbf{c} \text{ is a circuit of } \mathcal M
\}$ is the collection of circuits of a valuated matroid $\mathcal
M_{\mathbf{z}}$.
\end{lemma}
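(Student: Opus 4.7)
Plan: My approach is to verify the four valuated-matroid circuit axioms from Section~\ref{ss:valuatedmatroids} for $\mathcal{C}_{\mathbf{z}}$. Axioms 1--3 are essentially direct from the definitions. The all-infinity vector has empty support and so is excluded (axiom 1). For axiom 2, note that the vectors $\mathbf{u}_l$ used to construct $\mathbf{c}(\mathbf{z})$ depend on $\mathbf{c}$ only through $\supp(\mathbf{c})$, which is invariant under the tropical scaling $\mathbf{c}\mapsto\lambda\ttimes\mathbf{c}$ for $\lambda\in\mathbb{R}$; consequently $(\lambda\ttimes\mathbf{c})(\mathbf{z})=\lambda\ttimes\mathbf{c}(\mathbf{z})$ has the same support as $\mathbf{c}(\mathbf{z})$, so $\mathcal{C}_{\mathbf{z}}$ is closed under tropical scaling. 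Axiom 3 is immediate from the minimality condition in the definition of $\mathcal{C}_{\mathbf{z}}$.

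The substantive work is axiom 4. Suppose $\mathbf{v}=\mathbf{c}(\mathbf{z})$ and $\mathbf{v}'=\mathbf{c}'(\mathbf{z})$ in $\mathcal{C}_{\mathbf{z}}$ satisfy $v_i=v'_i<\infty$ and $v_j<v'_j$. Because $v_i$ and $v_i'$ are both finite, the offsets $\mathbf{z}\cdot\min(\mathbf{d}_{\mathbf{w}_l}:c_l\neq\infty)$ and $\mathbf{z}\cdot\min(\mathbf{d}_{\mathbf{w}_l}:c'_l\neq\infty)$ implicit in the formulas for $v_i$ and $v'_i$ are both finite, and their difference $\lambda\in\mathbb{R}$ is the tropical rescaling needed to lift the hypotheses on $\mathbf{v},\mathbf{v}'$ to hypotheses on the circuits: one verifies $c_i=c'_i+\lambda$ and $c_j<c'_j+\lambda$ on $\mathbf{c}$ and $\lambda\ttimes\mathbf{c}'$ in $\mathcal{M}$. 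Applying the elimination axiom of the valuated matroid $\mathcal{M}$ produces a circuit $\mathbf{c}''$ with $\mathbf{c}''\geq\mathbf{c}\tplus(\lambda\ttimes\mathbf{c}')$, $c''_i=\infty$, and $c''_j=c_j$. A coordinate-wise computation, together with a further tropical rescaling if necessary to compensate for any discrepancy between $\min(\mathbf{d}_{\mathbf{w}_l}:c''_l\neq\infty)$ and $\min(\mathbf{d}_{\mathbf{w}_l}:c_l\neq\infty)$, then shows that $\mathbf{c}''(\mathbf{z})\geq\mathbf{v}\tplus\mathbf{v}'$, $c''_i(\mathbf{z})=\infty$, and $c''_j(\mathbf{z})=v_j$.

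The main obstacle is that the vector $\mathbf{c}''(\mathbf{z})$ produced in this way need not have minimal support in $\{\mathbf{c}(\mathbf{z}):\mathbf{c}\in\mathcal{C}(\mathcal{M})\}$, so a priori it may not itself lie in $\mathcal{C}_{\mathbf{z}}$. To remedy this, I will iterate: whenever $\mathbf{e}\in\mathcal{C}(\mathcal{M})$ satisfies $\supp(\mathbf{e}(\mathbf{z}))\subsetneq\supp(\mathbf{c}''(\mathbf{z}))$, I choose an index $k\neq j$ in the difference and apply the elimination axiom of $\mathcal{M}$ between $\mathbf{c}''$ and an appropriate tropical rescaling of $\mathbf{e}$, eliminating $k$ while preserving the value at $j$. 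This step strictly reduces $|\supp(\mathbf{c}''(\mathbf{z}))|$, and since $\mathcal{G}$ is finite the process terminates at a minimal-support element of $\mathcal{C}_{\mathbf{z}}$ satisfying all three conditions of axiom 4, completing the verification.
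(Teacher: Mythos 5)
Your plan matches the paper's strategy: verify the valuated circuit axioms, with the real work in axiom~4 by lifting the hypotheses on $\mathbf{c}(\mathbf{z}),\mathbf{c}'(\mathbf{z})$ to rescaled circuits of $\mathcal M$, applying circuit elimination, and translating back. However, two steps have genuine gaps.

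The first is the treatment of boundary points. You assert that the ``offsets'' $\mathbf{z}\cdot\min(\mathbf{d}_{\mathbf{w}_l}:c_l\neq\infty)$ and $\mathbf{z}\cdot\min(\mathbf{d}_{\mathbf{w}_l}:c'_l\neq\infty)$ are finite. This is false when some $z_k=\infty$: the dot product then includes a term $\infty\cdot(\min)_k$, which is not finite unless that coordinate of the min vanishes. What actually is needed, and what the paper proves, is that for every $k$ with $z_k=\infty$ the vectors $\min(\mathbf{d}_{\mathbf{w}_l}:c_l\neq\infty)$ and $\min(\mathbf{d}_{\mathbf{w}_l}:c'_l\neq\infty)$ have the same $k$-th coordinate (because finiteness of $v_i$ and $v'_i$ forces $(\mathbf{u}_i)_k=(\mathbf{u}'_i)_k=0$). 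Then one defines $\lambda_{\mathbf{c}}$ to be the sum only over the finite $z_k$; the scalar you want is $\lambda_{\mathbf{c}_1}-\lambda_{\mathbf{c}_2}$, and it is this auxiliary fact that makes it well-defined. This step is the heart of why the lemma is not a triviality, and it is missing.

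The second gap is the support-reduction iteration. You propose to pick $k\neq j$ in $\supp(\mathbf{c}''(\mathbf{z}))\setminus\supp(\mathbf{e}(\mathbf{z}))$ and eliminate $k$. But the elimination axiom for $\mathcal M$ eliminates a position where the two circuits \emph{agree with a finite value}; at your $k$, the rescaled circuits need not agree, and indeed $e_k$ may already be $\infty$, so the axiom simply does not apply at $k$. The fix, as in the paper, is to rescale $\mathbf{e}$ so that $\mathbf{e}(\mathbf{z})\geq\mathbf{c}''(\mathbf{z})$ with equality at some position $l$ of $\supp(\mathbf{c}''(\mathbf{z}))$; if one can take $l=j$ then $\mathbf{e}(\mathbf{z})$ itself is the desired elimination, and otherwise $\mathbf{c}''(\mathbf{z})_j<\mathbf{e}(\mathbf{z})_j$ strictly and one eliminates at $l$ (since $\supp(\mathbf{e}(\mathbf{z}))\subsetneq\supp(\mathbf{c}''(\mathbf{z}))$, the new support still strictly drops). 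The paper also sidesteps iteration entirely by requiring, up front, that $\mathbf{c}_3$ is chosen with $\supp(\mathbf{c}_3(\mathbf{z}))$ minimal among the circuits produced by elimination, so that producing a smaller-support candidate is an immediate contradiction. Either route (iterate-and-terminate, or minimize-up-front) can be made to work, but your iteration as written picks the wrong index.

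Finally, a smaller point: your claim $c_j<c'_j+\lambda$ implicitly assumes $\mathbf{c}'(\mathbf{z})_j$ is finite; the paper explicitly carves out the case $\mathbf{c}'(\mathbf{z})_j=\infty$, which also needs to be checked compatible with the elimination hypothesis.
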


\begin{proof}
We show that the valuated circuit axioms hold for $\mathcal
C_{\mathbf{z}}$ (see \S\ref{ss:valuatedmatroids}).  We have $(\infty,\dots,\infty) \not \in \mathcal
C_{\mathbf{z}}$, and the elements of $\mathcal C_{\mathbf{z}}$ have
minimal support by construction.  The fact that
$\mu \ttimes
\mathbf{c}(\mathbf{z}) \in \mathcal C_{\mathbf{z}}$ for $\mu \in \mathbb R$ 
follows from the
analogous axiom for $\mathcal M$, as $\mu \ttimes
\mathbf{c}(\mathbf{z}) = (\mu \ttimes
\mathbf{c}) (\mathbf{z})$.  Suppose now that
$\mathbf{c}_1(\mathbf{z}), \mathbf{c}_2(\mathbf{z}) \in \mathcal
C_{\mathbf{z}}$ with 
$\mathbf{c}_1(\mathbf{z})_i = \mathbf{c}_2(\mathbf{z})_i < \infty$ and
$\mathbf{c}_1(\mathbf{z})_j < \mathbf{c}_2(\mathbf{z})_j$.

Write $\mathbf{u}_i^{\alpha}$ for the vector $\mathbf{u}_i$ corresponding to
$\mathbf{c}_{\alpha}$ for $\alpha =1,2$.   Since $\mathbf{c}_{\alpha}(\mathbf{z})_i<\infty$ for $\alpha=1,2$, we have we
   have $(\mathbf{u}_i^{\alpha})_k = 0$ whenever $z_k = \infty$,  so $(\mathbf{d}_{\mathbf{w}_i})_k =
   \min((\mathbf{d}_{\mathbf{w}_l})_k : c_{\alpha l} \neq \infty)$ whenever
   $z_k = \infty$.  Thus $ \min((\mathbf{d}_{\mathbf{w}_l}) : c_{1l}
   \neq \infty)_k = \min((\mathbf{d}_{\mathbf{w}_l}) : c_{2l} \neq
   \infty)_k$ for all such $k$.  For any $\mathbf{c} \in \Rbar^m$, set
   $$\lambda_{\mathbf{c}} = \ttimes_{k: z_k \neq \infty}
   z_k^{\min(\mathbf{d}_{\mathbf{w}_l} : c_l \neq \infty)_k}.$$

  Since $\mathbf{c}_1(\mathbf{z})_i =
c_{1i}\mathbf{z}^{\mathbf{u}^1_i} = \mathbf{c}_2(\mathbf{z})_i =
c_{2i}\mathbf{z}^{\mathbf{u}^2_i} < \infty$, we have $\lambda_{\mathbf{c}_{\alpha}} \ttimes \mathbf z^{\mathbf{u}_i^{\alpha}} = \mathbf{z}^{\mathbf{d}_{\mathbf{w}_i}}$ for $\alpha = 1,2$, and thus 
 $c_{1i}
\ttimes \lambda_{\mathbf{c}_2} = c_{2i} \ttimes \lambda_{\mathbf{c}_1}$.  Similarly,
either $c_{1j} \ttimes \lambda_{\mathbf{c}_2} < c_{2j} \ttimes  \lambda_{\mathbf{c}_1}$
or $\mathbf{c}_2(\mathbf{z})_j = \infty$.

By the circuit elimination axiom for $\mathcal M$ applied to the
circuits $\lambda_{\mathbf{c}_2} \ttimes \mathbf{c}_1$ and
$\lambda_{\mathbf{c}_1} \ttimes \mathbf{c}_2$, there is  a circuit
$\mathbf{c}_3$ of $\mathcal M$ with $c_{3i} = \infty$,
$\mathbf{c}_{3j} = \lambda_{\mathbf{c}_2} \ttimes c_{1j}$, and
$\mathbf{c}_3 \geq \lambda_{\mathbf{c}_2} \ttimes \mathbf{c}_1 \tplus
\lambda_{\mathbf{c}_1} \ttimes \mathbf{c}_2$ coordinatewise.  We may
assume that $\mathbf{c}_3$ is chosen to have
$\supp(\mathbf{c}_3(\mathbf{z}))$ minimal among all such choices for
$\mathbf{c}_3$.  Set $\mathbf{c}_4 = (\lambda_{\mathbf{c}_3} -
\lambda_{\mathbf{c}_1} - \lambda_{\mathbf{c}_2}) \ttimes
\mathbf{c}_3$.  We now show that, if its support is minimal,
$\mathbf{c}_4(\mathbf{z})$ is the required circuit.

We have $c_{4i} = \infty$, and since $\supp(\mathbf{c}_4) =
\supp(\mathbf{c}_3)$ we have $\lambda_{\mathbf{c}_4} =
\lambda_{\mathbf{c}_3}$.  In addition, for any $1 \leq t \leq m$,
\begin{equation} \label{eqtn:c4jlarger}
\begin{split} c_{4t} & = \lambda_{\mathbf{c}_3} - \lambda_{\mathbf{c}_1} -
  \lambda_{\mathbf{c}_2} + c_{3t} \\
  & \geq  \lambda_{\mathbf{c}_3} - \lambda_{\mathbf{c}_1} -
  \lambda_{\mathbf{c}_2} + \min(\lambda_{\mathbf{c}_2} + c_{1t},
    \lambda_{\mathbf{c}_1} + c_{2t}) \\
      & = \lambda_{\mathbf{c}_4} + \min(c_{1t}-\lambda_{\mathbf{c}_1},
    c_{2t}-\lambda_{\mathbf{c}_2}),\\
\end{split}
\end{equation}
   with equality if $t = j$.  Thus $c_{4t} - \lambda_{\mathbf{c}_4}
   \geq \min(c_{1t}-\lambda_{\mathbf{c}_1},
   c_{2t}-\lambda_{\mathbf{c}_2})$, with equality if $t=j$.

   We now show that $\mathbf{c}_4(\mathbf{z}) \geq \mathbf{c}_1(\mathbf{z}) \tplus \mathbf{c}_2(\mathbf{z})$. 
Consider $t \neq i$ with $\min(
\mathbf{c}_1(\mathbf{z})_t, \mathbf{c}_2(\mathbf{z})_t) < \infty$.
Write $\mathbf{c}_{\alpha}(\mathbf{z})_t$ for this minimum, where $\alpha \in
\{1,2\}$.  Since $\mathbf{c}_{\alpha}(\mathbf{z})_t < \infty$, it follows
that
\begin{equation} \label{eqtn:dwi}
  (\mathbf{d}_{\mathbf{w}_t})_k =
\min((\mathbf{d}_{\mathbf{w}_l})_k : c_{\alpha l} \neq \infty) =
(\mathbf{d}_{\mathbf{w}_i})_k
\end{equation}for all $k$ with $z_k=\infty$.  As
$(\mathbf{d}_{\mathbf{w}_i})_k$ also equals
$\min((\mathbf{d}_{\mathbf{w}_l})_k : c_{\beta l} \neq \infty)$ for $\beta =
\{1,2 \} \setminus \{\alpha \}$ and such $k$, adding $\ttimes_{k : z_k \neq
  \infty} z_k^{(\mathbf{d}_{\mathbf{w}_t})_k}$ to both sides of \eqref{eqtn:c4jlarger} yields
  \begin{equation} \label{eqtn:c4j}
    c_{4t} + \ttimes_{k : z_k \neq \infty}
    z_k^{(\mathbf{d}_{\mathbf{w}_t})_k} - \lambda_{\mathbf{c}_4} \geq
    \min( \mathbf{c}_1(\mathbf{z})_t, \mathbf{c}_2(\mathbf{z})_t)
  \end{equation}for $t$ where the right-hand side is
  not equal to $\infty$.  We have  equality when $t=j$.
  When
  $c_{4t}<\infty$, since $t \in \supp(\mathbf{c}_4) \subsetneq
  \supp(\mathbf{c}_1) \cup \supp(\mathbf{c}_2)$, the equality
  \eqref{eqtn:dwi} implies that $\min(\mathbf{d}_{\mathbf{w}_l} :
  c_{4l} \neq \infty)_k = (\mathbf{d}_{\mathbf{w}_t})_k$ for all $k$ with
  $z_k =\infty$.  Thus the left-hand side of \eqref{eqtn:c4j} equals
  $\mathbf{c}_4(\mathbf{z})$.  This means that 
  $$\mathbf{c}_4(\mathbf{z})_t \geq \min( \mathbf{c}_1(\mathbf{z})_t,
  \mathbf{c}_2(\mathbf{z})_t)$$ with equality when $t=j$,
  whenever the right-hand side is not equal to $\infty$.

If the right-hand side of \eqref{eqtn:c4j} is equal to $\infty$, so
$\mathbf{c}_1(\mathbf{z})_t = \mathbf{c}_2(\mathbf{z})_t = \infty$,
then either $c_{1t} = c_{2t} = \infty$, so $c_{4t} =\infty$ and thus
$\mathbf{c}_4(\mathbf{z})_t=\infty$, or $(\mathbf{d}_{\mathbf{w}_t})_k
> \min((\mathbf{d}_{\mathbf{w}_l})_k : c_{1l} \neq \infty) =
\min((\mathbf{d}_{\mathbf{w}_l})_k : c_{2l} \neq \infty)=
(\mathbf{d}_{\mathbf{w}_i})_k = (\mathbf{d}_{\mathbf{w}_j})_k$ for
some $k$ with $z_k = \infty$.  However $\mathbf{c}_4(\mathbf{z})_j
<\infty$, so we must also have $\min((\mathbf{d}_{\mathbf{w}_l})_k :
c_{4l} \neq \infty)= (\mathbf{d}_{\mathbf{w}_j})_k$, and so
$(\mathbf{d}_{\mathbf{w}_t})_k >\min((\mathbf{d}_{\mathbf{w}_l})_k : c_{4l} \neq \infty)$.
So we conclude that if
$\mathbf{c}_1(\mathbf{z})_t = \mathbf{c}_2(\mathbf{z})_t = \infty$
then $\mathbf{c}_4(\mathbf{z})_t= \infty$, and so
  $$\mathbf{c}_4(\mathbf{z}) \geq \mathbf{c}_1(\mathbf{z}) \tplus \mathbf{c}_2(\mathbf{z}).$$

  If $\mathbf{c}_4(\mathbf{z}) \in \mathcal C_{\mathbf{z}}$ then we
  are done.  Otherwise there would be a circuit $\mathbf{c}_5$ of
  $\mathcal M$ with $\supp(\mathbf{c}_5(\mathbf{z}))$ minimal and
  $\supp(\mathbf{c}_5(\mathbf{z})) \subsetneq
  \supp(\mathbf{c}_4(\mathbf{z}))$.
After scaling we may assume that  $\mathbf{c}_5(\mathbf{z}) \geq
  \mathbf{c}_4(\mathbf{z})$, and  $\mathbf{c}_5(\mathbf{z})_l =\mathbf{c}_4(\mathbf{z})_l$
  for some $l$.  If we cannot choose $l=j$, 
then repeating the above procedure with
  $\mathbf{c}_4$ and $\mathbf{c}_5$ playing the roles of
  $\mathbf{c}_1$ and $\mathbf{c}_2$, with $i$ replaced by $l$ would yield
  a circuit $\mathbf{c}_6$ with $\supp(\mathbf{c}_6(\mathbf{z})) \subsetneq
  \supp(\mathbf{c}_4(\mathbf{z}))$, $\mathbf{c}_6(\mathbf{z})_i=\infty$, and 
 $\mathbf{c}_6(\mathbf{z})_j= \mathbf{c}_1(\mathbf{z})_j$, which contradicts the minimality assumption on the choice of $\mathbf{c}_3$.  We thus conclude that $\mathbf{c}_5(\mathbf{z})_j$ is the desired elimination.
\end{proof}

\begin{remark} \label{r:othersvectors}
  When $\mathbf{c}(\mathbf{z}) \not \in \mathcal C_{\mathbf{z}}$ for a
  circuit $\mathbf{c}$ of $\mathcal M$, it is a tropical sum of elements
  of $\mathcal C_{\mathbf{z}}$, so a vector of $\mathcal
  M_{\mathbf{z}}$.  This follows from repeated applications of the
  technique of the last paragraph of the proof of
  Lemma~\ref{l:fiberwelldefined} to get circuits $\mathbf{c}_j$ with
  $\mathbf{c}_j(\mathbf{z}) \geq \mathbf{c}(\mathbf{z})$ and
  $\mathbf{c}_j(\mathbf{z})_j = \mathbf{c}(\mathbf{z})_j$ for all $j
  \in \supp(\mathbf{c}(\mathbf{z}))$.  The vector
  $\mathbf{c}(\mathbf{z})$ is then the tropical sum of all such
  $\mathbf{c}_j(\mathbf{z})$.
\end{remark}

In light of Lemma~\ref{l:fiberwelldefined}, we make the following definition.

\begin{definition}
Let $\mathcal E$ be a tropical toric vector bundle on
$\trop(X_{\Sigma})$.  The fiber of $\mathcal E$ over a point
$\mathbf{z} \in \Rbar^s$ giving the Cox coordinates of a point in
$\trop(X_{\Sigma})$ is the {\em tropical linear space} in
$\Rbar^{|\mathcal G|}$ of the valuated matroid $\mathcal
M_{\mathbf{z}}$.  This is the set of all $\mathbf{a} \in
\Rbar^{|\mathcal G|}$ tropically orthogonal to the circuits 
$\mathbf{c}({\mathbf{z}})$ of $\mathcal M_{\mathbf{z}}$:
$$\{ \mathbf{a} \in \Rbar^{|\mathcal G|} : \text{ either
}\tplus_{\mathbf{w} \in \mathcal G}
\mathbf{c}({\mathbf{z}})_{\mathbf{w}} a_{\mathbf{w}} = \infty, \text{
  or the minimum is achieved at least twice} \}.$$
Let
$$\mathcal C = \{
c_1\mathbf{z}^{\mathbf{u}_1} y_1 \tplus \dots \tplus
c_m\mathbf{z}^{\mathbf{u}_m}y_m : \mathbf{c} \text{ is a circuit of }
\mathcal M  \} \subseteq \Rbar[z_1,\dots,z_s,y_1,\dots,y_m].$$
The total space of $\mathcal E$ is
the quotient of
  $$V( \mathcal C ) \setminus V(B_{\Sigma}) \subseteq \Rbar^s \times
\Rbar^m$$ by the tropical dual of the class group as in
\S\ref{preliTTVar}.  This is a subset of $\trop(X_{\Sigma}) \times
\Rbar^m$.  The fiber over a point $\mathbf{z} \in \Rbar^s$ under the
projection to the first coordinates is the fiber defined above.
\end{definition}

\begin{remark}
  The fact that the fiber over a point $\mathbf{z} \in \Rbar^s$ under
  the projection to the first coordinates is the tropical linear space of $\mathcal
  M_{\mathbf{z}}$ follows from the fact that the specializations of
  the elements of $\mathcal C$ at $\mathbf{z} \in \Rbar^s$ are the elements
  $\mathbf{c}(\mathbf{z})$ for circuits $\mathbf{c}$ of $\mathcal M$.  The tropical
  prevariety defined by the collection $\mathcal C_{\mathbf{z}}$ is the
  tropical linear space of $\mathcal M_{\mathbf{z}}$.  When
  $\mathbf{c}(\mathbf{z}) \not \in \mathcal C_{\mathbf{z}}$ for some
  circuit $\mathbf{c}$, it is the tropical sum of circuits of
  $\mathcal M_{\mathbf{z}}$, so the tropical hyperplane corresponding
  to $\mathbf{c}(\mathbf{z})$ contains the fiber.  Thus the tropical
  prevariety defined by all the $c(\mathbf{z})$ is the fiber over
  $\mathbf{z}$.
\end{remark}

\begin{remark}
 When $\mathcal E$ is realizable, so $\mathcal E = \trop(\mathcal F)$
  for a toric vector bundle $\mathcal F$, the fiber of $\mathcal{E}$
  at a point of $\trop(X_{\Sigma})$ with Cox coordinates $\mathbf{z}$
  is the tropicalization of the fiber of $\mathcal F$ at any point of
  $X_{\Sigma}$ with Cox coordinates $\mathbf{x}$ with
  $\val(\mathbf{x}) = \mathbf{z}$.  To see this, note that when $\mathbf{x}$ corresponds to a point in $U_{\sigma}$ for $\sigma \in \Sigma$, the fiber over $\mathbf{x}$ is
  $$(P_{\mathbf{x}^{\hat{\sigma}}})_{\mathbf{0}} \otimes_{(S_{\mathbf{x}^{\hat{\sigma}}})_{\mathbf{0}}}   ((S
  /\mathfrak{m}(\mathbf{x}))_{\mathbf{x}^{\hat{\sigma}}})_{\mathbf{0}},$$
      where $P$ is the Cox-module of $\mathcal F$, as in
      Proposition~\ref{p:CoxModuleTVB}, $S$ is the Cox ring of
      $X_{\Sigma}$, and $\mathfrak{m}(\mathbf{x})$ is the ideal of all
    polynomials in $S$, homogeneous with respect to the grading on
    $S$, that vanish at $\mathbf{x}$.  Given the description of $P$ in
    Proposition~\ref{p:CoxModuleTVB}, this is isomorphic to
    $$K^m/R_{\mathbf{x}},$$
    where $R_{\mathbf{x}}$ is the submodule generated by 
        $$\left\{\sum_{i=1}^m c_i \mathbf{x}^{\mathbf{u}_i}
  \mathbf{e}_i : (c_1,\dots,c_m) \in \ker(G) \right\}.$$

  The valuated matroid associated to the images of standard basis
  vectors of $K^m$ in the $r$-dimensional vector space
  $K^m/R_{\mathbf{x}}$ is $\mathcal M_{\mathbf{z}}$.
  When $\mathbf{x}$ corresponds to the identity of the torus of $X_{\Sigma}$,
  $\mathcal M_{\mathbf{z}}$ is the matroid $\mathcal M$ associated to
  $\mathcal E$.
\end{remark}

        \begin{remark}
When $\mathcal E$ is a realizable tropical toric vector bundle, there
is a subset of the tropicalization of the fiber over the identity of
the torus that does not depend on the choice of tropicalization.
Consider, for example, the case that the valuated matroid $\mathcal M$
has a trivial valuated matroid structure (so the valuation of every
basis is $0$).  Then the tropical linear space of $\mathcal M$ is the
cone over the order complex of the lattice of flats of
$\underline{\mathcal M}$ by \cite{ArdilaKlivans}.  As the poset
$L'(\mathcal E)$ of all intersections of the $E^i(j)$ is a subposet of
this lattice of flats, its order complex is a subcomplex of the order
complex of $\mathcal L(\underline{\mathcal M})$, so the cone over the
order complex of $L'(\mathcal E)$ is a subfan of the fiber.  We call
this the {\em intrinsic subcomplex}.  As noted in the following
examples, this often has smaller dimension than the actual fiber.
This can be regarded as the polymatroid fiber (but should not be
confused with the Bergman fan of the polymatroid in the sense of
\cite{BergmanFanPolymatroid}).
\end{remark}          

\begin{example} \label{e:trivialbundle}
  The trivial rank-$r$ tropical toric vector bundle with valuated
  matroid $\mathcal M$ on a tropical toric variety $\trop(X_{\Sigma})$
  is given by the filtration with $E^i(j) = \mathcal G$ for $j \leq
  0$, and $E^i(j) = \emptyset$ for $j >0$ for $1 \leq i \leq s$.  In
  this case we have $\mathbf{d}_{\mathbf{w}} = \mathbf{0} \in \mathbb
  Z^s$ for all $\mathbf{w} \in \mathcal G$, so $\mathbf{c}(\mathbf{z})
  = \mathbf{c}$ for all $\mathbf{z} \in \Rbar^s$.  This means that
  $\mathcal M_{\mathbf{z}} = \mathcal M$ for all $\mathbf{z}$, and the
  total space of $\mathcal E$ is $\trop(X_{\Sigma}) \times \Gamma$, where
  $\Gamma$ is the tropical linear space of $\mathcal M$.  In this case the intrinsic subcomplex 
  is the
  origin $\mathbf{0} \in \Rbar^m$.
\end{example}

\begin{example}
  When $\mathcal E$ is the tropical tangent sheaf on $\trop(\mathbb
  P^2)$, as in Example~\ref{e:reflexive} \eqref{item:trs2}, the valuated matroid $\mathcal M$ has one
  valuated circuit up to scaling: $(0,0,0)$.  The underlying matroid
  is $U(2,3)$.  At a point $[z_0:z_1:z_2]$ of $\trop(\mathbb P^2)$ with
  $z_0,z_1,z_2 \neq \infty$ the unique (up to scaling) circuit is  $(z_0,z_1,z_2)$, so $\mathcal M_{[z_0:z_1:z_2]}$ is  a projectively  equivalent
  valuated matroid.  The fiber in $\Rbar^3$ has three cones:
  $(-z_0,-z_1,-z_2)+\cone(\mathbf{e}_i) + \spann((1,1,1))$ for $i=0,1,2$.
  The quotient of the fiber  in $\trop(\mathbb P^2)$ is the standard tropical line
  with vertex $[-z_0:-z_1:-z_2]$.  All three cones lie in the intrinsic subcomplex.

  When $z_0=\infty$, but $z_1,z_2 \neq \infty$ we get the valuated matroid
  with unique (up to scaling) circuit $(\infty, z_1,z_2)$.  This has
  underlying matroid $U(1,1) \oplus U(1,2)$.  The fiber in $\Rbar^3$
  is the hyperplane $z_1+y_1=z_2+y_2$.

  When $z_0=z_1=\infty$, the valuated matroid has the unique (up to
  scaling) circuit $(\infty, \infty, z_2)$, so the underlying matroid is
  $U(2,2) \oplus U(0,1)$.  The fiber in $\Rbar^3$ is the hyperplane
  $y_2=\infty$.

  Recall that $\trop(\mathbb P^2)$ is $(\Rbar^3 \setminus \{(\infty,\infty,\infty) \} / \spann((1,1,1))$.  Thus the total space of the tropical tangent sheaf is
  $$(V(z_0y_0\tplus z_1y_1 \tplus z_2y_2) \setminus
  \{(\infty,\infty,\infty,a_1,a_2,a_3): a_i \in
  \Rbar\})/\spann((1,1,1,0,0,0)) \subseteq \trop(\mathbb P^2) \times
  \Rbar^3.$$
\end{example}  

\begin{example} \label{e:trickymatroidtotalspace}
As noted in Example~\ref{e:trickymatroidrevisited}, the three different
matroids given in Example~\ref{e:trickymatroid} for that toric vector
bundle $\mathcal E$ on $\mathbb P^2$ give three different tropicalizations.  The
total spaces are as follows.
\begin{enumerate}
\item For the matroid $\underline{\mathcal M}_1$ with the choice of
  basis $\mathcal G = \{ \mathbf{e}_1+\mathbf{e}_2, \mathbf{e}_2,
  \mathbf{e}_1+\mathbf{e}_3, \mathbf{e}_3 \}$, and the corresponding
  trivial valuated matroid structure, we have a unique circuit
  $\mathbf{c} = (0,0,0,0)$, and
  $\mathbf{d}_{\mathbf{e}_1+\mathbf{e}_2} = (1,0,0)$,
  $\mathbf{d}_{\mathbf{e}_2} = (1,0,0)$,
  $\mathbf{d}_{\mathbf{e}_1+\mathbf{e}_3} = (0,1,0)$, and
  $\mathbf{d}_{\mathbf{e}_3} = (0,0,1)$.
  Thus $\mathbf{c}(\mathbf{z})
  = (z_0,z_0,z_1,z_2)$, so the total space is $V(z_0y_1 \tplus z_0 y_2
  \tplus z_1 y_3 \tplus z_2 y_4) \subseteq \trop(\mathbb P^2) \times
  \Rbar^4$.  The fiber over $z_0=\infty$ is the classical hyperplane
  $z_1+y_3= z_2+y_4$, while the fiber over $z_1=\infty$ is $V(z_0y_1
  \tplus z_0y_2 \tplus z_2 y_4)$, which is the tropical cone over a tropical line times $\mathbb R$.
The intrinsic subcomplex consists of the
  three cones $\cone((1,1,0,0))+\spann(1,1,1,1), \cone((0,0,1,0)+\spann((1,1,1,1))$, and
  $\cone((0,0,0,1))+\spann((1,1,1,1)$.
  
\item   
For the matroid $\underline{\mathcal M}_2$ with the choice of basis
$\mathcal G = \{ \mathbf{e}_1, \mathbf{e}_2,
\mathbf{e}_1+\mathbf{e}_3, \mathbf{e}_3 \}$, and the corresponding
trivial valuated matroid structure, we have a unique circuit
$\mathbf{c} = (0,\infty,0,0)$, and $\mathbf{d}_{\mathbf{e}_1} =
(1,0,0)$.
Thus $\mathbf{c}(\mathbf{z}) = (z_0,\infty,z_1,z_2)$, so the total
space is $V(z_0y_1 \tplus z_1 y_3 \tplus z_2 y_4) \subseteq
\trop(\mathbb P^2) \times \Rbar^3$.  The fiber over $z_0=\infty$ is
again the classical hyperplane $z_1+y_3= z_2+y_4$, but the fiber over
$z_1=\infty$ is the classical hyperplane $z_0+y_1 = z_2+y_4$.
The intrinsic subcomplex is as in the previous part.

\item 
For the matroid $\underline{\mathcal M}_2$ with the choice of basis
$\mathcal G = \{ \mathbf{e}_1+\mathbf{e}_2, \mathbf{e}_1-\mathbf{e}_2,
\mathbf{e}_2, \mathbf{e}_1+\mathbf{e}_3, \mathbf{e}_3,
\mathbf{e}_1+\mathbf{e}_2+\mathbf{e}_3 \}$, and the corresponding
trivial valuated matroid structure, we have $9$ circuits.  We have
$\mathbf{d}_{\mathbf{e}_1-\mathbf{e}_2} = (1,0,0)$, and
$\mathbf{d}_{\mathbf{e}_1+\mathbf{e}_2+\mathbf{e}_3} = (0,0,0)$.  The
total space is thus
\begin{multline*} V(y_1 \tplus y_2 \tplus y_3, 
  z_0y_3 \tplus z_1 y_4 \tplus y_6,
  z_0y_1 \tplus z_2 y_5 \tplus y_6, 
z_0y_1 \tplus z_0 y_2 \tplus z_1 y_4 \tplus z_2y_5,\\
z_0y_1 \tplus z_0 y_2 \tplus z_1 y_4 \tplus y_6,
z_0y_1 \tplus z_0 y_3 \tplus z_1 y_4 \tplus z_2y_5,
z_0y_2 \tplus z_0 y_3 \tplus z_1 y_4 \tplus z_2y_5,\\
z_0y_2 \tplus z_0 y_3 \tplus z_2 y_5 \tplus y_6,
z_0y_2 \tplus z_1 y_4 \tplus z_2 y_5 \tplus y_6
) \subseteq \trop(\mathbb P^2) \times \Rbar^6.\end{multline*}
The intrinsic subcomplex consists of the three cones
  $\cone((1,1,1,0,0,0))+\spann(1,1,1,1,1,1)$,
  $\cone((0,0,0,1,0,0)+\spann((1,1,1,1,1,1))$, and
  $\cone((0,0,0,0,1,0))+\spann((1,1,1,1,1,1)$ in $\Rbar^6$.
The fiber over $z_0=\infty$ is
$$V(y_1 \tplus y_2 \tplus y_3, z_1 y_4 \tplus y_6, z_2 y_5 \tplus y_6, 
z_1 y_4 \tplus z_2y_5),$$
which is the tropical cone over a tropical line times $\mathbb R$,
 and the fiber over $z_1=\infty$ is 
\begin{multline*} V(y_1 \tplus y_2 \tplus y_3, 
z_0y_3 \tplus y_6, z_0y_1 \tplus z_2 y_5 \tplus y_6, 
z_0y_1 \tplus z_0 y_2 \tplus z_2y_5,\\
z_0y_1 \tplus z_0 y_2 \tplus y_6,
z_0y_1 \tplus z_0 y_3 \tplus z_2y_5,
z_0y_2 \tplus z_0 y_3  \tplus z_2y_5,\\
z_0y_2 \tplus z_0 y_3 \tplus z_2 y_5 \tplus y_6,
z_0y_2 \tplus z_1 y_4 \tplus z_2 y_5 \tplus y_6
\end{multline*}
\end{enumerate}

\end{example}

\begin{example}  \label{e:vamostotalspace}
  Let $\mathcal E = (\mathcal M, \mathcal G, \{E^1(j) \})$ be the
  tropical toric vector bundle on $\trop(\mathbb A^1)$ with $\mathcal M$ the
  trivial valuated matroid structure on the Vamos matroid as in
  Remark~\ref{dist_latt_not_sat}, and $E^1(j)= \mathcal G$ for $j \leq
  0$, and $E^1(j) = \emptyset$ for $j >0$.  Then the fiber over a
  point in tropical torus is the tropical cone in $\Rbar^8$ over
  Bergman fan of the Vamos matroid.
\end{example}

	\section{Properties of tropical toric vector bundles} \label{s:properties}
	
In this section we extend some standard properties of vector bundles
to tropical toric vector bundles: decompositions into direct sums,
tensoring by line bundles, and global sections.
	
	\subsection{Decomposition into direct sums} \label{ss:directsum}

	We now give a definition for a tropical toric vector bundle to
	decompose as a direct sum of other tropical toric vector bundles.
	
	We first recall the notion of the direct sums of
	valuated matroids; see, for example~\cite{HLSV}.
	If $\mathcal M_1$ is a valuated matroid of rank $r_1$ on a
        ground set $\mathcal G_1$ with basis valuation function
        $\nu_1$, and $\mathcal M_2$ is a valuated matroid of rank
        $r_2$ on a ground set $\mathcal G_2$ with basis valuation
        function $\nu_2$, then $\mathcal M_1 \oplus \mathcal M_2$ is a
        valuated matroid on the ground set $\mathcal G_1 \sqcup
        \mathcal G_2$ with basis valuation function $\nu(I_1 \cup I_2)
        = \nu_1(I_1)+\nu_2(I_2)$ for all sets $I_1 \subseteq \mathcal
        G_1$ of size $r_1$ and $I_2 \subseteq \mathcal G_2$ of size
        $r_2$.  The lattice of flats $\mathcal L(\mathcal M_1
        \oplus \mathcal M_2)$ is isomorphic to $\mathcal L(\mathcal M_1) \times
        \mathcal L(\mathcal M_2)$.
	
	This lets us define the direct sum of tropical toric vector bundles.  
	
	\begin{definition}
		Let $\mathcal E_1=\left( \mathcal{M}_1, \mathcal{G}_1, \{E_{\mathcal M_1}^i(j)\}\right)$ and $\mathcal E_2=\left( \mathcal{M}_2, \mathcal{G}_2, \{E_{\mathcal M_2}^i(j)\}\right)$ be tropical toric vector bundles
		on $\trop(X_{\Sigma})$.  The tropical toric vector bundle $\mathcal
		E_1 \oplus \mathcal E_2$ is given by the valuated matroid $\mathcal
		M_1 \oplus \mathcal M_2$ on the ground set
		$\mathcal G_1 \sqcup \mathcal G_2$ with $E^i(j) = E^i_{\mathcal M_1}(j) \sqcup
		E^i_{\mathcal M_2}(j)$.  Compatibility for $\mathcal E_1 \oplus
		\mathcal E_2$ follows by setting $B_{\sigma} = B_{\sigma, \mathcal
			M_1} \sqcup B_{\sigma,\mathcal M_2}$, and $\mathbf u(\sigma) =
		\mathbf u(\sigma)_{\mathcal M_1} \cup \mathbf{u}(\sigma)_{\mathcal
			M_2}$.  
		
		A tropical toric vector bundle $\mathcal E$ is {\em decomposable} if
		it is the direct sum of two tropical toric vector bundles of smaller
		rank.  A tropical toric vector bundle is {\em indecomposable} if it is not decomposable.
	\end{definition}

	Recall that a matroid is {\em connected} if it is not a nontrivial
	direct sum of two matroids of lower ranks. The following lemma shows that the Krull-Schmidt theorem (see \cite[Section 4.3]{PerT}) holds for tropical toric vector bundles.
	
	\begin{lemma}\label{l:Krull-Schmidt}
		A tropical toric vector bundle $\mathcal E=\left( \mathcal{M}, \mathcal{G}, \{E^i(j)\}\right)$ is indecomposable if and
		only if the underlying matroid $\underline{\mathcal M}$ is connected.  Every
		tropical toric vector bundle can be written uniquely as the direct
		sum of indecomposable toric vector bundles.
	\end{lemma}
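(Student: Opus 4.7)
The plan is to reduce the first statement to the definition of direct sum together with a decomposition property for valuated matroids whose underlying matroid is disconnected, and then to deduce uniqueness from the uniqueness of the connected-component decomposition of an ordinary matroid.

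The forward implication ``$\underline{\mathcal M}$ connected implies $\mathcal E$ indecomposable'' is immediate from unwinding the definition: if $\mathcal E = \mathcal E_1 \oplus \mathcal E_2$ with both summands of positive rank, then by construction the valuated matroid of $\mathcal E$ is $\mathcal M_1 \oplus \mathcal M_2$, whose underlying matroid $\underline{\mathcal M}_1 \oplus \underline{\mathcal M}_2$ is disconnected.

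For the converse, suppose $\underline{\mathcal M} = \underline{\mathcal M}_1 \oplus \underline{\mathcal M}_2$ on $\mathcal G = \mathcal G_1 \sqcup \mathcal G_2$. I would first promote this to a decomposition of the valuated matroid. Fix bases $B_k^0$ of $\underline{\mathcal M}_k$ and define $\nu_1(B_1) := \nu(B_1 \cup B_2^0)$ and $\nu_2(B_2) := \nu(B_1^0 \cup B_2) - \nu(B_1^0 \cup B_2^0)$. That each $\nu_k$ satisfies the valuated exchange axiom for $\underline{\mathcal M}_k$ is immediate by applying the exchange axiom of $\mathcal M$ to elements within one component. The compatibility identity
\[
\nu(B_1 \cup B_2) + \nu(B_1^0 \cup B_2^0) = \nu(B_1 \cup B_2^0) + \nu(B_1^0 \cup B_2)
\]
is proved by a multi-element valuated exchange between $B_1 \cup B_2$ and $B_1^0 \cup B_2^0$, swapping $B_1 \setminus B_1^0$ with $B_1^0 \setminus B_1$: any such swap is forced to remain inside a single connected component since otherwise the resulting sets would fail to be bases, and applying the exchange in both directions yields the claimed equality. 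Hence $\mathcal M = \mathcal M_1 \oplus \mathcal M_2$. Since the lattice of flats of a direct sum of matroids is the product of the lattices of flats, each $E^i(j)$ decomposes uniquely as $E^i_1(j) \sqcup E^i_2(j)$ with $E^i_k(j) \in \mathcal L(\underline{\mathcal M}_k)$. Compatibility of each $\mathcal E_k$ in the sense of Definition~\ref{d:tropicaltvb} then follows by partitioning the basis $B_\sigma$ and multiset $\mathbf u(\sigma)$ for $\mathcal E$ across the two ground sets: each $\mathbf w_{\mathbf u}$ lies in exactly one $\mathcal G_k$, and joins in the product lattice decompose coordinatewise, so the defining formula \eqref{eqtn:tropicalcompatibility} splits accordingly.

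For the Krull-Schmidt statement, I would iterate the above decomposition along the (unique) partition of $\mathcal G$ into the connected components of $\underline{\mathcal M}$, producing $\mathcal E = \bigoplus_l \mathcal E_l$ with each $\mathcal E_l$ indecomposable by the first part of the lemma. Uniqueness follows because any decomposition of $\mathcal E$ into indecomposable summands induces, via the first part of the lemma, a decomposition of $\underline{\mathcal M}$ into connected matroids, and the connected-component decomposition of an ordinary matroid is unique. The main technical obstacle is the valuated matroid decomposition in the converse direction: promoting a combinatorial direct-sum decomposition of $\underline{\mathcal M}$ to an additive decomposition of $\nu$ requires the multi-exchange argument outlined above, while the remaining steps are bookkeeping in the product lattice.
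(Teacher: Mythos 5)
Your argument follows the same structure as the paper's: reduce both directions to the statement that a valuated matroid $\mathcal M$ decomposes as a direct sum of valuated matroids precisely when the underlying matroid $\underline{\mathcal M}$ is disconnected, then read off the decomposition of the flats from the product decomposition of the lattice of flats, and deduce uniqueness from uniqueness of connected components. The one substantive difference is that the paper invokes this valuated-matroid fact as a black box, while you sketch a proof of it; this is worthwhile, but the key step (the identity
\[
\nu(B_1 \cup B_2) + \nu(B_1' \cup B_2') = \nu(B_1 \cup B_2') + \nu(B_1' \cup B_2)
\]
for all bases $B_1,B_1'$ of $\underline{\mathcal M}_1$ and $B_2,B_2'$ of $\underline{\mathcal M}_2$) is asserted via a vague appeal to a ``multi-element valuated exchange,'' which is not one of the axioms. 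A clean route is to first treat the case $|B_1 \triangle B_1'|=2$, say $B_1'=B_1-i+j$: applying the single-exchange axiom to the pair $(B_1\cup B_2,\ B_1'\cup B_2')$ at $i$ forces the swap partner to be $j$ (any element of $\mathcal G_2$ would destroy the basis property, as you correctly observe), giving one inequality, and applying it to $(B_1'\cup B_2,\ B_1\cup B_2')$ at $j$ gives the reverse inequality; then extend to arbitrary $B_1,B_1'$ by a chain of single exchanges and telescope the resulting equalities $\nu(C_m\cup B_2)-\nu(C_m\cup B_2') = \nu(C_{m+1}\cup B_2)-\nu(C_{m+1}\cup B_2')$. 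With that made precise your argument is correct; the remaining steps (decomposing flats coordinatewise in the product lattice, splitting $B_\sigma$ and $\mathbf u(\sigma)$ across components to verify compatibility for each summand, and uniqueness via connected components of $\underline{\mathcal M}$) are exactly as in the paper.
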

	
	\begin{proof}
		The first claim follows from the fact that a valuated matroid
		$\mathcal M$ is a direct sum of two valuated matroids of smaller ranks
		if and only if the underlying matroid $\underline{\mathcal M}$ is not
		connected.  If the valuated matroid decomposes, then each flat in the
		lattice of flats of the underlying matroid is a union of flats of the
		two constituent matroids, which lets us define the summand tropical
		toric vector bundles.  The second claim then follows from the fact
		that matroids have well-defined connected components.
	\end{proof}
	
	\begin{remark}\label{rmk6.1}
		If $\mathcal E$ is an indecomposable toric vector
                bundle, then $\trop(\mathcal E)$ is also
                indecomposable.  To see this, the key observation is
                that when a realizable matroid is decomposable, it is
                decomposable as a direct sum of {\em realizable}
                matroids.  Thus if $\trop(\mathcal E)$ is
                decomposable, the realizable matroid
                $\underline{\mathcal M}(\mathcal E)$ is a direct sum
                of matroids of smaller ranks, which are themselves
                realizable.  This means that the vector space $E$
                decomposes as a direct sum of subspaces spanned by the
                realizations of each of the summand matroids.  This 
                induces a decomposition of $\mathcal E$ as a direct
                sum of toric vector bundles.
		
		Note, however, that the tropicalization of a decomposable vector
		bundle might not be decomposable; this depends on the choice of
		matroid for the bundle $\mathcal E$.  For example, consider the
		toric vector bundle $\mathcal E$ of Example~\ref{e:trickypolymatroid}.
		This is decomposable: we can write $K^3 =
		\spann(\mathbf{e}_1,\mathbf{e}_3) \oplus \spann(\mathbf{e}_2)$. This
		is reflected in the matroid $\mathcal M_2$ from that example, which
		is the direct sum $U_{2,3} \oplus U_{1,1}$.  The matroids $\mathcal
		M_1$ and $\mathcal M_3$ do not decompose as direct sums, however.
		Given a decomposable toric vector bundle $\mathcal E \cong \mathcal E_1
		\oplus \mathcal E_2$ we can always {\em choose} a valuated matroid ${\mathcal M}$ for $\mathcal E$ for which $\trop(\mathcal E)$ is decomposable.
	\end{remark}

	A tropical toric vector bundle $\mathcal E$ {\em splits as a
          direct sum of line bundles} if there are $\mathcal
        E_1,\dots, \mathcal E_r$ all of rank one with $\mathcal E =
        \mathcal E_1 \oplus \dots \oplus \mathcal E_r$.  If $\mathcal
        E$ is tropically minimal we then have $\underline{\mathcal
          M}(\mathcal E)$ equal to the uniform matroid $U_{r,r}$, as a
        tropically minimal tropical vector bundle of rank one has
        matroid $U_{1,1}$.
	
We now see the use of the partially modular condition of
Definition~\ref{d:partiallymodular} for decomposing into direct sums.

	\begin{proposition}[Tropical version of Grothendieck's theorem]  $($cf. \cite[Example 2.3 (3)]{Kly}, \cite[Example 3.3]{GUZ}$)$\label{p:P1splits}
		Let $\mathcal E$ be a partially modular tropical toric vector bundle on $\trop(\mathbb P^1)$
		that is tropically minimal.  Then $\mathcal E$ splits as a direct
		sum of line bundles.
	\end{proposition}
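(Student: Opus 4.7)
The fan of $\mathbb P^1$ has two rays, so $\mathcal E$ is determined by two filtrations $\{E^0(j)\}_{j \in \mathbb Z}$ and $\{E^1(j)\}_{j \in \mathbb Z}$ of the valuated matroid $\mathcal M$. The plan is to use the two hypotheses in turn: partial modularity will produce a single basis $B$ compatible with both filtrations, and tropical minimality will then force the whole underlying matroid to be free on $B$, after which the splitting is immediate.

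For the first step I would apply Lemma~\ref{l:partiallymodularmeansbasis} to the two rays of $\trop(\mathbb P^1)$ to obtain a basis $B = \{w_1,\dots,w_r\}$ of $\underline{\mathcal M}$ such that every $E^0(k)$ and every $E^1(l)$ is the join in $\mathcal L(\underline{\mathcal M})$ of a subset of $B$.

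Next I would study the map $\phi \colon 2^B \to \mathcal L(\underline{\mathcal M})$ sending $S$ to $\bigvee_{w \in S} w = \cl(S)$. Since $B$ is independent in $\underline{\mathcal M}$, one has $\cl(S) \cap B = S$ for any $S \subseteq B$, so $\phi(S_1) \leq \phi(S_2)$ implies $S_1 \subseteq \cl(S_2) \cap B = S_2$; thus $\phi$ is an order embedding, and its image is a subposet of $\mathcal L(\underline{\mathcal M})$ order-isomorphic to the Boolean lattice $2^B$, which is itself the lattice of flats of the free matroid $U_{r,r}$. By the choice of $B$ this image contains every $E^i(j)$, so tropical minimality forces $\phi(2^B) = \mathcal L(\underline{\mathcal M})$. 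A simple matroid of rank $r$ whose lattice of flats is Boolean has exactly $r$ atoms and therefore $r$ ground set elements, so $\mathcal G = B$ and $\underline{\mathcal M} = U_{r,r}$.

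Having reduced to the free matroid, $\mathcal M$ has the unique basis $B$, whose valuation may be normalized to zero, so $\mathcal M$ decomposes as the direct sum $\bigoplus_{s \in B} U_{1,1}$ of trivially valuated rank-one valuated matroids. For each $s \in B$ define $\mathcal E_s$ to be the rank-one tropical toric vector bundle with ground set $\{s\}$, trivial valuation, and filtrations $E^i_s(j) = \{s\}$ if $s \in E^i(j)$ and $\emptyset$ otherwise; a direct check against the definition of the direct sum in Section~\ref{ss:directsum} gives $\mathcal E = \bigoplus_{s \in B} \mathcal E_s$. The main obstacle I anticipate is confirming that $\phi(2^B)$ qualifies as a subposet isomorphic to the lattice of flats of a matroid in the precise sense required by the definition of tropical minimality --- in particular that the order-isomorphism onto $2^B$ is enough to certify the image as ``the lattice of flats of $U_{r,r}$'' for the purpose of invoking minimality; everything else is routine matroid theory once the basis from Lemma~\ref{l:partiallymodularmeansbasis} is in hand.
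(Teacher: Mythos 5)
Your proof is correct and follows the same route as the paper: both invoke Lemma~\ref{l:partiallymodularmeansbasis} to obtain a single basis $B$ compatible with both rays, use tropical minimality to conclude the lattice of flats is the Boolean lattice on $B$ so that $\underline{\mathcal M} = U_{r,r}$, and then split the matroid as $\bigoplus U_{1,1}$. Your concern at the end is unfounded: the definition of tropical minimality speaks of a \emph{subposet}, not a sublattice, so your order-embedding $\phi \colon 2^B \hookrightarrow \mathcal L(\underline{\mathcal M})$ is exactly what is needed, and the extra detail you supply (the verification that $\cl(S) \cap B = S$ for $S \subseteq B$, and the explicit construction of the rank-one summands $\mathcal E_s$) is a worthwhile fleshing-out of steps the paper's proof leaves implicit.
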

	
	\begin{proof}
		Since the fan of $\mathbb P^1$ has only two rays, we have two flags of
		flats $E^0(j)$ and $E^1(k)$ for $j,k \in \mathbb Z$.  By
		Lemma~\ref{l:partiallymodularmeansbasis} there is a basis $B$ of the lattice of
		flats $\mathcal L(\underline{\mathcal M})$ for which each $E^0(j)$ and
		$E^1(k)$ is the join of a subset of $B$.  Since  $\mathcal E$ is
		tropically minimal, the lattice of flats must equal the lattice of
		subsets of $B$, so the underlying matroid is the uniform matroid
		$U_{r,r}$, where $r$ is the rank of $\mathcal E$.  This is the direct
		sum $\oplus_{i=1}^r U_{1,1}$, so we conclude that $\mathcal E$ splits
		as the direct sum of line bundles.
	\end{proof}

	\begin{example}\label{vbonP1split}
		\begin{enumerate}
	\item Consider the tropically minimal tropical toric vector bundle $\mathcal
          E$ on $\trop(\mathbb{P}^1)$ of Example \ref{e:reflexive}
          \eqref{item:trs6}.  The lattice of flats is shown in Figure
          \ref{f:Matroideg1}. The underlying matroid decomposes as
          \(U_{3,3}=\{1\} \oplus \{3\} \oplus \{4\}\).  Consider the
          filtrations
			
				$$L_1^{1}(j) = \begin{cases} 
				\{1\} &  j \leq 2 \\
				\emptyset  &  j > 2 \\
			\end{cases}, \, 
			L_2^{1}(j) = \begin{cases} 
				\{3\} &  j \leq 1 \\
				\emptyset  &  j > 1 \\
			\end{cases},  \, 
			L_3^{1}(j) = \begin{cases} 
				\{4\} &  j \leq 0 \\
				\emptyset  &  j > 0 \\
			\end{cases},$$
			$$L_1^{2}(j) = \begin{cases} 
				\{3\} &  j \leq 2 \\
				\emptyset  &  j > 2 \\
			\end{cases}, \, 
			L_2^{2}(j) = \begin{cases} 
				\{4\} &  j \leq 1 \\
				\emptyset  &  j > 1 \\
			\end{cases},  \, 
			L_3^{2}(j) = \begin{cases} 
				\{1\} &  j \leq 0 \\
				\emptyset  &  j > 0 \\
			\end{cases}.$$

			These gives rise to tropical toric line bundles $\mathcal{L}_1$, $\mathcal{L}_2$ and $\mathcal{L}_3$ respectively. Note that $\mathcal{E}= \mathcal{L}_1 \oplus \mathcal{L}_2 \oplus \mathcal{L}_3$.
			
			The tropical toric vector bundle $\mathcal E$  is partially modular; in fact, the entire lattice of flats in Figure \ref{f:Matroideg1} is a modular lattice.

			\item \label{item:P1notdecomposable} Consider the rank-$3$ realizable matroid $\underline{\mathcal
				M}$ with ground set $\mathcal G = \{
			\mathbf{w}_1=(1,0,0), \mathbf{w}_2=(0,1,0),
			\mathbf{w}_3=(0,0,1), \mathbf{w}_4=(1,1,1)\}$.  Let $\mathcal
			E$ be the tropical toric vector bundle on $\trop(\mathbb P^1)$
			given by

			$$E^0(j)= \begin{cases} \mathcal G & j \leq 0 \\
				\{\mathbf{w}_1,\mathbf{w}_2\} &  j = 1 \\ \emptyset & j >1 \\ \end{cases} \, \,\, , \,
			E^1(j)= \begin{cases} \mathcal G & j \leq 0 \\
				\{\mathbf{w}_3,\mathbf{w}_4\} &  j = 1 \\ \emptyset & j >1 \\ \end{cases}.$$
			Then $\mathcal E$ is not decomposable.  It is also not partially modular, as
			\begin{equation*}
				\begin{split}
					3 & =   \rk((E^0(0) \meet E^1(1)) \join (E^0(1) \meet E^1(0))\\
				& \leq \rk( E^0(0) \meet E^1(1))+ \rk(E^0(1) \meet E^1(0)) - \rk(E^0(1) \meet E^1(1)) \\
				& =4\\
				\end{split}
			\end{equation*}
			Note that while $\underline{\mathcal M}$ is
                        realizable, $\mathcal E$ is not realizable.
                        This is because in the realizable case we
                        impose the condition that all intersections of
                        $E^i(j)$ are flats of the matroid, which is
                        not the case here, as $\spann(E^0(1)) \cap \spann(E^1(1)) = \spann((1,1,0))$, and a multiple of $(1,1,0)$ 
                        is not in the ground set of
                        $\underline{\mathcal M}$.  This shows that a
                        condition like partially modularity is
                        necessary for Proposition~\ref{p:P1splits}.
			\item  Consider $\mathcal O(a D_0+bD_1)$ on $\mathbb P^1$ as a toric vector bundle.  This has
			$$E^0(j) = \begin{cases} K& j \leq a \\ 0 & j>a \\ \end{cases} \, \, \, ,
			E^1(j) = \begin{cases} K & j \leq b \\ 0 & j>b \\ \end{cases}. \, \, \,
			$$ for $i = 0,1$.  Let $\mathcal E = \mathcal O(D_0+2D_1) \oplus
			\mathcal O(2D_0+D_1)$, and choose the matroid $\mathcal G = \{ (1,0),
			(0,1), (1,1) \}$ for $\mathcal E$.  Then $\trop(\mathcal E)$ is not
			decomposable, so not a direct sum of line bundles.  In this case
			$\trop(\mathcal E)$ is partially modular, but not tropically minimal;
			the subposet given by the matroid of $\{ (1,0), (0,1) \}$ contains the
			$E^i(j)$, so this shows the necessity of the tropically minimal
			condition for Proposition~\ref{p:P1splits}.
		\end{enumerate}      
	\end{example}  
	
	\begin{remark}
		Note that Example~\ref{vbonP1split} \eqref{item:P1notdecomposable}  shows that a tropical vector bundle whose underlying matroid is realizable need not be a realizable tropical vector bundle.
	\end{remark}

\subsection{Tensor product with line bundles.}\label{tensor prod}

We now define the tensor product of a tropical toric vector bundle by
a tropical line bundle.

Recall that for a toric vector bundle $\mathcal{E}$ on a toric variety
$X_{\Sigma}$ with associated filtrations $(E , \{ E^{i}(j) \})$, the
tensor product of $\mathcal{E}$ by a torus-invariant line bundle is
 obtained by shifting the filtration.
Specifically, consider the line bundle $\mathcal{O}_X(D)$,
for some torus-invariant Cartier divisor $D=\sum_{i \in
  \Sigma(1)}a_{i}D_{i}$ with the associated filtrations $(L, \{
L^{i}(j) \})$. Then the filtrations associated to the tensor product
$\mathcal{E} \otimes \mathcal{O}_X(D)$ are given by $(E , \{
E^{i}(j-a_{i}) \})$ (see \cite[Example II.9.]{gonzalez}).  
Motivated by
this we define the tensor product by a line bundle as follows.

\begin{definition}
Let \(\mathcal{E}=\left( \mathcal{M}, \mathcal{G}, \{E^i(j)\}\right) \) be
a tropical toric vector bundle on tropical toric variety
$\trop(X_{\Sigma})$.  A tropical line bundle $\mathcal{L}$ is
determined by a sequence of integers \(a_1, \ldots, a_s\) (see Example \ref{e:reflexive}
\eqref{item:trs1}).  We define the tensor product \(\mathcal{E}
\otimes \mathcal{L}\) by the data \((\mathcal{M}, \mathcal{G},
\{E^i(j-a_i)\})\): for the ray $\rho_i$ the $j$th piece of the filtration for $\mathcal E \otimes \mathcal L$ is the $(j-a_i)$th piece $E^i(j-a_i)$ of the filtration for $\mathcal E$.
\end{definition}

We now define the notion of isomorphism of tropical toric vector bundle motivated by the following result on toric vector bundles.
	\begin{proposition}\cite[Corollary 1.2.4]{Kly}
		Let $\mathcal{E}$ and $\mathcal{F}$ be indecomposable toric vector bundles on a complete toric variety \(X_{\Sigma}\). If $\mathcal{E}$ and $\mathcal{F}$ are isomorphic as ordinary bundles, then $\mathcal{E}$ is equivariantly isomorphic to $\mathcal{F} \otimes \divv(\chi^m)$ for some character \(m \in M\).
	\end{proposition}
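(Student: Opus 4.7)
The plan is to analyze $\operatorname{Hom}(\mathcal{E}, \mathcal{F})$ as a $T$-representation and extract an equivariant isomorphism from its weight decomposition. The two key inputs will be the inherited $T$-equivariant structure on $\mathcal{H}om(\mathcal{E}, \mathcal{F})$, and Atiyah's Krull--Schmidt theorem for coherent sheaves on complete schemes over an algebraically closed field, which says that the endomorphism ring of an indecomposable coherent sheaf is local.

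First, I would observe that $\mathcal{H}om_{\mathcal{O}_{X_\Sigma}}(\mathcal{E}, \mathcal{F})$ is naturally $T$-equivariant, so its space of global sections
\[
\operatorname{Hom}(\mathcal{E}, \mathcal{F}) \;=\; H^0(X_\Sigma, \mathcal{H}om(\mathcal{E}, \mathcal{F}))
\]
is a finite-dimensional rational $T$-representation (using completeness of $X_\Sigma$), and therefore decomposes into character eigenspaces
\[
\operatorname{Hom}(\mathcal{E}, \mathcal{F}) \;=\; \bigoplus_{m \in M} \operatorname{Hom}(\mathcal{E}, \mathcal{F})_m .
\]
A direct unwinding of the definition of tensoring with $\chi^m$ shows that $\operatorname{Hom}(\mathcal{E}, \mathcal{F})_m$ is precisely the space $\operatorname{Hom}^{T}(\mathcal{E}, \mathcal{F} \otimes \chi^m)$ of $T$-equivariant morphisms into the twist.

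Second, I would apply Krull--Schmidt to conclude that $\operatorname{End}(\mathcal{E})$ and $\operatorname{End}(\mathcal{F})$ are finite-dimensional local $k$-algebras with residue field $k$. Fixing the hypothesised (non-equivariant) isomorphism $\phi\colon\mathcal{E}\to\mathcal{F}$, composition with $\phi^{-1}$ identifies $\operatorname{Hom}(\mathcal{E},\mathcal{F})$ with $\operatorname{End}(\mathcal{E})$ as a right $\operatorname{End}(\mathcal{E})$-module, and under this identification the set of non-isomorphisms corresponds to the unique maximal ideal --- a proper $k$-subspace $N$.

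Writing $\phi = \sum_{m \in M} \phi_m$ in the weight decomposition, we have $\phi \notin N$, so at least one component $\phi_m$ must lie outside $N$ and therefore be an isomorphism. By construction such a $\phi_m$ is an equivariant isomorphism $\mathcal{E} \xrightarrow{\sim} \mathcal{F} \otimes \chi^m$, which is the desired conclusion. The main obstacle is verifying that Atiyah's Krull--Schmidt theorem applies in our complete but possibly non-projective toric setting, and justifying carefully that the non-isomorphisms form a proper subspace; both are standard, but use completeness of $X_\Sigma$ in an essential way.
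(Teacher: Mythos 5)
The paper does not supply its own proof of this proposition---it is quoted verbatim as \cite[Corollary 1.2.4]{Kly}, so there is no argument in the text against which to compare yours. Judged on its own merits, your proof is correct and is in fact the standard route, essentially Klyachko's: decompose $\operatorname{Hom}(\mathcal E, \mathcal F)$ as a rational $T$-representation (finite-dimensional by completeness), identify the weight-$m$ piece with $T$-equivariant morphisms $\mathcal E \to \mathcal F \otimes \chi^m$, and then use that $\operatorname{End}(\mathcal E)$ is a finite-dimensional local algebra (Atiyah's Krull--Schmidt for indecomposable coherent sheaves on a complete scheme over an algebraically closed field) so that the non-isomorphisms in $\operatorname{Hom}(\mathcal E, \mathcal F) \cong \operatorname{End}(\mathcal E)$ form the unique maximal ideal, a proper linear subspace. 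Since the given isomorphism $\phi = \sum_m \phi_m$ lies outside that subspace, some weight component $\phi_m$ does too, and that $\phi_m$ is the desired equivariant isomorphism onto the character twist. Your flagged concern---that Atiyah's locality result is often stated for projective rather than merely complete varieties---is real but resolves easily: completeness gives finite-dimensionality of $\operatorname{End}(\mathcal E)$, and in a finite-dimensional $k$-algebra over an algebraically closed field, an indecomposable module has local endomorphism ring because any non-nilpotent, non-invertible endomorphism would produce a nontrivial idempotent via Fitting's lemma. No gap remains.
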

	Note that in view of Lemma \ref{l:Krull-Schmidt} it suffices to define isomorphism of tropical toric vector bundle for indecomposable bundles.
	\begin{definition}\label{Dtropiso}
		Two indecomposable tropical toric vector bundles $\mathcal{E}_1=\left( \mathcal{M}_1, \mathcal{G}_1, \{E_1^i(j)\}\right) $ and $\mathcal{E}_2=\left( \mathcal{M}_2, \mathcal{G}_2, \{E_2^i(j)\}\right) $ on a tropical toric variety $\trop(X_{\Sigma})$ are said to be isomorphic if the valuated  matroids \(\left( \mathcal{M}_1, \mathcal{G}_1\right)\) and \(\left( \mathcal{M}_2, \mathcal{G}_2\right)\) are isomorphic and under the identification of the lattice of flats $\mathcal{L}(\underline{\mathcal{M}}_1)$ and $\mathcal{L}(\underline{\mathcal{M}}_2)$, we have
		\begin{equation*}
			E^i_1(j)=E^i_2(j+ \mathbf{u} \cdot \mathbf{v}_i)
		\end{equation*}
		for some \( \mathbf{u} \in M\) and for all \(i\) and \(j\).
	\end{definition}

\begin{definition}
  The Picard group of a tropical toric variety $\trop(X_{\Sigma})$,
  denoted by $\text{Pic}(\trop(X_{\Sigma}))$, is the set of
  isomorphism classes of tropical toric line bundles under tensor
  product.
Any tropical line bundle on a
tropical toric variety $\trop(X_{\Sigma})$ is realizable (see
Definition \ref{pd:tropE}) as any rank 1 matroid is realizable. Hence,
we have $\text{Pic}(X_{\Sigma})=\text{Pic}(\trop(X_{\Sigma}))$, as expected from \cite{JMT19}.
\end{definition}

	\begin{example}  
	  Consider the toric variety $\mathbb{P}^2$.
          Any tropical toric line bundle $\mathcal{L}$ is determined by a vector \((a_0, a_1, a_2) \in \mathbb{Z}^3\). Let \(\mathbf{u}=(-a_1, -a_2) \in M=\mathbb{Z}^2\). Then $\mathcal{L}$ is isomorphic to the vector \((a_0+a_1+a_3, 0, 0)\) under the isomorphism defined in Definition \ref{Dtropiso}. Hence, the group $\text{Pic}(\trop(\mathbb P^2))$ is isomorphic to $\mathbb Z$.
          	\end{example}

        \begin{remark}
There is not a simple definition of the tensor product of tropical
toric vector bundles.  This is because there is not a tensor product
of matroids in general.  In \cite{LasVergnas} Las Vergnas showed that,
given two matroids $\mathcal M_1$, $\mathcal M_2$, there is in general
no definition of a matroid with ground set the product $\mathcal G_1
\times \mathcal G_2$ of the two ground sets, rank the product of the
ranks, the product of two flats a flat, and the restriction of the
matroid to sets of the form $\{x\} \times \mathcal M_2$ or $\mathcal
M_1 \times \{y\}$ is a flat.  These are all natural requirements for the
tensor product of matroids, and thus for the tensor product of
tropical toric vector bundles, so any definition of ``the'' or ``a''
tensor product needs to relax one of them.

When $\mathcal E$ and $\mathcal F$ are toric vector bundles on a toric
variety $X_{\Sigma}$, so define realizable tropical vector bundles, 
the choices of realizations of  matroids for $\mathcal E$ and $\mathcal F$ give rise to a
matroid for $\trop(\mathcal E \otimes \mathcal F)$.
However this matroid may depend on the choice of realization of the two matroids.  
For example, suppose that the matroid of both $\mathcal E$ and
$\mathcal F$ is the uniform matroid $U(2,4)$, with realizations
\begin{center}
	$\left( \begin{array}{rrrr}  1 &  0 & 1 & 1 \\
		0  & 1 & 1 & a  \\\end{array} \right) \text{ and }$
	$\left( \begin{array}{rrrr}  1 &  0 & 1 & 1 \\
		  0  & 1 & 1 & b  \\\end{array} \right).$
\end{center}
We write $\mathbf{v}_1,\dots,\mathbf{v}_4$ for the columns of the first matrix, and $\mathbf{v}'_1,\dots,\mathbf{v}'_4$ for the columns of the second matrix.  The tensor product of these two vector configurations defines a matroid on ground set $\{ \mathbf{v}_i \otimes \mathbf{v}'_j : 1 \leq i,j \leq 4 \}$ of size $16$.  The subconfiguration $\{ \mathbf{v}_1 \otimes \mathbf{v}'_1,
\mathbf{v}_2 \otimes \mathbf{v}'_2,   \mathbf{v}_3 \otimes \mathbf{v}'_3,   \mathbf{v}_4 \otimes \mathbf{v}'_4 \}$ has matrix
$$\left( \begin{array}{rrrr} 1 & 0 & 1 & 1 \\ 0 & 0 & 1 & b \\ 0 & 0 &
  1 & a \\ 0 & 1 & 1 & ab \\ \end{array} \right),$$ which has
determinant $a-b$.  Thus this subset is a basis for the matroid of $\trop(\mathcal E \otimes \mathcal F)$ if and only if $a \neq b$, so the tropicalization of the tensor product depends on the choice of realization.

In this case there is a generic choice for the matroid, which is the
matroid of the tensor product when none of $\{a-b, ab-1, ab-a-b,
ab-a+1, ab-b+1,a+b-1 \}$ are zero.  However this is a consequence of
the realization space of the matroid $U(2,4)$ being irreducible.  When
the realization spaces of the matroids for $\mathcal E$ and $\mathcal F$  are reducible there may be no
choice of generic matroid for the tensor product.
\end{remark}

	\subsection{Parliaments of polytopes}
	\label{s:parliament}
	
	In \cite{DJS} Di Rocco, Jabbusch, and Smith associate a collection of
	polytopes to a toric vector bundle that encode the global sections of
	the bundle.  We now give a tropical formulation of this, which lets us
	{\em define} $h^0(\mathcal E)$ for a tropical toric vector bundle.  For the rest of this section we assume that $X_{\Sigma}$ is smooth and complete.
	
	 Recall that for a tropical toric vector bundle $\mathcal E$
         with associated filtrations \( \{E^i(j)\} \) and $\mathbf{w}$
         in the ground set of $\underline{\mathcal M}(\mathcal E)$ we
         have $\mathbf{d}_{\mathbf{w}} \in \mathbb Z^{|\Sigma(1)|}$
         given by $(\mathbf{d}_{\mathbf{w}})_i = \max(j : \mathbf{w}
         \in E^i(j) )$.
	
	\begin{definition} \label{d:parliament}
		Let $\mathcal E$ be a tropical toric vector bundle. For each $\mathbf{w}$ in the ground set $\mathcal G$ of $\underline{\mathcal M}(\mathcal E)$ we set
		$$P_{\mathbf{w}} = \{ \mathbf{u} \in M_{\mathbb R} : \mathbf{u} \cdot \mathbf{v}_i \leq (\mathbf{d}_{\mathbf{w}})_i \text{ for all } 1 \leq i \leq s \}.$$
		We call $\{P_{\mathbf{w}} : \mathbf{w} \in \mathcal G\}$ the {\em parliament of polytopes} associated to $\mathcal E$.
		For $\mathbf{u} \in M$, we define
		\begin{equation} \label{eqtn:h0} h^0(\mathcal E)_{\mathbf{u}} =\rk\left( \bigvee\limits_{\mathbf{u} \in P_{\mathbf{w}}} \mathbf{w} \right),\end{equation}
		and $$h^0(\mathcal E) = \sum_{\mathbf{u} \in M} h^0(\mathcal E)_{\mathbf{u}}.$$
	\end{definition}  
	Note that the assumption that $\Sigma$ is complete implies that each
	$P_{\mathbf{w}}$ is bounded, so a polytope.  Thus $h^0(\mathcal
	E)_{\mathbf{u}}$ is zero for all but finitely many $\mathbf{u}$, and
	so the last sum is finite.
	
	\begin{example} \label{e:parliaments}
		We consider the tropicalization of the toric vector bundle $\mathcal E$ of Example~\ref{e:trickymatroid}.
		\begin{enumerate}
			\item  \label{item:firstparliament} For the matroid with ground set
                          $\{\mathbf{e}_1+\mathbf{e}_2,
                          \mathbf{e}_2,
                          \mathbf{e}_1+\mathbf{e}_3,
                          \mathbf{e}_3 \}$ in
                          Example~\ref{e:trickymatroid}
                          \eqref{item:e:trickymatroid0}, we have
			\begin{equation*}
				\mathbf{d}_{\mathbf{e}_1+\mathbf{e}_2} = \mathbf{d}_{\mathbf{e}_2} = (1, 0, 0), \mathbf{d}_{\mathbf{e}_1+\mathbf{e}_3} = (0,1,0) \text{ and } \mathbf{d}_{\mathbf{e}_3} = (0,0,1).
			\end{equation*}
			The parliament of polytopes is 
			\begin{equation*}
				\begin{split}
				&	P_{\mathbf{e}_1+\mathbf{e}_2} = P_{\mathbf{e}_2} = \{ (u_1,u_2) \in M_{\mathbb R} : -u_1-u_2 \leq 1, u_1 \leq 0, u_2 \leq 0 \},\\
				&	P_{\mathbf{e}_1+\mathbf{e}_3} = \{(u_1,u_2)  \in M_{\mathbb R} : -u_1-u_2 \leq 0, u_1 \leq 1, u_2 \leq 0 \} \text{ and }\\ 
				&  P_{\mathbf{e}_3} = \{(u_1,u_2)  \in M_{\mathbb R} : -u_1-u_2 \leq 0, u_1 \leq 0, u_2 \leq 1 \}.
				\end{split}
			\end{equation*}

			 These are shown in Figure~\ref{f:parliaments}.
			 	\begin{figure}
			 	\caption{The parliament of polytopes for Example~\ref{e:parliaments}\eqref{item:firstparliament} \label{f:parliaments}}
			 	\begin{tikzpicture}[scale=1]
\coordinate (A1) at (0,0);
\coordinate (A3) at (0,2);
\coordinate (A2) at (2,0);
\coordinate (A4) at (-2,2);
\coordinate (A5) at (-2,0);
\coordinate (A6) at (0,-2);
\coordinate (A7) at (2,-2);
\coordinate (B2) at (3,0);
\coordinate (B3) at (0,3);
\coordinate (B5) at (-3,0);
\coordinate (B6) at (0,-3);

\definecolor{c1}{RGB}{0,129,188}
\definecolor{c2}{RGB}{252,177,49}
\definecolor{c3}{RGB}{35,34,35}

	\draw[c3, <->, very thick] (B2) -- (B5);
	\draw[c3, <->, very thick] (B3) -- (B6);
	
	\fill [black!30] (A1) -- (A5) --(A6)  -- cycle;
	\fill [black!30] (A1) -- (A3) --(A4)  -- cycle;
		\fill [black!30] (A1) -- (A2) --(A7)  -- cycle;
		\filldraw[black] (0,0) circle (2pt);
		\filldraw[black] (2,0) circle (2pt);
		\filldraw[black] (2,-2) circle (2pt);
		\filldraw[black] (0,2) circle (2pt);
		\filldraw[black] (-2,0) circle (2pt);
		\filldraw[black] (-2,2) circle (2pt);
		\filldraw[black] (0,2) circle (2pt);
		\filldraw[black] (0,-2) circle (2pt);
			\node at (-1.5,-1.3) { \( P_{\mathbf{e}_2}\)};
						\node at (-1.5,2.3) { \( P_{\mathbf{e}_3}\)};
												\node at (1.5,-2.3) { \( P_{\mathbf{e}_1+\mathbf{e}_3}\)};
			 		\end{tikzpicture}
			 \end{figure}
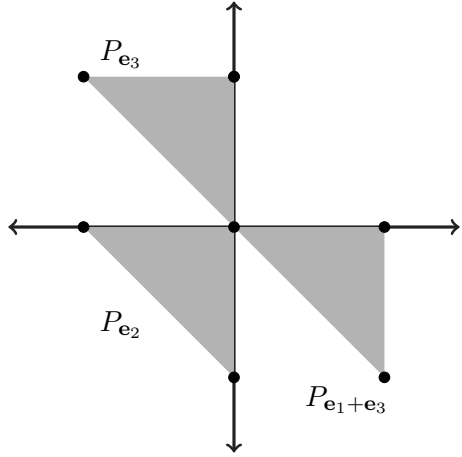 
			 
			   Note that
			 \begin{equation*}
			 	\begin{split}
			 		h^0(\trop(\mathcal E)) = &	h^0(\trop(\mathcal E))_{(0,0)}+h^0(\trop(\mathcal E))_{(1,0)} + h^0(\trop(\mathcal E))_{(1,-1)}+h^0(\trop(\mathcal E))_{(0,1)} \\
			 	& + h^0(\trop(\mathcal E))_{(-1,1)} + h^0(\trop(\mathcal E))_{(-1,0)}+ h^0(\trop(\mathcal E))_{(0,-1)}\\
			 	& =3+1+1+1+1+2+2=11.
			 	\end{split}
			 \end{equation*}
			\item For the matroid with ground set $\{\mathbf{e}_1, \mathbf{e}_2,
			\mathbf{e}_1+\mathbf{e}_3, \mathbf{e}_3 \}$ in Example~\ref{e:trickymatroid} \eqref{item:e:trickymatroid}, we have
			$\mathbf{d}_{\mathbf{e}_1} = (1, 0,0)$, so $P_{\mathbf{e}_1} =
			P_{\mathbf{e}_2}$, $P_{\mathbf{e}_1+\mathbf{e}_3}$ and $P_{\mathbf{e}_3}$ are as above.  Thus $h^0(\trop(\mathcal E))$ is again $11$.  
			
			\item For the matroid with ground set $\{\mathbf{e}_1+\mathbf{e}_2, \mathbf{e}_1-\mathbf{e}_2, \mathbf{e}_2, \mathbf{e}_1+\mathbf{e}_3, \mathbf{e}_3,\mathbf{e}_1+\mathbf{e}_2+\mathbf{e}_3 \}$ in Example~\ref{e:trickymatroid} \eqref{item:e:trickymatroid1},   $P_{\mathbf{e}_1-\mathbf{e}_2} = P_{\mathbf{e}_2} = P_{\mathbf{e}_1+\mathbf{e}_2}$.  We have $\mathbf{d}_{\mathbf{e}_1+\mathbf{e}_2+\mathbf{e}_3} = (0, 0, 0)$, so 
			  $$P_{\mathbf{e}_1+\mathbf{e}_2+\mathbf{e}_3} = \{ (u_1,u_2) \in M_{\mathbb R} : -u_1-u_2 \leq 0, u_1 \leq 0, u_2 \leq 0\} = \{ \mathbf{0} \}.$$
                          We then get
\begin{equation*}                     \begin{split}h^0(\trop(\mathcal E)) = h^0(\trop(\mathcal E))_{(0,0)} +
h^0(\trop(\mathcal E))_{(1,0)} + h^0(\trop(\mathcal E))_{(1,-1)}+h^0(\trop(\mathcal E))_{(0,1)} \\
			 	 + h^0(\trop(\mathcal E))_{(-1,1)} + h^0(\trop(\mathcal E))_{(-1,0)}+ h^0(\trop(\mathcal E))_{(0,-1)}\\
			 	 =3+1+1+1+1+2+2=11.
  \end{split}
  \end{equation*}
		\end{enumerate}    
		Note that in this example dimension of the space of global sections does not depend on the choice of the matroid. The following lemma shows that this is true in general.
	\end{example}

	\begin{lemma} \label{l:tvbh0}
		When $\mathcal E = \trop(\mathcal F)$ for a toric vector bundle $\mathcal F$ on $X_{\Sigma}$ we have
		$$h^0(\mathcal E)_{\mathbf{u}} = h^0(\mathcal F)_{\mathbf{u}} \text{ for $\mathbf{u} \in M$ and thus }
		h^0(\mathcal E) = h^0(\mathcal F).$$
	\end{lemma}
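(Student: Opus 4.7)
The plan is to identify $h^0(\mathcal E)_{\mathbf{u}}$ with Klyachko's formula for $h^0(\mathcal F)_{\mathbf{u}}$ via the realizability of the matroid. Recall that for any toric vector bundle $\mathcal F$ on a complete toric variety $X_\Sigma$, Klyachko's theorem gives the $\mathbf{u}$-graded piece of global sections as
\[ H^0(X_\Sigma, \mathcal F)_{\mathbf{u}} = \bigcap_{i=1}^s E^i(\mathbf{u}\cdot \mathbf{v}_i), \]
where on the right the $E^i(j)$ are interpreted as subspaces of $E \cong K^r$. I would first record this formula, and then translate the parliament definition into a matroid-theoretic condition: by the definition of $\mathbf{d}_{\mathbf{w}}$, a lattice point $\mathbf{u} \in M$ lies in $P_{\mathbf{w}}$ if and only if $\mathbf{w} \in E^i(\mathbf{u}\cdot \mathbf{v}_i)$ for every ray $\rho_i$, and this happens iff $\mathbf{w} \in \bigcap_i E^i(\mathbf{u}\cdot \mathbf{v}_i)$, where the intersection is now taken as subsets of the ground set $\mathcal G$.

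Now set $V_{\mathbf{u}} := \bigcap_i E^i(\mathbf{u}\cdot\mathbf{v}_i) \subseteq E$ and let $F_{\mathbf{u}} \in \mathcal L(\underline{\mathcal M}(\mathcal E))$ be the flat corresponding to the subspace $V_{\mathbf{u}}$. Since $V_{\mathbf{u}}$ lies in $L'(\mathcal E) \subseteq L(\mathcal E)$, Definition~\ref{d:matroidoftvb} guarantees that $F_{\mathbf{u}}$ exists, and Theorem~\ref{t:matroidoflattice} ensures that $\rk(F_{\mathbf{u}}) = \dim V_{\mathbf{u}}$. By the equivalence established above, the set of $\mathbf{w} \in \mathcal G$ with $\mathbf{u} \in P_{\mathbf{w}}$ is exactly the set of atoms of $\underline{\mathcal M}(\mathcal E)$ contained in $F_{\mathbf{u}}$.

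Because the lattice of flats of a simple matroid is atomic, every flat is the join of the atoms it contains; hence
\[ \bigvee_{\mathbf{u} \in P_{\mathbf{w}}} \mathbf{w} \; = \; F_{\mathbf{u}}. \]
Combining these identifications gives
\[ h^0(\mathcal E)_{\mathbf{u}} \; = \; \rk\!\left(\bigvee_{\mathbf{u} \in P_{\mathbf{w}}} \mathbf{w}\right) \; = \; \rk(F_{\mathbf{u}}) \; = \; \dim V_{\mathbf{u}} \; = \; h^0(\mathcal F)_{\mathbf{u}}, \]
and summing over $\mathbf{u} \in M$ yields the global equality. The only delicate point, and what I expect to be the main thing to watch, is the identification of the set-theoretic meet $\bigcap_i E^i(\mathbf{u}\cdot\mathbf{v}_i)$ inside $\mathcal L(\underline{\mathcal M}(\mathcal E))$ with the subspace intersection $V_{\mathbf{u}}$; this relies precisely on each filtered piece being a flat of the matroid and on the embedding $L(\mathcal E) \hookrightarrow \mathcal L(\underline{\mathcal M}(\mathcal E))$ preserving ranks, both of which are built into Definition~\ref{d:matroidoftvb}. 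This also explains why the conclusion is independent of the particular realizable matroid chosen for $\mathcal F$, as already illustrated in Example~\ref{e:parliaments}.
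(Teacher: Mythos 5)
Your proof is correct and takes essentially the same route as the paper: invoke Klyachko's description $H^0(X,\mathcal F)_{\mathbf u}\cong\bigcap_{i}E^i(\mathbf u\cdot\mathbf v_i)$ (via \cite{DJS}*{Proposition 1.1}), and then use the fact that this intersection is a flat of $\underline{\mathcal M}(\mathcal E)$ spanned by the ground-set elements it contains, so its rank equals its dimension. The paper's proof is terser — it just observes that the spanning set of $\bigcap_i E^i(\mathbf u\cdot\mathbf v_i)$ lies in the lattice of flats and cites the DJS argument — but the logical content is the same; you have simply filled in the atomicity step identifying $\bigvee_{\mathbf u\in P_{\mathbf w}}\mathbf w$ with the flat $F_{\mathbf u}$.
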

	
	\begin{proof}
		 The proof of Proposition 1.1 of \cite{DJS} gives an
                 isomorphism $H^0(X,\mathcal E)_{\mathbf{u}} \cong
                 \bigcap_{i=1}^s E^i(\mathbf{u} \cdot \mathbf{v}_i)$,
                 essentially following \cite[Corollary 4.13]{Kly}.
                 Since a spanning set of $\bigcap_{i=1}^s
                 E^i(\mathbf{u} \cdot \mathbf{v}_i)$ belongs to the
                 lattice of flats of the underlying matroid of
                 $\mathcal E = \trop(\mathcal F)$ (see Definition
                 \ref{d:matroidoftvb}), the lemma follows from the
                 proof of \cite[Proposition 1.1]{DJS}.
	\end{proof}  
	
	\begin{remark} \label{r:DJSok}
		A version of the formula~\eqref{eqtn:h0} is found in the work of
		Kaveh and Manon connecting toric vector bundles with piecewise
		linear valuations; see \cite{KMTVBvalTrop23}*{Proposition 3.12}.  Note that
		one corollary of Lemma~\ref{l:tvbh0} is that the dimension of
		$h^0(\mathcal E)$ for a toric vector bundle as given in
		\cite{DJS}*{Proposition 1.1} does not depend on the choice of
		matroid of $\mathcal E$.
	\end{remark}

\section{Stability} \label{s:stability}
	
In this section we give a definition of (semi)stability for a tropical
toric reflexive sheaf, and construct Jordan-H\"older and
Harder-Narisimhan filtrations.  The concept of modularity from lattice
theory plays an essential role here; see
\S\ref{ss:latticesmodularity}.  Throughout this section we assume that the fan $\Sigma$ defines a {\em projective} toric variety.  For convenience we also assume that $\Sigma$ is simplicial, though this is only needed for defining the polarization \eqref{eqtn:polarization}.

\subsection{Definition of stability}

Recall that a reflexive sheaf $\mathcal E$ on a nonsingular projective $n$-dimensional variety with distinguished very ample divisor $H$ is  {\em stable} if the slope 
	$$\mu(\mathcal F) = \frac{c_1(\mathcal F) \cdot H^{n-1}}{\rk(\mathcal F)}$$ 
of a torsion-free subsheaf $\mathcal F$ of $\mathcal E$ with \(0 < \rk(\mathcal F) < \rk(\mathcal E) \) and first Chern class $c_1(\mathcal F)$ 
satisfies
$$\mu(\mathcal F)  < \mu(\mathcal E).$$
The sheaf $\mathcal E$ is {\em semistable} if the condition holds with
$<$ replaced by $\leq$.
	
Note that for a toric variety  it suffices to consider only equivariant reflexive
subsheaves of $\mathcal E$ (see \cite[Remark 2.3.2]{DDK-Stab}) to
check (semi)stability. We now formulate a tropical version of this
concept. This first requires the definition of a reflexive subsheaf of
a tropical reflexive sheaf. We are motivated by the following
proposition which establishes a correspondence between equivariant
subsheaves and subspaces.

\begin{proposition}[{\cite[Corollary 0.0.2]{ErSEVB}}] \label{p:subsheafsubspace}
	Let $ \mathcal{E}$ be an equivariant reflexive sheaf on a toric variety $X_{\Sigma}$ with associated filtrations $\left( E, \{E^{i}(j) \} \right)$. Then  equivariant saturated subsheaves of $\mathcal{E}$ are in one-to-one correspondence with induced subfiltrations $\left( F, \{F^{i}(j) \} \right)$ of $\left( E, \{E^{i}(j) \} \right)$, where $F$ is a subspace of $E$ and $F^{i}(j) = E^{i}(j) \cap F$.
\end{proposition}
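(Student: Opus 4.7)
The plan is to invoke Klyachko's equivalence of categories for equivariant reflexive sheaves on a toric variety $X_\Sigma$, under which such a sheaf $\mathcal{E}$ corresponds to the data $(E, \{E^i(j)\})$ of a vector space together with compatible filtrations indexed by the rays. A morphism of equivariant reflexive sheaves corresponds to a linear map of the underlying vector spaces that respects the filtrations. In particular, equivariant subsheaves of $\mathcal{E}$ correspond bijectively to pairs $(F, \{F^i(j)\})$ where $F \subseteq E$ is a subspace and $\{F^i(j)\}$ is a compatible family of filtrations on $F$ satisfying $F^i(j) \subseteq E^i(j)$ for all $i$ and all $j$. The inclusion map on filtrations recovers the inclusion of subsheaves. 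This reduces the proposition to a purely linear-algebraic statement about which subfiltrations correspond to \emph{saturated} subsheaves.

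Next I would characterize saturation on the filtration side. Recall that an equivariant subsheaf $\mathcal F \subseteq \mathcal E$ is saturated precisely when the quotient $\mathcal E/\mathcal F$ is torsion-free. In the Klyachko dictionary, the quotient sheaf corresponds to the quotient filtration $(E/F, \{E^i(j)/F^i(j)\})$ (once one verifies this quotient filtration is still compatible, which follows from the compatibility of $E$ and $F$ on each maximal cone). Because any equivariant torsion subsheaf must come from strictly bigger filtered pieces in each direction, the quotient is torsion-free if and only if no piece $E^i(j)/F^i(j)$ can be enlarged by replacing $F^i(j)$ with $E^i(j)\cap F$ without changing $F$. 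This translates directly to the condition
\begin{equation*}
	F^i(j) = E^i(j) \cap F \qquad \text{for all } i, j.
\end{equation*}

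To make this last equivalence precise, I would argue in both directions. If $F^i(j) = E^i(j) \cap F$ for all $i,j$, then for any element $e \in E$ with $e \pmod F \in \mathcal E/\mathcal F$ annihilated by some regular function on an equivariant open, one checks that some multiple of $e$ lies in $F$ already, so $e \in E^i(j)\cap F = F^i(j)$, showing torsion-freeness. Conversely, if the inclusion $F^i(j) \subsetneq E^i(j) \cap F$ were strict for some $i,j$, then the filtration $(F, \{E^i(j)\cap F\})$ defines a strictly larger equivariant subsheaf of $\mathcal E$ with the same generic rank, and the resulting nontrivial quotient of $\mathcal E/\mathcal F$ onto it is supported on a torus-invariant proper subscheme, producing torsion in $\mathcal E/\mathcal F$.

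The main obstacle is verifying that the quotient filtration $\{E^i(j)/F^i(j)\}$ on $E/F$ really is compatible on each maximal cone $\sigma$—that is, that it admits a simultaneous splitting. This is automatic in the induced case because if $F^i(j) = E^i(j)\cap F$, then a basis of $E$ compatible with all $\{E^i(j) : \rho_i \in \sigma\}$ can be chosen to be the union of a basis of $F$ and a basis of a complement, each adapted to the filtration; projecting to $E/F$ yields the required compatible decomposition. Once this lemma is in place, together with the two directions of the saturation characterization, the stated bijection follows by combining with Klyachko's equivalence, which handles the functoriality and uniqueness automatically.
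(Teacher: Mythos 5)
The paper does not prove this proposition itself; it is stated as a quoted result from the literature (Corollary 0.0.2 of the erratum by Dasgupta--Dey--Khan, with a pointer to Hering--Nill--S\"u\ss, Proposition 2.3). So there is no proof in the paper to compare against; I can only assess the argument on its own.

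Your overall route is the standard one and is correct in outline: pass through Klyachko/Perling's equivalence, identify equivariant (reflexive) subsheaves with subfiltrations, and characterize saturation by the condition $F^i(j) = E^i(j)\cap F$. The converse direction (a strict inclusion $F^i(j)\subsetneq E^i(j)\cap F$ lets you enlarge the subsheaf without changing rank, so the original quotient has torsion supported on an invariant divisor) is correctly conceived, and the forward direction can be tightened by noting that $(F,\{E^i(j)\cap F\})$ is the \emph{maximal} filtration on $F$ admitting a filtered inclusion into $(E,\{E^i(j)\})$, and hence is precisely the filtration data of the saturation of any subsheaf with underlying space $F$.

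The ``main obstacle'' paragraph, however, is off-target and, as written, would be false. The proposition concerns equivariant \emph{reflexive sheaves}, and under Perling's classification a reflexive sheaf is encoded by an arbitrary family of decreasing filtrations with \emph{no} compatibility hypothesis among the rays of a maximal cone; such compatibility only enters for vector bundles. So there is nothing to verify on the quotient side, and indeed the quotient $\mathcal{E}/\mathcal{F}$ need not even be reflexive, so it is not encoded by filtration data in the same way. Worse, the claimed automatic simultaneous splitting ``a basis of $E$ compatible with all $\{E^i(j):\rho_i\in\sigma\}$ can be chosen to be the union of a basis of $F$ and a basis of a complement'' is simply false in general, even when $\mathcal{E}$ is a bundle: take $E=K^2$, $E^1(1)=\spann(\mathbf{e}_1)$, $E^2(1)=\spann(\mathbf{e}_2)$, and $F=\spann(\mathbf{e}_1+\mathbf{e}_2)$; then $E^i(1)\cap F=0$ but no basis of $E$ is simultaneously adapted to $E^1(1)$, $E^2(1)$, and $F$. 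Fortunately this entire discussion is unnecessary for the reflexive-sheaf statement you are proving; if you drop it, the proposal is essentially sound, modulo making the torsion-freeness argument in the forward direction precise (e.g.\ via the characterization of the saturation as the maximal filtration just mentioned).
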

See also \cite[Proposition 2.3]{HNS}.

	\begin{definition}\label{sheaf_asso_F}
	  Let $\mathcal E=\left( \mathcal{M}, \mathcal{G}, \{E^i(j)\}\right) $ be a tropical reflexive sheaf on a tropical toric variety $\trop(X_{\Sigma})$ as in Definition~\ref{d:tropicalreflexivesheaf}.
          We associate to a flat $F \in \mathcal{L}(\underline{\mathcal{M}})$ a tropical reflexive sheaf as follows.  Let $\mathcal M|_F$ be the restriction
          of the valuated matroid $\mathcal{M}$ (see \S \ref{ss:valuatedmatroids}).   Note that the lattice of flats $\mathcal{L}(\underline{\mathcal{M}}|_F)$ is the interval \([\emptyset, F]\) in $\mathcal{L}(\underline{\mathcal{M}})$.  For any ray $\rho_i \in \Sigma(1)$ and $j \in \mathbb Z$, set 
		$$F^i(j) = F \wedge E^i(j) \in
          \mathcal{L}(\underline{\mathcal M}) . $$ The data
          \(\mathcal{E}_F:=(\mathcal{M}|_F, F, \{F^i(j)\})\) defines a
          tropical toric reflexive sheaf, which we call a reflexive subsheaf,
          or just a subsheaf,  of $\mathcal{E}$.
	\end{definition}

		\begin{remark} In  \cite[Proposition 2.5]{GUZ}, the authors showed that every subbundle \(\mathcal F\) of a vector bundle
		\(\mathcal{E}\) on a metric graph $\Gamma$ is split:
		there exists another subbundle \(\mathcal H\) of
		$\mathcal{E}$ such that \(\mathcal{E} \cong
		\mathcal{F} \oplus \mathcal{H}\). This phenomenon does not occur for tropical toric vector bundles. Consider
		the Hirzebruch surface \(X=\mathbb F_1\), whose fan
		has rays spanned by  \(\mathbf{v}_1=(1,0), \,
		\mathbf{v}_2=(0,1), \, \mathbf{v}_3=(-1, 1) \) and
		\(\mathbf{v}_4=(0, -1)\). Consider the tangent bundle
		$\mathcal{E}=\mathscr{T}_X$ with filtrations given by
		\(\left( E, \{E^i(j)\} \right) \), where
		\[ E^{i}(j) = \left\{ \begin{array}
			{r@{\quad \quad}l}
			E=\spann(\mathbf{v}_1, \mathbf{v}_2) & j \leq 0 \\ 
			\spann(\mathbf{v}_i) & j=1 \\
			0 & j > 1
		\end{array} \right. .\] 
		Then $\mathcal{E}$ has as realizable matroid the matroid 
		$\underline{\mathcal{M}}(\mathcal{E})$ with ground set
		$\mathcal{G}=\{\mathbf{v}_1, \mathbf{v}_2,
		\mathbf{v}_3\}$.   Note that we do not include $\mathbf{v}_4$ as $\spann(\mathbf{v}_4) = \spann(\mathbf{v}_2)$. Consider the flat
		\(F=\spann(\mathbf{v}_2)\). Then, by
		\cite[Proposition 4.1.1]{KD} \(F\) corresponds to a
		subbundle $\mathcal{F}$ with associated filtrations
		\((F, \{F \cap E^i(j)\})\). If there is a subsheaf
		\(\mathcal H\) of $\mathcal{E}$ such that
		\(\mathcal{E} \cong \mathcal{F} \oplus \mathcal{H}\),
		we must have \(H^1(1)=\spann(\mathbf{v}_1)\).
		From the direct sum we also need
		\(H^3(1)=\spann(\mathbf{v}_3)\), which is not
		possible as $\mathcal{H}$ has rank one. In \cite[Section 3.2]{Lucie}, the author also constructed a similar example. Now tropicalizing, we get the tropical toric vector bundle $\trop(\mathcal E)=\left( \underline{\mathcal{M}}(\mathcal{E}), \mathcal E, \{E^i(j)\}\right) $ on $\trop(X_{\Sigma})$. The flat \(F\) corresponds to a tropical toric subsheaf $\mathcal{E}_F=\trop(\mathcal{F})$, which is not split as the above argument suggests.
		\end{remark}

	Recall that the first Chern class of a toric vector bundle $\mathcal{E}$ associated with Klyachko data \((E, \{E^i(j)\})\) has the following combinatorial description  (see \cite[Remark 3.2.4]{Kly}):
	\begin{equation} \label{eqtn:toricChernclass}
		c_1{\mathcal{(E)}}= \sum\limits_{\mathbf{v}_i \in \Sigma(1), j \in \mathbb{Z}} j \text{ dim} \left( \frac{E^{i}(j)}{E^{i}(j+1)}\right)  D_{i},
	\end{equation}
	We thus {\em define} the first Chern class of a tropical toric reflexive sheaf as follows.
	
	\begin{definition} \label{d:firstChernclass}
		Let $\mathcal E$ be a tropical reflexive sheaf on a tropical toric variety $\trop(X_{\Sigma})$ given by the data \((\mathcal{M}, \mathcal{G}, \{E^i(j)\})\). Recall that we have the exact sequence  \eqref{eqtn:A1}
		\begin{equation*}
		0 \longrightarrow M \longrightarrow \mathbb{Z}^s \stackrel{\phi}{\longrightarrow} A^1(\Sigma) \longrightarrow 0.
		\end{equation*} 
                Let $\mathbf{c} \in \mathbb Z^s$ be the vector with $i$th component
		\begin{align}  \label{eqtn:Chernclass}
                  c_i= \sum\limits_{j \in \mathbb{Z}} j  \left( \rk(E^{i}(j)) - \rk (E^{i}(j+1))\right).
		\end{align}
We define the {\em first Chern class} $c_1(\mathcal E)$ as the image of
$\mathbf{c}$ in \(A^1(\Sigma)\) under the map $\phi$.  We now define
the degree of a tropical reflexive sheaf $\mathcal E$. We fix a
polarization \(H\), i.e., an ample line bundle on the toric variety
\(X_{\Sigma}\). This gives rise to a vector
		\begin{equation} \label{eqtn:polarization}
			\mathbf{h}:=(h_1, \ldots, h_s) \in \mathbb{N}^s, \text{ where } h_i=D_i \cdot H^{n-1}.
		\end{equation}
		We define the degree as follows:
		\begin{equation*}
			\deg(\mathcal{E}):= \sum\limits_{\rho_i \in \Sigma(1), j \in \mathbb{Z}} j  \left( \rk (E^{i}(j)) - \rk (E^{i}(j+1)) \right) \, h_i \in \mathbb N.
		\end{equation*}
	We fix such a vector \(\mathbf{h}\) throughout the rest of the
        section, and all definitions implicitly depend on the choice
        of $\mathbf{h}$.  We define the slope as the quotient
		\begin{equation*}
			\mu(\mathcal{E}):=\frac{\deg(\mathcal{E})}{\mathrm{rank}(\mathcal{E})}.
		\end{equation*}
	\end{definition}
	
        \begin{example}
When $\mathcal E$ is a rank-$1$ toric vector bundle, given by $\mathbf{a} \in \mathbb Z^s$ as in Example~\ref{e:reflexive} \eqref{item:trs1}, we have $c_1(\mathcal E)$ equal to the image of $\mathbf{a}$ in $A^1(\Sigma)$.
\end{example}          
	
	\begin{remark}
		Let $\mathcal{E}$ be a toric vector bundle on a 
                toric variety \(X_{\Sigma}\). Consider the
                tropicalization $\trop(\mathcal{E})$ defined in
                Definition \ref{pd:tropE}. Then $c_1(\mathcal E) =
                c_1(\trop(\mathcal E))$, as the rank of the flat
                $E^i(j)$ in the tropicalization equals the dimension
                of the subspace that it corresponds to, so
                \eqref{eqtn:toricChernclass} agrees with \eqref{eqtn:Chernclass}.
                Since the
                definition of degree in both cases only depends on a
                class in $A^1(X_{\Sigma}) \cong A^1(\Sigma)$, we also
                have $\deg(\mathcal E) = \deg(\trop(\mathcal E))$, and
                so $\mu(\mathcal{E})=\mu(\trop(\mathcal{E}))$.  Note
                that this does not depend on the choice of matroid for
                $\mathcal E$, and thus on the precise tropicalization.
	\end{remark}
	
	\begin{definition}
		Let $\mathcal E$ be a tropical reflexive sheaf on a tropical toric variety $\trop(X_{\Sigma})$ given by the data \((\mathcal{M}, \mathcal{G}, \{E^i(j)\})\).  We say that $\mathcal E$ is {\em stable} if 
		$$\mu(\mathcal E_F)  < \mu(\mathcal E) $$                
		for all proper nonempty flats $F$ of the lattice of flats of the underlying matroid $\underline{\mathcal M}$.  The sheaf $\mathcal E$ is {\em semistable} if the condition holds with $<$ replaced by $\leq$.  
                
	\end{definition}  
	
	\begin{example}
		Let $\mathcal{E}$ be a rank-\(1\) tropical toric reflexive sheaf on a tropical toric variety $\trop(X_{\Sigma})$ given by the data \((\mathcal{M}, \mathcal{G}, \{E^i(j)\})\). Since the underlying matroid $\underline{\mathcal M}$ does not contain any proper flats, $\mathcal{E}$ is vacuously stable.
	\end{example}
	
	\begin{example} \label{eg_tangent_bdl} 
	  Let \(X_{\Sigma}=\mathbb{P}^n\) be projective space, and
          consider the tropical tangent bundle \(\trop(\mathscr{T})\) defined in Example \ref{e:reflexive}\eqref{item:trs3}.  The only $j$ which contributes to the first Chern class computation is $j=1$, for which $j(\rk(E^i(j)-\rk(E^i(j+1))=1$, so the first Chern class $c_1(\trop(\mathscr T))$ is the image of $(1,\dots,1)$ in $A^1(\mathbb P^n) \cong \mathbb Z$, which is $1 \in \mathbb Z$.  For the polarization $\mathbf{h} = (1,\dots,1)$, corresponding to $\mathcal O(1)$ on $\mathbb P^n$, the degree is then $n+1$, so the slope of  \(\trop(\mathscr{T})\) is given by
	  \[\mu(\trop(\mathscr{T}))=\frac{n+1}{n}.\]
          
		Let \(F\) be a proper flat of rank \(l < n\). Then \(|F|=l\) as any proper subset of
                the ground set of the matroid of $\trop(\mathscr{T})$
                is linearly independent. Hence, the slope of the
                subsheaf corresponding to \(F\) is
                \(\mu(\mathscr{T}_F)=1 <
                \mu(\trop(\mathscr{T}))\). Thus,
                \(\trop(\mathscr{T})\) is stable.  Note that this is also true for the 
                tangent bundle on (nontropical) projective space (see \cite[Theorem 1.3.2]{Okonek}).
	\end{example}

	\begin{example} Let \(\mathcal{E}=\left( \mathcal{M}, \mathcal{G}, \{E^i(j)\}\right) \) be a tropical toric vector bundle on a tropical toric variety $\trop(X_{\Sigma})$ and let $\mathcal{L}$ be a tropical line bundle, determined by $\mathbf{a} \in \mathbb Z^s$ as in Example~\ref{e:reflexive}\eqref{item:trs1}. 
          We have
	  \begin{equation*}
			\begin{split}
 \text{deg}(\mathcal{E} \otimes \mathcal{L})
 &= \sum\limits_{\rho_i \in \Sigma, j \in \mathbb{Z}} j
 \left( \rk (E^{i}(j-a_i)) - \rk (E^{i}(j-a_i+1))\right) h_{i}\\
 &= \sum\limits_{\rho_i \in \Sigma} \left(  \sum\limits_{j \in
     \mathbb{Z}} (j-a_i) \left( \rk (E^{i}(j-a_i)) - \rk (E^{i}(j-a+1))
 \right) h_i \right. \\
& \hspace{1cm} \left. + a_i \sum_{j \in \mathbb Z}  \left( \rk (E^{i}(j-a_i)) - \rk (E^{i}(j-a+1)) \right) h_i \right) \\
& = \deg(\mathcal E) + \sum_{\rho_i \in \Sigma}  a_i  \sum_{j \in \mathbb Z} \left( \rk (E^{i}(j-a_i)) - \rk (E^{i}(j-a+1)) \right) h_i\\
   & = \deg(\mathcal E) + \sum_{\rho_i \in \Sigma} a_i \rk(\mathcal G) h_i  \\
   & = \deg(\mathcal E) + \rk(\mathcal G) \deg(\mathcal L).\\
			\end{split}
		\end{equation*}
		Thus we have \(\mu(\mathcal{E} \otimes \mathcal{L})=\mu(\mathcal{E}) + \mu (\mathcal{L})\). Hence, a tropical toric vector bundle $\mathcal{E}$ is (semi)stable if and only if \(\mathcal{E} \otimes \mathcal{L}\) is (semi)stable for any tropical line bundle $\mathcal{L}$.
	\end{example}

	\begin{proposition} \label{Sprop1.1}
		Stable tropical toric vector bundles are indecomposable.
	\end{proposition}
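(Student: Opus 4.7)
The plan is to imitate the classical argument: if $\mathcal E$ decomposes as $\mathcal E = \mathcal E_1 \oplus \mathcal E_2$ with both summands of positive rank less than $\rk(\mathcal E)$, then $\mu(\mathcal E)$ is a weighted average of $\mu(\mathcal E_1)$ and $\mu(\mathcal E_2)$, which prevents both summands (viewed as subsheaves) from having strictly smaller slope than $\mathcal E$.

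The first step is to realize each summand as a subsheaf of $\mathcal E$ in the sense of Definition~\ref{sheaf_asso_F}. Write $\mathcal E_k = (\mathcal M_k, \mathcal G_k, \{E^i_{\mathcal M_k}(j)\})$ for $k = 1,2$, so that $\mathcal E$ has underlying valuated matroid $\mathcal M_1 \oplus \mathcal M_2$ on ground set $\mathcal G_1 \sqcup \mathcal G_2$ and filtration $E^i(j) = E^i_{\mathcal M_1}(j) \sqcup E^i_{\mathcal M_2}(j)$. Since the ground set of each summand is a flat of a matroid direct sum, $\mathcal G_k$ is a flat of $\underline{\mathcal M}$; the identity $\mathcal G_1 \wedge E^i(j) = E^i_{\mathcal M_1}(j)$ (and symmetrically for $k=2$) then shows that $\mathcal E_{\mathcal G_k}$, as defined in Definition~\ref{sheaf_asso_F}, coincides with $\mathcal E_k$. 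Moreover, both $\mathcal G_1$ and $\mathcal G_2$ are proper nonempty flats of $\underline{\mathcal M}$.

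The second step is to verify additivity of rank and degree. The rank of $\mathcal E_{\mathcal G_k}$ equals $\rk_{\underline{\mathcal M}}(\mathcal G_k) = \rk(\underline{\mathcal M_k})$, giving $\rk(\mathcal E) = \rk(\mathcal E_1) + \rk(\mathcal E_2)$. For the degree, the additivity $\rk(E^i(j)) = \rk(E^i_{\mathcal M_1}(j)) + \rk(E^i_{\mathcal M_2}(j))$ plugged into the formula in Definition~\ref{d:firstChernclass} shows $\deg(\mathcal E) = \deg(\mathcal E_1) + \deg(\mathcal E_2)$. Consequently
\begin{equation*}
\mu(\mathcal E) = \frac{\rk(\mathcal E_1)\mu(\mathcal E_1) + \rk(\mathcal E_2)\mu(\mathcal E_2)}{\rk(\mathcal E_1) + \rk(\mathcal E_2)}.
\end{equation*}

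Now assume for contradiction that $\mathcal E$ is stable and decomposable. Applying stability to the flats $\mathcal G_1$ and $\mathcal G_2$ yields $\mu(\mathcal E_1) < \mu(\mathcal E)$ and $\mu(\mathcal E_2) < \mu(\mathcal E)$. But then the convex combination above forces $\mu(\mathcal E) < \mu(\mathcal E)$, which is absurd. Hence $\mathcal E$ must be indecomposable. The only step requiring care is the identification of the direct summands with subsheaves in the sense of Definition~\ref{sheaf_asso_F}, i.e.\ checking that $\mathcal G_k$ is a flat and that $\mathcal G_k \wedge E^i(j)$ reproduces the summand filtration; everything else reduces to additivity of the invariants.
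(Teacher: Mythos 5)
Your proof is correct and takes essentially the same route as the paper's: identify the ground sets $\mathcal G_1, \mathcal G_2$ of the direct summands as proper nonempty flats of $\underline{\mathcal M}$, check that restriction to these flats recovers the summands, use additivity of rank and degree to write $\mu(\mathcal E)$ as a weighted average of $\mu(\mathcal E_1)$ and $\mu(\mathcal E_2)$, and derive a contradiction with stability. The only cosmetic difference is that you invoke stability on both flats to contradict the convex-combination identity directly, while the paper picks WLOG the summand of larger slope and shows $\mu(\mathcal E) \le \mu(\mathcal E_2)$, contradicting stability at the single flat $\emptyset \times \mathcal G_2$; the substance is identical.
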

	
	\begin{proof}
		Let \(\mathcal{E}=\left( \mathcal{M}, \mathcal{G},
                \{E^i(j)\}\right) \) be a stable tropical toric vector bundle of rank
                \(r\). Suppose that $\mathcal{E}$ is decomposable.
                Then $\mathcal M = \mathcal M_1 \oplus \mathcal M_2$, for valuated matroids $\mathcal M_1$ and $\mathcal M_2$ of ranks $r_1$, $r_2$ respectively on ground sets
                $\mathcal{G}_1$ and $\mathcal{G}_2$
with $\mathcal{G}=
\mathcal{G}_1 \sqcup \mathcal{G}_2$.
Note that                 their lattice of flats are related as follows:
		\begin{equation*}
			\mathcal{L}(\underline{\mathcal M})=\mathcal{L}(\underline{\mathcal M}_1) \times \mathcal{L}(\underline{\mathcal M}_2).
		\end{equation*}
		Then, \(E^i(j)=E^i_1(j) \times E^i_2(j) \), where \(E^i_l(j) \in \mathcal{L}(\underline{\mathcal M}_l) \) for \(l=1, 2\). So we can consider the tropical reflexive sheaves \(\mathcal{E}_l=\left( \mathcal{M}_l, \mathcal{G}_l, \{E_l^i(j)\}\right) \) for \(l=1, 2\). Note that we have
		\begin{equation*}
			\rk(E^i(j))=\rk(E^i_1(j)) + \rk(E^i_2(j)).
		\end{equation*}
		Hence, from the definition of the first Chern class (Definition~\ref{d:firstChernclass}), we see that
		\begin{equation*}
			c_1(\mathcal{E})=c_1(\mathcal{E}_1) + c_1(\mathcal{E}_2).
		\end{equation*}
                This implies that
                $$\deg(\mathcal E) = \deg(\mathcal E_1) + \deg(\mathcal E_2).$$
		Without loss of generality we may assume that \(\mu(\mathcal{E}_1) \leq \mu(\mathcal{E}_2)\). Then
		\begin{equation*}
			\begin{split}
				\mu(\mathcal{E})=& \frac{\deg(\mathcal{E}_1)+\deg(\mathcal{E}_2)}{r_1+r_2}\\
				=& \frac{r_1}{r} \frac{\deg(\mathcal{E}_1)}{r_1} + \frac{r_2}{r} \frac{\deg(\mathcal{E}_2)}{r_2}\\
				\leq & \left( \frac{r_1}{r} + \frac{r_2}{r}\right) \mu(\mathcal{E}_2) =  \mu(\mathcal{E}_2).
			\end{split}
		\end{equation*}
		Since \(\underline{\mathcal{M}}_2= \underline{\mathcal{M}}|_F\), where \(F\) is the flat \(\emptyset \times \mathcal{G}_2 \in \mathcal{L}(\underline{\mathcal M})\), this contradicts the stability of $\mathcal{E}$. Thus $\mathcal{E}$ must be indecomposable. 
	\end{proof}  
	
	\begin{remark}
	Note that any matroid $\underline{\mathcal{M}}$ can be written
        as a direct sum $\underline{\mathcal{M}} \cong
        \underline{\mathcal{M}}' \oplus \left( U_{0,1}\right)^{\oplus
          l} \oplus \left( U_{1, 1}\right)^{\oplus c}$, where
        \(\underline{\mathcal{M}}'\) has neither loops nor coloops
        (see \cite[Proposition 5.3]{Katz}). Proposition
        \ref{Sprop1.1} shows that a necessary condition for a
        tropical toric vector bundle to be stable is that the
        underlying matroid should, in addition to being loopfree as required in Definition~\ref{d:tropicalreflexivesheaf}, also be coloopfree.
	\end{remark}

	\begin{example} (Behavior of (semi)stability under tropicalization). \label{stabUtrop}
		\begin{enumerate}
			\item \label{item:stabUtrop0} Example \ref{eg_tangent_bdl} shows that tangent bundle on $\mathbb{P}^n$ remains stable under tropicalization.
			
			\item  \label{item:stabUtrop} Let $\mathcal{E}$ be the toric vector bundle on $\mathbb{P}^2$ considered in Example~\ref{e:trickymatroid}. The slope is given by
			\begin{equation*}
				\mu(\mathcal{E})= \frac{2+1+1}{3}=1 + \frac{1}{3}.
			\end{equation*}
			Consider the subspace \(F=\spann(\mathbf{e}_1, \mathbf{e}_3) \subseteq E\). The corresponding subsheaf is given by the following filtrations:
			
			{\footnotesize $$F^{0}(j) = \left\{ \begin{array}
					{r@{\quad \quad}l}
					F & j \leq 0 \\ 
					\spann(\mathbf{e}_1)  & j=1 \\
					0 & j > 1
				\end{array} \right. ,
				E^{1}(j) = \left\{ \begin{array}
					{r@{\quad \quad}l}
					F & j \leq 0 \\ 
					\spann(\mathbf{e}_1+\mathbf{e}_3) & j=1 \\
					0 & j > 1
				\end{array} \right.,
				E^{2}(j) = \left\{ \begin{array}
					{r@{\quad \quad}l}
					F & j \leq 0 \\ 
					\spann(\mathbf{e}_3) & j=1 \\
					0 & j > 1
				\end{array} \right. .$$}
			
			The slope is given by
			\begin{equation*}
				\mu(\mathcal{F})= \frac{1+1+1}{2}=1 + \frac{1}{2}.
			\end{equation*}
			Thus $\mathcal{E}$ is not semistable. 
			  
Now we consider the tropicalization from Example
\ref{e:trickymatroid}\eqref{item:e:trickymatroid0} and show that it
is stable.  Let $\mathcal{M}$ be the rank-\(3\) matroid on the ground set $\mathcal{G}=\{\mathbf{e}_1+\mathbf{e}_2,\mathbf{e}_2,
\mathbf{e}_1+\mathbf{e}_3, \mathbf{e}_3\}$. The lattice of flats is
given in Figure \ref{eg8}.
			\begin{figure}
				\caption{\label{eg8}}
				\begin{tikzpicture}
					[scale=.6,auto=left]
					\node (n0) at (2,0) {$\emptyset$};
					\node (n1) at (-1,1.5)  {1};
					\node (n2) at (1,1.5)  {2};
					\node (n3) at (3,1.5) {3};
					\node (n4) at (5,1.5)  {4};
					\node (n5) at (-2.5,4)  {12};
					\node (n6) at (-1,4) {23};
					\node (n7) at (1,4)  {13};
					\node (n8) at (3,4) {24};
					\node (n9) at (5,4)  {14};
					\node (n10) at (6.6,4)  {34};
					\node  (n12) at (2, 6) {$\mathcal G$};
					\foreach \from/ \to in {n4/n0,n3/n0,n2/n0,n1/n0,n5/n1,n5/n2,n6/n2, n6/n3, n7/n1,n7/n3,n8/n2, n8/n4,n9/n1,n9/n4, n10/n3, n10/n4, n12/n6, n12/n7, n12/n8, n12/n9, n12/n5, n12/n10}
					\draw (\from) -- (\to);
				\end{tikzpicture}
			\end{figure} 
			The tropicalization is the tropical toric vector bundle $\trop(\mathcal{E})=\left(\mathcal{M}, \mathcal{G}, \{E^i(j)\} \right) $, where the filtrations of the flats are given as follows:
			
			{\footnotesize $$E^{0}(j) = \left\{ \begin{array}
					{r@{\quad \quad}l}
					\mathcal{G} & j \leq 0 \\ 
					\{\mathbf{e}_1+\mathbf{e}_2, \mathbf{e}_2\}& j=1 \\
					\emptyset & j > 1
				\end{array} \right.,
				E^{1}(j) = \left\{ \begin{array}
					{r@{\quad \quad}l}
					\mathcal{G} & j \leq 0 \\ 
					\{\mathbf{e}_1+\mathbf{e}_3\} & j=1 \\
					\emptyset & j > 1
				\end{array} \right.,
				E^{2}(j) = \left\{ \begin{array}
					{r@{\quad \quad}l}
					\mathcal{G} & j \leq 0 \\ 
					\{\mathbf{e}_3\} & j=1 \\
					\emptyset & j > 1
				\end{array} \right. .$$}
			
			The slope is given by
			\begin{equation*}
				\mu(\trop(\mathcal{E}))= 1 + \frac{1}{3}.
			\end{equation*}
			All the rank-$1$ and rank-$2$ flats have
                       slope $1$. Hence, $\trop\mathcal{(E)}$
                        is stable.

  The issue here is that the destabilizing subsheaf corresponding to
  the subspace $F = \spann(\mathbf{e}_1,\mathbf{e_3})$ does not
    tropicalize to a subsheaf of $\trop(\mathcal E)$.  We have $F \cap
    \mathcal G = \{ \mathbf{e}_1+\mathbf{e}_3, \mathbf{e}_3\}$, which
    is a flat of $\mathcal L(\underline{\mathcal M})$, but the
      restriction of $\mathcal M$ to $F$ has lattice of flats
      $[\emptyset,F]$, which just has two elements of rank $1$.  However
      the subsheaf of $\mathcal E$ determined by $F$ has three
      one-dimensional flats: in addition to
      $\spann(\mathbf{e}_1+\mathbf{e}_3)$ and $\spann(\mathbf{e}_3)$
      in $[\emptyset,F]$, there is also $\spann(\mathbf{e}_1) = E^0(2)
      \cap F$.  This is a consequence of the lattice of flats not
      being modular: we have $E^0(2) \meet (E^1(3) \join E^2(1)) =
      \emptyset$, and $E^0(2) \join (E^1(3) \join E^2(1)) = \mathcal G$,
      so the sum of these ranks is $0+3=3$, which is less than
      $\rk(E^0(2)) + \rk(E^1(3) \join E^2(1)) = 2+2=4$.
     
\item If we choose the tropicalization of the toric vector bundle
$\mathcal{E}$ in Example \ref{stabUtrop}\eqref{item:stabUtrop} to be
given by the matroid of Example \ref{e:trickymatroid}\eqref{item:e:trickymatroid}, then we see $\trop\mathcal{(E)}$ is not
semistable as well.  Note that here modularity assumption is satisfied
by Proposition \ref{propmodflat}\eqref{item:propmodflat2}.

		\end{enumerate}
		
	\end{example}

\begin{lemma} \label{l:stabilitypreserved} Let $\mathcal E$ be a
stable, or semistable, toric vector bundle.  Then $\trop(\mathcal E)$
is stable, respectively semistable.  \end{lemma}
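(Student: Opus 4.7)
The plan is to use Proposition~\ref{p:subsheafsubspace} to match each potentially destabilizing flat $F$ of $\underline{\mathcal{M}}(\mathcal{E})$ with an equivariant saturated subsheaf $\mathcal F_V$ of $\mathcal E$, where $V := \spann(F) \subseteq E$, and then bound the tropical slope above by the algebraic slope. Given a proper nonempty flat $F$ of $\underline{\mathcal{M}}(\mathcal E)$, the subspace $V$ is proper and nonzero, so $\mathcal F_V$ is a proper nonzero subsheaf of $\mathcal E$ with filtration $V \cap E^i(j)$. The ranks of the two subsheaves agree: $\rk(\trop(\mathcal E)_F) = \rk_{\underline{\mathcal M}}(F) = \dim V = \rk(\mathcal F_V)$.

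The crux is comparing the degrees. Setting $f(j) := \rk(F \wedge E^i(j)) = \dim \spann(F \cap E^i(j))$ and $g(j) := \dim(V \cap \spann(E^i(j)))$, the fact that each $E^i(j)$ is a flat of $\underline{\mathcal M}(\mathcal E)$ gives $F \cap E^i(j) \subseteq V \cap \spann(E^i(j))$, and hence $f(j) \leq g(j)$ for all $j$. Equality holds for $j \ll 0$ (both equal $\dim V$, since $E^i(j) = \mathcal G$) and for $j \gg 0$ (both equal $0$). A summation-by-parts argument applied to the telescoping-style sums in \eqref{eqtn:Chernclass} then yields
\[
c_i(\mathcal F_V) - c_i(\trop(\mathcal E)_F) = \sum_j j\bigl((g(j)-f(j)) - (g(j+1)-f(j+1))\bigr) = \sum_j (g(j) - f(j)) \geq 0,
\]
where the boundary terms vanish by the equality at the two ends. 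Since $h_i \geq 0$ for each $i$, summing gives $\deg(\trop(\mathcal E)_F) \leq \deg(\mathcal F_V)$.

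Dividing by the common rank produces $\mu(\trop(\mathcal E)_F) \leq \mu(\mathcal F_V)$. Combining this with stability of $\mathcal E$ (so $\mu(\mathcal F_V) < \mu(\mathcal E)$) and the equality $\mu(\mathcal E) = \mu(\trop(\mathcal E))$ recorded in the remark following Definition~\ref{d:firstChernclass}, we conclude $\mu(\trop(\mathcal E)_F) < \mu(\trop(\mathcal E))$ for every proper nonempty flat $F$, proving stability of $\trop(\mathcal E)$. Replacing each strict inequality by its non-strict version handles the semistable case.

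The main obstacle I anticipate is that the matroid meet $F \wedge E^i(j)$ can have strictly smaller rank than the subspace intersection $V \cap \spann(E^i(j))$; this is precisely the phenomenon illustrated in Example~\ref{stabUtrop}\eqref{item:stabUtrop}, where a destabilizing subspace of the algebraic bundle fails to produce a destabilizing flat after tropicalization. The argument above side-steps this because only the one-sided inequality $f(j) \leq g(j)$ is required: it forces the tropical slope to drop (weakly) relative to the algebraic one, which preserves the stability inequality in exactly the direction needed.
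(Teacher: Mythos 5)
Your proposal is correct and follows essentially the same route as the paper: identify the flat $F$ with the subspace $V = \spann(F)$, observe $\rk(F \wedge E^i(j)) \leq \dim(V \cap \spann(E^i(j)))$ because the matroid meet only captures ground-set elements, and pass through the degree formula to conclude. The only presentational difference is that the paper delegates the degree comparison to a cited lemma of Hering--Nill--S\"u\ss\ whereas you carry out the summation-by-parts explicitly, which makes the argument self-contained.
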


\begin{proof} Write $\mathcal F = (\mathcal M,\mathcal G, \{ E^i(j)
\})$ for $\trop(\mathcal E)$, where the ground set $\mathcal G$ is a
subset of the vector space $E$.  If $\mathcal E$ is stable then
for every subsheaf $\mathcal E_F$ induced by a subspace $F \subseteq
E$ in the sense of Proposition~\ref{p:subsheafsubspace} or \cite[Proposition 2.3]{HNS} we have $\mu(\mathcal E_F) < \mu(\mathcal E)$.     In particular this
is true for the subspaces spanned by subsets of $\mathcal G$, which
correspond to flats of $\underline{\mathcal M}$.  

 By construction $c_1(\mathcal F) = c_1(\mathcal E)$, so $\mu(\mathcal
 F) = \mu(\mathcal E)$.  For a flat of $\underline{\mathcal M}$ we
 write $F$ for the subspace and $\tilde{F} \subseteq \mathcal G$ for
 the set; in particular we also use this notation to make a
 distinction between $E^i(j) \subseteq E$ and $\tilde{E}^i(j)
 \subseteq \mathcal G$.  To show that $\mathcal F$ is stable it
 suffices to show that $\mu(\mathcal F_{\tilde{F}}) \leq
   \mu(\trop(\mathcal E_F))$, as that will show that slope of every
   subsheaf induced by a flat of $\underline{\mathcal M}$ is less than
   $\mu(\mathcal F)$.  Since the rank of the flat $\tilde{F}$ equals
   the dimension of the subspace $F = \spann(\tilde{F})$, and $h_i
   \geq 0$ for all $1 \leq i \leq s$, to show this it suffices to show
   that $c_1(\mathcal F_F)_i \leq c_1(\mathcal E_F)_i$ for all $i$.
   This follows, using \cite[Lemma 2.5]{HNS}, from the observation
   that $\rk(\tilde{E}^i(j) \meet \tilde{F}) \leq \dim( E^i(j) \cap
   F)$. 
\end{proof}

\begin{remark}
\label{r:semistabletropicalize}
  The converse of Lemma~\ref{l:stabilitypreserved} is not
true: if $\mathcal E$ is unstable, then $\trop(\mathcal E)$ might not
be unstable.  One example is given by Example \note{7.13(2)}, where
the toric vector bundle $\mathcal E$ is not stable, but the first
tropicalization is.

There are two ways in which the tropicalization of an unstable toric
vector bundle can fail to be unstable.  The first, which is what
occurs in Example \note{7.13(2)}, is that the subspace of $E$
corresponding to a destabilizing subsheaf may not be a flat of
$\underline{\mathcal M}$.  The second is that the subspace may be a
flat of $\underline{\mathcal M}$, but the inequality
$\rk(\tilde{E}^i(j) \meet \tilde{F}) \leq \dim( E^i(j) \cap F)$ may be
strict in a way that means that the slope of the $\trop(\mathcal
E)_{\tilde{F}}$ is less than the slope of $\mathcal E_F$.  This
does not happen when $\tilde{E}^i(j)$ and $\tilde{F}$ are a modular pair.

However, if $\mathcal E$ is toric vector bundle that is not
(semi)stable, it is always possible to choose a matroid for $\mathcal
E$ in such a way that the tropicalization $\trop(\mathcal E)$ is not
(semi)stable.  This is inspired by \cite{Lucie}*{Theorem 2.18}, though
see Remark~\ref{r:rank3} for some differences.  If $\mathcal E$ is not
stable, then by \cite[Theorem 2.1]{BDGP}) there is an equivariant
destabilizing subsheaf $\mathcal F$.  Such a sheaf is given by
filtrations $\{F^{\sigma}_{\mathbf{u}} \}_{\sigma \in \Sigma,
  \mathbf{u} \in M}$ \cite[Chapter 4]{PerT}.  By \cite[Corollary
  3.18]{Kool} the first Chern class of $\mathcal F$ only depends on
the $F^{\rho_i}_{\mathbf{u}}$ and the formula is the same as for the
reflexive sheaf with filtrations $F^i(j) := F^{\rho_i}_\mathbf{u}$
when $j = \mathbf{u} \cdot \mathbf{v}_i$.  Here we use that
$F^{\rho_i}_{\mathbf{u}} = F^{\rho_i}_{\mathbf{u}'}$ if $\mathbf{u}
\cdot \mathbf{v}_i = \mathbf{u}' \cdot \mathbf{v}_i$.  Let
$\mathcal{F}'$ be this toric reflexive sheaf.  By construction we have
$\mu(\mathcal F) = \mu(\mathcal F')$, so $\mathcal F'$ is also
destabilizing.  After passing to its saturation if necessary, we have
$F^i(j) = E^i(j) \cap F$ for some subspace $F$ of $E$.  Given any
matroid for $\mathcal E$, we can augment $\mathcal G$ by adding a
basis for each $F \cap E^i(j)$ to $\mathcal G$.  This ensures that
$\trop(\mathcal F) = \trop(\mathcal E)_{\tilde{F}}$, where $\tilde{F}$
is the flat of the matroid $\underline{\mathcal M}$ for $\mathcal E$
corresponding to $F$.  This means that $\mu(\trop(\mathcal
E)_{\tilde{F}}) = \mu(\mathcal F)$, so the flat $\tilde{F}$ is
destabilizing for $\trop( \mathcal E)$.
          \end{remark}

\begin{remark} \label{r:rank3}
  The first tropicalization in Example \ref{stabUtrop}\eqref{item:stabUtrop} shows that the whether a rank-$3$ toric
  vector bundle is stable can not always be detected from its
  tropicalization.  This contrasts with \cite[Theorem 2.23]{Lucie},
  where Devey shows that (semi)stability of a toric\ vector bundle
  $\mathcal{E}$ of rank at most $ 3$ can be determined by considering
  flats of {\em any} choice of the associated matroid
  $\underline{\mathcal{M}}$.  The distinction is that in \cite{Lucie}
  the flat is identified with the subspace: then content of
  \cite[Theorem 2.23]{Lucie} is that if there is a destabilizing
  sheaf, it is $\mathcal E_F$ for a subspace $F \subseteq E$
  corresponding to a flat of the chosen matroid.  However the
  stability of $\trop(\mathcal E)$ depends on $\trop(\mathcal
  E)_{\tilde{F}}$, which may not equal $\trop(\mathcal E_F)$.
\end{remark}

	\subsection{Jordan-H\"older and Harder-Narasimhan filtrations}

        In this section we construct Jordan-H\"older and
        Harder-Narasimhan filtrations for tropical toric reflexive
        sheaves.  This allows us to reduce the study of tropical toric
        vector bundles to (semi)stable ones.  In usual algebraic
        geometry these justify the study of moduli spaces of stable vector bundles, and we hope that the same will be true in the tropical setting.

	We defined the notion of a subsheaf of a tropical reflexive
        sheaf in Definition \ref{sheaf_asso_F}. We now formalise the
        notion of the quotient of a tropical reflexive sheaf.  This uses the notion of the {\em quotient} of valuated matroids; see \S\ref{ss:valuatedmatroids}.
        
	\begin{definition}
	  Let \(\mathcal{E}=\left( \mathcal{M}, \mathcal{G},
          \{E^i(j)\}\right) \) be a tropical toric reflexive sheaf. A
          flat \(F \in \mathcal{L}(\underline{\mathcal M})\)
          corresponds to a subsheaf \(\mathcal{E}_F:=(\mathcal{M}|_F,
          F, \{F^i(j)\})\) (see Definition \ref{sheaf_asso_F}).
          Consider the contraction of the valuated matroid \(\mathcal
          M\) by the flat \(F\), denoted by \(\mathcal M/ F\), as
          recalled in \S\ref{ss:valuatedmatroids}.  This is a matroid
          on the ground set \(\mathcal{G} \setminus F\) with lattice
          of flats \(\mathcal{L}(\underline{\mathcal M}/ F)\)
          isomorphic to the interval \([F, \mathcal{G}]\) in
          $\mathcal{L}(\underline{\mathcal M})$.   Note that \(\left(
          \mathcal M/ F, \mathcal{G} \setminus F, \{
          E'^i(j)\}\right) \) is a tropical toric reflexive sheaf,
          where
		$$ E'^{i}(j) = (E^i(j) \vee F) .$$ 
          Here we are using the isomorphism $\mathcal L(\underline{\mathcal M}/F) \cong [F,\mathcal G]$ to regard ${E'}^i(j)$ as a subset of $\mathcal G \setminus F$.
                We call this the quotient of $\mathcal{E}$ by the subsheaf $\mathcal{E}_F$ and denote it by \(\mathcal{E}/F\). Note that 
		\begin{equation}\label{egqt1}
			\begin{split}
				& \rk(\mathcal M/ F)=\rk(\underline{\mathcal M})- \rk(F), \text{ and }\\
				&\rk_{\mathcal M/ F}((E^i(j) \vee F) )=\rk_{\underline{\mathcal M}} (E^i(j) \vee F)-\rk_{\underline{\mathcal M}} (F)
			\end{split}
		\end{equation}
		(see \cite[Proposition 7.4.3]{White}). Here we have used the subscripts $\mathcal{M}$ and $\mathcal M/ F$ to denote the corresponding rank functions. We usually omit the subscript for the rank function of $\mathcal{M}$. Let \(F' \in \mathcal{L}(\underline{\mathcal M})\) be another flat containing the flat \(F\). We have \(F \in \mathcal{L}(\underline{\mathcal M}|_{F'})\) and we call the contraction
		\[\left( (\mathcal M|_{F'})/ F,  F'\setminus F, \{((E^i(j) \meet F') \vee F) \}\right) ,\]
		the quotient of \(\mathcal E_{F'}\) by \(\mathcal
                E_F\).  This is denoted by \(\mathcal{E}_{F'/
                  F}\). 
                Note that \((\underline{\mathcal M}|_{F'})/ F\) has
                rank \(\rk(F')-\rk(F)\) and
		\begin{equation}\label{egqt1.1}
			\begin{split}
				\rk_{\mathcal{M}_{F'/ F}}(((E^i(j) \meet F') \vee F) )= \rk((E^i(j) \meet F') \vee F)-\rk(F).
			\end{split}
		\end{equation}
	\end{definition}
	
	\begin{remark}[Comparison with realizable tropical vector bundles]
		The definition of matroid quotient is consistent with
                the realizable case. Let $\mathcal{E}$ be a toric
                vector bundle with associated filtrations $\left(E,
                \{E^i(j)\} \right)$ and let $\mathcal{F}$ be a subsheaf
                corresponding to the subspace $F \subseteq E$. Then the
                filtrations associated to the quotient
                $\mathcal{E}/\mathcal{F}$ are given by
                $\left(\frac{E}{F}, \{\frac{E^i(j)+F}{F}\}
                \right)$.

Suppose $\underline{\mathcal M}$ is a realizable
matroid for $\mathcal E$, with ground set $\mathcal G \subseteq E$,
and $F$ is the span of a subset of $\mathcal G$, so corresponds to a
flat of $\underline{\mathcal M}$, which we will also call $F$.  Then
$E^i(j) + F$ corresponds to the flat $E^i(j) \join F$.  Quotienting
$E$ by $F$ corresponds to contracting $\underline{\mathcal M}$ by $F$.  The
quotient $\underline{\mathcal M}/F$ has lattice of flats isomorphic to the
interval $[F,\mathcal G]$ in $\mathcal L(\underline{\mathcal{M}})$.
Thus $\{E^i(j) \join F\}$ is the induced filtration on the quotient.
	\end{remark}
	
	\begin{remark}\label{rmkforSlem1.1}
		Using \eqref{egqt1}, we have
		\begin{equation*}
			\begin{split}	
			\deg(\mathcal{E}/F)& = \sum\limits_{\rho_i \in \Sigma(1), j \in \mathbb{Z}} j  \left( \rk_{\mathcal M/ F} (E'^{i}(j)) - \rk_{\mathcal M/ F} (E'^{i}(j+1))\right)  h_{i}\\
				& = \sum\limits_{\rho_i \in \Sigma(1), j \in \mathbb{Z}} j  \left( \rk (E^{i}(j) \vee F) - \rk (E^{i}(j+1) \vee F) \right)  h_{i}
			\end{split}
		\end{equation*}
		Note that the lattice of flats of
                \(\underline{\mathcal M}':=(\underline{\mathcal
                  M}|_{F'})/ F\) is isomorphic to the interval \([F, F']\) in
                \(\mathcal{L}(\underline{\mathcal M})\). Using
                \eqref{egqt1.1}, we have
		\begin{equation*}
			\deg(\mathcal E_{F'/F})=\sum\limits_{\rho_i \in \Sigma(1), j \in \mathbb{Z}} j  \left(  \rk((E^i(j) \meet F') \vee F) - \rk((E^i(j+1) \meet F') \vee F) \right)  h_{i}.
		\end{equation*}
	\end{remark}

	\begin{lemma}\label{Slem1.1}
Let \(\mathcal{E}=\left( \mathcal{M}, \mathcal{G}, \{E^i(j)\}\right)
\) be a tropical toric reflexive sheaf of rank \(r\). Let \(F \in
\mathcal{L}(\underline{\mathcal M})\) be a flat of rank \(r_1\) such
$(F,E^i(j))$ is a modular pair for all $1 \leq i \leq s$ and all $j
\in \mathbb Z$, so the following condition holds
\begin{equation}\label{Seq1.1}
	\rk(F) + \rk(E^i(j))=\rk(E^i(j) \vee F) + \rk(E^i(j) \meet F)
\end{equation}
for all \(1 \leq i \leq s\) and \( j \in \mathbb{Z}\). Let
\(r_2=r-r_1\) Then we have
\[\mu(\mathcal{E})= \frac{r_1}{r} \mu(\mathcal{E}_{F}) +  \frac{r_2}{r} \mu(\mathcal{E}/F).\]
	\end{lemma}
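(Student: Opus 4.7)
The plan is to prove the slope identity by showing the corresponding additivity of degrees,
\[
\deg(\mathcal{E}) = \deg(\mathcal{E}_F) + \deg(\mathcal{E}/F),
\]
from which the conclusion follows since $r = r_1 + r_2$ and $\mu = \deg/\rk$.

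First I would unwind the three degree expressions using Definition~\ref{d:firstChernclass} and Remark~\ref{rmkforSlem1.1}. For the subsheaf, $\deg(\mathcal{E}_F) = \sum_{i,j} j\bigl(\rk(E^i(j)\meet F) - \rk(E^i(j+1)\meet F)\bigr) h_i$, using that the rank function on $\mathcal{L}(\underline{\mathcal{M}}|_F)=[\emptyset,F]$ agrees with the restriction of $\rk$. For the quotient, using \eqref{egqt1} (or rather the displayed formula in Remark~\ref{rmkforSlem1.1}), the shift by $\rk(F)$ cancels telescopically in the sum over $j$, so
\[
\deg(\mathcal{E}/F) = \sum_{i,j} j\bigl(\rk(E^i(j)\join F) - \rk(E^i(j+1)\join F)\bigr) h_i.
\]

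The heart of the argument is then the pointwise identity
\[
\rk(E^i(j)) = \rk(E^i(j)\meet F) + \rk(E^i(j)\join F) - \rk(F),
\]
which is exactly the hypothesis \eqref{Seq1.1}. Subtracting the analogous identity for $j+1$, the constant $\rk(F)$ cancels and we obtain
\[
\rk(E^i(j)) - \rk(E^i(j+1)) = \bigl(\rk(E^i(j)\meet F) - \rk(E^i(j+1)\meet F)\bigr) + \bigl(\rk(E^i(j)\join F) - \rk(E^i(j+1)\join F)\bigr).
\]
Multiplying by $j\,h_i$ and summing over $i$ and $j$ yields $\deg(\mathcal{E}) = \deg(\mathcal{E}_F) + \deg(\mathcal{E}/F)$.

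Dividing by $r = r_1 + r_2$ and rewriting gives the weighted-average formula. There is no real obstacle here: the only subtle point is checking that the $\rk(F)$ contribution in the definition of the contracted rank function telescopes away in the sum defining the degree, which is immediate because $\rk(E^i(j)) = \rk(\mathcal{G})$ for $j \ll 0$ and $\rk(E^i(j)) = 0$ for $j \gg 0$, so only finitely many $j$ contribute and the telescoping sum of the constant $\rk(F)$ vanishes.
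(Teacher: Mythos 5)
Your proposal is correct and follows essentially the same route as the paper's proof: subtract the modular identity \eqref{Seq1.1} at $j+1$ from the one at $j$, multiply by $jh_i$, sum, and conclude degree additivity, then divide by $r = r_1+r_2$. One minor point: your remark that ``the $\rk(F)$ contribution \dots telescopes away in the sum'' is slightly misplaced — the $\rk(F)$ already cancels term-by-term in each difference $\rk_{\mathcal M/F}(E'^i(j)) - \rk_{\mathcal M/F}(E'^i(j+1)) = \rk(E^i(j)\join F) - \rk(E^i(j+1)\join F)$ (as in Remark~\ref{rmkforSlem1.1}), so no telescoping argument is needed; but this does not affect correctness.
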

	
\begin{proof}
First we show that \(\deg(\mathcal{E})=\deg(\mathcal{E}_{F}) +
\deg(\mathcal{E}/F)\). Consider the condition \eqref{Seq1.1} for
\(j+1\),
		\begin{equation}\label{Seq1.2}
			\rk(F) + \rk(E^i(j+1))=\rk(E^i(j+1) \vee F) + \rk(E^i(j+1) \meet F).
		\end{equation}
		From the equations \eqref{Seq1.1} and \eqref{Seq1.2}, we get
		\begin{equation*}
			\begin{split}
				\rk (E^{i}(j)) - \rk (E^{i}(j+1)) & =\left( \rk (E^{i}(j) \vee F) - \rk (E^{i}(j+1) \vee F)\right) \\
				& + \left( \rk (E^{i}(j) \meet F) - \rk (E^{i}(j+1) \meet F )\right). 
			\end{split}
		\end{equation*}
		Thus, using Remark \ref{rmkforSlem1.1}, we have \(\deg(\mathcal{E})=\deg(\mathcal{E}_{F}) + \deg(\mathcal{E}/F)\). Now consider
		\begin{equation*}
			\begin{split}
				\mu(\mathcal{E})& = \frac{\deg(\mathcal{E})}{r}=\frac{\deg(\mathcal{E}_{F}) + \deg(\mathcal{E}/F)}{r}\\
				&= \frac{r_1}{r} \frac{\deg(\mathcal{E}_{F})}{r_1} + \frac{r_2}{r} \frac{\deg(\mathcal{E}/F)}{r_2}\\
				&= \frac{r_1}{r} \mu(\mathcal{E}_{F}) +  \frac{r_2}{r} \mu(\mathcal{E}/F).
			\end{split}
		\end{equation*}
	\end{proof}
	
	\noindent
	The following immediate corollary, called the see-saw property, is extremely crucial:
	\begin{corollary}\label{Scor1.1}
		Let \(\mathcal{E}=\left( \mathcal{M}, \mathcal{G}, \{E^i(j)\}\right) \) be a tropical toric reflexive sheaf. Let \(F \in \mathcal{L}(\underline{\mathcal M})\) be a flat satisfying condition \eqref{Seq1.1}. Then 
		\begin{equation*}
			\begin{split}
				& 	\mu(\mathcal{E}_F) < \mu(\mathcal{E})  \Longleftrightarrow \mu(\mathcal{E}) <\mu(\mathcal{E}/ F), \text{ and } \\
				& 	\mu(\mathcal{E}_F) > \mu(\mathcal{E})  \Longleftrightarrow \mu(\mathcal{E}) >\mu(\mathcal{E}/ F). \\
			\end{split}
		\end{equation*}
	\end{corollary}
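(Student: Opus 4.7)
The plan is to deduce the see-saw property directly from Lemma~\ref{Slem1.1} by a short algebraic manipulation, treating the slope $\mu(\mathcal{E})$ as a weighted average of $\mu(\mathcal{E}_F)$ and $\mu(\mathcal{E}/F)$.

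First I would record the setup. Writing $r_1 = \rk(F)$ and $r_2 = r - r_1$, Lemma~\ref{Slem1.1} gives
\begin{equation*}
\mu(\mathcal{E}) \;=\; \frac{r_1}{r}\,\mu(\mathcal{E}_F) + \frac{r_2}{r}\,\mu(\mathcal{E}/F).
\end{equation*}
The case $r_1 = 0$ (i.e.\ $F = \emptyset$) or $r_2 = 0$ (i.e.\ $F = \mathcal{G}$) may be disposed of immediately, since then $\mathcal{E}_F$ or $\mathcal{E}/F$ agrees with $\mathcal{E}$ and both implications hold vacuously. So I assume $r_1, r_2 > 0$.

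Next I would rewrite the identity in two equivalent forms. Subtracting $\mu(\mathcal{E}_F)$ from both sides and using $r_1/r + r_2/r = 1$ gives
\begin{equation*}
\mu(\mathcal{E}) - \mu(\mathcal{E}_F) \;=\; \frac{r_2}{r}\bigl(\mu(\mathcal{E}/F) - \mu(\mathcal{E}_F)\bigr),
\end{equation*}
while subtracting both sides from $\mu(\mathcal{E}/F)$ gives
\begin{equation*}
\mu(\mathcal{E}/F) - \mu(\mathcal{E}) \;=\; \frac{r_1}{r}\bigl(\mu(\mathcal{E}/F) - \mu(\mathcal{E}_F)\bigr).
\end{equation*}
Since $r_1/r$ and $r_2/r$ are positive, the two differences $\mu(\mathcal{E}) - \mu(\mathcal{E}_F)$ and $\mu(\mathcal{E}/F) - \mu(\mathcal{E})$ share the sign of $\mu(\mathcal{E}/F) - \mu(\mathcal{E}_F)$, and in particular have the same sign as each other. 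Specializing to strict positivity and strict negativity yields the two equivalences of the corollary.

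I do not expect any real obstacle here: the content is entirely in Lemma~\ref{Slem1.1}, where modularity of the pair $(F, E^i(j))$ was used to split the degree additively. Once that additivity is in hand, the see-saw property is a purely formal consequence of the fact that $\mu(\mathcal{E})$ lies strictly between $\mu(\mathcal{E}_F)$ and $\mu(\mathcal{E}/F)$ whenever these differ.
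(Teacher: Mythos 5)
Your proof is correct and matches the paper's approach: the paper presents this as an ``immediate corollary'' of Lemma~\ref{Slem1.1} without spelling out the algebra, and you have simply written out the elementary manipulation that makes the implication immediate. One small remark: in the degenerate cases $F=\emptyset$ or $F=\mathcal{G}$ the slope of $\mathcal{E}_F$ or $\mathcal{E}/F$ is undefined (division by a rank of zero), so it is cleaner to say the statement is only meaningful for proper nonempty flats, as is implicit in the stability context, rather than that the implications ``hold vacuously.''
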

	
	\begin{lemma}\label{Slem1.2}
Let $\mathcal M$ be a valuated matroid of rank \(r\) on the ground set
$\mathcal{G}$ with basis valuation function $\nu$. Let \(F_1 \in
\mathcal{L}(\underline{\mathcal M})\) be a flat of rank \(r_1\). Let
\(F_2 \in \mathcal{L}(\underline{\mathcal M})\) be another flat of
rank \(r_2\) with \(F_1 \subsetneq F_2\). Then we have \(\left( \mathcal
M/ F_1\right) / F_2= \mathcal M/ F_2\). Let \(\mathcal{E}=\left(
\mathcal{M}, \mathcal{G}, \{E^i(j)\}\right) \) be a tropical toric
reflexive sheaf, then we have \((\mathcal{E}/F_1)/F_2=\mathcal
E/F_2\). Moreover, if \(E^i(j)\) are modular flats we have
\(\mathcal{E}'_{F_2} =\mathcal{E}_{F_2/F_1},\) where
$\mathcal{E}'=\mathcal{E} / F_1$.
	\end{lemma}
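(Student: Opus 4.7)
The plan is to verify the three claims in order, reducing each to standard identities for (valuated) matroid contractions together with the modular-flat characterization of Proposition~\ref{defmodflat}.

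First I would prove $(\mathcal M/F_1)/F_2 = \mathcal M/F_2$ by matching the data in \S\ref{ss:valuatedmatroids}. Choose a maximal independent set $E_1 \subseteq F_1$ witnessing $\mathcal M/F_1$, and extend it by $E_2 \subseteq F_2 \setminus F_1$ to a maximal independent subset of $F_2$. Then $E_1 \cup E_2$ witnesses both $\mathcal M/F_2$ and $(\mathcal M/F_1)/F_2$, so both have basis valuation function $B \mapsto \nu(B \cup E_1 \cup E_2)$ on the common ground set $\mathcal G \setminus F_2$ and the same lattice of flats $[F_2,\mathcal G]$. This is the classical matroid identity (cf.\ \cite[Proposition 3.1.27]{Oxley}) lifted to the valuated setting by the usual argument that $\nu''$ is independent of the chosen witness.

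Next, for the second claim $(\mathcal E/F_1)/F_2 = \mathcal E/F_2$, the underlying valuated matroid is $\mathcal M/F_2$ on ground set $\mathcal G \setminus F_2$ by the first step. The filtrations of $\mathcal E/F_1$ are ${E'}^i(j) = E^i(j) \join F_1$ (inside the interval $[F_1,\mathcal G] \cong \mathcal L(\underline{\mathcal M/F_1})$), and contracting further by $F_2$ yields
\[
{E'}^i(j) \join F_2 \;=\; (E^i(j) \join F_1) \join F_2 \;=\; E^i(j) \join F_2,
\]
since $F_1 \leq F_2$. These are exactly the filtrations defining $\mathcal E/F_2$.

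Finally, for $\mathcal E'_{F_2} = \mathcal E_{F_2/F_1}$ under the modularity hypothesis, I would first identify the underlying matroids. The matroid $(\underline{\mathcal M}/F_1)|_{F_2 \setminus F_1}$ attached to $\mathcal E'_{F_2}$ equals $(\underline{\mathcal M}|_{F_2})/F_1$ attached to $\mathcal E_{F_2/F_1}$ by the classical identity $(M/A)|_{B \setminus A} = (M|_B)/A$ for $A \subseteq B$, and again a common choice of witness set lifts this to valuated matroids. It remains to compare the filtrations. By the subsheaf definition, the filtrations of $\mathcal E'_{F_2}$ are $F_2 \meet_{\mathcal L(\underline{\mathcal M/F_1})} {E'}^i(j)$; since meets inside the interval $[F_1,\mathcal G]$ coincide with meets in $\mathcal L(\underline{\mathcal M})$, this simplifies to $F_2 \meet (E^i(j) \join F_1)$. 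The filtrations of $\mathcal E_{F_2/F_1}$ are $(E^i(j) \meet F_2) \join F_1$ by definition. The required equality
\[
F_2 \meet (E^i(j) \join F_1) \;=\; (E^i(j) \meet F_2) \join F_1
\]
is then exactly Proposition~\ref{defmodflat}(2) applied to the modular flat $F := E^i(j)$ with $G := F_1$ and $H := F_2$, using $F_1 \leq F_2$. The main obstacle in the whole argument is not any one of these verifications individually but the bookkeeping among three lattices---the ambient lattice $\mathcal L(\underline{\mathcal M})$, the contracted lattice on $[F_1,\mathcal G]$, and the restricted lattice on $[\emptyset,F_2]$---and keeping track of which joins and meets coincide between them; once that is sorted out, modularity of the $E^i(j)$ closes the argument in one line.
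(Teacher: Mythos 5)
Your proposal is correct and follows essentially the same route as the paper: the same witness-set computation for $(\mathcal M/F_1)/F_2 = \mathcal M/F_2$, the same one-line join calculation for the second claim, and the same invocation of Proposition~\ref{defmodflat}(2) with $F = E^i(j)$, $G = F_1$, $H = F_2$ to reconcile the two filtrations in the third claim. The only place you are lighter than the paper is the identification of the \emph{valuated} matroids $(\mathcal M/F_1)|_{F_2 \setminus F_1}$ and $(\mathcal M|_{F_2})/F_1$: you cite the classical unvaluated identity and assert that a common witness lifts it, whereas the paper writes out the basis valuation functions and checks they agree on a shared witness set $A \cup A''$. That computation is short but not entirely automatic (one has to verify the chosen set is simultaneously a legal witness for both operations in the required order), so if you were to write this up you would want to include it explicitly rather than gesture at it.
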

	
	\begin{proof}
Recall from \S\ref{ss:valuatedmatroids} that the contraction
\(\mathcal M':=\mathcal M/ F_1\) is a valuated matroid of rank
\(r-r_1\) on the ground set \(\mathcal{G} \setminus F_1 \) with the
basis valuation function given as follows:
		\begin{equation*}
			\nu_{\mathcal{M}'} : \binom{\mathcal{G} \setminus F_1}{r-r_1} \longrightarrow \Rbar, ~ B \longmapsto \nu(B \cup A),
		\end{equation*}
where \(A \subseteq F_1\) is such that $|A| = \rk_{\mathcal{M}}(A)=
r_1$ and $B \subseteq \mathcal{G} \setminus F_1$.  Note that \(F_2 \in
\mathcal{L}(\underline{\mathcal M}/ F_1) = [F_1, \mathcal{G}]\) and
$\rk_{\mathcal{M}'}(F_2)=r_2-r_1$. Then the contraction \(\mathcal M'/
F_2\) is a valuated matroid on the ground set \( (\mathcal{G} \setminus
F_1 )\setminus F_2=\mathcal{G} \setminus F_2\) of rank
$r-r_1-(r_2-r_1)=r-r_2$. We can choose an independent subset $A'
\subseteq \mathcal G$ of rank $r_2$ in $\mathcal M$ satisfying $A
\subsetneq  A' \subseteq F_2$. Let $A'':= A' \setminus F_1 \subset F_2
\setminus F_1$. Then, we have \(
\rk_{\mathcal{M}'}(A'')=\rk(A')-r_1=r_2-r_1\). Also, $|A''|=|A'| -
|F_1| \leq r_2-r_1$. Hence, $A''$ must be independent. Then the basis
valuation function for \(\mathcal M'/ F_2\) is given as follows: for
any $B \subseteq \mathcal{G} \setminus F_2$
                \begin{equation*}
                	\begin{split}
                		\nu_{\mathcal{M}'/F_2}(B)= \nu_{\mathcal{M}'}(B \cup A'') = \nu (B \cup A'' \cup A) =\nu(B \cup A').
                	\end{split}
                \end{equation*}
             This shows that $\nu_{\mathcal{M}'/F_2}$ coincides with the basis valuation function for the matroid $\mathcal{M}/F_2$. Hence, we have \(\left( \mathcal M/ F_1\right) / F_2= \mathcal M/ F_2\) as valuated matroids. Recall that the quotient \(\mathcal E/F_2\) is given by \(\left( \mathcal{M}/F_2, \mathcal{G} \setminus F_2, \{E^i(j) \vee F_2\}\right) \). Note that $\mathcal{L}(\underline{\mathcal{M}}/F_2)$ is the interval \([F_2, \mathcal{G}]\) in $\mathcal{L}(\underline{\mathcal{M}})$ and we have 
		$$\rk_{\underline{\mathcal M}/ F_2}(E^i(j) \vee F_2)=\rk(E^i(j) \vee F_2) - \rk(F_2) .$$
		The tropical toric reflexive sheaf $(\mathcal{E}/F_1)/F_2$ is given by
		\[\left( \mathcal{M}/F_2, \mathcal{G} \setminus F_2, \{E^i(j) \vee F_2\}\right) ,\]
		as \(F_1 \subsetneq F_2\). This shows that the tropical toric reflexive sheaves \((\mathcal{E}/F_1)/F_2\) and \(\mathcal E/F_2\) are same.
		
		To see the last sentence of the lemma, note that the tropical toric reflexive sheaves \(\mathcal{E}_{F_2/F_1}\) and \(\mathcal{E}'_{F_2}\) are given by the data
		\begin{equation*}
			\begin{split}
				&	\left( \left( \mathcal{M} |_{F_2}\right)/ F_1 , F_2 \setminus F_1, \{(E^i(j) \meet F_2) \vee F_1\}\right)  \text{ and }\\ 
				&    \left( (\mathcal M/ F_1)|_{F_2}, F_2 \setminus F_1, \{(E^i(j) \vee F_1) \meet F_2 \}\right),
			\end{split}
		\end{equation*}
		respectively. Since \(E^i(j)\) are modular flats, we have 
		\begin{equation*}
			(E^i(j) \meet F_2) \vee F_1=(E^i(j) \vee F_1) \meet F_2. 
		\end{equation*}
		Hence, it suffices to check that both the matroids
                \(\left( \mathcal{M} |_{F_2}\right)/ F_1\) and
                \((\mathcal M/ F_1)|_{F_2}\) have the same basis
                valuation function.  Choose
		\begin{equation}\label{N7.22.3}
			A \subseteq \mathcal{G} \setminus F_2 \text{ such that } \rk(A \cup F_2)=r.
		\end{equation}
	  Then the basis valuation function for $\mathcal{M} |_{F_2}$ is given by
		\begin{equation*}
			\nu_{F_2} : \binom{F_2}{r_2} \longrightarrow \Rbar, B_2 \longmapsto \nu(B_2 \cup A),
		\end{equation*}
		for all $B_2 \subseteq F_2$. Note that $\mathcal{L}( \underline{\mathcal{M}} |_{F_2}) =[\emptyset, F_2]$ and $F_1 \in \mathcal{L}( \underline{\mathcal{M}} |_{F_2}) $ is a flat of rank $r_1$. Choose $A' \subseteq F_1$ independent with $\rk(A')=r_1$. Then the contraction $\left( \mathcal{M} |_{F_2}\right)/ F_1$ is a valuated matroid on the ground set $F_2 \setminus F_1$ with the basis valuation function given by
		\begin{equation}\label{N7.22.4}
			\nu'' : \binom{F_2 \setminus F_1}{r_2-r_1} \longrightarrow \Rbar, B \longmapsto \nu_{F_2}(B \cup A')=\nu(B \cup A' \cup A).
		\end{equation}
		The contraction \(\mathcal M':=\mathcal M/ F_1\) is a valuated matroid of rank \(r-r_1\) on the ground set \(\mathcal{G} \setminus F_1 \)  with the basis valuation function given as follows: 
		\begin{equation*}
			\nu_{\mathcal{M}'} : \binom{\mathcal{G} \setminus F_1}{r-r_1} \longrightarrow \Rbar, ~ B_1 \longmapsto \nu(B_1 \cup A'),
		\end{equation*}
		where \(B_1 \subseteq F_1\). Note that using \eqref{N7.22.3}, we have
		\begin{equation*}
			\begin{split}
				\rk_{\mathcal{M}'} (A \cup F_2)=\rk(A \cup F_2)-r_1=r-r_1= \rk(\mathcal{M}').
			\end{split}
		\end{equation*}
		Now the restriction $\mathcal{M}'|_{F_2}$ is a valuated matroid on the ground set $F_2 \setminus F_1$ with basis valuation function given by
		\begin{equation}\label{N7.22.5}
	\nu_{\mathcal{M}'|_{F_2}} : \binom{F_2 \setminus F_1}{r_2- r_1}  \longrightarrow \Rbar, \, \, \, \, \, \, \,	\nu_{\mathcal{M}'|_{F_2}}(B)  = \nu_{\mathcal{M}'}(B \cup A)=\nu(B \cup A \cup A'),
		\end{equation}
		where $B \subseteq F_2 \setminus F_1$. Thus, from \eqref{N7.22.4} and \eqref{N7.22.5} we get the valuated matroids \(\left( \mathcal{M} |_{F_2}\right)/ F_1\) and \((\mathcal M/ F_1)|_{F_2}\) are same.
	\end{proof}

	\begin{theorem}[Jordan-H\"older filtration] \label{t:JH}
		Let \(\mathcal{E}=\left( \mathcal{M}, \mathcal{G}, \{E^i(j)\}\right) \) be a semistable tropical toric vector bundle on tropical toric variety $\trop(X_{\Sigma})$. Assume that the \(E^i(j)\) are modular flats of  $\mathcal L(\underline{\mathcal M})$. Then, there is an increasing filtration by flats of the underlying matroid \(\underline{\mathcal M}\)
		\begin{equation*}
			\emptyset =F_0 \subsetneq F_1 \subsetneq F_2 \subsetneq \ldots \subsetneq F_k=\mathcal{G}
		\end{equation*}
		for which the quotients \(\text{gr}_i:=\mathcal{E}_{F_i/ F_{i-1}} \) are stable with slope \(\mu(\mathcal{E})\) for $1 \leq i \leq k$.
	\end{theorem}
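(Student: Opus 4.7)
The plan is to argue by induction on the rank $r$ of $\mathcal E$. When $r=1$, the simple valuated matroid $\mathcal M$ has ground set of size one, so $\mathcal E$ is vacuously stable and the filtration $\emptyset \subsetneq \mathcal G$ works. For the inductive step, if $\mathcal E$ is already stable we again take the trivial filtration. Otherwise, among the proper nonempty flats $F \in \mathcal L(\underline{\mathcal M})$ with $\mu(\mathcal E_F) = \mu(\mathcal E)$, which exist by failure of stability, I choose $F_1$ of minimal rank. Any proper flat $F' \subsetneq F_1$ (regarded in $\mathcal L(\underline{\mathcal M}|_{F_1}) = [\emptyset, F_1] \subseteq \mathcal L(\underline{\mathcal M})$) then satisfies $\mu((\mathcal E_{F_1})_{F'}) = \mu(\mathcal E_{F'}) \leq \mu(\mathcal E)$ by semistability, and equality would contradict minimality of $\rk(F_1)$; hence $\mathcal E_{F_1}$ is stable with slope $\mu(\mathcal E)$.

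Next I would pass to the quotient $\mathcal E/F_1$, whose underlying matroid has rank $r - \rk(F_1) < r$. Two things must be verified before invoking the inductive hypothesis. First, Lemma~\ref{Slem1.1} combined with the modularity of the $E^i(j)$ gives $\mu(\mathcal E/F_1) = \mu(\mathcal E)$, since modularity of $E^i(j)$ forces $(F_1, E^i(j))$ to be a modular pair for every $i,j$. Second, the new filtration pieces $E'^i(j) = E^i(j) \vee F_1$ are modular flats of $\mathcal L(\underline{\mathcal M}/F_1) = [F_1, \mathcal G]$: this follows from Proposition~\ref{propmodflat}(1) applied with $G = F_1$, $H = \mathcal G$, and modular flat $F = E^i(j)$.

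I then must check that $\mathcal E/F_1$ is semistable. Let $\bar F \in [F_1, \mathcal G]$ correspond to a flat $F$ of $\underline{\mathcal M}$ with $F_1 \leq F$. By Lemma~\ref{Slem1.2}, the modularity of the $E^i(j)$ identifies $(\mathcal E/F_1)_{\bar F}$ with $\mathcal E_{F/F_1}$. Now $F_1$ is a modular flat of $\mathcal L(\underline{\mathcal M}|_F) = [\emptyset, F]$ by Remark~\ref{rmkmodflat}(1), and each $E^i(j) \wedge F$ is modular in $[\emptyset, F]$ by Proposition~\ref{propmodflat}(1) with $G = \emptyset$, $H = F$, so $(F_1, E^i(j) \wedge F)$ is a modular pair there. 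Applying Lemma~\ref{Slem1.1} to $\mathcal E_F$ and the subflat $F_1$ yields
\[
\mu(\mathcal E_F) = \frac{\rk(F_1)}{\rk(F)}\,\mu(\mathcal E_{F_1}) + \frac{\rk(F) - \rk(F_1)}{\rk(F)}\,\mu(\mathcal E_{F/F_1}).
\]
Since $\mu(\mathcal E_{F_1}) = \mu(\mathcal E)$, an inequality $\mu((\mathcal E/F_1)_{\bar F}) = \mu(\mathcal E_{F/F_1}) > \mu(\mathcal E)$ would force $\mu(\mathcal E_F) > \mu(\mathcal E)$, contradicting the semistability of $\mathcal E$. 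Hence $\mathcal E/F_1$ is semistable of slope $\mu(\mathcal E)$ with modular filtration pieces.

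Applying the inductive hypothesis to $\mathcal E/F_1$ produces a filtration $F_1 = F_1' \subsetneq F_2' \subsetneq \cdots \subsetneq F_k' = \mathcal G$ in $[F_1, \mathcal G]$ whose successive quotients inside $\mathcal E/F_1$ are stable of slope $\mu(\mathcal E)$. Setting $F_0 = \emptyset$, $F_1$ as chosen above, and $F_i = F'_{i-1}$ for $i \geq 2$ gives the desired filtration in $\mathcal L(\underline{\mathcal M})$. Using Lemma~\ref{Slem1.2} iteratively one identifies $\mathrm{gr}_i = \mathcal E_{F_i/F_{i-1}}$ with $(\mathcal E/F_1)_{F'_{i-1}/F'_{i-2}}$ for $i \geq 2$ (and with $\mathcal E_{F_1}$ for $i=1$), so each graded piece is stable of slope $\mu(\mathcal E)$. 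The main subtlety, and what I expect to be the principal technical step, is verifying that modularity propagates to $\mathcal E/F_1$ in a strong enough form to simultaneously upgrade the destabilizing data and to identify quotients of quotients with iterated quotients along the filtration. This is precisely the content encoded by Proposition~\ref{propmodflat}, Remark~\ref{rmkmodflat}, and Lemma~\ref{Slem1.2}, and these results together make the induction go through.
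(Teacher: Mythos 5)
Your proposal is correct and follows essentially the same approach as the paper: choose a stable subsheaf $\mathcal E_{F_1}$ of equal slope with $F_1$ of minimal rank (the paper finds it by descending through equal-slope flats, which is equivalent), show $\mathcal E/F_1$ is semistable of the same slope with modular filtration pieces, and then iterate/induct using Lemma~\ref{Slem1.2} and Remark~\ref{rmkmodflat} to identify quotients-of-quotients with quotients. The only minor variation is in the semistability step for $\mathcal E/F_1$: the paper applies the see-saw Corollary~\ref{Scor1.1} twice combined with Lemma~\ref{Slem1.2}, while you apply Lemma~\ref{Slem1.1} directly to $\mathcal E_F$ with subflat $F_1$; both rest on the same modularity hypotheses and give the same conclusion.
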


	\begin{proof}
		If $\mathcal{E}$ is stable, we set \(F_1=\mathcal{G}\)
                and we are done. Suppose that $\mathcal{E}$ is not
                stable. Since $\mathcal E$ is semistable there is
                a flat \(G_1 \in \mathcal{L}(\underline{\mathcal M})\)
                with \(\mu(\mathcal{E})=\mu(\mathcal{E}_{G_1})\). If
                \(\mathcal{E}_{G_1}\) is not stable we can choose a
                flat \(G_2 \in \mathcal{L}(\underline{\mathcal M})\)
                with \(G_2 \subsetneq G_1\) and
                \(\mu(\mathcal{E}_{G_2})=\mu(\mathcal{E}_{G_1})\). Since
                the ranks of these flats are decreasing, this cannot
                continue forever, so we must have a nonempty flat \(F_1 \in
                \mathcal{L}(\underline{\mathcal M})\) with
                \(\mathcal{E}_{F_1}\) stable and
                \(\mu(\mathcal{E})=\mu(\mathcal{E}_{F_1})\). Now we
                show that the quotient \(\mathcal E/ F_1\) is
                semistable with slope $\mu$. That the slope of \(\mathcal E/ F_1\) is
                $\mu$ follows from Lemma \ref{Slem1.1}.
                  Suppose \(\mathcal E/ F_1\) is not
                semistable. Then using Corollary \ref{Scor1.1}, we
                have a flat \(G \in [F_1, \mathcal{G}]\) such that
                \(\mu(\mathcal E/ F_1) > \mu((\mathcal E/
                F_1)/G)\). Thus we have \(\mu(\mathcal{E}) >
                \mu(\mathcal E/G)\) using Lemma \ref{Slem1.2}.  Corollary \ref{Scor1.1} then contradicts that
                $\mathcal{E}$ is semistable, so we conclude that 
                \(\mathcal E/ F_1\) is semistable. We denote by
                \(\mathcal{E}'\) the quotient \(\mathcal E/ F_1\) and repeat the same
                procedure. Let \(F_2 \in
                \mathcal{L}(\underline{\mathcal M}/ F_1)\) be a flat
                such that \(\mathcal{E}'_{F_2} \) is stable with slope
                $\mu$. Then we have $\mathcal{E}'_{F_2} =\mathcal{E}_{F_2/F_1}$ by Lemma \ref{Slem1.2}. Hence, we get \(\emptyset \subsetneq F_1 \subsetneq F_2\) with \(\mathcal{E}_{F_1}\) and \(\mathcal{E}_{F_2/F_1}\) are both stable with same slope $\mu$. Since \(\mathcal E'= \mathcal{E} / F_1\) is semistable and \(\mathcal{E}'_{F_2}\) is stable, we have \(\mathcal{E}'/F_2\) is again semistable,  using Remark \ref{rmkmodflat}(2). Note that \(\mathcal{E}'/F_2=\mathcal{E}/F_2\) by Lemma \ref{Slem1.2}. Hence, we can repeat the procedure together with Remark \ref{rmkmodflat} to get the desired filtration.
	\end{proof}
	
	\begin{lemma}\label{HNFILTgrlem}
		Let \(\mathcal{E}=\left( \mathcal{M}, \mathcal{G}, \{E^i(j)\}\right) \) be a tropical toric reflexive sheaf on tropical toric variety $\trop(X_{\Sigma})$. Assume that  \(E^i(j)\) are modular flats of the lattice of flats of $\underline{\mathcal M}$. Let $F, G \in \mathcal{L}(\underline{\mathcal{M}})$ be flats such that $\mathcal{E}/F$ is semistable. Assume further that one of the flats \(F\) or \(G\) is also modular. Then, we have
		\begin{equation*}
			\mu(\mathcal{E}_{G/ G \meet F}) \leq \mu(\mathcal{E}/F).
		\end{equation*} 
	\end{lemma}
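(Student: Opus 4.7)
The plan is to bound $\mu(\mathcal E_{G/(G \meet F)})$ from above by $\mu(\mathcal E/F)$ by routing through an intermediate sheaf $\mathcal E_{(G \vee F)/F}$, which is a genuine subsheaf of $\mathcal E/F$. Assuming $G \not\leq F$ (otherwise the statement is vacuous since $\mathcal E_{G/(G \meet F)}$ has rank zero), Lemma~\ref{Slem1.2} applies because the $E^i(j)$ are modular by hypothesis, and identifies $(\mathcal E/F)_{G \vee F} = \mathcal E_{(G \vee F)/F}$. Semistability of $\mathcal E/F$ then gives $\mu(\mathcal E_{(G \vee F)/F}) \leq \mu(\mathcal E/F)$, so it remains to show
\[
\mu(\mathcal E_{G/(G \meet F)}) \leq \mu(\mathcal E_{(G \vee F)/F}).
\]

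To set this up I would first invoke the hypothesis that one of $F, G$ is modular to conclude that $(F, G)$ is a modular pair. This gives the rank equality $\rk(G) - \rk(G \meet F) = \rk(G \vee F) - \rk(F)$, so $\mathcal E_{G/(G \meet F)}$ and $\mathcal E_{(G \vee F)/F}$ have the same rank, and it makes available the classical interval isomorphism
\[
\phi \colon [F, G \vee F] \longrightarrow [G \meet F, G], \qquad y \mapsto y \meet G,
\]
with inverse $x \mapsto x \vee F$, which preserves relative ranks: $\rk(y) - \rk(F) = \rk(y \meet G) - \rk(G \meet F)$ for $y \in [F, G \vee F]$.

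Next, for each ray $\rho_i$ and each $j$, let $y_{i,j} = (E^i(j) \meet (G \vee F)) \vee F$ and $x_{i,j} = (E^i(j) \meet G) \vee (G \meet F)$ be the respective filtration pieces. A short lattice manipulation---requiring no further modularity---shows $\phi(y_{i,j}) = y_{i,j} \meet G \geq x_{i,j}$: from $E^i(j) \meet G \leq E^i(j) \meet (G \vee F) \leq y_{i,j}$ together with $E^i(j) \meet G \leq G$ one has $E^i(j) \meet G \leq y_{i,j} \meet G$, and similarly $G \meet F \leq y_{i,j} \meet G$, so the join $x_{i,j}$ is bounded above by $\phi(y_{i,j})$. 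Rank-preservation of $\phi$ then gives the pointwise inequality $r^A_{i,j} \leq r^B_{i,j}$ for the filtration ranks of $\mathcal E_{G/(G \meet F)}$ and $\mathcal E_{(G \vee F)/F}$ respectively.

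Finally, I would apply summation by parts to the degree formulas of Remark~\ref{rmkforSlem1.1}. Both filtrations stabilize identically---full ground set for $j \ll 0$ and empty for $j \gg 0$---with matching top ranks by the modular-pair rank equality, so the boundary contributions cancel and
\[
\deg(\mathcal E_{(G \vee F)/F}) - \deg(\mathcal E_{G/(G \meet F)}) = \sum_{i,j} (r^B_{i,j} - r^A_{i,j}) h_i \geq 0,
\]
which combined with equal ranks yields the slope inequality and chains with the semistability bound to finish the proof. The step I expect to be most delicate is the rank-preserving property of $\phi$ from only the modular-pair hypothesis on $(F, G)$: this is the classical fact that in a semimodular lattice a modular pair produces rank-preserving interval isomorphisms, but it is worth spelling out in the lattice-of-flats setting by combining the modular-pair rank identity with the observation that $F \vee (y \meet G) \leq y$ has the correct rank, so equality holds by semimodularity.
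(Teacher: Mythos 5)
Your proof is correct and, once unwound, is essentially the paper's argument: both compare $\mathcal E_{G/(G\meet F)}$ with $(\mathcal E/F)_{F\vee G}=\mathcal E_{(F\vee G)/F}$ via the same filtration pieces and then invoke semistability of $\mathcal E/F$. The difference is in how the key slope inequality between the two auxiliary sheaves is obtained: the paper cites \cite[Lemma 2.5]{HNS} after noting the flat containment and the rank equality, whereas you carry out the rank comparison and the Abel summation explicitly. Your route is more self-contained, and the pointwise comparison $r^A_{i,j}\le r^B_{i,j}$ that you prove is exactly the content that makes the cited lemma applicable.

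One point of prose is imprecise, though the proof survives it. You attribute the rank-preserving property of $\phi\colon[F,F\vee G]\to[G\meet F,G]$, $y\mapsto y\meet G$, to the modular-pair condition on $(F,G)$ and call it ``the classical fact that in a semimodular lattice a modular pair produces rank-preserving interval isomorphisms.'' In a geometric lattice this is false in general: in the Vamos matroid, $F=\{1,2\}$ and $G=\{5,6\}$ form a modular pair (both have rank $2$, $F\meet G=\emptyset$, $F\vee G=\mathcal G$), yet for $y=\{1,2,3,4\}\in[F,F\vee G]$ one has $\rk(y)-\rk(F)=1$ while $\rk(y\meet G)-\rk(G\meet F)=0$. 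What is true, and what the lemma's hypothesis actually provides, is that one of $F$ or $G$ is a \emph{modular flat}: if $F$ is modular, then $(y\meet G,F)$ is a modular pair, so $\rk((y\meet G)\vee F)=\rk(y\meet G)+\rk(F)-\rk(G\meet F)$, and combining this with the semimodularity bound $\rk(y\meet G)\ge\rk(y)+\rk(G)-\rk(F\vee G)$ and the modular-pair identity for $(F,G)$ gives $(y\meet G)\vee F=y$ and hence the rank equality; the case $G$ modular is similar, using Proposition~\ref{defmodflat}. So you should phrase this step as following from the modularity of $F$ or $G$, not merely from $(F,G)$ being a modular pair. With that repair the argument is complete.
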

	
	\begin{proof}
		Recall that the tropical toric reflexive sheaf $\mathcal{E}_{G/ G \meet F}$	is given by 
		\begin{equation*}
			\left( (\mathcal{M}|_G)/ (G \meet F), G \setminus (G \meet F), \{(E^i(j) \meet G) \vee(G \meet F)\}\right). 
		\end{equation*}
		Note that using the modularity of the flat \(E^i(j)\) (see Proposition \ref{defmodflat}(2)), we have 
		\begin{equation}\label{HNFILTgrlemeq1}
			\begin{split}
				E'^i(j):=(G \meet F)  \vee (E^i(j) \meet G) &= ((G \meet F)  \vee E^i(j)) \meet G.
			\end{split}
		\end{equation}
		Let \(A=F \vee G \in [F, \mathcal{G}]=\mathcal L(\underline{\mathcal M}/F)\). The tropical reflexive subsheaf of $\mathcal{E}/F$ corresponding to \(A\) is given by
		\begin{equation*}
			\left( (\mathcal{M}/F)|_A, A \setminus F, \{(E^i(j) \vee F) \meet A \}\right). 
		\end{equation*}
		We have \(E'^i(j) \leq (E^i(j) \vee F) \meet A\) and under the additional assumption of modularity, we get
		\begin{equation*}
			\begin{split}
				\rk((\mathcal{M}|_G)/ (G \meet F)) & =\rk(G)- \rk(G \meet F)\\
				& =\rk(F \vee G)-\rk(F)=\rk((\mathcal{M}/F)|_A).
			\end{split}
		\end{equation*}
		Hence, using \cite[Lemma 2.5]{HNS} we have \(\mu(\mathcal{E}_{G/ G \meet F}) \leq \mu((\mathcal{E}/F)_A)\). Finally, by semistability of \(\mathcal{E}/F\) we get \(\mu((\mathcal{E}/F)_A) \leq \mu(\mathcal{E}/F)\). Hence, the lemma follows.
	\end{proof}
	
	\begin{theorem}[Harder-Narasimhan filtration] \label{t:HNfilt}
		Let \(\mathcal{E}=\left( \mathcal{M}, \mathcal{G}, \{E^i(j)\}\right) \) be a tropical toric reflexive sheaf on tropical toric variety $\trop(X_{\Sigma})$. Assume that the \(E^i(j)\) are modular flats of the lattice of flats of $\underline{\mathcal M}$. Then there is an increasing filtration by flats of the underlying matroid \(\underline{\mathcal M}\)
		\begin{equation*}
			\emptyset=F_0 \subsetneq F_1 \subsetneq F_2 \subsetneq \ldots \subsetneq F_k=\mathcal{G}
		\end{equation*}
		where the quotients \(\text{gr}_i:=\mathcal{E}_{F_i/ F_{i-1}}, \, i=1, \ldots, k\) satisfies the following conditions:
		\begin{itemize}
			\item[(i)] the quotient \(\text{gr}_i\) is semistable, and 
			\item[(ii)]  $\mu(\text{gr}_i) > \mu(\text{gr}_{i+1})$ for \(i=1, \ldots, k-1\).
		\end{itemize}
		This filtration is unique under the additional assumption that the flats \(F_1, \ldots, F_k\) are also modular.
	\end{theorem}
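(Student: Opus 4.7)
The plan is to mimic the classical construction of the Harder-Narasimhan filtration, adapted to work with flats of $\underline{\mathcal{M}}$ in place of subsheaves. For existence, I would define $F_1$ to be a flat of maximum slope
\[
\mu_{\max} := \max\{\mu(\mathcal{E}_F) : F \in \mathcal{L}(\underline{\mathcal{M}}),\ F \neq \emptyset\},
\]
chosen among all such flats to have maximal rank; the maximum is attained because $\mathcal{L}(\underline{\mathcal{M}})$ is finite. Then $\mathcal{E}_{F_1}$ is semistable: any proper subflat $G \subsetneq F_1$ is itself a flat of $\underline{\mathcal{M}}$, so $\mu(\mathcal{E}_G) \leq \mu_{\max} = \mu(\mathcal{E}_{F_1})$.

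For the inductive step, Remark~\ref{rmkmodflat}(2) shows that the filtered pieces $E^i(j) \vee F_1$ of the quotient $\mathcal{E}/F_1$ are modular flats in $\mathcal{L}(\underline{\mathcal{M}}/F_1) = [F_1, \mathcal{G}]$, so the hypotheses of the theorem are preserved. Since $\rk(\mathcal{E}/F_1) < \rk(\mathcal{E})$, induction on rank produces a filtration $F_1 \subsetneq F_2 \subsetneq \cdots \subsetneq F_k = \mathcal{G}$ of $\mathcal{E}/F_1$ satisfying (i) and (ii); by Lemma~\ref{Slem1.2}, $(\mathcal{E}/F_1)_{F_i/F_1}/(F_{i-1}/F_1) = \mathcal{E}_{F_i/F_{i-1}}$, so each $\text{gr}_i$ is semistable. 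The strict descent $\mu(\text{gr}_1) > \mu(\text{gr}_2)$ is proved by contradiction: if $\mu((\mathcal{E}/F_1)_{F_2/F_1}) \geq \mu(\mathcal{E}_{F_1})$, apply Lemma~\ref{Slem1.1} (see-saw) to the flat $F_1 \subseteq F_2$ inside the subsheaf $\mathcal{E}_{F_2}$. The required modular-pair condition $(F_1, E^i(j) \meet F_2)$ holds because Remark~\ref{rmkmodflat}(1) guarantees that $E^i(j) \meet F_2$ is modular in $[\emptyset, F_2]$. This yields $\mu(\mathcal{E}_{F_2}) \geq \mu_{\max}$, hence equality; but $\rk(F_2) > \rk(F_1)$, contradicting the maximality of $\rk(F_1)$ among slope-maximizers. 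The same argument applied at each stage gives (ii).

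For uniqueness under the additional modularity of each $F_i$, let $\emptyset = G_0 \subsetneq G_1 \subsetneq \cdots \subsetneq G_l = \mathcal{G}$ be another such filtration with all $G_j$ modular. I would show $F_1 = G_1$ and then induct on $\mathcal{E}/F_1$. Applying Lemma~\ref{HNFILTgrlem} with $F := G_1$ and $G := F_1$ (using semistability of $\mathcal{E}/G_1$ and modularity of one of $F_1, G_1$) gives $\mu(\mathcal{E}_{F_1/F_1 \meet G_1}) \leq \mu(\mathcal{E}/G_1)$. If $F_1 \not\subseteq G_1$, then $F_1 \meet G_1 \subsetneq F_1$, and semistability of $\mathcal{E}_{F_1}$ combined with the see-saw property forces $\mu(\mathcal{E}_{F_1/F_1 \meet G_1}) \geq \mu(\mathcal{E}_{F_1}) = \mu(\text{gr}_1) > \mu(\text{gr}_2) \geq \mu(\mathcal{E}/G_1)$, a contradiction. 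By symmetry $G_1 \subseteq F_1$, hence $F_1 = G_1$, and induction finishes the proof.

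\textbf{Main obstacle.} The principal difficulty is ensuring that the modularity conditions needed for applications of Lemmas~\ref{Slem1.1} and \ref{HNFILTgrlem} are genuinely available at each step, especially as we pass to contractions $\mathcal{E}/F_i$, since modularity of lattice elements is not always preserved under quotients. Remark~\ref{rmkmodflat} handles the filtered pieces $E^i(j)$, but the uniqueness argument requires the inductively chosen $F_i$ to retain the necessary modularity in each quotient lattice $\mathcal{L}(\underline{\mathcal{M}}/F_{i-1})$, and one must check that the see-saw computations in the key step comparing $F_1$ with $G_1$ can be run using only the modularity hypotheses on the $E^i(j)$ and on $F_1, G_1$ themselves.
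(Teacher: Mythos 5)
The existence half of your plan matches the paper's proof: same choice of $F_1$ (maximal slope, then maximal rank), same use of Remark~\ref{rmkmodflat}(2) to show the modular-flat hypothesis passes to the quotient, same use of Lemma~\ref{Slem1.2} to identify the graded pieces, and the same see-saw argument (yours phrased as a contradiction, the paper's directly via Corollary~\ref{Scor1.1}) for strict descent of slopes.

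The uniqueness part, however, has a genuine gap. You apply Lemma~\ref{HNFILTgrlem} with $F := G_1$ and $G := F_1$ and invoke ``semistability of $\mathcal{E}/G_1$.'' But in a Harder-Narasimhan filtration of length $l > 2$, the quotient $\mathcal{E}/G_1$ is \emph{not} semistable; it is the graded pieces $\text{gr}^G_i = \mathcal{E}_{G_i/G_{i-1}}$ that are semistable, while $\mathcal{E}/G_1$ itself has a nontrivial HN filtration of length $l-1$. So the hypothesis of Lemma~\ref{HNFILTgrlem} is not satisfied, and the inequality $\mu(\mathcal{E}_{F_1/F_1 \meet G_1}) \leq \mu(\mathcal{E}/G_1)$ does not follow as stated. (The subsequent chain also mixes the graded pieces of the two filtrations: $\mu(\text{gr}_1) > \mu(\text{gr}_2)$ refers to the $F$-filtration, so the comparison to $\mu(\mathcal{E}/G_1)$ is not justified.)

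The paper avoids this by a two-step uniqueness argument. First it shows, for any flat $F'$, that the induced filtration $F'_i := F_i \meet F'$ of $F'$ satisfies $\mu(\mathcal{E}_{F'}) \leq \mu(\mathcal{E}_{F_1})$ with equality only if $F' \subseteq F_1$; crucially, Lemma~\ref{HNFILTgrlem} is applied there with $\mathcal{E}$ replaced by $\mathcal{E}_{F_i}$, $F := F_{i-1}$, and $G := F' \meet F_i$, so the required semistable quotient is $\mathcal{E}_{F_i}/F_{i-1} = \text{gr}_i$, which \emph{is} semistable by hypothesis. Then taking $F' = G_1$ (and symmetrically $F' = F_1$ against the $G$-filtration) forces $F_1 = G_1$, and one continues inductively on $\mathcal{E}/F_1$. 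To repair your argument you would need to replace the single application of Lemma~\ref{HNFILTgrlem} to $\mathcal{E}/G_1$ by this level-by-level version that only ever uses semistability of a genuine graded piece.
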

	
	\begin{proof}
If $\mathcal{E}$ is semistable, then we set \(k=1\) and
\(F_1=\mathcal{G}\). Now suppose that $\mathcal{E}$ is not
semistable. Since there are only finitely many flats in the lattice of
flats of \(\underline{\mathcal M}\), we may choose a flat \(F_1 \in
\mathcal{L}(\underline{\mathcal M})\) of maximal slope and maximal
rank amongst flats of maximal slope. Here by slope of a flat \(F_1\),
we mean slope of \(\mathcal{E}_{F_1}\). Then $\mathcal{E}_{F_1}$ is
semistable by the choice of the flat \(F_1\). Now consider the
quotient \(\mathcal{E}':=\mathcal E/ F_1\) on the ground set
\(\mathcal{G} \setminus F_1\). As before choose a flat \(F_2 \in
\mathcal{L}(\underline{\mathcal M}/ F_1)\) of maximum slope and
maximum rank amongst flats of maximal slope of \(\underline{\mathcal
  M}/ F_1\).
Note that, as before \(\mathcal E'_{F_2}\) is semistable by
the choice of \(F_2\). Using Lemma \ref{Slem1.2}, we get
$\mathcal{E}'_{F_2} =\mathcal{E}_{F_2/F_1}$. Next we show that
		\begin{equation*}
			\mu\left( \mathcal{E}_{F_1} \right) >  \mu\left( \mathcal{E}_{F_2/ F_1}\right).
		\end{equation*}
Since \(\rk(F_2) > \rk(F_1)\), by the choice of \(F_1\), we have
\(\mu(\mathcal{E}_{F_2}) < \mu(\mathcal{E}_{F_1})\). By Remark \ref{rmkmodflat}(1), $E^i(j) \join F$ is a modular flat of $\mathcal M/F$, so by Corollary \ref{Scor1.1} and 
we get
\(\mu(\mathcal{E}_{F_2}) > \mu(\mathcal{E}_{F_2/ F_1})\). Thus we get
$\mu\left( \mathcal{E}_{F_1} \right) > \mu\left( \mathcal{E}_{F_2/
  F_1}\right)$. To continue consider \(\mathcal{E}'/F_2\). Note that
\(\mathcal{E}'/F_2=\mathcal{E}/F_2\) by Lemma \ref{Slem1.2}. Hence, we
can repeat the procedure
to obtain the desired filtration.
		
		To see the uniqueness, note that if \((F_i)_{i=1, \ldots, k}\) is a filtration which satisfies the desired properties and if \(F'\) is another flat in $\mathcal{L}(\underline{\mathcal{M}})$ then this filtration induces a filtration \((F'_i=F_i \meet F')_{i=1, \ldots, k}\) of \(F'\). Let \(\text{gr}'_i=\mathcal{E}_{F'_i/ F'_{i-1}}\) be the corresponding quotients. If they are all non-zero, by taking $\mathcal{E}=\mathcal{E}_{F_i}$, $F=F_{i-1}$ and \(G=F' \meet F_i\) in Lemma \ref{HNFILTgrlem} together with Remark \ref{rmkmodflat}(1), we have 
		\begin{equation}\label{grvsgr'}
			\mu(\text{gr}'_i) \leq \mu(\text{gr}_i) \leq \mu(\mathcal{E}_{F_1})
		\end{equation}
		Here we have used the additional assumption that the flats \(F_i\) are also modular. Now applying Lemma \ref{Slem1.1} together with Remark \ref{rmkmodflat}(1), we get
		\begin{equation*}
			\begin{split}
				\mu(\mathcal{E}_{F_2'}) &= \frac{r_1}{r'} \, \mu(\mathcal{E}_{F'_{1}}) + \frac{r_2}{r'} \, \mu(\mathcal{E}_{F_2'/F'_{1}}) \\
				& \leq \frac{r_1}{r'} \, \mu(\mathcal{E}_{F_1}) + \frac{r_2}{r'} \, \mu(\mathcal{E}_{F_1}) ~ (\text{using \eqref{grvsgr'} and semistability of } \mathcal{E}_{F_1})\\
				& = \mu(\mathcal{E}_{F_1}),
			\end{split}
		\end{equation*}
		$\text{here } r_1=\rk(F'_{1}), r'=\rk(F_2') \text{ and } r_2=r'-r_1$. Inducting on $k$, we get
		\begin{equation*}
\mu(\mathcal{E}_{F'})=\mu(\mathcal{E}_{F_k'}) \leq \mu(\mathcal{E}_{F_1}).
		\end{equation*}
		 Moreover, if the equality holds, it forces all  \(\text{gr}'_i\) to be zero for all \(i \geq 2\), in other words \(F' \subseteq F_1\). Now if \((G_i)_{i=1, \ldots, k}\) is another filtration with these properties then the slopes of \(\mathcal{E}_{F_1}\) and \(\mathcal{E}_{G_1}\), we have \(F_1=G_1\). Now we continue applying the same argument to \(\mathcal{E}/F_1\). Note that we can apply the same argument in view of Remark \ref{rmkmodflat}. Hence, we conclude the filtration is unique.
	\end{proof}

	\begin{example}\label{Stab_eg1}
		\begin{enumerate}
		\item Consider the tropical tangent sheaf
                  \(\trop(\mathscr{T})\) on $\trop(X_{\Sigma})$ from
                  Example \ref{eg_tangent_bdl}, which is given by the
                  representable matroid $\underline{\mathcal{M}}$ on
                  the ground set
                  $\mathcal{G}=\{\mathbf{v}_1,\dots,\mathbf{v}_s\}$. Note
                  that by Proposition \ref{propmodflat}\eqref{item:propmodflat2}
                  \(\emptyset\), \(\mathcal G\) and atoms in a lattice
                  are always modular flats.

 Hence using Theorem \ref{t:HNfilt} a Harder-Narasimhan filtration of
                  \(\trop(\mathscr{T})\) corresponds to a particular increasing
                  sequence of flats of $\underline{\mathcal{M}}$.
                This can be thought of as a tropical analogue of
                \cite[Corollary 2.1.7]{pangT}).  which shows that a
                Harder-Narasimhan filtration of the tangent bundle on
                a toric variety corresponds to an increasing
                filtration of subspaces of $N_{\mathbb R}$.
			
			\item  Using Proposition \ref{propmodflat}(2), we get that any rank-$2$ tropical toric reflexive sheaf admits a  Harder-Narasimhan filtration as described in Theorem \ref{t:HNfilt}.
			
			\item \label{item:Stab_eg1.1} Let \(\mathcal{E}=\left( \mathcal{M}, \mathcal{G}, \{E^i(j)\}\right) \) be a tropical reflexive sheaf on tropical toric variety $\trop(X_{\Sigma})$. Suppose that there is a modular flat \(F\) of maximal slope such that the quotient is $\mathcal{E} / F$ is semistable and \(\mu (\mathcal{E}) < \mu(\mathcal{E}_F)\). Then \( \emptyset \subsetneq F \subsetneq \mathcal{G}\) is a Harder-Narasimhan filtration. To see this it suffices to check
			\begin{equation*}
				\mu(\mathcal{E}_F) > \mu (\mathcal{E} / F).
			\end{equation*}
			This follows from Corollary \ref{Scor1.1}.  Note that this case does not require the $E^i(j)$ to be modular flats.
		\end{enumerate}
		
	\end{example}

	\begin{example}
		Consider the representable matroid $\mathcal{M}$ on the ground set $\mathcal{G}=\{\mathbf{e}_1, \mathbf{e}_2, \mathbf{e}_1+\mathbf{e}_3, \mathbf{e}_3\}$ introduced in Example \ref{e:trickymatroid} \eqref{item:e:trickymatroid}. The lattice of flats is given in Figure \ref{eg7}.
	\begin{figure}
		\caption{\label{eg7}}
		\begin{tikzpicture}
			[scale=.5,auto=left]
			\node (n0) at (2,0) {$\emptyset$};
			\node (n1) at (-1,2)  {1};
			\node (n2) at (1,2)  {2};
			\node (n3) at (3,2) {3};
			\node (n4) at (5,2)  {4};
			\node (n6) at (-1,4) {12};
			\node (n7) at (1,4)  {23};
			\node (n8) at (3,4) {134};
			\node (n9) at (5,4)  {24};
			\node  (n12) at (2, 6) {$\mathcal G$};
			\foreach \from/ \to in {n4/n0,n3/n0,n2/n0,n1/n0,n6/n1,n6/n2,n7/n2,n7/n3,n8/n1, n8/n3, n8/n4,n9/n2,n9/n4, n12/n6, n12/n7, n12/n8, n12/n9}
			\draw (\from) -- (\to);
		\end{tikzpicture}
	\end{figure}
		Consider the tropical toric vector bundle $\mathcal{E}=\left(\mathcal{M}, \mathcal{G}, \{E^i(j)\} \right) $, where the filtrations of the flats are given as follows:
		
		$$E^{0}(j) = \left\{ \begin{array}
			{r@{\quad \quad}l}
			\mathcal{G} & i \leq 1 \\ 
			\{\mathbf{e}_1, \mathbf{e}_2\}& j=2 \\
			\emptyset & j > 2
		\end{array} \right.,
		E^{1}(j) = \left\{ \begin{array}
			{r@{\quad \quad}l}
			\mathcal{G} & i \leq 2 \\ 
			\{\mathbf{e}_1+\mathbf{e}_3\} & j=3 \\
			\emptyset & j > 3
		\end{array} \right.,
		E^{2}(j) = \left\{ \begin{array}
			{r@{\quad \quad}l}
			\mathcal{G} & i \leq 0 \\ 
			\{\mathbf{e}_3\} & j=1 \\
			\emptyset & j > 1
		\end{array} \right. .$$
		
		The slope is given by
		\begin{equation*}
			\mu(\mathcal{E})= \frac{(1+4)+(4+3)+1}{3}=\frac{13}{3}.
		\end{equation*}
		Note that \(F=\{\mathbf{e}_1,
                \mathbf{e}_1+\mathbf{e}_3, \mathbf{e}_3\}\) is the
                unique flat with maximum slope and maximum rank. Also,
                the flat \(F\) is modular and the quotient
                $\mathcal{E}/ F$ is rank 1 so stable. Hence, by
                Example \ref{Stab_eg1} \eqref{item:Stab_eg1.1} we get
                that \(\emptyset \subsetneq F \subsetneq \mathcal{G}\) is
                the Harder-Narasimhan filtration for $\mathcal{E}$.
	\end{example}
	
	\begin{example}
		Consider the tropical toric sheaf \(\left(\mathcal{E}, \mathcal{G}, \{E^i(j)\} \right) \) from Remark \ref{dist_latt_not_sat}. Note that here the \(E^i(j)\) are not all modular flats. The slope of the bundle is given by 
		\begin{equation*}
			\mu(\mathcal{E})=\frac{2+2}{4}=1.
		\end{equation*}
		Note that the rank-$1$ flats are singleton subsets of $\mathcal{G}$.	For the flats \(\{1\}, \{2\}, \{3\}\) and \(\{4\}\) the slope of the corresponding tropical toric  subsheaf is $1$. All other rank-$1$ flats have slope $0$.		
		The rank-$2$ flats are subsets of $\mathcal{G}$ with cardinality \(2\) as there are no circuits with cardinality 3. All of them have slope less than or equal to 1.
		
		The rank-$3$ flats are \(\{1,2,3,4\}, \{1,4,5,6\}, \{1,4,7,8\}, \{2,3,5,6\}\) and \(\{2,3,7,8\}\) and all subsets of cardinality 3 of $\mathcal{G}$ not contained in any of the previous 4-element flats. Consider the flat \(F=\{1,2,3,4\}\) and the corresponding filtrations are given by 
		$$F^0(j) = \begin{cases} F & j \leq 0 \\ \{1,2\} & j=1 \\ \emptyset & j >1 \\ \end{cases} \, \, \, \, \,,
		F^1(j) = \begin{cases} F & j \leq 0 \\ \{3,4\} & j=1 \\ \emptyset & j >1 \\ \end{cases}.$$
		The slope is given by
		\begin{equation*}
			\mu(\mathcal{E}_F)=\frac{2+2}{3}=1 + \frac{1}{3} > \mu(\mathcal{E}).
		\end{equation*}
		Hence, the bundle $\mathcal{E}$ is not
                semistable. Note that \(F\) is the unique flat with
                maximum slope and maximum rank. Note that \(F\) is not
                a modular flat. Consider the quotient \(\mathcal{E}
                /F=\left( \mathcal{M} / F, \{5,6,7,8\}, \{E^i(j) \vee
                F \}\right)\), where $$E^0(j) \vee F = \begin{cases}
                  \mathcal G & j \leq 0 \\ \{1,2, 3, 4\} & j >
                  0 \end{cases} \, \, \, \, \,, E^1(j) \vee
                F= \begin{cases} \mathcal G & j \leq 0 \\ \{1,2,3,4\}
                  & j >0 \\ \end{cases}.$$ Thus the slope
                \(\mu(\mathcal{E}/F)=0 < \mu(\mathcal{E}_F)\).  Since
                the quotient \(\mathcal{E} /F\) is rank $1$, it is
                stable. Hence, the Harder-Narasimhan filtration is
                given by \(\emptyset \subsetneq F \subsetneq \mathcal{G}\).
                This Harder-Narasimhan filtration is unique, even
                though $E^1(1)$ and $E^2(1)$ are not modular flats.
                Since there are only finitely many possibilities for a
                filtration, this can be checked directly.
	\end{example}

\section{Tropical vector bundles on other varieties} \label{s:nontoric}

In this section we extend the definition of tropical vector bundles from bundles on tropical toric varieties to more general bases.
A subscheme $X$ of a tropical toric variety $\trop(X_{\Sigma})$ is
given by a tropical ideal in the sense of \cite{TropicalIdeals}.
Given a tropical toric vector bundle on $\trop(X_{\Sigma})$ we can
restrict it to $X$.  Algebraically, this means replacing the Cox semiring $S$ in
\eqref{eqtn:tropicalM} by the quotient of $S$ by the bend locus of the
ideal for $X$.

We first recall the definition of a subscheme of a tropical toric variety.

\begin{definition} \cite{TropicalIdeals}*{\S 4}
  Let $\Sigma$ be a rational polyhedral fan in $N_{\mathbb R}$.  Let
  $\sigma$ be a cone of $\Sigma$.  An ideal $J \subseteq
  \Rbar[\sigma^{\vee} \cap M]$ is a {\em tropical ideal} if for every
  finite set $E$ of monomials in $\Rbar[\sigma^{\vee} \cap M]$ the set
  of polynomials with support in $E$ is the collection of vectors of a
  valuated matroid with ground set $E$.  A ideal $I \subseteq
  \Cox(\trop(X_{\Sigma})) = \Rbar[x_1,\dots,x_s]$ that is homogeneous
  with respect to the $A^1(\Sigma)$-grading is a {\em locally tropical
    ideal} if for every $\sigma \in \Sigma$ the ideal $(I
  \Cox(\trop(X_{\Sigma}))_{\mathbf{x}^{\hat{\sigma}}})_{\mathbf{0}}
    \subseteq \Rbar[\sigma^{\vee} \cap M]$ is a tropical ideal.

    A subscheme $X$ of a tropical toric variety $X_{\Sigma}$ is
    determined by a locally tropical ideal $I \subseteq
    \Cox(\trop(X_{\Sigma}))$ as follows.  On the local chart
    $\trop(U_{\sigma})$ the subscheme $X \cap \trop(U_{\sigma})$ has
    coordinate semiring $$\Rbar[\sigma^{\vee} \cap M]/\mathcal
    B({I\Cox(\trop(X_{\Sigma})_{\mathbf{x}^{\hat{\sigma}}}})_{\mathbf{0}}),$$
    where $\mathcal B()$ is the Giansiracusa bend congruence of \S
    \ref{ss:bendcongruence}.
\end{definition}

\begin{example}
Let $X \subseteq \trop(\mathbb P^4)$ be the tropicalization of the
embedding of $\mathbb P^1$ into $\mathbb P^4$ given by $\phi \colon
[x:y] \mapsto [x:y:x-y:x-2y:x-5y]$.  Then $X$ is given by the
tropical ideal $\trop(J)$ for $J = \langle u-x+y, v-x+2y,w-x+5y
\rangle \subseteq K[x,y,u,v,w]$.  On the chart $x \neq \infty$ the coordinate ring of $X$ is
$$\Rbar[y/x,z/x,u/x,v/x,w/x] / \mathcal R,$$ where $\mathcal R$ is the
bend congruence of the ideal $\trop(\langle u/x-1+y/x, v/x -1 +2y/x,
w/x -1 +5y/x \rangle)$.  This contains relations such as $$u/x \tplus
0 \tplus y/x \sim u/x \tplus 0 \sim u/x \tplus y/x \sim  0 \tplus
y/x.$$
\end{example}

We then define tropical vector bundles on subschemes of tropical toric
varieties to be the restriction of tropical vector bundles on the
ambient toric variety.

\begin{definition}
Let $X$ be a tropical subscheme of a tropical toric variety
$\trop(X_{\Sigma})$.  A tropical toric vector bundle on $X$ is given by the
data of a tropical toric vector bundle on the ambient variety $\trop(X_{\Sigma})$.
\end{definition}

\begin{example}
Let $Y = V(x^3+y^3+z^3) \subseteq \mathbb P^2$, and let $X = \trop(Y)
\subseteq \trop(\mathbb P^2)$.  Let $\mathcal E$ be the tropical
tangent bundle on $\mathbb P^2$.  The restriction of $\mathcal E$ to
$X$ is described by the $\mathbb Z$-graded semimodule
$$(\Rbar[x,y,z]/\mathcal X)^3/R,$$ where $\mathcal X$ is the bend
congruence $\{ x^3\tplus y^3 \tplus z^3 \sim x^3 \tplus y^3
, \dots \}$ of $X$, and $R$ is the congruence generated by $\{x \mathbf{e}_1
\tplus y \mathbf{e}_2 \sim x \mathbf{e}_1 \tplus z \mathbf{e}_3 \sim y
\mathbf{e}_2 \tplus z \mathbf{e}_3 \}$.
\end{example}

We can tropicalize vector bundles on projective varieties as follows.

\begin{definition}
  Let $\mathcal F$ be a vector bundle on a projective variety $Y$.  Suppose

  \begin{equation} \label{q:pullback}
   \begin{split}
     \text{there is an embedding }\phi \colon Y \rightarrow X_{\Sigma} \text{ of }Y \text{  into a toric variety }X_{\Sigma}, \\ \text{  and a toric vector bundle }   \mathcal E \text{ on }X_{\Sigma} \text{ for which } \mathcal F = \phi^*(\mathcal E).
\end{split}     
\end{equation}
Then a tropicalization of $\mathcal F$ is given by the tropicalization
of the subscheme $Y \subseteq X_{\Sigma}$, and the data of a
tropicalization of $\mathcal E$ in the sense of
Definition~\ref{pd:tropE}.
\end{definition}

Condition \ref{q:pullback} does not hold for all embeddings of $Y$ into a toric variety, as the following example shows.

\begin{example} \label{e:notTVB}
  Let $\mathcal F$ be the coherent sheaf on $\mathbb P^1$ given by $\mathcal F = \tilde{P}$ for the module
  $$P = K[x,y]^3/\langle (x-y)\mathbf{e}_1 + (x-2y)
  \mathbf{e}_2 + (x-5y)\mathbf{e}_3 \rangle.$$ over $K[x,y]$.
  Note that $\mathcal F$ is locally free, as $\mathbb P^1 = D(x-y)
  \cup D(x-2y) \cup D(x-5y)$ and on each of these open sets $\mathcal
  F$ is free.  For example,
  \begin{align*} \mathcal
  F(D(x-y)) & = (P_{x-y})_{\mathbf{0}} \\ &  = K[x/(x-y), y/(x-y)]^3/\langle
    \mathbf{e}_1 + (x-2y)/(x-y) \mathbf{e}_2 + (x-5y)/(x-y)
    \mathbf{e}_3 \rangle \\ & \cong K[x/(x-y),y/(x-y)]\mathbf{e}_2
    \oplus K[x/(x-y),y/(x-y)] \mathbf{e}_3.
    \end{align*}
Thus $\mathcal F$ defines a rank-$2$ vector bundle on $\mathbb P^1$.
However $\mathcal F$ is not presented as an equivariant vector bundle, so the
embedding $\phi \colon \mathbb P^1 \rightarrow \mathbb P^1$ does not
satisfy \ref{q:pullback}.
\end{example}  

\begin{question} \label{q:vbpullback}
  Given a vector bundle $\mathcal E$ on a projective variety $Y$, does
  there exist an embedding $i \colon Y \rightarrow X_{\Sigma}$ of $Y$
  into a toric variety $X_{\Sigma}$ with the property that $\mathcal E
  = i^* \mathcal F$ for a toric vector bundle $\mathcal F$ on
  $X_{\Sigma}$?
\end{question}

The vector bundles for which the answer to Question~\ref{q:vbpullback}
is yes are those that we can currently tropicalize.

It is important to note that Question~\ref{q:vbpullback} does not fix
the ambient toric variety.  Given one embedding of $Y$ into a toric variety, we may want 
to reembed $Y$ into a larger toric variety to achieve success.
We illustrate this with Example~\ref{e:notTVB}.

\begin{example}
Consider the example of Example~\ref{e:notTVB}.  We embed $\mathbb P^1$ into $\mathbb P^4$ by $\phi \colon [x:y] \mapsto
[x:y:x-y:x-2y:x-5y]$.  We then have $\mathbb P^1$ is the subscheme defined by
the ideal $J = \langle u-x+y, v-x+2y,w-x+5y \rangle \subseteq S = K[x,y,u,v,w]$.  We have
$$P \cong  (S/J)^3/ \langle u \mathbf{e}_1 + v\mathbf{e}_2 + w \mathbf{e}_3 \rangle.$$
Let $X_{\Sigma}$ be $\mathbb P^4$ with all torus invariant planes removed; this has fan the $1$-skeleton of the fan of $\mathbb P^4$, and contains $\phi(\mathbb P^1)$.
The lift
$$\tilde{P} = S^3/\langle u \mathbf{e}_1+v\mathbf{e}_2 + w \mathbf{e}_3 \rangle$$
defines a coherent sheaf $\mathcal F$ on $\mathbb P^4$, and thus on $X_{\Sigma}$.  Note that it is not a vector bundle on $\mathbb P^4$, but is on $X_{\Sigma}$, and $\mathcal E$ is the restriction of $\mathcal F$ to $\phi(\mathbb P^1) \cong \mathbb P^1$.  
\end{example}

\begin{remark} \label{r:allpullbacks}
  \begin{enumerate}
\item The answer to Question~\ref{q:vbpullback} is yes almost by
  definition when $\mathcal E$ is a very ample line bundle: in that
  case $\mathcal E = \phi^*(\mathcal O(1))$ for the embedding $\phi
  \colon Y \rightarrow \mathbb P^{h^0(\mathcal F)-1}$ given by $\mathcal E$.
  This extends easily to all line bundles.
\item \label{item:brion} However the answer to Question~\ref{q:vbpullback} is not
  uniformly positive, at least if we assume that the toric variety
  $X_{\Sigma}$ is smooth.  The Chow ring of a smooth toric variety is
  generated by the classes of divisors, so in particular all Chern
  classes of a toric vector bundle $\mathcal F$ are polynomials in
  classes of divisors on $X_{\Sigma}$.  Since Chern classes of vector
  bundles pullback, any vector bundle for which the answer to
  Question~\ref{q:vbpullback} is yes must have its Chern classes in
  the subring of the Chow ring of $Y$ generated by divisors.  As this
  is not the case for most tautological bundles of Grassmannians (see, for example, \cite{3264}*{Theorem 5.26}), we see that the answer to Question~\ref{q:vbpullback} is
  not always yes.  We thank Michel Brion for pointing this out to us.
\end{enumerate}
\end{remark}

\begin{bibdiv}
	
	\begin{biblist}

\bib{Allermann}{article}{
   author={Allermann, Lars},
   title={Chern classes of tropical vector bundles},
   journal={Ark. Mat.},
   volume={50},
   date={2012},
   number={2},
   pages={237--258},
   issn={0004-2080},
}
            
\bib{ArdilaKlivans}{article}{
   author={Ardila, Federico},
   author={Klivans, Caroline J.},
   title={The Bergman complex of a matroid and phylogenetic trees},
   journal={J. Combin. Theory Ser. B},
   volume={96},
   date={2006},
   number={1},
   pages={38--49},
   issn={0095-8956},
}	  
		\bib{Birkhoff}{book}{
			author={Birkhoff, Garrett},
			title={Lattice theory},
			series={American Mathematical Society Colloquium Publications},
			volume={Vol. XXV},
			edition={3},
			publisher={American Mathematical Society, Providence, RI},
			date={1967},
			pages={vi+418},
		}
		
		\bib{BDGP}{article}{
			author={Biswas, Indranil},
			author={Dey, Arijit},
			author={Genc, Ozhan},
			author={Poddar, Mainak},
			title={On stability of tangent bundle of toric varieties},
			journal={Proc. Indian Acad. Sci. Math. Sci.},
			volume={131},
			date={2021},
			number={2},
			pages={Paper No. 36, 21},
			issn={0253-4142},
		}
		
		\bib{BdeM08}{article}{
			author={Bonin, Joseph E.},
			author={de Mier, Anna},
			title={The lattice of cyclic flats of a matroid},
			journal={Ann. Comb.},
			volume={12},
			date={2008},
			number={2},
			pages={155--170},
			issn={0218-0006},
		}

\bib{CDPR}{article}{
   author={Cools, Filip},
   author={Draisma, Jan},
   author={Payne, Sam},
   author={Robeva, Elina},
   title={A tropical proof of the Brill-Noether theorem},
   journal={Adv. Math.},
   volume={230},
   date={2012},
   number={2},
   pages={759--776},
   issn={0001-8708},
}

\bib{CoxHomogeneous}{article}{
   author={Cox, David A.},
   title={The homogeneous coordinate ring of a toric variety},
   journal={J. Algebraic Geom.},
   volume={4},
   date={1995},
   number={1},
   pages={17--50},
   issn={1056-3911},
}
                
		\bib{CLS}{book}{
			author={Cox, David A.},
			author={Little, John B.},
			author={Schenck, Henry K.},
			title={Toric varieties},
			series={Graduate Studies in Mathematics},
			volume={124},
			publisher={American Mathematical Society, Providence, RI},
			date={2011},
			pages={xxiv+841},
			isbn={978-0-8218-4819-7},
		}

		\bib{CrawleyDilworth}{book}{
			author={Crawley, Peter },
			author={Dilworth, Robert P. },
			title={Algebraic Theory of Lattices},
			series={the University of Michigan, },
			publisher={Prentice-Hall},
			date={1973},
			pages={201},
			isbn={9780130222695},
		}

                \bib{BergmanFanPolymatroid}{unpublished}{
label={CHLSW22}                  
      title={The Bergman fan of a polymatroid}, 
      author={Colin Crowley}
      author ={June Huh}
      author={Matt Larson}
      author={Connor Simpson}
      author={Botong Wang},
      year={2022},
      note={arXiv:2207.08764},
}		
		
		\bib{DDK-Stab}{article}{
			author={Dasgupta, Jyoti},
			author={Dey, Arijit},
			author={Khan, Bivas},
			title={Stability of equivariant vector bundles over toric varieties},
			journal={Doc. Math.},
			volume={25},
			date={2020},
			pages={1787--1833},
			issn={1431-0635},
			
		}
		
		\bib{ErSEVB}{article}{
			author={Dasgupta, Jyoti},
			author={Dey, Arijit},
			author={Khan, Bivas},
			title={Erratum for ``Stability of equivariant vector bundles over toric
				varieties''},
			journal={Doc. Math.},
			volume={26},
			date={2021},
			pages={1271--1274},
			issn={1431-0635},
		}

		\bib{Lucie}{unpublished}{
                        label={Dev22}
			author = {Lucie Devey},
			title = {A combinatorial description of stability for toric vector bundles},
			year = {2022},
		 note={arXiv:2212.11020},
		}  
		
		\bib{DJS}{article}{
			label={DJS},
			author={Di Rocco, Sandra},
			author={Jabbusch, Kelly},
			author={Smith, Gregory G.},
			title={Toric vector bundles and parliaments of polytopes},
			journal={Trans. Amer. Math. Soc.},
			volume={370},
			date={2018},
			number={11},
			pages={7715--7741},
		}
		
		\bib{DressWenzel}{article}{
			author={Dress, Andreas W. M.},
			author={Wenzel, Walter},
			title={Valuated matroids},
			journal={Adv. Math.},
			volume={93},
			date={1992},
			number={2},
			pages={214--250},
			issn={0001-8708},
		}

\bib{3264}{book}{
	author={Eisenbud, David},
	author={Harris, Joe},
	title={3264 and all that---a second course in algebraic geometry},
	publisher={Cambridge University Press, Cambridge},
	date={2016},
	pages={xiv+616},
	isbn={978-1-107-60272-4},
	isbn={978-1-107-01708-5},
	doi={10.1017/CBO9781139062046},
}

\bib{FarkasJensenPayne}{unpublished}{
      title={The Kodaira dimensions of $\overline{\mathcal{M}}_{22}$ and $\overline{\mathcal{M}}_{23}$}, 
      author={Farkas, Gavril},
      author={Jensen, David},
      author={Payne, Sam},
      year={2000},
      note={arXiv:2005.00622},
}

                \bib{Fulton}{book}{
			author={Fulton, William},
			title={Introduction to toric varieties},
			series={Annals of Mathematics Studies},
			volume={131},
			note={The William H. Roever Lectures in Geometry},
			publisher={Princeton University Press, Princeton, NJ},
			date={1993},
			pages={xii+157},
			isbn={0-691-00049-2},
		}

                \bib{GeorgeManon1}{article}{
label={GM23a},                  
   author={George, Courtney},
   author={Manon, Christopher},
   title={Cox rings of projectivized toric vector bundles and toric flag
   bundles},
   journal={J. Pure Appl. Algebra},
   volume={227},
   date={2023},
   number={11},
   pages={Paper No. 107437, 25},
   issn={0022-4049},
}

\bib{GeorgeManon2}{unpublished}{
label={GM23b}  
      title={Positivity properties of divisors on Toric Vector Bundles}, 
      author={Courtney George},
      author ={Christopher Manon},
      year={2023},
      note={arXiv:2308.09014},
                }

                \bib{GeorgeManon3}{unpublished}{
label={GM23c},                
      title={Algebra and Geometry of Irreducible toric vector bundles of rank $n$ on $\mathbb{P}^n$}, 
      author={George, Courtney},
      author ={Manon, Christopher},
      year={2023},
      note={arXiv:2308.09017},
}

		\bib{Giansiracusa2}{article}{
			author={Giansiracusa, Jeffrey},
			author={Giansiracusa, Noah},
			title={Equations of tropical varieties},
			journal={Duke Math. J.},
			volume={165},
			date={2016},
			number={18},
			pages={3379--3433},
			issn={0012-7094},
		}

		\bib{gonzalez}{book}{
			author={Gonzalez, Jose L.},
			title={Toric projective bundles},
			note={Thesis (Ph.D.)--University of Michigan},
			publisher={ProQuest LLC, Ann Arbor, MI},
			date={2011},
			pages={83},
			isbn={978-1124-90560-0},
		}

		\bib{GUZ}{article}{
			author={Gross, Andreas},
			author={Ulirsch, Martin},
			author={Zakharov, Dmitry},
			title={Principal bundles on metric graphs: the ${\rm GL}_n$ case},
			journal={Adv. Math.},
			volume={411},
			date={2022},
			pages={Paper No. 108775, 45},
			issn={0001-8708},
		}

\bib{Helgason}{article}{
   author={Helgason, Thorkell},
   title={Aspects of the theory of hypermatroids},
   conference={
      title={Hypergraph Seminar (Proc. First Working Sem., Ohio State Univ.,
      Columbus, Ohio, 1972; dedicated to Arnold Ross)},
   },
   book={
      series={Lecture Notes in Math.},
      volume={Vol. 411},
      publisher={Springer, Berlin-New York},
   },
   date={1974},
   pages={191--213},
}                
                
		\bib{HNS}{article}{
			author={Hering, Milena},
			author={Nill, Benjamin},
			author={S\"{u}\ss , Hendrik},
			title={Stability of tangent bundles on smooth toric Picard-rank-2
				varieties and surfaces},
			conference={
				title={Facets of algebraic geometry. Vol. II},
			},
			book={
				series={London Math. Soc. Lecture Note Ser.},
				volume={473},
				publisher={Cambridge Univ. Press, Cambridge},
			},
			date={2022},
			pages={1--25},
		}
		
		\bib{HLSV}{article}{
			author={Husi\'{c}, Edin},
			author={Loho, Georg},
			author={Smith, Ben},
			author={V\'{e}gh, L\'{a}szl\'{o} A.},
			title={On complete classes of valuated matroids},
			conference={
				title={Proceedings of the 2022 Annual ACM-SIAM Symposium on Discrete
					Algorithms (SODA)},
			},
			book={
				publisher={[Society for Industrial and Applied Mathematics (SIAM)],
					Philadelphia, PA},
			},
			isbn={978-1-61197-707-3},
			date={2022},
			pages={945--962},
		}

\bib{JensenRanganathan}{article}{
   author={Jensen, David},
   author={Ranganathan, Dhruv},
   title={Brill-Noether theory for curves of a fixed gonality},
   journal={Forum Math. Pi},
   volume={9},
   date={2021},
   pages={Paper No. e1, 33},
}

		\bib{JMT19}{article}{
			author={Jun, Jaiung},
			author={Mincheva, Kalina},
			author={Tolliver, Jeffrey},
			title={Picard groups for tropical toric schemes},
			journal={Manuscripta Math.},
			volume={160},
			date={2019},
			number={3-4},
			pages={339--357},
			issn={0025-2611},
		}
		
		\bib{JMT20}{article}{
			author={Jun, Jaiung},
			author={Mincheva, Kalina},
			author={Tolliver, Jeffrey},
			title={Vector bundles on tropical schemes},
			journal={J. Algebra},
			volume={637},
			date={2024},
			pages={1--46},
			issn={0021-8693},
		}
		
		\bib{Kajiwara}{article}{
			author={Kajiwara, Takeshi},
			title={Tropical toric geometry},
			conference={
				title={Toric topology},
			},
			book={
				series={Contemp. Math.},
				volume={460},
				publisher={Amer. Math. Soc., Providence, RI},
			},
			isbn={978-0-8218-4486-1},
			date={2008},
			pages={197--207},
		}

		\bib{Katz}{article}{
			author={Katz, Eric},
			title={Matroid theory for algebraic geometers},
			conference={
				title={Nonarchimedean and tropical geometry},
			},
			book={
				series={Simons Symp.},
				publisher={Springer, [Cham]},
			},
			isbn={978-3-319-30944-6},
			isbn={978-3-319-30945-3},
			date={2016},
			pages={435--517},
		}

	        \bib{KavehManonOlder}{unpublished}{
                        label={KM22}
			title={Toric flat families, valuations, and applications to projectivized toric vector bundles}, 
			author={Kiumars Kaveh and Christopher Manon},
			year={2019},
			note={arXiv:1907.00543}
		}
		
		\bib{KMTVBvalTrop23}{unpublished}{
                  label={KM23}
				title={Toric vector bundles, valuations and tropical geometry}, 
		author={Kiumars Kaveh and Christopher Manon},
         year = {2023},
			note={arXiv:2304.11211}
		}

                \bib{KavehManon24}{unpublished}{
                  label={KM24}
				title={Toric Matroid Bundles}
		author={Kiumars Kaveh and Christopher Manon},
                year = {2024},
                note={Preprint}
}                

		\bib{KD}{article}{
			AUTHOR = {Khan, Bivas},
			AUTHOR ={ Dasgupta, Jyoti},
			TITLE = {Toric vector bundles on {B}ott tower},
			JOURNAL = {Bull. Sci. Math.},
			FJOURNAL = {Bulletin des Sciences Math\'{e}matiques},
			VOLUME = {155},
			YEAR = {2019},
			PAGES = {74--91},
			ISSN = {0007-4497},

		}

		\bib{Kly}{article}{
			AUTHOR = {Klyachko, Aleksander A.},
			TITLE = {Equivariant bundles over toric varieties},
			JOURNAL = {Izv. Akad. Nauk SSSR Ser. Mat.},
			FJOURNAL = {Izvestiya Akademii Nauk SSSR. Seriya Matematicheskaya},
			VOLUME = {53},
			YEAR = {1989},
			NUMBER = {5},
			PAGES = {1001--1039, 1135},
			ISSN = {0373-2436},
		}

		\bib{Kool}{article}{
			author={Kool, Martijn},
			title={Fixed point loci of moduli spaces of sheaves on toric varieties},
			journal={Adv. Math.},
			volume={227},
			date={2011},
			number={4},
			pages={1700--1755},
			issn={0001-8708},
		}

\bib{LasVergnas}{article}{
   author={Las Vergnas, Michel},
   title={On products of matroids},
   journal={Discrete Math.},
   volume={36},
   date={1981},
   number={1},
   pages={49--55},
   issn={0012-365X},
}

\bib{Lovasz}{article}{
   author={Lov\'{a}sz, L\'aszl\'o},
   title={Submodular functions and convexity},
   conference={
      title={Mathematical programming: the state of the art},
      address={Bonn},
      date={1982},
   },
   book={
      publisher={Springer, Berlin},
   },
   isbn={3-540-12082-3},
   date={1983},
   pages={235--257},
}
		
		\bib{TropicalBook}{book}{
			author={Maclagan, Diane},
			author={Sturmfels, Bernd},
			title={Introduction to tropical geometry},
			series={Graduate Studies in Mathematics},
			volume={161},
			publisher={American Mathematical Society, Providence, RI},
			date={2015},
			pages={xii+363},
			isbn={978-0-8218-5198-2},
			doi={10.1090/gsm/161},
		}

		\bib{TropicalIdeals}{article}{
			author={Maclagan, Diane},
			author={Rinc\'{o}n, Felipe},
			title={Tropical ideals},
			journal={Compos. Math.},
			volume={154},
			date={2018},
			number={3},
			pages={640--670},
			issn={0010-437X},
		}

\bib{Balancing}{article}{
   author={Maclagan, Diane},
   author={Rinc\'{o}n, Felipe},
   title={Varieties of tropical ideals are balanced},
   journal={Adv. Math.},
   volume={410},
   date={2022},
   pages={Paper No. 108713, 44},
   issn={0001-8708},
}

\bib{MurotaTamura}{article}{
   author={Murota, Kazuo},
   author={Tamura, Akihisa},
   title={On circuit valuation of matroids},
   journal={Adv. in Appl. Math.},
   volume={26},
   date={2001},
   number={3},
   pages={192--225},
   issn={0196-8858},
   review={\MR{1818743}},
}      

\bib{Okonek}{book}{
	author={Okonek, Christian},
	author={Schneider, Michael},
	author={Spindler, Heinz},
	title={Vector bundles on complex projective spaces},
	series={Modern Birkh\"{a}user Classics},
	note={Corrected reprint of the 1988 edition;
		With an appendix by S. I. Gelfand},
	publisher={Birkh\"{a}user/Springer Basel AG, Basel},
	date={2011},
	pages={viii+239},
	isbn={978-3-0348-0150-8},
}          
		
		\bib{Oxley}{book}{
			author={Oxley, James G.},
			title={Matroid theory},
			series={Oxford Science Publications},
			publisher={The Clarendon Press, Oxford University Press, New York},
			date={1992},
			pages={xii+532},
			isbn={0-19-853563-5},
		}

		\bib{pangT}{thesis}{
			author = {Pang, Thiam-Sun},
			year = {2015},
			title = {The Harder-Narasimhan filtrations and rational contractions},
                        type = {Ph.D. Thesis},
                        school  = {Albert-Ludwigs-Universit\"at Freiburg im Breisgau}
		}
		
		\bib{Payne}{article}{
			AUTHOR = {Payne, Sam},
			TITLE = {Moduli of toric vector bundles},
			JOURNAL = {Compos. Math.},
			FJOURNAL = {Compositio Mathematica},
			VOLUME = {144},
			YEAR = {2008},
			NUMBER = {5},
			PAGES = {1199--1213},
			ISSN = {0010-437X},
		}
		
		\bib{PayneTropical}{article}{
			author={Payne, Sam},
			title={Analytification is the limit of all tropicalizations},
			journal={Math. Res. Lett.},
			volume={16},
			date={2009},
			number={3},
			pages={543--556},
			issn={1073-2780},
		}
		
		\bib{PerT}{thesis}{
			AUTHOR = {Perling, Markus},
			TITLE = {Resolutions and Moduli for Equivariant Sheaves over Toric Varieties},
			type = {Ph.D. thesis},
			school = {University of Kaiserslautern},
			YEAR = {2003},
		}

                \bib{Perling}{article}{
   author={Perling, Markus},
   title={Graded rings and equivariant sheaves on toric varieties},
   journal={Math. Nachr.},
   volume={263/264},
   date={2004},
   pages={181--197},
   issn={0025-584X},
}
		\bib{Sims}{article}{
			author={Sims, Julie A.},
			title={An extension of Dilworth's theorem},
			journal={J. London Math. Soc. (2)},
			volume={16},
			date={1977},
			number={3},
			pages={393--396},
			issn={0024-6107},
		}

		\bib{Stanley16}{article}{
			author={Stanley, Richard P.},
			title={Modular elements of geometric lattices},
			journal={Algebra Universalis},
			volume={1},
			date={1971/72},
			pages={214--217},
			issn={0002-5240},
			doi={10.1007/BF02944981},
		}
		
		\bib{StanleyEC1}{book}{
			AUTHOR = {Stanley, Richard P.},
			TITLE = {Enumerative combinatorics. {V}olume 1},
			SERIES = {Cambridge Studies in Advanced Mathematics},
			VOLUME = {49},
			EDITION = {Second},
			PUBLISHER = {Cambridge University Press, Cambridge},
			YEAR = {2012},
			PAGES = {xiv+626},
			ISBN = {978-1-107-60262-5},
		}

		\bib{White}{book}{
			author={White, Neil},
			title={Theory of Matroids (Encyclopedia of Mathematics and its Applications).},
			series={},
			publisher={Cambridge: Cambridge University Press.},
			date={1986},
			pages={},
		}

\bib{Ziegler91}{article}{
			author={Ziegler, G\"unter M.},
			title={Binary supersolvable matroids and modular constructions},
			journal={Proc. Amer. Math. Soc.},
			volume={113},
			date={1991},
			number={3},
			pages={817--829},
			issn={0002-9939},
		}
                
\bib{Ziegler}{book}{
  title={Combinatorial Models for Subspace Arrangements},
  author={Ziegler, G\"unter M.},
  url={https://books.google.de/books?id=aOAmygAACAAJ},
  year={1992},
}

	\end{biblist}
	
\end{bibdiv}

\end{document}